\documentclass[11pt]{amsart}
\usepackage{amssymb,amsmath,amsthm,amsrefs}
\usepackage{appendix}
\usepackage[mathscr]{eucal}
\usepackage{color}
\usepackage{enumerate}
\usepackage[utf8]{inputenc}
\usepackage[leftcaption]{sidecap}
\usepackage{tikz, pgfplots}
 \usepackage{tikz-cd}
 \usepackage{cancel}
 \usepackage{stmaryrd}
 \usepackage{multicol}
 \setlength{\columnseprule}{1pt}
 
\usetikzlibrary{matrix,arrows,decorations.pathmorphing}
\usetikzlibrary{positioning}
\usetikzlibrary{matrix,arrows,decorations.pathmorphing, shapes.geometric}
\tikzset{>=latex}

\usepackage[T1, T2A]{fontenc}

\usepackage{psfrag}
\usepackage{hyperref}

\usepackage[safe]{tipa}
\DeclareFontFamily{U}{mathx}{\hyphenchar\font45}
\DeclareFontShape{U}{mathx}{m}{n}{
      <5> <6> <7> <8> <9> <10>
      <10.95> <12> <14.4> <17.28> <20.74> <24.88>
      mathx10
      }{}
\DeclareSymbolFont{mathx}{U}{mathx}{m}{n}
\DeclareFontSubstitution{U}{mathx}{m}{n}
\DeclareMathAccent{\widecheck}{0}{mathx}{"71}

\makeatletter
\newcounter{savesection}
\newcounter{apdxsection}
\renewcommand\appendix{\par
  \setcounter{savesection}{\value{section}}%
  \setcounter{section}{\value{apdxsection}}%
  \setcounter{subsection}{0}%
  \gdef\thesection{\@Alph\c@section}}
\newcommand\unappendix{\par
  \setcounter{apdxsection}{\value{section}}%
  \setcounter{section}{\value{savesection}}%
  \setcounter{subsection}{0}%
  \gdef\thesection{\@arabic\c@section}}
\makeatother


\usepackage{epstopdf}
\usepackage{subcaption}
\usepackage{graphicx}
\usepackage{verbatim}
\usepackage{amsmath,amsthm}
\usepackage{graphics}
\usepackage{color}
\usepackage{epsfig}
\usepackage{fullpage}
\usepackage{amssymb,amsmath}
\usepackage[mathscr]{eucal}
\usepackage{ upgreek }
\usepackage{tensor}




\newtheorem{theorem}[equation]{Theorem}
\newtheorem*{theorem*}{Theorem}
\newtheorem{lemma}[equation]{Lemma}

\newtheorem{proposition}[equation]{Proposition}
\newtheorem{cor}[equation]{Corollary}
\newtheorem{corollary}[equation]{Corollary}

\newtheorem{definition}[equation]{Definition}

\newtheorem{remark}[equation]{Remark}
\newtheorem{notation}[equation]{Notation}
\newtheorem{convention}[equation]{Convention}
\newtheorem{assumption}[equation]{Assumption}

\numberwithin{equation}{section}

\newcommand{\R}{\mathbb{R}}
\newcommand{\N}{\mathbb{N}}

\newcommand{\K}{\mathbb{K}}

\newcommand{\Z}{\mathbb{Z}}

\newcommand{\Sph}{\mathbb{S}}
\newcommand{\Spheqduo}{\mathbb{S}_{eq}^2}

\newcommand{\sgn}{\operatorname{sgn}}

\newcommand{\Grp}{\mathscr{G}}

\newcommand{\Hgrp}{\mathscr{H}}

\newcommand{\Fcal}{\mathcal{F}}
\newcommand{\Ccal}{\mathcal{C}}
\newcommand{\Acal}{\mathcal{A}}
\newcommand{\Hcal}{\mathcal{H}}
\newcommand{\Lcal}{\mathcal{L}}

\newcommand{\Rcal}{\mathcal{R}}

\newcommand{\uC}{{\mathscr{C}}}

\DeclareFontFamily{U}{mathx}{}
\DeclareFontShape{U}{mathx}{m}{n}{<-> mathx10}{}
\DeclareSymbolFont{mathx}{U}{mathx}{m}{n}
\DeclareMathAccent{\widehat}{0}{mathx}{"70}
\DeclareMathAccent{\widecheck}{0}{mathx}{"71}

\newcommand{\zz}{\ensuremath{\mathrm{z}}}
\newcommand{\rr}{\ensuremath{\mathrm{r}}}

\newcommand{\cu}{\underline{c}} 
\newcommand{\hu}{\underline{h}}

\newcommand{\abs}[1]{\left\lvert#1\right\rvert}
\newcommand{\norm}[1]{\left\|#1\right\|}
\newcommand{\cutoff}[3]{\mathbf{\Psi}\left[ #1,#2;#3 \right]}

\newcommand{\osc}{{{osc}}}

\newcommand{\CCC}{\mathsf{C}}
\newcommand{\UUU}{\mathsf{U}}
\newcommand{\RRR}{\mathsf{R}}

\newcommand{\skernel}{\mathscr{K}}
\newcommand{\skernelv}{\widehat{\mathscr{K}}}

\newcommand{\psicut}{{\psi_{cut}}}
\newcommand{\Psibold}{{\boldsymbol{\Psi}}}

\newcommand{\arcosh}{\mathrm{arcosh}}

\newcommand{\avg}{\mathrm{avg}}
\newcommand{\supp}{\mathrm{supp}}

\newcommand{\dist}{\mathbf{d}}

\newcommand{\Id}{\mathrm{Id}}

\newcommand{\Graph}{\mathrm{Graph}}
\newcommand{\inj}{\mathrm{inj}}
\newcommand{\dd}{\mathrm{d}}
\newcommand{\ds}{\mathrm{d}s}
\newcommand{\dt}{\mathrm{d}t}

\newcommand{\dr}{\mathrm{d}r}
\newcommand{\sym}{\mathrm{sym}}

\newcommand{\Cylinder}{\mathrm{Cyl}}

\newcommand{\Ecal}{\mathcal{E}}
\newcommand{\Ecalu}{\underline{\mathcal{E}}}
\newcommand{\Vcal}{\mathcal{V}}

\newcommand{\Mcal}{\mathcal{M}}

\newcommand{\Pcal}{\mathcal{P}}
\newcommand{\Jcal}{\mathcal{J}}
\newcommand{\BPcal}{B_{\Pcal}}
\newcommand{\BPcals}{B_{\Pcal_s}}

\newcommand{\deltamin}{\delta_{\mathrm{min}}}
\newcommand{\deltau}{\underline{\delta}}

\newcommand{\taubold}{\boldsymbol{\tau}}
\newcommand{\kappabold}{\boldsymbol{\kappa}}
\newcommand{\kappaboldu}{\underline{\boldsymbol{\kappa}}}

\newcommand{\kappau}{\underline{\kappa}}

\newcommand{\zetabold}{\boldsymbol{\zeta}}

\newcommand{\zetaboldu}{\underline{\zetabold}}

\newcommand{\mubold}{\boldsymbol{\mu}}

\newcommand{\wbold}{\boldsymbol{w}}
\newcommand{\psibold}{\boldsymbol{\psi}}
\newcommand{\fbold}{\boldsymbol{f}}
\newcommand{\gbold}{\boldsymbol{g}}
\newcommand{\ubold}{\boldsymbol{u}}
\newcommand{\nubold}{\boldsymbol{\nu}}

\newcommand{\varphihat}{\hat{\varphi}}
\newcommand{\Lu}{\underline{L}}
\newcommand{\tauu}{\underline{\tau}}
\newcommand{\su}{\underline{s}}
\newcommand{\ru}{\underline{r}}
\newcommand{\Ku}{\underline{K}}
\newcommand{\Kcech}{\widecheck{K}}

\newcommand{\bbracket}[1]{[\![#1]\!]}

\DeclareFontFamily{U}{tipa}{}
\DeclareFontShape{U}{tipa}{m}{sl}{
	<-8.5> tipasl8
	<8.5-9.5> tipasl9
	<9.5-11> tipasl10
	<11-> tipasl12
}{}
\DeclareSymbolFont{tipa}{U}{tipa}{m}{sl}
\DeclareMathSymbol{\SmallCapitalR}{\mathord}{tipa}{246}
\DeclareMathSymbol{\textPhi}{\mathord}{tipa}{70}
\newcommand{\rbalanced}{r_{mu}}


\newcommand{\Tcal}{\mathcal{T}}

\newcommand{\sgr}[2]{\mathrm{Isom}_{#1}^{#2} }

\newcommand{\shr}{Shr}

\newcommand{\JM}{\mathscr{J}_M}

\title[Self-shrinkers by stacking]{Self-shrinkers with any number of ends\\ in $\mathbb{R}^{3}$ by stacking $\mathbb{R}^{2}$}  

\author[G.~Shao]{Guanhua~Shao}
\address{Department of Mathematics, Rutgers University, Pistacaway, NJ 08854} 
\email{gs977@math.rutgers.edu}
	   
\author[J.~Zou]{Jiahua~Zou} 
\address{Department of Mathematics, Rutgers University, Pistacaway, NJ 08854} 
\email{jiahua.zou@rutgers.edu}


\begin{document}

\date{\today}

	   \keywords{Differential geometry, self-shrinkers, partial differential equations, perturbation methods}

\begin{abstract}
	 For each half-integer $J$ and large enough integer $m$ we construct by PDE gluing methods a self-shrinker $\breve{M}[J,m]$ with $2J+1$ ends and genus $2J(m-1)$. $\breve{M}[J,m]$ resembles the stacking of $2J+1$ levels of the plane $\R^2$ in $\R^3$ that have been connected by $2Jm$ catenoidal bridges with $m$ bridges connecting each pair of adjacent levels. It observes the symmetry of an $m$-gonal prism (when $J$ is a half integer) or an $m$-gonal antiprism (when $J$ is an integer). The construction is based on the Linearised Doubling (LD) methodology which was first introduced by Kapouleas in the construction of minimal surface doublings of $\Spheqduo$ in $\Sph^3$.
\end{abstract}

\maketitle

\section{Introduction}
\label{S:intro} 

\subsection*{The historic framework and background} 
A hypersurface $\Sigma^n$ immersed in $\R^{n+1}$ is called an \emph{$f$-minimal hypersurface} if its mean curvature $H_{\Sigma}$ satisfies 
\begin{equation*}
	H_{\Sigma}+\nabla_{\Sigma} f\cdot \nu_{\Sigma}=0
\end{equation*}
where $\nu_{\Sigma}$ is a choice of unit normal, $f$ is a function defined on $\Sigma$. Here for the mean curvature we use the convention $H_{\Sigma}:=\Delta_{\Sigma} X_{\Sigma}\cdot \nu_{\Sigma}$, where $X_{\Sigma}$ is the embedding of $\Sigma$ and $\Delta_{\Sigma}$ is the Laplacian operator on $\Sigma$ induced from the Euclidean metric in $\R^{n+1}$. In particular, when 
\begin{equation*}
	f=\frac{\abs{x}^2}{4},
\end{equation*}
i.e. when $H_{\Sigma}$ satisfies 
\begin{equation*}
	H_{\Sigma}+\frac{1}{2} X_{\Sigma}\cdot \nu_{\Sigma}=0,
\end{equation*}
$\Sigma^n$ is called a \emph{self-shrinker}. It is well-known that for a self-shrinker $\Sigma$, the family of surfaces
\begin{equation*}
	(-\infty,0)\ni t\mapsto \sqrt{\abs{t}}\Sigma 
\end{equation*}
is a mean curvature flow.

Many interesting examples of self-shrinkers have been constructed by reduction to the analysis of ODE, e.g., \cite{angenent,KM,Riedler}; in $\R^3$ by minimax method, e.g., \cite{ketover:platonicsolids,ketover2024,BNS}, by studying the mean curvature flow \cite{IW,HMW}; or by gluing method, e.g., \cite{nguyenIII,kapouleas:kleene:moller,LDg} and in higher dimension, e.g., \cite{s3doubling}. However, only three classes of non-compact examples without a continuous group of symmetry in $\R^3$ are known: the desingularising constructions of the plane and the sphere in \cite{nguyenI,nguyenII,nguyenIII,kapouleas:kleene:moller,BNS}, the doubling constructions in \cite{ketover2024,IW} and the tripling constructions in \cite{HMW} of the plane. In particular, they all have one, two or three ends.

\subsection*{Brief discussion of the results and the staking construction}   
As a generalisation of the constructions in \cite{IW,ketover2024,HMW}, in this article we construct self-shrinkers in $\R^3$ by stacking the plane with any number of ends bigger or equal to $2$. More precisely, we prove the following theorem.
 \begin{theorem*}
	For any $J\in \frac{1}{2}\N$, if $m\in\N$ is large enough, then there is a $\Grp[m,J]$-invariant embedded self-shrinker $\breve{M}[m,J]$ in $\R^3$ (see \ref{def:grp}) of genus $2J(m-1)$ and $2J+1$ ends. The hypersurfaces $\breve{M}$ converge in the sense of varifolds as $m \to\infty$ to $(2J+1)\R^2$, and $\breve{M}$ has asymptotic cone $\cup_j\Ccal_j$, where $\Ccal_j$ is the cone described in \ref{thm}.
\end{theorem*}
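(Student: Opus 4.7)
The plan is to carry out a Kapouleas-style Linearised Doubling (LD) gluing. First, I would build a $\Grp[m,J]$-invariant singular initial model consisting of $2J+1$ parallel copies of $\R^2$ at heights $z_{-J}<\dots<z_{J}$, each punctured at $m$ points lying on a circle of radius $r$, joined across adjacent levels by $2Jm$ small catenoidal necks whose radii $\tau_j$ and centres realise the $m$-gonal prismatic (or antiprismatic) symmetry. After truncating the catenoids and smoothly matching them to the planes with standard cutoffs, this produces a $\Grp[m,J]$-invariant smooth surface $M_{\mathrm{init}}=M_{\mathrm{init}}[\xi]$ depending on a finite-dimensional parameter $\xi=(z_j,r,\tau_j,\dots)$, with self-shrinker error $H_{M_{\mathrm{init}}}+\tfrac12 X\cdot\nu$ bounded in a suitable weighted Hölder norm by a negative power of $m$.

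The linearised problem is governed by the Colding--Minicozzi stability operator
\begin{equation*}
L u = \Delta_{M_{\mathrm{init}}} u - \tfrac12 X\cdot\nabla u + |A|^2 u + \tfrac12 u,
\end{equation*}
which carries a non-trivial approximate kernel. On each planar sheet away from the necks $L$ reduces to the Ornstein--Uhlenbeck type operator of the self-shrinker weight, whose small eigenfunctions (constants, coordinate functions, and the dilation mode from $|x|^2/4$) present obstructions; on each neck the catenoidal logarithmic Jacobi field and the two translational Jacobi fields do the same. The $\Grp[m,J]$-symmetry quotient kills most of these, but a finite-dimensional substitute kernel survives and must be absorbed geometrically. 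Following the LD methodology, for each parameter $\xi$ I would construct a Linearised Doubling function $\varphi[\xi]$ solving $L\varphi$ equal to the current error modulo this substitute kernel, with prescribed logarithmic singularities $\tau_j\log r$ at the neck locations. Matching the Cauchy data of $\varphi$ across adjacent planes with the standard catenoidal Jacobi fields of the corresponding bridges yields a $\Grp[m,J]$-equivariant finite-dimensional balancing system for $\xi$; its solvability uses the non-degeneracy of the dilation response in the self-shrinker weight together with the non-degeneracy of the logarithmic flux in the neck radii.

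Once balanced, the linearisation with extended substitute kernel admits a uniformly bounded $\Grp[m,J]$-equivariant right inverse in the relevant weighted $C^{2,\alpha}$ space, and a standard contraction mapping applied to the nonlinear operator $u\mapsto H_{\Graph_u(M_{\mathrm{init}})}+\tfrac12 X\cdot\nu$ produces a small smooth normal perturbation $w$ whose graph is the desired self-shrinker $\breve{M}[m,J]$. The symmetry group and the smallness of $w$ force embeddedness; the number of ends is $2J+1$ by construction; the Euler characteristic contribution of the $2Jm$ necks gives genus $2J(m-1)$; varifold convergence to $(2J+1)\R^2$ follows since the necks have shrinking size concentrated on a codimension-one set; and the asymptotic cone $\cup_j\Ccal_j$ reflects the dilation-mode adjustment of each sheet at infinity. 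The main obstacle will be producing the LD function on each punctured plane so that its singularities match the catenoidal Cauchy data on both sides, and then solving the balancing system in $\xi$ equivariantly with quantitative bounds uniform in $m$ --- everything else (cutoff analysis, linear theory on the catenoidal model, and the final fixed-point step) is by now standard in the Kapouleas LD framework.
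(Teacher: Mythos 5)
Your outline follows the same Linearised Doubling strategy as the paper, and the overall architecture (LD functions with logarithmic singularities, catenoidal bridges, extended substitute kernel, finite\--dimensional balancing, fixed point) is the right one. However, the sketch passes over precisely the two points where the stacking problem genuinely differs from the known doubling/desingularisation constructions, and as written neither would go through.

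First, the vertical balancing. You invoke ``the non-degeneracy of the logarithmic flux in the neck radii,'' but with $2J+1$ levels the requirement that all the constant-term mismatches $\mu_{j,\pm}$ be simultaneously cancellable is a \emph{coupled} linear system across the whole stack: each interior level sees two necks whose fluxes must balance against the heights $h_{\ell}$ of both adjacent bridges. This forces the neck sizes to be proportional to the positive eigenvector of the tridiagonal Toeplitz matrix $T_{2J\times 2J}$, i.e.\ $\tau_{\ell}\propto\cos\frac{\pi\ell}{2J+1}$ with the heights determined by $h_{\ell}\sim(\tau_{\ell-1}-\tau_{\ell+1})\phi_J/2$ (equations \eqref{eq:tau}, \eqref{eq:h} and Lemma \ref{lem:linearalg}). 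Without identifying this spectral structure the balancing system is overdetermined and your parameter count does not close; a single radius parameter $\tau_j$ per bridge chosen ``non-degenerately'' is not enough.

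Second, the horizontal balancing and the placement of the necks. The bridges joining adjacent levels must sit at angular positions alternating by $\pi/m$ between two families $L^0,L^1$ (this is what realises $\mathbf{D}_{mh}$ versus $\mathbf{D}_{md}$ and keeps the configuration embeddable), so each interior sheet carries singularities on \emph{two} nearly coincident circles and its LD solution is a combination $\tau_{j,+}\Phi_{j,+}-\tau_{j,-}\Phi_{j,-}$. Cancelling the gradient mismatch then requires estimating $\Phi^0$ near the singularities of $\Phi^1$ and vice versa --- the dislocation estimates of Lemmas \ref{lem:Gm}--\ref{lem:MPhi}, obtained from the cylinder Green's function limit --- and pins the common radius to the root $\rbalanced$ of $\frac{\phi_m'}{\phi_m}+\frac{\phi_u'}{\phi_u}-\frac r2$. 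Your single circle of radius $r$ with ``non-degeneracy of the dilation response'' does not supply this. Two smaller remarks: the symmetric kernel of $\Lcal=\Delta-\frac12X\cdot\nabla+\frac12$ on $\R^2$ consists only of the coordinate functions (constants and the dilation mode are not Jacobi fields of the flat plane through the origin, since $X\cdot\nu\equiv0$); and because the initial surfaces depend only continuously on the unbalancing parameters, the paper closes the argument with Schauder's fixed point theorem on the product of the function ball with the parameter box rather than with a contraction mapping.
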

Roughly speaking the surface $\Grp[m,J]$ has the shape of $2J+1$ levels of planes, and between each two adjacent levels, there are $m$ necks connecting them. The necks are distributed equally but alternately on the same circle (see \ref{def:L}).

One should expect that when $J=1/2$ and $m$ is large enough as required in all the constructions,  $\breve{M}[m,1/2]$ is the same self-shrinker as the one constructed in \cite{IW} by Ilmanen and White and the one constructed in \cite{ketover2024} by Ketover; and when $J=1$,  $\breve{M}[m,1]$ is the same self-shrinker as the one constructed in \cite{HMW} by Hoffman, Mart\'{\i}n and White.

Of particular interest for this construction are the \emph{stacking constructions} by PDE gluing methods as a generalisation of the doubling constructions. The first doubling constructions by PDE gluing methods produced minimal surface doublings of the Clifford torus \cite{kapouleas:clifford} and free boundary minimal surface doublings of the disk \cite{zolotareva}. Subsequently Kapouleas introduced in \cite{kapouleas:equator} a refinement of the general methodology which he called \emph{Linearised Doubling (LD)}. Using the LD methodology he constructed the first high genus minimal surface doublings of the equatorial sphere in round three-sphere. Since then the LD methodology has been successfully providing many new examples: minimal surfaces in round three-sphere \cite{kapouleas:ii:mcgrath,LDg,zouthesis}, minimal hypersurfaces in round four-sphere \cite{s3doubling}, self-shrinkers in $\R^3$ \cite{LDg} and $\R^4$ \cite{s3doubling}. In \cite{LDg} Kapouleas and McGrath also gave a very systematic study of doubling constructions in general.
 
The stacking constructions by PDE gluing methods were also introduced to study the same problems: they produced first minimal surface stackings of the Clifford torus \cite{wiygul:stacking} by Wiygul and then free boundary minimal surface stackings of the disk \cite{kapouleas:wiygul,CSW}.

In this paper, as a first example, we apply the LD methodology to the stacking constructions. We first find $2J+1$ LD solutions which solve the linearised self-shrinker equation (see \eqref{eq:jacobi}) and have logarithmic singularities, then connect their graphs by catenoids in Gaussian metric (see \ref{not:g}), and finally find the self-shrinkers by a perturbation. 

We want to point out that, although not pursued here, our method will also work to construct $f$-minimal surfaces in general of the same topology as long as the function $f$ satisfies the conditions needed in the construction.

We also mention that by the interesting work \cite{LZ} of Lee and Zhao, the self-shrinkers constructed in this paper will arise as blow-ups of the mean curvature flow of initially smooth, compact surfaces as they all have only conical ends.

\subsection*{Outline of strategy and main ideas}
As discussed above, we first construct the LD solutions (see \ref{def:LD}). However, instead of just one family of the LD solutions in in this article as in \cite{kapouleas:equator,kapouleas:ii:mcgrath,LDg,zouthesis,s3doubling}, in this article we have to make use of $J+\frac{1}{2}$ (when $J\notin \N$) or $J+1$ (when $J\in \N$) different families of the LD solutions $\varphi_j$. Fortunately, from the consideration of the balancing condition, their averages on the concentric circles, i.e. the \emph{rotationally invariant linearised doubling (RLD) solutions} (see \ref{lem:Phiavg}) differ from each other very little up to the coefficients. And as the cases studied before, we can control the LD solutions by the RLD solutions. Moreover, as the necks will be distributed along one circle between the levels, the RLD solutions in this article will only have one circle of singularity, which corresponds to the results in \cite{ketover2024,IW,HMW}. Therefore, this part is similar to the counterpart in \cite{kapouleas:equator}, where the catenoidal bridges are equidistributed along two parallel circle of $\Spheqduo$.

The LD solutions are then converted to \emph{matched linearised doubling (MLD) solutions} $\varphi_j+\underline{v}_j$, and here arises the main difficulty of this article. Unlike the doubling case, the balancing and unbalancing questions of the stacking constructions are more difficult to study. In general, we have $4$ parameters for each catenoidal bridges undetermined by the symmetry: the waist $\tau$, the height $h$, the location $r$ and the tilt $\kappa$ (see \ref{def:catebridge}). For each level $j$, the MLD solution $\varphi_j+\underline{v}_j$ are determined by these parameters of catenoids connecting this level from above and below. The matching conditions (see \ref{def:mismatch}) then provide a system of equations for these parameters which are called \emph{vertical balancing} (for $\tau$ and $h$) and \emph{horizontal balancing} (for $r$ and $\kappa$) as in \cite{kapouleas:equator}. The former in this article is similar to the one in \cite{wiygul:stacking,CSW}, while the latter is new. We then make use of the techniques from \cite[Section 9]{LDg}, which were further studied in \cite{zouthesis}, where a similar problem also rises from the lack of symmetry. 

Furthermore, to allow the perturbation, we also introduced $4J$ \emph{unbalancing} parameters $(\zetaboldu,\kappaboldu)$ at each gluing region. The parameters $r,\tau,h$ are then determined in \eqref{eq:r}, \eqref{eq:tau}, \eqref{eq:h}. The unbalancing parameters $(\zetaboldu,\kappaboldu)$ will create $4J$ mismatches $(\mubold,\mubold')$. We have to find their relationships and show that on the linear level this map is invertible (see \eqref{eq:muzeta}, \eqref{eq:muzetaprime} and \ref{lem:zetamu}). 

Unlike the minimal surfaces in $\R^3$, there are no rotationally invariant self-shrinkers like catenoid centred at any point. Thus as in \cite{LDg} we use the catenoids in the Gaussian metric $g_{\shr}$ (see \ref{not:g}) as the model and glue them with the graphs of MLD solutions to obtain the initial surfaces. Finally, we solve the linear problem of the linearised equation of self-shrinker from the technologies in \cite{kapouleas:kleene:moller,Chodosh:Schulze} and use Schauder's fixed point theorem to find the self-shrinker.

\subsection*{Organisation of the presentation}

The article consists of seven sections. 
The first section is the introduction. 
In Section \ref{S:not} we establish the notation. In Section \ref{S:RLD} we study the RLD and review the linear theory on $\R^2$ which is combined in section \ref{S:lin} with the linear theory on the catenoid to produce the global linear theory Proposition \ref{prop:lineareq}. 
In Section \ref{S:LD} we discuss the construction and estimates of the LD solutions by the RLD solution $\phi$. In Section \ref{S:MLD} we discuss the balancing conditions and construct the LD solutions $\varphi_j$ and the auxiliary functions $\underline{v}_j$.   
In Section \ref{S:init} we discuss the construction and properties of the initial surfaces. 
Finally in Section \ref{S:main} we state and prove the main Theorem. 

\subsection*{Acknowledgments}
The authors would like to thank Daniel Ketover for his support and for the helpful conversations about the topics of this article. JZ would like to thank Nicolaos Kapouleas for his interests and the helpful discussions.

\section{Elementary Geometry and Notation}\label{S:not}

\subsection*{General notation and conventions}

	  	\begin{notation}
	  	$ f(x) \lesssim_b g(x)$ indicates that there exists some absolute constant $C=C(b)>0$ that depends  only on $b$ so that $ f(x) \leq C g(x)$ for all $x$ in some specified domain. We will also use this notation for functions that depend on several variables or parameters. When the constant $C$ can be chosen absolutely, we will just write $f(x) \lesssim g(x)$.
	  	
	  	$ f(x) \sim_b g(x)$ indicates  that $ c(b)g(x) \leq f(x) \leq C(b)g(x)$ for some $C=C(b)>c=c(b)>0$ that depends only on $b$. When the constants $C$, $c$ can be chosen absolutely, we will just write $f(x) \sim g(x)$.
	  \end{notation}
	  
	  \begin{definition}
	  	\label{D:newweightedHolder}
	  	Assuming that $\Omega$ is a domain inside a manifold,
	  	$g$ is a Riemannian metric on the manifold, 
	  	$f,\rho:\Omega\to(0,\infty)$ are given functions, 
	  	$k\in \N$, 
	  	$\beta\in[0,1)$, 
	  	$u\in C^{k,\beta}_{loc}(\Omega)$ 
	  	or more generally $u$ is a $C^{k,\beta}_{loc}$ tensor field 
	  	(section of a vector bundle) on $\Omega$, 
	  	and that the injectivity radius in the manifold around each point $x$ in the metric $\rho^{-2}(x)g$
	  	is at least $1/10$,
	  	$\|u: C^{k,\beta} ( \Omega,\rho,g,f)\|$ is defined by
	  	$$
	  	\|u: C^{k,\beta} ( \Omega,\rho,g,f)\|:=
	  	\sup_{x\in\Omega}\frac{\,\|u:C^{k,\beta}(\Omega\cap B_x, \rho^{-2}(x)g)\|\,}{f(x) },
	  	$$
	  	where $B_x$ is a geodesic ball centered at $x$ and of radius $1/100$ in the metric $\rho^{-2}(x)g$.
	  	For simplicity, any of $\beta$, $\rho$, or $f$ may be omitted 
	  	when $\beta=0$, $\rho\equiv1$, or $f\equiv1$, respectively.
	  \end{definition}
	  
	  $f$ can be thought of as a ``weight'' function because $f(x)$ controls the size of $u$ in the vicinity of
	  the point $x$.
	  From the definition it follows that 
	  \begin{equation}
	  	\label{E:norm:derivative}
	  	\| \, \nabla u: C^{k-1,\beta}(\Omega,\rho,g,\rho^{-1}f)\|
	  	\le
	  	\|u: C^{k,\beta}(\Omega,\rho,g,f)\|, 
	  \end{equation}
	  and the multiplicative property 
	  \begin{equation}
	  	\label{E:norm:mult}
	  	\| \, u_1 u_2 \, : C^{k,\beta}(\Omega,g,\rho, f_1 f_2 \, )\|
	  	\lesssim_k	   
	  	\| \, u_1 \, : C^{k,\beta}(\Omega,g,\rho, f_1 \, )\|
	  	\,\,
	  	\| \, u_2 \, : C^{k,\beta}(\Omega,g,\rho, f_2 \, )\|.
	  \end{equation}

	  Cut-off functions will be used extensively,
	  and for this reason the following is adopted.
	  
	  \begin{definition}
	  	\label{DPsi} 
	  	A smooth function $\Psi:\R\to[0,1]$ is fixed with the following properties:
	  	\begin{enumerate}[(i)]
	  		\item 	$\Psi$ is non-decreasing.
	  		\item 	$\Psi\equiv1$ on $[1,\infty]$ and $\Psi\equiv0$ on $(-\infty,-1]$.
	  		\item  $\Psi-\frac12$ is an odd function.
	  	\end{enumerate}

	  Given now $a,b\in \R$ with $a\ne b$, the smooth function
	  $\psicut[a,b]:\R\to[0,1]$ is defined
	  by
	  \begin{equation}
	  	\label{Epsiab}
	  	\psicut[a,b]:=\Psi\circ L_{a,b},
	  \end{equation}
	  where $L_{a,b}:\R\to\R$ is the linear function defined by the requirements $L(a)=-3$ and $L(b)=3$.
	  
	  Clearly, $\psicut[a,b]$ has the following properties:
	  \begin{enumerate}[(i)]
	  	\item  $\psicut[a,b]$ is weakly monotone.
	  	\item  $\psicut[a,b]=1$ on a neighborhood of $b$ and 
	  	$\psicut[a,b]=0$ on a neighborhood of $a$.
	  	\item $\psicut[a,b]+\psicut[b,a]=1$ on $\R$.
	  \end{enumerate}

	  Suppose now we have two sections $f_0$, $f_1$ of some vector bundle over some domain $\Omega$. (A special case is when the vector bundle is trivial and $f_0, f_1$ are real-valued functions). Suppose we also have some real-valued function $d$ defined on $\Omega$. We define a new section
	  \begin{equation}
	  	\cutoff{a}{b}{d}(f_0,f_1):=f_1\psicut[a,b]\circ d +f_0\psicut[b,a]\circ d ,
	  \end{equation}
	  
	\end{definition}

	  \begin{notation}\label{not:manifold}
	  	For $(N^{n+1}, g)$ a Riemannian manifold, $\Sigma^n \subset N^{n+1}$ a two-sided hypersurface equipped with a
	  	(smooth) unit normal $\nu$, and $\Omega\subset \Sigma$, we introduce the following notation where any of $N$, $g$, $\Sigma$ or $\Omega$ may be omitted when clear from context.
	  	\begin{enumerate}[(i)]
	  		\item \label{item:neighbour} For $A\subset N$ we write $\dist_A^{N,g}$ for the distance function from $A$ with respect to $g$ and we define the tubular neighborhood of $A$ with radius $\delta>0$ by  $D_A^{N,g}(\delta):=\{p\in N:\dist_A^{N,g}(p)<\delta\}$. If $A$ is finite, we may just enumerate its points in both cases; for example, if $A=\{q\}$, we write $\dist^{N,g}_q(p)$. 
	  		\item We denote by $\exp^{N,g}$ the exponential map, by $\mathrm{dom}(\exp^{N,g}) \subset TN$ its maximal domain, and by $\mathrm{inj}^{N,g}$ the injectivity radius of $(N, g)$. Similarly, by $\exp_p^{N,g}$, $\mathrm{dom}(\exp_p^{N,g})$ and $\mathrm{inj}_p^{N,g}$ the same at $p\in N$.
	  		\item\label{item:graph} Given a function $f : \Sigma \to \R$ satisfying $\abs{f}(p) < \mathrm{inj}^{N,g}$,
	  		$\forall p \in \Omega$, we use the notation
	  		\begin{equation*}
	  			X^{N,g}_{\Omega,f}:=\exp^{N,g}\circ(f\nu)\circ I_\Omega^{N},\quad \Graph_{\Omega}^{N,g}(f):=X^{N,g}_{\Omega,f}(\Omega),
	  		\end{equation*}
	  		where $I_\Omega^{N}:\Omega\to N$ denotes the inclusion map of $\Omega$ in $N$. 
\item\label{item:sym} For $A\subset N$ we write $\sgr A{N,g}$ for the group of isometries of $(N,g)$ which preserve $A$ as a set. 
	  	\end{enumerate}
	  \end{notation}
	  
	  As in \cite[Definition A.1]{LDg}, we also define the Fermi exponential map.
	  \begin{definition}[Fermi exponential map]\label{def:fermi}
	  	 Given a hypersurface $\Sigma^n$ in a Riemannian manifold $(N^{n+1}, g)$ and a unit normal $\nu_p \in T_pN$ at some $p\in\Sigma $, for $\delta > 0$ we define
	  	 \begin{equation*}
	  	 	\hat{D}^{\Sigma, N, g}_p(\delta):=\{v+z\nu_p:v\in D^{T_p\Sigma,  g|_p}_O(\delta)\subset T_p\Sigma,z\in(-\delta,\delta)\}\subset T_pN.
	  	 \end{equation*}
	  	 For small enough $\delta$, the map $\exp^{\Sigma,N,g}_p:\hat{D}^{\Sigma, N, g}_p(\delta)\to N$, defined by
	  	 \begin{equation*}
	  	 	\exp^{\Sigma,N,g}_p(v+z\nu_p):=\exp^{N,g}_q(z\nu_v),\quad \forall v+z\nu_p\in \hat{D}^{\Sigma, N, g}_p(\delta)\text{ with }v\in T_p\Sigma,
	  	 \end{equation*}
	  	 where $q := \exp^{\Sigma,g}_p(v)$ and $\nu_v \in T_qN$ is the unit normal to $\Sigma$ at $q$ pointing to the same side of $D^{\Sigma,g|_{\Sigma}}_p(\delta)$ (which is two-sided) as $\nu_p$, is a diffeomorphism onto its image, which we will denote by $D^{\Sigma,N,g}_p\subset N$. We define the injectivity radius $\inj^{\Sigma,N,g}_p$ of $(\Sigma,N,g)$ at $p$ to be the supremum of such $\delta$’s. Finally, when $\delta<\inj^{\Sigma,N,g}_p$, we define the following on $D^{\Sigma,N,g}_p(\delta)$.
	  	 \begin{enumerate}[(i)]
	  	 	\item\label{item:proj}  $\Pi_{\Sigma}: D^{\Sigma,N,g}_p(\delta)\to \Sigma\cap   D^{\Sigma, N, g}_p(\delta)$ is the nearest point projection in $(D^{\Sigma,N,g}_p(\delta),g)$. Alternatively, $\Pi_{\Sigma}$ corresponds through $\exp^{\Sigma,N,g}_p$ to orthogonal projection to $T_p\Sigma$ in $(T_pN, g|_p)$.
	  	 	\item\label{item:zz} $\zz: D^{\Sigma,N,g}_p(\delta)\to (-\delta, \delta)$ is the signed distance from $\Sigma\cap   D^{\Sigma, N, g}_p(\delta)$ in $(D^{\Sigma, N, g}_p(\delta) g)$. Alternatively, $(\zz\circ\exp^{\Sigma,N,g}_p(v))\nu_p$ is the orthogonal projection of $v$ to $\langle\nu_p\rangle$ in $(T_pN, g|_p)$. 
	  	 	\item A foliation by the level sets $\Sigma_{z} := \zz^{-1}(z) \subset D^{\Sigma, N, g}_p(\delta)$ for $z \in (-\delta, \delta)$.
	  	 	\item Tensor fields $g_{\Sigma,\zz}$, $A_{\Sigma,\zz}$ 
	  	 	by requesting that on each level set $\Sigma_{z}$ they are equal to the
	  	 	first and second fundamental forms
	  	 	of $\Sigma_{z}$ respectively.
	  	 \end{enumerate}
	  \end{definition}
	  
	  \begin{notation}\label{not:g}
	  	We denote by $g_{Euc}$ the standard Euclidean metric on $\R^{n+1}$ and by $g_{\shr}$ the conformal metric $e^{2\omega} g_{Euc}$, where $\omega$ is
	  	\begin{equation}\label{eq:weight}
	  		\omega=-\frac{(\dist^{\R^{n+1},g_{Euc}}_O)^2}{4n}.
	  	\end{equation}
	  	
	  	We will also write $g$ instead of $g_{Euc}$ or omit $g$ in the notations defined in \ref{not:manifold} when it is clear from the context.
	  \end{notation}
	  \begin{notation}\label{not:multi}
	  	Given a function $f$ defined over some domain $\Omega$ and any function $u$ defined over $\Omega$ and $x\in\Omega$, we define the following notations.
	  	\begin{enumerate}[(i)]
	  		\item\label{item:multi} We denote by $\Tcal_f$ the \emph{multiplication operator by $f$}, i.e., $T_fu(x)=f(x)u(x)$.
	  		\item\label{item:df} We denote by $\dd_f$ the \emph{weighted differential by $f$}, i.e., $\dd_fu:=\Tcal_{e^{-f}}\dd(\Tcal_{e^{f}} u)=\dd u+u\dd f$. We use the similar notation to the other derivatives, e.g., $\partial_{f}u:=\Tcal_{e^{-f}}\partial(\Tcal_{e^{f}} u)$
	  	\end{enumerate}  
	  \end{notation}
	  
	  \begin{notation}\label{not:Fcal}
	  	For an oriented embedding hypersurface $\Sigma^n$ in $\R^{n+1}$ with embedding $X_{\Sigma}$, we use $g_{\Sigma}$, $A_{\Sigma}$, $\abs{A}^2_{\Sigma}$, $H_{\Sigma}$ and $\nu_{\Sigma}$ to denote the induced metric, the second fundamental form, the square of the second fundamental form, the mean curvature and the normal vector, respectively. We use $H^{\omega}_{\Sigma}$ to denote the \emph{weighted mean curvature} (cf. e.g., \cite{fminimal})
	  	\begin{equation*}
	  		H^{2\omega}_{\Sigma}:=H_{\Sigma}+\frac{1}{2}X_{\Sigma}\cdot\nu_{\Sigma}=e^{\omega}H_{\Sigma}^{\shr},
	  	\end{equation*}
	  	where $H_{\Sigma}^{\shr}$ is the mean curvature of $\Sigma$ in $(\R^{n+1},g_{\shr})$ (recall \ref{not:g}).
	  The Jacobi operators $\mathcal{L}_{\Sigma}$ in $(\R^{n+1},g_{Euc})$ and in $(\R^{n+1},g_{\shr})$ are defined by (from the calculation of \cite[Lemma 8.2]{s3doubling})
	  	\begin{align}\label{eq:jacobiH}
	  		&\mathcal{L}^{Euc}_\Sigma=\Delta_\Sigma+\abs{A}^2_{\Sigma},\\ &\mathcal{L}^{\shr}_\Sigma=e^{-2\omega}\left(\Delta_\Sigma+1+\frac{(2n-1)\abs{X_{\Sigma}\cdot\nu_{\Sigma}}^2-(n-1)\abs{X_{\Sigma}}^2}{4n^2}+\abs{A}^{2}_{\Sigma}-H_{\Sigma}\frac{X_{\Sigma}\cdot\nu_{\Sigma}}{n}  \right).\nonumber
	  	\end{align}
Finally, the linearised operator of $H^{2\omega}_{\Sigma}$ is defined by
\begin{align}\label{eq:jacobiF}
	\mathcal{L}_\Sigma=\Delta_\Sigma+\abs{A}^2_{\Sigma}-\frac{1}{2}X_{\Sigma}\cdot\nabla_{\Sigma}+\frac{1}{2}.
\end{align}
Notice that when $H^{2\omega}_\Sigma=0$, we have (for the calculation, see e.g., \cite[Appendix]{fminimal})
\begin{equation}\label{eq:jacobiHF}
	\mathcal{L}_\Sigma=\Tcal_{\exp(\omega)}\Lcal^{\shr}_{\Sigma} \Tcal_{\exp(\omega)}=-\Tcal_{\exp(-\omega)}\Hcal_{\Sigma} \Tcal_{\exp(\omega)},
	\end{equation}
	where $\Hcal$ is the Hamiltonian
	\begin{equation}\label{eq:Hamilton}
	\Hcal_{\Sigma}:=-\Delta_\Sigma-1+\frac{\abs{X_{\Sigma}\cdot\nu_{\Sigma}}^2+(n-1)\abs{X_{\Sigma}}^2}{4n^2}-\abs{A}^{2}_{\Sigma}.
	\end{equation}
	  \end{notation}
	  \subsection*{The coordinates and symmetries of the construction}
	  
	   Inspired by quantum mechanics, we will use $J$ to denote the total number of levels, as the \emph{total angular momentum quantum number}, which can be either integer (for an odd number of levels) or half-integer (for an even number of levels). 
	  
	  \begin{notation}\label{not:Jm}
	  	The constructions depend on a half-integer $J\in\frac{1}{2}\N$ and a large number $ m\in\N\setminus\{1\}$ which we now assume fixed, and $m$ is assumed to be as big as needed. 
	  \end{notation}
	  
	  We will use standard coordinates $(x_1, x_2, \zz)$ in $\R^3$, and we identify $\R^2\subset \R^3$ as
	  \begin{equation*}
	  	\R^2=\R^3\cap \{\zz=0\}.
	  \end{equation*}
	  We also define the cylindrical coordinates by the surjective map $\Theta:\R_+\cup\{0\}\times\R\to\R^2$ by 
	  \begin{align}\label{eq:Theta}
	  	\Theta(r, \theta)
	  	:=(r\cos\theta,r\sin\theta).
	  \end{align}
	  Apparently, $\Theta$ restricted to $\R_+\times\R$ is a covering map onto $\R^2\setminus\{O\}$. We also define the diffeomorphism $\Theta^{\Cylinder}:\R^2\to\R_+\times\R$ by
	  \begin{equation}\label{eq:ThetaCyl}
	  	\Theta^{\Cylinder}(s,\theta)=(e^s,\theta).
	  \end{equation}
	  Notice that $\Theta\circ\Theta^{\Cylinder}$ is conformal:
	  \begin{equation*}
	  	(\Theta\circ\Theta^{\Cylinder})^*g_{Euc}=e^{2s}(\ds^2+\dd\theta^2).
	  \end{equation*}
	  
	  We will also refer to 
	  \begin{equation} 
	  	\label{eq:circle}
	  		\uC[c]:=\R^2\cap\{x_1^2+x_2^2=c^2,\zz=0\}= \Theta(\{r=c,\zz=0\}).
	  	\end{equation}  
	  
	  Finally, for any interval $[a,b]\subset\R_+$ we define the circle in $\R^2$ by 
	  \begin{equation}\label{eq:ttorus}
	  	\Omega[a,b]:=\cup_{c\in [a,b]}\uC[c].
	  \end{equation}
	  
	  \begin{notation}\label{not:hgrp}
	  	We use $\mathfrak{O}(3)$ to denote the isometric group of $(\R^3,g_{Euc})$ with origin fixed, i.e., $\mathfrak{O}(3):=\mathrm{Isom}^{\R^3,g_{Euc}}_{O}$ (recall \ref{not:manifold}\ref{item:sym}).
	  	From the identification $\R^3\cong\R^2\times\R$, we can consider the embedding $\mathfrak{O}(2)\hookrightarrow \mathfrak{O}(3)$.
	  	We will then identify $\mathfrak{O}(2)$ with its image, i.e., $\mathfrak{O}(2):=\mathrm{Isom}^{\R^3,g_{Euc}}_{\R^2}$.
	  \end{notation}

	  We now define the reflections with respect to vertical planes $\hat{\sigma}_v[c]$, the reflection with respect to horizontal planes $\hat{\sigma}_h$, rotations $\hat{\CCC}[c],\hat{\CCC}_n$ and the reflections with respect to lines $\hat{\UUU}[c]$ in $\R_+\times \R$, where $c\in\R$ by
	  \begin{align}\label{eq:symhat}
	  	&\hat{\sigma}_v[c](r,\theta,\zz):=(r,2c-\theta,\zz),\quad\hat{\sigma}_h(r,\theta,\zz):=(r,\theta,-\zz),\\
	  	&\hat{\CCC}[c](r,\theta,\zz):=(r,\theta+c,\zz),\quad\hat{\CCC}_n:=\hat{\CCC}[2\pi/n],\nonumber\\
	  	&\hat{\UUU}[c](r,2c-\theta,\zz)=(r,2c-\theta,-\zz).\nonumber
	  \end{align}
	  We then define corresponding reflections with respect to vertical planes $\sigma_v[c]$, the reflection with respect to horizontal planes ${\sigma}_h$, rotations ${\CCC}[c],\CCC_n$ and the reflections with respect to lines ${U}[c]$ as elements in $\mathfrak{O}(3)$ by
	  \begin{align}\label{eq:sym}
	  	&\sigma_v[c](x_1,x_2,\zz):=(x_1 \cos 2c + x_2\sin 2c , x_1 \sin 2c - x_2 \cos 2c , \zz),\\
	  	&\sigma_h(x_1,x_2,\zz):=(x_1,x_2,-\zz),\nonumber\\
	  	&\CCC[c](x_1,x_2,\zz):=(x_1 \cos c - x_2 \sin c , x_1 \sin c + x_2 \cos c , \zz),\quad \CCC_n:=\CCC[2\pi/n],\nonumber\\
	  	&\UUU[c](x_1,x_2,\zz):=(x_1 \cos 2c + x_2\sin 2c , x_1 \sin 2c - x_2 \cos 2c , -\zz).\nonumber
	  \end{align}
	  We also define the reflections $\sigma[c]$, and rotations $\CCC[c],\CCC_n$ (by abuse of notation) as elements in $\mathfrak{O}(2)$ by
	  \begin{align}\label{eq:symduo}
	  	&\sigma[c](x_1,x_2):=(x_1 \cos 2c + x_2\sin 2c , x_1 \sin 2c - x_2 \cos 2c),\\
	  	&\CCC[c](x_1,x_2):=(x_1 \cos c - x_2 \sin c , x_1 \sin c + x_2 \cos c),\quad \CCC_n:=\CCC[2\pi/n].\nonumber
	  \end{align}
	  
	  We record the symmetries of $\Theta$ in the following lemma:
	  \begin{lemma}\label{lem:theta}
	  	We have the following.
	  	\begin{enumerate}[(i)]
	  		\item The group of deck transformations is generated by $\CCC_{2\pi}$.
	  		\item\label{item:thetaeqconj} $\sigma_{v}[c]\circ\Theta=\Theta\circ \hat{\sigma}_{v}[c]$, $\sigma_h\circ\Theta=\Theta\circ \hat{\sigma}_h$, $\CCC[c]\circ\Theta=\Theta\circ \hat{\CCC}[c]$ and $\UUU[c]\circ\Theta=\Theta\circ \hat{\UUU}[c]$.
	  	\end{enumerate}
	  \end{lemma}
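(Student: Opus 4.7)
Both parts amount to unpacking definitions, so the plan is to verify each identity directly from the formulas in \eqref{eq:Theta}, \eqref{eq:symhat}, and \eqref{eq:sym}.

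For (i), I would start by determining when two points of $\R_+\times\R$ have the same image under $\Theta$. If $\Theta(r_1,\theta_1)=\Theta(r_2,\theta_2)$, then $(r_1\cos\theta_1,r_1\sin\theta_1)=(r_2\cos\theta_2,r_2\sin\theta_2)$; squaring and adding the two components gives $r_1^2=r_2^2$, hence $r_1=r_2$ (as both are positive), and then the angular coordinates must satisfy $\theta_1-\theta_2\in 2\pi\Z$. Since the shift $(r,\theta)\mapsto(r,\theta+2\pi)$ is precisely the restriction of $\hat{\CCC}[2\pi]$ to $\R_+\times\R$, this generates the deck group, as claimed.

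For (ii), I would verify each equivariance identity by direct trigonometric computation. For the rotation, the standard angle-addition formulas give
\[
\CCC[c]\circ\Theta(r,\theta)=(r\cos\theta\cos c-r\sin\theta\sin c,\;r\cos\theta\sin c+r\sin\theta\cos c)=\Theta(r,\theta+c)=\Theta\circ\hat{\CCC}[c](r,\theta).
\]
For the vertical-plane reflection, I would apply the identities $\cos(2c-\theta)=\cos 2c\cos\theta+\sin 2c\sin\theta$ and $\sin(2c-\theta)=\sin 2c\cos\theta-\cos 2c\sin\theta$ to match $\Theta(r,2c-\theta)$ with $\sigma_v[c]\circ\Theta(r,\theta)$ as given in \eqref{eq:sym}. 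The identity for $\sigma_h$ is immediate since $\Theta$ does not touch the $\zz$-coordinate and both $\sigma_h$ and $\hat{\sigma}_h$ act as $\zz\mapsto-\zz$. Finally, $\UUU[c]=\sigma_v[c]\circ\sigma_h$ and $\hat{\UUU}[c]=\hat{\sigma}_v[c]\circ\hat{\sigma}_h$ (as seen from \eqref{eq:symhat} and \eqref{eq:sym}), so the $\UUU[c]$ identity follows from composing the previous two.

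There is no real obstacle here: the lemma is a routine equivariance check establishing that $\Theta$ intertwines the coordinate-level symmetries of $\R_+\times\R$ with the ambient symmetries of $\R^3$, which will be used throughout the later sections to enforce the symmetry group $\Grp[m,J]$ on the LD solutions and gluing regions.
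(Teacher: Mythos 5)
Your proof is correct, and since the paper states this lemma without any proof (treating it as a routine unpacking of the definitions \eqref{eq:Theta}, \eqref{eq:symhat} and \eqref{eq:sym}), your direct verification simply supplies the computation the authors omit. The only point worth flagging is notational: the generator of the deck group is the shift $(r,\theta)\mapsto(r,\theta+2\pi)$, i.e.\ $\hat{\CCC}[2\pi]$, which you identify correctly even though the paper's label $\CCC_{2\pi}$ clashes with its own convention $\CCC_n:=\CCC[2\pi/n]$.
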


	 We use the \emph{Schoenflies notation} for the \emph{point groups} in two and three dimensions.
	  \begin{definition}\label{def:grp}
	  	We define the group $\Grp=\Grp[m,J]\subset\mathfrak{O}(3)$ by
	  \begin{equation*}
	  	\Grp[m,J]=
	  	\begin{cases}
	  		&\mathbf{D}_{mh},\text{ when }J\notin \N\\
	  		&\mathbf{D}_{md},\text{ when }J\in \N.
	  	\end{cases}
	  \end{equation*}
	   where $\mathbf{D}_{mh}$ and $\mathbf{D}_{md}$ are subgroups in $\mathfrak{O}(3)$ defined by the following.
	  	\begin{enumerate}[(i)] 
	  		\item $\mathbf{D}_{mh}$ is generated by $\sigma_{v}[0],\sigma_{v}[\pi/m],\UUU[0]$.
	  		\item $\mathbf{D}_{md}$ is generated by $\sigma_{v}[0],\sigma_{v}[\pi/m],\UUU[\pi/(2m)]$.
	  	\end{enumerate} 
	  		We also define the groups $\mathbf{D}_m,\mathbf{D}_{2m}\subset\mathfrak{O}(2)$ by the following.
	  	\begin{enumerate}[(i)] 
	  			\item $\mathbf{D}_{m}$ is generated by $\sigma[0],\sigma[\pi/m]$.
	  			\item $\mathbf{D}_{2m}$ is generated by $\sigma[0],\sigma[\pi/(2m)]$.
	  		\end{enumerate} 
	  \end{definition}

	  \begin{remark}
	  	$\sigma_h\in \mathbf{D}_{mh}$ but $\sigma_h\notin \mathbf{D}_{md}$. Moreover, $\mathbf{D}_{mh}\cong D_m\times\Z_2$, and $\mathbf{D}_{md}\cong D_{2m}$, where $D_n$ is the abstract dihedral group with order $2n$.
	  \end{remark}
	  
	  \begin{notation}[Symmetric functions] 
	  	\label{N:G} 
	  	Given $\Omega\subset \R^3$ a surface
	  	invariant under the action of $\Grp[m,J]$, and a space of functions $\mathcal{X}\subset C^0(\Omega)$,
	  	we use a subscript ``sym'' to denote the subspace $\mathcal{X}_{\sym}\subset\mathcal{X}$ by
	  	\begin{equation*}
	  		\mathcal{X}_{\sym}:=\{u\in\mathcal{X}:u\circ g=\sgn(g)u, \forall g\in\Grp\},
	  	\end{equation*}
	  	where the signature of $g\in\Grp$ is defined by
	  	\begin{equation*}
	  		\sgn(g)=
	  		\begin{cases}
	  			1&\text{ when $g$ preserves the orientation of $\Omega$},\\
	  			-1&\text{ when $g$ reverses the orientation of $\Omega$}.
	  		\end{cases}
	  	\end{equation*} 
	  	
	  	Similarly, given $\Omega\subset \R^2$ 	invariant under the action of $\mathbf{D}_m$, and a space of functions $\mathcal{X}\subset C^0(\Omega)$, we use a subscript ``sym'' to denote the subspace $\mathcal{X}_{\sym}\subset\mathcal{X}$ consisting of the functions in $\mathcal{X}$ which are invariant under the action of $\mathbf{D}_m$.
	  \end{notation}

\section{Linear theory on $\R^2$}
\label{S:RLD}
Recall \eqref{eq:jacobiF}, the linearised operator on $\R^2$ is
\begin{equation}\label{eq:jacobi}
	\Lcal=\Lcal_{\R^2}:=\Delta-\frac{1}{2}X\cdot \nabla+\frac{1}{2}.
\end{equation}
And we have the corresponding Hamiltionian (recall \ref{eq:jacobiHR2})
\begin{equation}\label{eq:jacobiHR2}
	\Hcal=\Hcal_{\R^2}:-\Delta+\left(-1+\frac{r^2}{16}\right)=-\Tcal_{\exp(\omega)}\Lcal\Tcal_{\exp(-\omega)}.
\end{equation}

It will be easier later to state some estimates if we use a scaled metric on $\R^2$:
\begin{definition}
	We define the metric $\tilde{g}$ and coordinates $(\tilde{x}_1,\tilde{x}_2)$ on $\R^2$ by
	\begin{equation}\label{eq:tildeg}
		\tilde{g}:=m^2g_{Euc}=\dd \tilde{x}^2_1+\dd \tilde{x}^2_2,\quad (\tilde{x}_1,\tilde{x}_2):=m(x_1,x_2),
	\end{equation}
	and the corresponding operator by
	\begin{equation}\label{eq:tildeL}
		\tilde{\Lcal}:=\Delta_{\tilde{g}}+m^{-2}\left(-\frac{1}{2}\vec{\tilde{x}}\cdot \nabla_{\tilde{g}}+\frac{1}{2}\right)=m^{-2}\Lcal.
	\end{equation}
	
	Similarly, we define the coordinates $(\tilde{r},\tilde{\theta})$ and $(\tilde{s},\tilde{\theta})$ on $\R^2$ (recall \eqref{eq:Theta} and \eqref{eq:ThetaCyl}) by
	\begin{equation}\label{eq:tildetheta}
		(\tilde{r},\tilde{\theta})=m(r,\theta),\quad	(\tilde{s},\tilde{\theta})=m(s,\theta).
	\end{equation}
\end{definition}

\subsection*{Rotationally invariant solutions}
By a \emph{rotationally invariant function}, we mean a function on a domain of $\R^2$ which depends only on the coordinate $r$, i.e., invariant under the action of $\mathfrak{O}(2)$. When the solution $\phi$ is rotationally invariant, by calculating from \eqref{eq:jacobi}, the linearised equation $\Lcal\phi=0$ amounts to the ODE:
\begin{equation}\label{eq:jacobirotinvar}
	\frac{\dd^2\phi}{\dr^2}+\left(\frac{1}{r}-\frac{r}{2}\right)\frac{\dd\phi}{\dr}+\frac{1}{2}\phi=0.
\end{equation}
For the $k$-th Fourier coefficients of the equation, we will also consider the ODE:
\begin{equation}\label{eq:jacobik}
	\frac{\dd^2\phi}{\dr^2}+\left(\frac{1}{r}-\frac{r}{2}\right)\frac{\dd\phi}{\dr}+\left(\frac{1}{2}-k^2\right)\phi=0.
\end{equation}

The study of the ODE \eqref{eq:jacobirotinvar} has been done in \cite[Section 9]{ketover2024}. Here, we do a more thorough study for the application in this paper.
\begin{lemma}\label{lem:phimu}
	The space of solutions of the ODE \eqref{eq:jacobirotinvar} in $r$ on $\R^+$ is spanned by the two functions
	\begin{equation} 
		\label{eq:phiC}
		\begin{aligned}
			\phi_{m,k}:= M\left(k^2-\frac{1}{2},1,\frac{r^2}{4}\right),\quad\phi_{u,k}:=U\left(k^2-\frac{1}{2},1,\frac{r^2}{4}\right), 
		\end{aligned} 
	\end{equation} 
	where $M(\cdot,\cdot,\cdot)$ and $U(\cdot,\cdot,\cdot)$ are Kummer and Tricomi confluent hypergeometric functions (see e.g., \cite[Chapter 13]{NIST}).  Moreover, the following hold (we omit $k$ when $k=0$).
	\begin{enumerate}[(i)]
		\item\label{item:phimusing}  When $r\to\infty$,
		\begin{equation*}
			\phi_{m,k}\sim \frac{r^{2k^2-3}e^{r^2/4}}{2^{2k^2-3}\Gamma(k^2-1/2)},\quad \phi_{u,k}\sim \left(\frac{r}{2}\right)^{1-2k^2}.
		\end{equation*}
		Moreover, $\phi_{m}$ is smooth at $r=0$ and $\phi_m(0)=0$; $\phi_{u}\sim\frac{1}{2\sqrt{\pi}}\log r$ when $r\to 0$.
		\item\label{item:wronskian} The Wronskian $W[\phi_m,\phi_u]:=\phi_m\phi_u'-\phi_u\phi_m'$ satisfies
		\begin{equation}\label{eq:Wronskian}
			W[\phi_m,\phi_u](r)=\frac{e^{r^2/4}}{\sqrt{\pi}r}.
		\end{equation}
		\item\label{item:monotonicity} $\phi_m$ is strictly decreasing on $\R_+$, $\phi_u$ is strictly increasing on $\R_+$. Moreover, both $\phi_m$ and $\phi_u$ are strictly concave.
		\item\label{item:rroot} On $\R_+$, $\phi_{m}$ has a unique root $r_m$, and $\phi_{u}$ has a unique root $r_u$. Moreover, $r_u<r_m$.
		\item\label{item:hhatmonotone} The function $\frac{\phi_m'}{\phi_m}+\frac{\phi_u'}{\phi_u}-\frac{r}{2}$ is strictly decreasing from $+\infty$ to $-\infty$ on $(r_u,r_m)$.
		\item\label{item:rbalanced} There is a unique $\rbalanced\in \R^+$ such that (recall \ref{not:multi}\ref{item:df})
		\begin{equation}\label{eq:rbalanced}
			\frac{1}{\phi_m}\frac{\dd_{\omega}\phi_m}{\dr}(\rbalanced)+\frac{1}{\phi_u}\frac{\dd_{\omega}\phi_u}{\dr}(\rbalanced)=\frac{\phi_m'}{\phi_m}(\rbalanced)+\frac{\phi_u'}{\phi_u}(\rbalanced)-\frac{\rbalanced}{2}=0,
		\end{equation}
		and $\phi_{m}(\rbalanced)>0$, $\phi_{u}(\rbalanced)>0$.
	\end{enumerate}
\end{lemma}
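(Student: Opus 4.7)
The plan is to reduce the ODE \eqref{eq:jacobirotinvar} (and more generally \eqref{eq:jacobik}) to Kummer's confluent hypergeometric equation via the substitution $t=r^2/4$; a direct computation shows that $\phi(r)=\psi(r^2/4)$ satisfies \eqref{eq:jacobik} iff $\psi$ satisfies $t\psi''(t)+(1-t)\psi'(t)-(k^2-\tfrac12)\psi(t)=0$. The two standard linearly independent solutions are then Kummer's $M$-function and Tricomi's $U$-function with parameters $(k^2-\tfrac12,1)$, which identifies the solution space and gives the formulas \eqref{eq:phiC}. Item \ref{item:phimusing} follows from the standard asymptotics of $M$ and $U$ at $\infty$ and at $0^+$ (DLMF Chapter 13): $M(a,1,\cdot)$ is entire, while $U(a,1,z)$ has a logarithmic singularity at $0$ with leading coefficient $-1/\Gamma(a)$. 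Item \ref{item:wronskian} then comes from Abel's identity $W'=-(1/r-r/2)W$, which forces $W=Ce^{r^2/4}/r$, with the constant fixed by the leading behaviour of $\phi_m,\phi_u$ at $r\to 0$.

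For items \ref{item:monotonicity} and \ref{item:rroot} with $k=0$, I would use the derivative identities $\partial_zM(a,b,z)=(a/b)M(a+1,b+1,z)$ and $\partial_zU(a,b,z)=-a\,U(a+1,b+1,z)$. Specialising to $a=-\tfrac12$, $b=1$ yields
\begin{equation*}
\phi_m'(r)=-\tfrac{r}{4}\,M(\tfrac12,2,r^2/4),\qquad \phi_u'(r)=\tfrac{r}{4}\,U(\tfrac12,2,r^2/4),
\end{equation*}
and since $M(\tfrac12,2,\cdot)>0$ (all Taylor coefficients positive) and $U(\tfrac12,2,\cdot)>0$ (from the integral representation $U(a,b,z)=\tfrac{1}{\Gamma(a)}\int_0^\infty e^{-zt}t^{a-1}(1+t)^{b-a-1}\,dt$, valid since $\Re a>0$), strict monotonicity follows. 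Differentiating once more shows $\phi_m''$ is a strictly negative combination of positive $M$-values, giving strict concavity of $\phi_m$; for $\phi_u$, one integration by parts in the $U$-integral representation reduces strict concavity to the pointwise inequality $zU(\tfrac32,3,z)>U(\tfrac12,2,z)$, which can be verified directly. Existence and uniqueness of $r_m,r_u$ come from strict monotonicity combined with the endpoint values/limits, and $r_u<r_m$ is immediate from the positivity of the Wronskian at $r_u$: $W(r_u)=\phi_m(r_u)\phi_u'(r_u)>0$ together with $\phi_u'(r_u)>0$ force $\phi_m(r_u)>0$.

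For item \ref{item:hhatmonotone}, the key observation is
\begin{equation*}
h(r):=\frac{\phi_m'}{\phi_m}+\frac{\phi_u'}{\phi_u}-\frac{r}{2}=\frac{d}{dr}\log\!\bigl(\phi_m(r)\phi_u(r)\,e^{-r^2/4}\bigr),
\end{equation*}
so the limits $h(r_u^+)=+\infty$ and $h(r_m^-)=-\infty$ are immediate from the simplicity of the roots (item \ref{item:rroot}) and the known signs of $\phi_u'$, $\phi_m'$ there. For strict monotonicity I would differentiate $h$ and substitute the ODE for each of $\phi_m,\phi_u$: setting $a=\phi_m'/\phi_m$, $b=\phi_u'/\phi_u$ and using $a^2+b^2=\tfrac12[(a+b)^2+(b-a)^2]$ with $b-a=W/(\phi_m\phi_u)$, one arrives at a Riccati-type identity of the form
\begin{equation*}
h'=-\tfrac12 h^2-\frac{h}{r}+\frac{r^2}{8}-\frac{W^2}{2\phi_m^2\phi_u^2}-2,
\end{equation*}
and strict negativity on $(r_u,r_m)$ follows by completing the square in $h$, since $W^2/(\phi_m\phi_u)^2$ blows up at each endpoint and bounded estimates on $\phi_m\phi_u$ in the interior (from items \ref{item:monotonicity}--\ref{item:rroot}) control the remaining terms. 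Finally item \ref{item:rbalanced} is then a direct consequence of \ref{item:hhatmonotone} via the intermediate value theorem, and $\phi_m(\rbalanced),\phi_u(\rbalanced)>0$ follows because $\rbalanced\in(r_u,r_m)$.

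The main obstacle I anticipate is item \ref{item:hhatmonotone}: the endpoint behaviour of $h$ is trivial, but strict monotonicity throughout the whole interval $(r_u,r_m)$ is not visible directly from the ODE, and requires combining the Wronskian identity with the Riccati form together with a quantitative interior estimate on $\phi_m\phi_u$.
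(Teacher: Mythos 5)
Your reduction to Kummer's equation, the asymptotics in (i), the Wronskian via Abel's identity, and the monotonicity/concavity of $\phi_m$ via the contiguous-derivative formulas all match the paper's route. Two points of divergence are worth recording. First, for the strict concavity of $\phi_u$ the paper quotes a quantitative bound on $-U'(1/2,2,\xi)/U(1/2,2,\xi)$ from an external reference, whereas you reduce it to the pointwise inequality $\xi\,U(3/2,3,\xi)>U(1/2,2,\xi)$. This is exactly the right inequality (one has $\phi_u''=\tfrac14\bigl[U(1/2,2,\xi)-\xi U(3/2,3,\xi)\bigr]$ with $\xi=r^2/4$), and it does follow from one integration by parts in the Tricomi integral representation: with $f(t)=t^{1/2}(1+t)^{1/2}$ one gets $\xi U(3/2,3,\xi)\cdot\tfrac{\sqrt{\pi}}{2}=\int_0^\infty e^{-\xi t}t^{-1/2}(1+t)^{-1/2}(\tfrac12+t)\,dt$, and comparing integrands with $U(1/2,2,\xi)$ leaves the manifestly positive remainder $\int_0^\infty e^{-\xi t}t^{1/2}(1+t)^{-1/2}dt$. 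So your version is self-contained and arguably cleaner than the citation.

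Second, your treatment of (v) is where the approaches genuinely differ, and as written yours has a gap. Your Riccati identity for $h$ is correct, but "completing the square" leaves the term $\tfrac{1}{2r^2}+\tfrac{r^2}{8}-2-\tfrac{W^2}{2\phi_m^2\phi_u^2}$, and the blow-up of $W^2/(\phi_m\phi_u)^2$ only helps near the endpoints; in the interior you would need either the explicit numerical containment $(r_u,r_m)\subset(0.51,3.97)$ (where $\tfrac{1}{2r^2}+\tfrac{r^2}{8}-2<0$) or a genuine lower bound on $W/(\phi_m\phi_u)$, neither of which you supply. The paper avoids this entirely: since $\phi_m,\phi_u>0$ on $(r_u,r_m)$ and both are strictly concave by (iii), each logarithmic derivative satisfies $(\phi'/\phi)'=(\phi''\phi-(\phi')^2)/\phi^2<0$, so $\frac{\phi_m'}{\phi_m}+\frac{\phi_u'}{\phi_u}-\frac r2$ is strictly decreasing, and the endpoint limits $\pm\infty$ come from the simple zeros. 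You already have every ingredient for this one-line argument from your items (iii) and (iv); I would replace the Riccati detour with it.
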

\begin{proof}
	As in \cite[Section 9]{ketover2024}, the change of variable $\xi:=r^2/4$ in \eqref{eq:jacobirotinvar} leads to the Kummer’s equation (see e.g., \cite[(13.2.1)]{NIST}) $\xi\phi_{\xi\xi}+(1-\xi)\phi_{\xi}+(-k^2+1/2)\phi=0$, thus the space of solutions is spanned by $\phi_{m,k}$ and $\phi_{u,k}$. \ref{item:phimusing} follows by \cite[(13.2.6) and (13.2.23)]{NIST} when $r\to\infty$ and by \cite[(13.2.13) and (13.2.19)]{NIST} when $r\to 0$. \ref{item:wronskian} follows by \cite[(13.2.34)]{NIST}.
	
	The part in \ref{item:monotonicity} related to $\phi_m$ follows by differentiating the Taylor expansion expression of $M(-\frac{1}{2},1,\xi)$ (see e.g., \cite[13.2.2]{NIST}). From the differentiation formula \cite[(13.3.22)]{NIST} and the integral representation \cite[(13.4.4)]{NIST},
	\begin{equation*}
		U'(-1/2,2,\xi)=\frac{1}{2}U(1/2,2,\xi)=\frac{1}{2\sqrt{\pi}}\int_0^{\infty}e^{-\xi t}\sqrt{\frac{1+t}{t}}\dt>0.
	\end{equation*}
	As $\phi'_u(r)=rU'(-1/2,1,r^2/4)/2$, $\phi_u$ is strictly increasing. Moreover, by \cite[Corollary 3.2(ii)]{Mao2025}
	\begin{equation*}
		\frac{-U'(1/2,2,\xi)}{U(1/2,2,\xi)}>\frac{0.5(\xi+1.5)}{\xi(\xi+1)}>\frac{1}{2\xi}.
	\end{equation*}
	Thus $\phi''_u(r)=\frac{1}{2}U'(-1/2,1,r^2/4)+\frac{r^2}{4}U''(-1/2,1,r^2/4)=\frac{1}{4}U(1/2,1,r^2/4)+\frac{r^2}{8}U'(1/2,2,r^2/4)<0$. The part in \ref{item:monotonicity} related to $\phi_u$ then follows.
	
	The existence and uniqueness of $r_m$ and $r_u$ in \ref{item:rroot} simply follow from the asymptotics in \ref{item:phimusing} and monotonicities in \ref{item:monotonicity}. The positiveness of Wronskian in \ref{item:wronskian} shows that $\phi_m$ and $\phi_u$ can not both be negative. This implies $r_u<r_m$ in \ref{item:rroot}. Moreover, the condition that $\phi_m$ and $\phi_u(r)$ are both positive only holds when $r\in(r_u,r_m)$. The monotonicities and concavities in \ref{item:monotonicity} then imply \ref{item:hhatmonotone}. \ref{item:rbalanced} then follows.
\end{proof}

\begin{remark}\label{rk:rmu}
	Numerically, $r_m\approx 2.51$, $r_u\approx 0.88$, $\rbalanced\approx 1.52$.
\end{remark}

\subsection*{Linear theory on $\R^2$}
\begin{lemma}\label{lem:nokernel}
	$(\ker\Lcal)_{\sym}\subset L^2(\R^2,e^{2\omega})$ is trivial when $m\geq 2$. 
\end{lemma}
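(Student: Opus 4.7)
The plan is to apply the conjugation identity \eqref{eq:jacobiHR2} to convert the weighted kernel problem for $\Lcal$ into an unweighted spectral problem for the Hamiltonian $\Hcal=-\Delta-1+r^2/16$, which is a $2$-dimensional isotropic harmonic oscillator up to a constant shift, and then use the $\mathbf{D}_m$-symmetry to cut the resulting finite-dimensional kernel down to zero.

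Concretely, for any $u\in(\ker\Lcal)_{\sym}\cap L^2(\R^2,e^{2\omega})$, I would introduce $v:=e^{\omega}u$. The weight cancels so that $\|v\|_{L^2(\R^2)}=\|u\|_{L^2(\R^2,e^{2\omega})}<\infty$, and \eqref{eq:jacobiHR2} yields $\Hcal v=0$. Moreover, since $\omega$ is radial, the conjugation commutes with the action of $\mathbf{D}_m\subset\mathfrak{O}(2)$, so $v$ is also $\mathbf{D}_m$-invariant. The task therefore reduces to showing that the $\mathbf{D}_m$-invariant subspace of $\ker\Hcal\cap L^2(\R^2)$ is trivial.

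The $L^2(\R^2)$ spectrum of $\Hcal$ is completely explicit: under the rescaling $y:=x/2$ one has $\Hcal=\frac{1}{4}(-\Delta_y+|y|^2)-1$, the standard two-dimensional isotropic harmonic oscillator shifted by $-1$, with eigenvalues $(N-1)/2$ for $N=0,1,2,\ldots$ and $N$-th eigenspace spanned by tensor products of Hermite functions of total degree $N$. The $L^2$-kernel therefore corresponds to $N=1$ and equals $\lspan\{x_1e^{-r^2/8},\,x_2e^{-r^2/8}\}$; reversing the conjugation, $\ker\Lcal\cap L^2(\R^2,e^{2\omega})=\lspan\{x_1,x_2\}$, which one can sanity-check directly from $\Lcal x_i=-\frac{1}{2}x_i+\frac{1}{2}x_i=0$.

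Finally, the action of $\mathbf{D}_m$ on $\lspan\{x_1,x_2\}$ is its defining two-dimensional representation, and it has no nonzero invariant vector when $m\geq 2$: for $m\geq 3$, the rotation $\CCC_m=\sigma[\pi/m]\circ\sigma[0]\in\mathbf{D}_m$ acts through an angle $2\pi/m$ and has no nonzero real fixed vector; for $m=2$, the reflections $\sigma[0]$ and $\sigma[\pi/2]$ fix the orthogonal lines $\lspan\{x_1\}$ and $\lspan\{x_2\}$ respectively, whose intersection is $\{0\}$. The argument is essentially computational once these ingredients are in place; the main (mild) obstacle is handling the case $m=2$ separately from $m\geq 3$, as the former forces one to use a pair of distinct reflections rather than a genuine order-$m$ rotation to rule out invariants.
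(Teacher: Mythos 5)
Your proposal is correct and follows essentially the same route as the paper: conjugate $\Lcal$ to the harmonic-oscillator Hamiltonian, identify the $L^2$-kernel as $\lspan\{x_1,x_2\}$ (the paper simply cites \cite[Lemma 8.1]{kapouleas:kleene:moller} for this, whereas you rederive it from the oscillator spectrum), and kill it with the $\mathbf{D}_m$-symmetry. One tiny simplification: the case split at $m=2$ is unnecessary, since the rotation $\CCC_2$ acts on $\lspan\{x_1,x_2\}$ as $-\mathrm{Id}$ and so already has no nonzero fixed vector.
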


\begin{proof}
	By \eqref{eq:jacobi}, the operator $\Lcal$ is conjugate with the hamiltonian operator for harmonic oscillator. The space $\ker\Lcal$ is then the same and is spanned by the two coordinate functions $x_i$, $i=1,2$, restricted to $\R^2$ (see e.g., \cite[Lemma 8.1]{kapouleas:kleene:moller}). The conclusion then follows.
\end{proof}

\begin{lemma}[Weighted $L^2$-estimates, cf. {\cite[Lemma 8.1]{kapouleas:kleene:moller}} and {\cite[Theorem 3.14]{Chodosh:Schulze}}]
	Given $E\in L^2_{\sym}(\R^2,e^{2\omega})$ and $m\geq 2$, there is a unique $u\in H^2_{\sym}(\R^2,e^{2\omega})$ such that $\Lcal u=E$.
	
\end{lemma}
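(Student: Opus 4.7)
The plan is to reduce the weighted $L^2$ problem to a standard spectral problem via the conjugation \eqref{eq:jacobiHR2}. Setting $v := e^{\omega} u$ and $F := -e^{\omega} E$, the identity $\Lcal = -\Tcal_{\exp(-\omega)} \Hcal \Tcal_{\exp(\omega)}$ transforms the equation $\Lcal u = E$ into $\Hcal v = F$. The map $u \mapsto v$ is an isometry of $L^2(\R^2, e^{2\omega})$ onto the unweighted $L^2(\R^2)$, and it preserves $\Grp$-invariance since $\omega$ is rotationally invariant. Hence it suffices to uniquely solve $\Hcal v = F$ in $L^2_{\sym}(\R^2)$ with $v \in H^2$.

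Next, I would apply standard spectral theory to $\Hcal = -\Delta + r^2/16 - 1$, which is a shifted isotropic two-dimensional quantum harmonic oscillator of frequency $1/4$. It is essentially self-adjoint on $L^2(\R^2)$ with compact resolvent, and its spectrum is $\{(N-1)/2 : N \in \N \cup \{0\}\}$, the $N$-th eigenspace being spanned by standard Hermite--Gaussian functions of total degree $N$; see for instance \cite[Lemma 8.1]{kapouleas:kleene:moller}. Zero is an eigenvalue only at $N=1$, whose eigenspace corresponds under the conjugation to $\lspan\{x_1, x_2\}$. Since $\Hcal$ commutes with the action of $\Grp$, its restriction to $L^2_{\sym}(\R^2)$ remains self-adjoint with discrete spectrum; by Lemma \ref{lem:nokernel} (equivalently, since $\CCC_m \in \Grp$ with $m\geq 2$ has no invariant vectors in $\lspan\{x_1, x_2\}$) the kernel in the symmetric subspace is trivial. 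The nearest symmetric eigenvalues are $-1/2$ (with ground state $e^{\omega}$) and $1/2$, so $\Hcal|_{L^2_{\sym}}$ has spectrum bounded away from $0$ and therefore a bounded inverse.

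This yields a unique symmetric $v \in L^2$ with $\Hcal v = F$, and standard elliptic $H^2$ regularity for the Schr\"odinger operator with smooth confining quadratic potential promotes $v$ to $H^2$; transforming back by $u = e^{-\omega} v$ gives the unique $u \in H^2_{\sym}(\R^2, e^{2\omega})$ solving $\Lcal u = E$. The main technical point, which I would be careful about, is verifying that the conjugation $u \leftrightarrow e^{\omega} u$ is genuinely an isomorphism on the relevant weighted $H^2$ spaces, since derivatives of $\omega$ produce polynomial growth factors that must be absorbed by the Gaussian weight; this is precisely the weighted Sobolev framework already established in \cite[Lemma 8.1]{kapouleas:kleene:moller} and \cite[Theorem 3.14]{Chodosh:Schulze}, which I would cite rather than reprove.
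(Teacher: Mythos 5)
Your proof is correct and follows exactly the route the paper intends: the paper states this lemma by citation to \cite[Lemma 8.1]{kapouleas:kleene:moller} and \cite[Theorem 3.14]{Chodosh:Schulze}, which prove it precisely by the conjugation $\Lcal=-\Tcal_{\exp(-\omega)}\Hcal\Tcal_{\exp(\omega)}$ to the harmonic-oscillator Hamiltonian and its spectral decomposition, the same conjugation the paper itself invokes in the proof of Lemma \ref{lem:nokernel}. Your identification of the spectrum $\{(N-1)/2\}$, the exclusion of the zero mode $\lspan\{x_1,x_2\}$ by the $\mathbf{D}_m$-symmetry for $m\geq 2$, and the deferral of the weighted-Sobolev bookkeeping to the cited references are all consistent with the paper.
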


We will need to study the operator \eqref{eq:jacobi} over $\R^2$. As $\R^2$ is not compact, we have to develop the linear theory over a suitable Banach space inside $L^2(\R^2,e^{2\omega})$, which is equivalent to imposing some condition at infinity. From the asymptotics of each mode in \ref{lem:phimu}, we can see that a good choice is the set of functions which has at most the linear growth asymptotically. We will follow the definitions and make use of the results in \cite[Section 8]{kapouleas:kleene:moller} and \cite[Section 3]{Chodosh:Schulze}.

\begin{definition}[Homogeneously weighted H\"older spaces, {\cite[Definition 8.2]{kapouleas:kleene:moller} and \cite[Definition 3.1]{Chodosh:Schulze}}]
	For $\Omega\subset\R^2$, the \emph{homogeneously weighted spaces of H\"older functions for decay rate $k\in\mathbb{N}$} are defined as:
	\begin{equation*}
			C_{\mathrm{hom},-k}^{0,\beta}(\Omega):=\{f\in C^{0,\beta}_\mathrm{loc}(\Omega):\norm{f:C_{\mathrm{hom},-k}^{0,\beta}(\Omega)}<\infty\},
	\end{equation*}
	where the norm is defined by
	\begin{equation*}
			\norm{f:C_{\mathrm{hom},-k}^{0,\beta}(\Omega)}:=\sup_{x,y\in\Omega}\frac{1}{|x|^{-k-\beta}+|y|^{-k-\beta}}\frac{|f(x)-f(y)|}{|x-y|^\beta}+\sup_{x\in\Omega} |x|^k|f(x)|.
	\end{equation*}

	We then define
	\begin{align*}
		&C_{\mathrm{hom},-k}^{2,\beta}(\Omega):=\{f\in C_\mathrm{loc}^{0,\beta}(\Omega):\partial^{(l)}f\in C_{\mathrm{hom},-k}^{0,\beta}(\Omega),\: |(l)|\leq 2\},
	\end{align*}
	where $(l)$ ranges over all multi-indices, with norm given by
	\begin{equation*}
		\norm{f:C_{\mathrm{hom},-k}^{2,\beta}(\Omega)}:=\sum_{|(l)|\leq 2}\norm{\partial^{(l)} f:C_{\mathrm{hom},-k}^{0,\beta}(\Omega)}.
	\end{equation*}
\end{definition}

\begin{definition}[Anisotropically homogeneously weighted H\"o{}lder spaces, {\cite[Definition 8.3]{kapouleas:kleene:moller} and \cite[Definition 3.2]{Chodosh:Schulze}}] \label{def:anisospaces}
	The \emph{anisotropically homogeneously weighted H\"o{}lder spaces} are defined as
	\begin{equation*}
			C_{\mathrm{an},-1}^{2,\beta}(\Omega):=\{f\in C_{\mathrm{hom},-1}^{2,\beta}(\Omega): X\cdot\nabla f\in C_{\mathrm{hom},-1}^{0,\beta}(\Omega)\},
	\end{equation*}
	with the norm defined by
	\begin{equation*}
		\norm{f:C_{\mathrm{an},-1}^{2,\beta}(\Omega)}:=\norm{f:C_{\mathrm{hom},-1}^{2,\beta}(\Omega)}+\norm{X\cdot\nabla f:C_{\mathrm{hom},-1}^{0,\beta}(\Omega)}.
	\end{equation*}
\end{definition}

\begin{definition}[Cone H\"o{}lder spaces, {\cite[Definition 8.5]{kapouleas:kleene:moller} and \cite[Definition 3.3]{Chodosh:Schulze}}]\label{def:conespaces}
	For $\Omega\subset\R^2$ with $\R^2\setminus\Omega$ compact and $R:=\mathrm{diam}(\R^2\setminus\Omega)$, the (anisotropic homogeneous) cone H\"older space 
	are defined as
	\begin{align*}
		\mathcal{CS}^{0,\beta}(\Omega):=C_{\mathrm{hom},-1}^{0,\beta}(\Omega),\quad
		\mathcal{CS}^{2,\beta}(\Omega):=C^{2,\beta}(\Sph^1)\times C_{\mathrm{an},-1}^{2,\beta}(\Omega).
	\end{align*}
	An element $(c,f)\in\mathcal{CS}^{2,\beta}(\Omega)$ will be considered as a function on $\Omega$ given by
	\begin{equation*}
		u_{(c,f)}(r,\theta)=\cutoff{R+1}{2R+1}{r}(0,rc(\theta))+f(r,\theta),
	\end{equation*}
	with the norm defined by
	\begin{equation*}
		\norm{u:\mathcal{CS}^{2,\beta}(\Omega)}=
		\norm{(c,f):\mathcal{CS}^{2,\beta}(\Omega)}:=\norm{c:{C^{2,\beta}(\Sph^1)}}+\norm{f:C_{\mathrm{an},-1}^{2,\beta}(\Omega)}.
	\end{equation*}

\end{definition}

\begin{lemma}[Schauder estimates, {\cite[Theorem 8.9]{kapouleas:kleene:moller} and \cite[Corollary 3.17]{Chodosh:Schulze}}]\label{lem:linearRduo} 
	For any $E\in \mathcal{CS}^{0,\beta}_{\sym}(\R^2)$ there exists a unique pair $(c,f)\in\mathcal{CS}^{2,\beta}_{\sym}(\R^2)$ such that
	\[
	\Lcal u_{(c,f)}=E.
	\]
	Moreover,
	\begin{equation*}
		\norm{(c,f):\mathcal{CS}^{2,\beta}(\R^2)}\lesssim \norm{E:\mathcal{CS}^{0,\beta}(\R^2)}.
	\end{equation*}
\end{lemma}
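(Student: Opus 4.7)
The plan is to reduce the statement to the cited results of \cite{kapouleas:kleene:moller} and \cite{Chodosh:Schulze}, and then to promote symmetry by invoking the uniqueness of the solution. I would proceed in four stages: weighted $L^2$ existence, elliptic regularity, asymptotic expansion, and symmetry.

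First I would establish existence in a Gaussian-weighted $L^2$ space. Because an element of $\mathcal{CS}^{0,\beta}(\R^2)$ satisfies $|E(x)|\lesssim |x|^{-1}$ at infinity, and the weight $e^{2\omega}$ has full Gaussian decay, $E\in L^2_{\sym}(\R^2,e^{2\omega})$. Applying the $L^2$ theory (recall the remark on $\ker\Lcal$ in Lemma \ref{lem:nokernel} and the weighted $L^2$ result quoted before it, which in turn rest on the fact that $\Hcal$ in \eqref{eq:jacobiHR2} is a translated harmonic oscillator with compact resolvent), there is a unique $u\in H^2_{\sym}(\R^2,e^{2\omega})$ solving $\Lcal u=E$. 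Standard interior elliptic regularity then upgrades $u$ to $C^{2,\beta}_{loc}(\R^2)$ and yields the interior Schauder bounds on unit balls with constants independent of the center.

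Next comes the asymptotic analysis, which is where the main work lies. To show that $u$ decomposes as $u=\cutoff{R+1}{2R+1}{r}(0,rc(\theta))+f(r,\theta)$ with $(c,f)\in \mathcal{CS}^{2,\beta}_{\sym}(\R^2)$, I would Fourier-decompose in $\theta$, writing $u(r,\theta)=\sum_k u_k(r)e^{ik\theta}$, so that each $u_k$ solves the ODE \eqref{eq:jacobik} with right-hand side $E_k(r)$ having suitable decay. The two fundamental solutions of \eqref{eq:jacobik} behave, at infinity, as a Gaussian-growth solution (analogue of $\phi_{m,k}$) and a solution of algebraic growth/decay (analogue of $\phi_{u,k}$). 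The $L^2$-integrability of $u$ against $e^{2\omega}$ kills the Gaussian branch in every mode, so $u_k$ is entirely on the polynomial side, which for $|k|=1$ allows linear growth $r c_k$ and for $|k|\neq 1$ forces decay at rate $|x|^{-1}$ (or faster). Variation-of-parameters using the Wronskian \eqref{eq:Wronskian} then expresses $f_k:=u_k-r c_k\delta_{|k|=1}$ as an integral of $E_k$ against kernels of size $|x|^{-1}$, giving $|f_k|\lesssim |x|^{-1}$ and, after differentiation, the anisotropic bound $|X\cdot\nabla f_k|\lesssim |x|^{-1}$. Summing in $k$ and combining with the rescaled interior Schauder estimates on dyadic annuli $\{2^j\le |x|\le 2^{j+1}\}$ yields the required norm inequality $\norm{(c,f):\mathcal{CS}^{2,\beta}(\R^2)}\lesssim \norm{E:\mathcal{CS}^{0,\beta}(\R^2)}$.

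Finally, symmetry is automatic: since $E$ is $\mathbf{D}_m$-symmetric and the $L^2$ solution is unique, $u$ is $\mathbf{D}_m$-symmetric; the decomposition $u\mapsto (c,f)$ is itself unique (because $c$ is determined by the coefficient of $r$ in the leading term of $u$ as $r\to\infty$), hence both $c$ and $f$ are symmetric. The hard part is the asymptotic analysis, which is essentially the argument of \cite[Theorem 8.9]{kapouleas:kleene:moller} and \cite[Corollary 3.17]{Chodosh:Schulze}, since the operator $\Lcal$ in \eqref{eq:jacobi} is the restriction of the Colding--Minicozzi drift Laplacian on $\R^2$; I would therefore present the argument as an application of those results adapted to the symmetric subspace, rather than redoing the ODE machinery in detail.
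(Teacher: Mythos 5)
The paper offers no proof of this lemma beyond the citations to \cite{kapouleas:kleene:moller} and \cite{Chodosh:Schulze}, and your proposal is a correct reconstruction of exactly that route: weighted $L^2$ existence, local Schauder theory, the Fourier/ODE asymptotics underlying the cone-space decomposition, and symmetry (hence triviality of the kernel spanned by $x_1,x_2$) via uniqueness. Your handling of the only point specific to this paper — that restricting to the $\mathbf{D}_m$-symmetric subspace with $m\ge 2$ removes the linearly growing kernel and yields genuine uniqueness of $(c,f)$ — is correct, so the proposal matches the paper's intent.
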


  \section{LD solutions}
  \label{S:LD} 
	  \subsection*{Green's function and LD solutions}
	    We now discuss the Green’s function for $\Lcal$. 
	  \begin{lemma}\label{lem:green}
	  	There is a constant $\deltau>0$, such that $\forall p\in D_O(10\rbalanced)$ (recall \ref{lem:phimu}\ref{item:rbalanced}), there exists a function $G_p \in C^{\infty}(D_p(\deltau)\setminus\{p\})$  satisfying the following:
	  	\begin{enumerate}[(i)]
	  		\item\label{item:greenp} $\Lcal G_p= 0$ on $D_p(\deltau)\setminus\{p\}$.
	  		\item\label{item:greenest} $\norm{\Tcal_{\exp(\omega)}G_p-\log\dist^{g_{\shr}}_p:C^k(D_p(\deltau),\dist^{g}_p,g,(\dist^{g}_p)^2\log\dist^{g}_p)}\lesssim_k 1$.
	  		\item\label{item:greenuni}
	  		if $G'_p$ is another Green’s functions satisfying \ref{item:greenp} and \ref{item:greenest} for some $\deltau' > 0$, then the unique extension $G''\in C^{\infty}(D_p(\deltau'')) $ of $G_p-G'_p$ satisfies $G''(p)=0$ and $\dd_pG''=0$, where $\deltau''=\min\{\deltau,\deltau'\}$.
	  	\end{enumerate}
	  \end{lemma}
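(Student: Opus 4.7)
\emph{Reduction to a Schr\"odinger Green's function.} The plan is to exploit the conjugation $\Hcal = -\Tcal_{\exp(\omega)}\Lcal\Tcal_{\exp(-\omega)}$ from \eqref{eq:jacobiHR2} to reduce the problem to a local Green's function construction for the Schr\"odinger-type operator $\Hcal = -\Delta + V$ with smooth bounded potential $V = -1 + r^2/16$. Setting $\tilde{G}_p := \Tcal_{\exp(\omega)}G_p$, conditions \ref{item:greenp}--\ref{item:greenest} translate respectively to $\Hcal\tilde{G}_p = 0$ pointwise on $D_p(\deltau)\setminus\{p\}$ and the stated weighted H\"older bound on $\tilde{G}_p - \log\dist^{g_{\shr}}_p$. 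Uniform constants in $p$ will follow from the precompactness of $D_O(10\rbalanced)$.

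\emph{Parametrix and Dirichlet solve.} Fix $\deltau > 0$ small enough (independent of $p$) so that $D_p(2\deltau)$ lies inside the $g_{\shr}$-injectivity radius at $p$ and the first Dirichlet eigenvalue of $\Hcal$ on $D_p(\deltau)$ is strictly positive (valid since $-\Delta$ scales as $\deltau^{-2}$ and dominates the bounded $V$ for small $\deltau$). With $\chi$ a cutoff equal to $1$ on $D_p(\deltau/2)$ and supported in $D_p(\deltau)$, define the parametrix $\tilde{G}_p^{(0)} := \chi\cdot\log\dist^{g_{\shr}}_p$. A computation in $g_{\shr}$-geodesic polar coordinates at $p$, using the expansion $g_{\shr} = \dd\rho^2 + (\rho^2 + O(\rho^4))\dd\theta^2$ together with the two-dimensional conformal identity $\Delta_{g_{\shr}} = e^{-2\omega}\Delta$, shows $\Delta_{g_{\shr}}\log\rho = O(1)$ near $p$; i.e.\ the distributional $\delta_p$ contribution to $\Delta\log|x-p|$ is cancelled pointwise by the geometric correction. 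Consequently $E_p := -\Hcal\tilde{G}_p^{(0)}$ is smooth on $D_p(\deltau)\setminus\{p\}$ with $|E_p(x)|\lesssim |\log\dist^g_p(x)|$ near $p$. Solving $\Hcal v_p = E_p$ on $D_p(\deltau)$ with $v_p|_{\partial D_p(\deltau)} = 0$ by standard elliptic theory and setting $G_p := \Tcal_{\exp(-\omega)}(\tilde{G}_p^{(0)} + v_p)$ yields \ref{item:greenp}.

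\emph{Weighted estimate and uniqueness.} For \ref{item:greenest}, first obtain the pointwise bound $|v_p(x)|\lesssim\dist^g_p(x)^2|\log\dist^g_p(x)|$ via a barrier argument, using the computation $\Hcal(\rho^2\log\rho) = -4\log\rho + O(1)$ near $p$. For higher-order norms, rescale each ball $B_x \subset D_p(\deltau)\setminus\{p\}$ of radius $\dist^g_p(x)/4$ to unit size: the rescaled operator is a small perturbation of $-\Delta$ with source of scale-invariant $C^{k,\beta}$-norm bounded by $\dist^g_p(x)^2|\log\dist^g_p(x)|$, and standard interior Schauder for $-\Delta$ produces the rescaled $C^{k+2,\beta}$-bound on $v_p$, which is exactly the content of \ref{item:greenest}. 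For \ref{item:greenuni}, if $G_p, G_p'$ both satisfy \ref{item:greenp}--\ref{item:greenest}, then $u := G_p - G_p'$ is pointwise $\Lcal$-harmonic on the punctured disc with $|u(x)|\lesssim \dist^g_p(x)^2|\log\dist^g_p(x)|$, hence bounded; the removable singularity theorem for second-order elliptic equations in dimension two extends $u$ smoothly across $p$ with $\Lcal u = 0$, and the pointwise decay forces $u(p)=0$ and $\dd u(p)=0$.

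\emph{Main obstacle.} The delicate points are (a) the pointwise cancellation of the distributional $\delta_p$ in $E_p$, which requires the careful polar-coordinate expansion of $g_{\shr}$ at $p$, and (b) the bookkeeping for the rescaled Schauder estimates to yield the sharp $\rho^2\log\rho$ weight. Both are routine but attention-demanding.
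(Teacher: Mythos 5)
Your construction is correct, but it takes a different route from the paper. The paper's proof is essentially a citation: it invokes \cite[Lemma 3.3]{LDg}, which produces a Green's function $G^{\shr}_p$ for the Jacobi operator $\Lcal^{\shr}_{\R^2}$ of the totally geodesic $\R^2$ in the Gaussian metric $g_{\shr}$, and then sets $G_p:=e^{-\omega}G^{\shr}_p$ using the conjugation \eqref{eq:jacobiHF} and the local equivalence of $\dist^g_p$ and $\dist^{g_{\shr}}_p$. You instead conjugate by $\Tcal_{\exp(\omega)}$ to the flat Schr\"odinger operator $\Hcal=-\Delta+V$ of \eqref{eq:jacobiHR2} and rebuild the Green's function from scratch: parametrix $\chi\log\dist^{g_{\shr}}_p$, Dirichlet solve for the error on a ball small enough that the first eigenvalue of $\Hcal$ is positive (which also supplies the maximum principle needed for your barrier, since $V$ is negative near the origin), barrier and rescaled interior Schauder for the $\rho^2\log\rho$ weight, and a removable-singularity argument for \ref{item:greenuni}. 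Both routes rest on the same conjugation identity; yours is self-contained at the cost of redoing the analysis that \cite{LDg} packages, and it has the advantage of making the uniformity in $p\in D_O(10\rbalanced)$ and the source of each estimate explicit. One phrasing to fix: the distributional $\delta_p$ in $\Delta(\chi\log\dist^{g_{\shr}}_p)$ is not ``cancelled'' by the geometric correction --- it is still present distributionally; what your polar-coordinate computation actually shows is that the \emph{pointwise} value of $\Hcal\tilde{G}^{(0)}_p$ on the punctured disc is $O(|\log\dist^g_p|)$, which is all that conditions \ref{item:greenp}--\ref{item:greenest} require since they are only imposed away from $p$.
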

	  \begin{proof}
        By \cite[Lemma 3.3]{LDg}, there exists a Green's function $G^{\shr}_p$ to the operator $\Lcal_{\R^2}^{\shr}$ (recall \eqref{eq:jacobiH}) such that $\Lcal_{\R^2}^{\shr} G^{\shr}_p=0$ and $G^{\shr}_p$ satisfies the estimate in \ref{item:greenest} with $g_{\shr}$ instead of $g$ on some small neighbourhood of $p$ along with the property in \ref{item:greenuni}. The lemma then follows by defining $G_p:=e^{-\omega}G_{\shr}$ and using the equivalence of $\dist^{g}$ and $\dist^{g_{\shr}}$ on a neighbourhood of $p$.
	  \end{proof}
	  
	   Because of the symmetry imposed on the constructions, we concentrate now on LD solutions which are invariant under the action of $\mathbf{D}_m$ (recall \ref{def:grp}). Moreover, although the singular set is not generally contained on the same circle, by the restriction of the symmetry, it can be written as the disjoint union of two sets $L^0\sqcup L^1$, and it is contained on two circles (recall \eqref{eq:circle}) $\uC^0:=\uC[r^0]$ and $\uC^1:=\uC[r^1]$.
	   We will assume from now on (recall \ref{lem:phimu} and \ref{rk:rmu})
	   \begin{equation}
	   	r^0,r^{1}\in((r_u+\rbalanced)/2,(r_m+\rbalanced)/2).
	   \end{equation}
	  We then fix the positions of the points of singularity on $L^0$ and $L^1$ such that they are compatible with the coordinates.
	  \begin{definition}\label{def:L}
	  	We define finite sets $L^0,L^1\subset \R^2$ of points ($i=0,1$):
	  	 \begin{equation}
	  		L^{i}=L^{i}[r^{i};m]:=\mathbf{D}_mp_i,
	  	\end{equation}
	  	where $p_0=\Theta(r^{0},0)=(r^{0},0,0)$, $p_1=\Theta(r^{1},\pi/m)=(r^{1}\cos(\pi/m),r^{1}\sin(\pi/m),0)$.
	  \end{definition}
	  \begin{remark}\label{rk:D2m}
	  	When $r_0=r_1$, $L^{0}\sqcup L^{1}=\mathbf{D}_{2m}p_0$.
	  \end{remark}
	  \begin{definition}[LD solutions]\label{def:LD}
	  	 We call a $\mathbf{D}_m$-symmetric function $\varphi\in L^2(\R^2,e^{2\omega})$ a \emph{linearised doubling (LD) solution} on $\R^2$ when there exists two numbers $\tau_{\pm}\in \R_+$ and two $\mathbf{D}_m$-symmetric finite sets $L_{\pm}$ (one of them could be empty) satisfying the following.
	  	\renewcommand{\theenumi}{\roman{enumi}}
	  	\begin{enumerate}
	  		\item\label{item:LDCS} $\varphi\in C^{\infty}(\R^2\setminus (L_+\sqcup L_-))\cap\mathcal{CS}^{2,\beta}(\R^2\setminus (L_+\sqcup L_-))$ (recall \ref{def:conespaces}) for some $\beta\in(0,1)$ and $\Lcal\varphi=0$ on $\R^2\setminus (L_+\sqcup L_-)$. 
	  		\item\label{item:LDlog} $\forall p_{\pm}\in L_{\pm}$, the function $\Tcal_{\exp(\omega)}\varphi\mp\tau_{\pm}\log\dist^{g_{\shr}}_{p_{\pm}}$ is bounded on some neighborhood of $p_{\pm}$ in $\R^2$ (we use the convention $\tau_{\pm}=0$ when $L_{\pm}$ is empty). 
	  	\end{enumerate}	
	  \end{definition}

	   \begin{definition}[The constants $\delta$]\label{def:delta}
	   	We define a constant $\delta>0$ by
	   	\begin{equation}\label{eq:delta}
	   		\delta=\delta[m]:=1/(100m).
	   	\end{equation}
	   	Moreover, given $L:=L^0\sqcup L^1$ as in \ref{def:L}, we assume that $m$ is big enough so that the following are satisfied.
	  	\renewcommand{\theenumi}{\roman{enumi}}
	  	\begin{enumerate}
	  		\item $\forall p,p'\in L$ with $p\neq p'$, we have $D_p(9\delta)\cap D_{p'}(9\delta)=\emptyset$.
	  		\item $\forall p\in L$, $\delta<\mathrm{inj}_p^{\R^2,\R^3,g_{\shr}}$, where $\mathrm{inj}_p^{\R^2,\R^3,g_{\shr}}$ is the injective radius as in \ref{def:fermi}.
	  	\end{enumerate}	
	  \end{definition}

	  \begin{lemma}\label{lem:LD}
	 Given two numbers $\tauu:=(\tau_+,\tau_-)\in\R^2_+$, and two finite sets $\Lu=(L_+,L_-)$ as in \ref{def:LD}, there is a unique $\mathbf{D}_m$-symmetric LD solution $\varphi =\varphi(\tauu;\Lu;m)= \varphi[\tau_+,\tau_-;L_{+},L_{-};m]$ satisfying the conditions in \ref{def:LD}. 
Moreover the following hold. 
	  	\begin{enumerate}[(i)]
	  		\item  $\varphi$ depends linearly on $\tauu$.
	  		\item\label{item:varphiavgcont} $\varphi_{\avg} \in C^0(\R^2)\cap C^{\infty}(\R^2\setminus (\uC_0\sqcup \uC_1))$ and on $\R^2\setminus (\uC_0\sqcup \uC_1)$ it satisfies the ODE $\Lcal\varphi_{\avg}=0$.
	  		\item\label{item:extrasym} When $\tau_+=\tau_-$ and $\sigma[\pi/2m](L_+)=L_-$, $\varphi$ satisfies the extra odd symmetry: $\varphi\circ\sigma[\pi/2m]=-\varphi$ (recall \eqref{eq:symduo} and \ref{rk:D2m}).
	  	\end{enumerate}
	  	
	  \end{lemma}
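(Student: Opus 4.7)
The plan is to reduce the existence and uniqueness to the linear theory on $\R^2$ (Lemma \ref{lem:linearRduo}) by first producing a rough approximation with the correct logarithmic singularities from the Green's functions of Lemma \ref{lem:green}. More precisely, for each $p \in L_+ \sqcup L_-$ choose a cutoff $\chi_p$ supported in $D_p(\deltau/2)$ and equal to $1$ on $D_p(\deltau/4)$, and choose these cutoffs $\mathbf{D}_m$-equivariantly across the orbit (e.g., by fixing $\chi_{p_0}, \chi_{p_1}$ first, pushing forward by group elements, and using $\chi_p(x) := \Psi \circ (\deltau/2 - \dist_p(x))$ type constructions). Define the singular part
\begin{equation*}
\varphi_{sing} := \sum_{p \in L_+} \tau_+ \, \chi_p \, G_p \; - \; \sum_{p \in L_-} \tau_- \, \chi_p \, G_p,
\end{equation*}
which is $\mathbf{D}_m$-symmetric by construction and has precisely the logarithmic behaviour requested in Definition \ref{def:LD}(ii) near each singular point.

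Next I would verify that $E := -\Lcal \varphi_{sing}$ is smooth, compactly supported (supported in the annular regions where $\nabla \chi_p$ is nonzero, since $\Lcal G_p = 0$ on $D_p(\deltau) \setminus \{p\}$), and $\mathbf{D}_m$-symmetric; in particular $E \in \mathcal{CS}^{0,\beta}_{sym}(\R^2)$ with vanishing cone component. By Lemma \ref{lem:linearRduo} applied with this $E$, there is a unique $v \in \mathcal{CS}^{2,\beta}_{sym}(\R^2)$ with $\Lcal v = E$. Setting $\varphi := \varphi_{sing} + v$ gives a $\mathbf{D}_m$-symmetric function that satisfies $\Lcal \varphi = 0$ on $\R^2 \setminus (L_+ \sqcup L_-)$, has the prescribed log singularities, and lies in $\mathcal{CS}^{2,\beta}$ away from the singular set; since $\mathcal{CS}^{2,\beta}$ functions have at most linear growth, $\varphi \in L^2(\R^2, e^{2\omega})$ is automatic. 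For uniqueness, if $\varphi'$ is any other such LD solution, then $\varphi - \varphi'$ has removable singularities at each $p$ by Lemma \ref{lem:green}(iii), extends smoothly and $\mathbf{D}_m$-symmetrically to all of $\R^2$, lies in $\mathcal{CS}^{2,\beta}_{sym}(\R^2)$, and is annihilated by $\Lcal$; the uniqueness clause of Lemma \ref{lem:linearRduo} (with $E = 0$) then forces $\varphi = \varphi'$. Linear dependence on $\tauu$ is immediate from the linearity of the construction and the uniqueness.

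For part (ii), the average $\varphi_{avg}(r) := \frac{1}{2\pi}\int_0^{2\pi} \varphi(r,\theta)\,\dd\theta$ commutes with the rotationally invariant part of $\Lcal$ (the $r^{-2}\partial_\theta^2$ term averages to zero), so $\Lcal \varphi_{avg} = 0$ on $\R^2 \setminus (\uC^0 \sqcup \uC^1)$. Continuity across the singular circles reduces to the classical computation that averaging $\log \dist_p$ over a circle of radius $r$ centred at the origin yields a continuous function of $r$ (equal to $\log \max\{r, |p|\}$ up to constants), since the logarithmic potential of a point mass in $\R^2$ averaged over a concentric circle has this form; the smooth part $v$ causes no trouble. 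For part (iii), observe that when $\tau_+ = \tau_-$ and $\sigma[\pi/2m](L_+) = L_-$, the function $\tilde{\varphi} := -\varphi \circ \sigma[\pi/2m]$ is $\mathbf{D}_m$-symmetric (since $\sigma[\pi/2m]$ normalises $\mathbf{D}_m$), satisfies $\Lcal \tilde\varphi = 0$ away from the singular set, and has exactly the same log singularities of weights $\tau_\pm$ on the same sets $L_\pm$; by the uniqueness proved above, $\tilde{\varphi} = \varphi$.

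The main obstacle I expect is book-keeping rather than analytic: verifying that $\varphi_{sing}$, which is built from the metric-dependent Green's functions $G_p$, still yields an error $E$ in the correct cone Hölder space so that Lemma \ref{lem:linearRduo} applies cleanly, and that the resulting $\varphi$ truly belongs to $\mathcal{CS}^{2,\beta}$ at infinity rather than merely to $L^2(e^{2\omega})$. This should follow from compact support of $\nabla \chi_p$ and the polynomial bounds in Lemma \ref{lem:green}(ii), but requires some care in quantifying the contribution of the Green's functions against the anisotropic weight in Definition \ref{def:anisospaces}.
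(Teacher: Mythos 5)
Your proposal follows essentially the same route as the paper's proof: build the singular part by cutting off the Green's functions $\pm\tau_{\pm}G_p$ near each point of $L_{\pm}$, observe that applying $\Lcal$ yields a smooth, compactly supported, symmetric error, correct it using the global solvability and uniqueness of Lemma \ref{lem:linearRduo}, and deduce linearity, part (ii) by averaging the logarithmic singularity, and part (iii) from uniqueness. The extra details you supply (removability of the singularity of the difference of two LD solutions, the normalising property of $\sigma[\pi/2m]$) are correct elaborations of steps the paper leaves implicit.
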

	  \begin{proof}
	  	We denote $L=L_+\sqcup L_-$ in this proof. We define $\varphi_1 \in C^{\infty}_{\sym}(\R^2\setminus L)$ by requesting that it is supported on $D_{L}(2\delta)$ and $\varphi_1=\cutoff{\delta}{2\delta}{\dist_p}(\pm\tau_{\pm} G_p,0)$ (recall \ref{lem:green}\ref{item:greenp}) on $D_p(2\delta)$ for each $p \in L_{\pm}$. Note that the function $\Lcal\varphi_1\in C^{\infty}_{\sym}(\R^2)$ (by assigning 0 values on $L$) and it is supported on $\sqcup_{p\in L}D_p(2\delta)\setminus D_p(\delta)$. Because of the symmetries, by \ref{lem:linearRduo} and the standard regularity theory, there is a function $\varphi_2\in C^{\infty}_{\sym}(\R^2)\cap\mathcal{CS}^{2,\beta}_{\sym}(\R^2)$ such that $\Lcal\varphi_2=-\Lcal\varphi_1$. We can then define $\varphi := \varphi_1 + \varphi_2$. Uniqueness and linearity follow immediately. To prove \ref{item:varphiavgcont} we need to check that $\varphi$ is integrable on $\uC_0$, $\uC_1$ and that $\varphi_{avg}$ is also continuous there. But these follow easily by the logarithmic singularity behaviour of $G_p$ in \ref{lem:green}. Finally, \ref{item:extrasym} follows by the linearity and the uniqueness.
	  \end{proof}
	  
	  \begin{definition}\label{def:Phi}
	  	For $i=0,1$, $L^0,L^1$ as in \ref{def:L}, by making use of \ref{lem:LD}, we define the LD solution $\Phi^{i}=\Phi[L^i] := \varphi[1,0;L^i,\emptyset;m] \in C^{\infty}_{\sym}(\R^2\setminus L^i)$.
	  \end{definition}

	  \subsection*{Estimates on $\Phi_{\avg}$}
	  	In this and two following subsections we assume $\Phi$ is either one of $\Phi^{i}$, $L$ is either one of $L^i$, $\ru$ is either one of $r^i$, $i=0,1$ (recall \ref{def:Phi}), and $\underline{\uC}$ is either one of $\uC_i$. 
	  
	In order to estimate the mismatch (defined in \ref{def:mismatch}) of linear combinations of function $\Phi$'s, we have to study the behaviour of $\Phi$ around its singularities. Overall, the idea is to make use of its average $\Phi_{avg}$, which is much easier to understand.  
	  	
	  	\begin{lemma}\label{lem:Phiavg}
	  		$\phi:=\Phi_{avg}$ is given by (recall \ref{lem:phimu} and \ref{lem:LD}\ref{item:varphiavgcont})
	  		\begin{equation}\label{eq:phi}
	  			\phi=
	  				\begin{cases} 
	  					\phi_1e^{\ru^2/8}\frac{\phi_{u}}{\phi_{u}(\ru)}, &\text{ when  } r\geq \ru,\\
	  					\phi_1e^{\ru^2/8}\frac{\phi_{m}}{\phi_{m}(\ru)}, &\text{ when } r \leq \ru,
	  				\end{cases}
	  		\end{equation}
	  		where $\phi_1=\phi_1[\ru]$ is given by
	  		\begin{equation}\label{eq:phiunum}
	  			\phi_1[\ru]=\sqrt{\pi}m\phi_{m}(\ru)\phi_{u}(\ru)e^{-\ru^2/4}.
	  		\end{equation}
	  	\end{lemma}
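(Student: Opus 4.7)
The plan is to pin down $\phi := \Phi_{\avg}$ on each side of $\uC$ via the radial ODE theory from \ref{lem:phimu}, and then fix the remaining two constants through a jump analysis at $r=\ru$ coming from the logarithmic singularities of $\Phi$ on $L$.

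First I will show that $\phi = A\,\phi_m$ on $\{r<\ru\}$ and $\phi = B\,\phi_u$ on $\{r>\ru\}$ for some $A,B\in\R$. By \ref{lem:LD}\ref{item:varphiavgcont}, $\phi$ is continuous on $\R^2$, smooth away from $\uC$, and solves the radial ODE \eqref{eq:jacobirotinvar} on each side; hence $\phi$ lies in the two-dimensional space spanned by $\phi_m,\phi_u$ there. Since $\Phi$ is smooth in a neighbourhood of the origin ($L\subset\uC$ is bounded away from $O$), so is $\phi$, which together with the logarithmic divergence of $\phi_u$ at $r=0$ from \ref{lem:phimu}\ref{item:phimusing} forces the $\phi_u$-coefficient to vanish on $\{r<\ru\}$. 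On $\{r>\ru\}$, membership of $\Phi$ in $\mathcal{CS}^{2,\beta}$ from \ref{def:LD}\ref{item:LDCS} caps the growth of $\phi$ at linear, ruling out $\phi_m\sim r^{-3}e^{r^2/4}$ by \ref{lem:phimu}\ref{item:phimusing} and leaving only $\phi_u\sim r/2$. Continuity at $r=\ru$ yields the first relation $A\,\phi_m(\ru)=B\,\phi_u(\ru)$.

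Next I pin down the jump $[\phi'](\ru) := B\phi_u'(\ru)-A\phi_m'(\ru)$ by computing $\Lcal\phi$ distributionally. Near each $p\in L$, the LD condition \ref{def:LD}\ref{item:LDlog} (with $\tau_+=1$), combined with $\log\dist^{g_{\shr}}_p = \omega(p)+\log\dist_p + O(\dist_p)$ from Lipschitzness of the conformal factor $e^\omega$, gives $\Phi = e^{-\omega(p)}\log\dist_p + O(1)$ near $p$; since $\Delta\log\dist_p = 2\pi\delta_p$, while the lower-order terms of $\Lcal$ act on a locally $L^1$ function and $\Lcal\Phi\equiv 0$ classically off $L$, we obtain $\Lcal\Phi = 2\pi e^{-\omega(\ru)}\sum_{p\in L}\delta_p$ as distributions on $\R^2$. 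Circle averaging gives $(\delta_p)_{\avg}=\frac{1}{2\pi\ru}\delta_{\uC}$ for $p\in\uC[\ru]$, and since $\Lcal$ has rotation-invariant coefficients it commutes with the average, so $\Lcal\phi = \frac{m\,e^{-\omega(\ru)}}{\ru}\,\delta_{\uC}$. On the other hand, as $\phi$ is continuous, piecewise smooth, and solves the ODE classically on each side, distributional $\Lcal\phi$ equals $[\phi'](\ru)\,\delta_{\uC}$; equating gives $B\phi_u'(\ru)-A\phi_m'(\ru) = m\,e^{-\omega(\ru)}/\ru$.

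Eliminating $B$ via continuity yields $A\cdot W[\phi_m,\phi_u](\ru)/\phi_u(\ru) = m\,e^{-\omega(\ru)}/\ru$. Substituting $\omega(\ru) = -\ru^2/8$ (since $n=2$) and the Wronskian identity $W[\phi_m,\phi_u](\ru) = e^{\ru^2/4}/(\sqrt{\pi}\,\ru)$ from \ref{lem:phimu}\ref{item:wronskian} gives $A = \sqrt{\pi}\,m\,\phi_u(\ru)\,e^{-\ru^2/8}$ and $B = \sqrt{\pi}\,m\,\phi_m(\ru)\,e^{-\ru^2/8}$, which rearrange as the coefficients in \eqref{eq:phi} with $\phi_1$ as in \eqref{eq:phiunum}. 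The main technical point to be careful with is the distributional identity $\Lcal\Phi = 2\pi e^{-\omega(\ru)}\sum_p\delta_p$: one must verify that the bounded remainder $\Phi-e^{-\omega(p)}\log\dist_p$ contributes no distributional mass (which follows because applying $\Lcal$ to it yields a genuine function, not a measure, using the estimate in \ref{lem:green}\ref{item:greenest}), and that no surface term on $\uC$ appears beyond the averaged point masses.
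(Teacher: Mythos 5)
Your proof is correct and follows essentially the same route as the paper: identify $\phi$ as a multiple of $\phi_m$ inside and of $\phi_u$ outside $\uC$ via regularity at $O$ and the growth bound from $\mathcal{CS}^{2,\beta}$, then determine the derivative jump at $r=\ru$ from the logarithmic singularities and close with the Wronskian. The paper phrases the jump computation as a limiting flux integral (integration by parts over $D_{\underline{\uC}}(\epsilon_2)\setminus D_L(\epsilon_1)$) rather than your distributional identity $\Lcal\Phi=2\pi e^{-\omega(\ru)}\sum_p\delta_p$, but these are the same calculation and your constants agree with \eqref{eq:phiunum}.
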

	  	\begin{proof}
	  		Because of the asymptotics of $\varphi$ at infinity \ref{def:LD}\ref{item:LDCS} and \ref{lem:phimu}\ref{item:phimusing}, it is clear that $\phi=A_m\phi_m/\phi_m(\ru)$ when $r\leq \ru$ and $\phi=A_u\phi_u/\phi_u(\ru)$ when $r\geq \ru$ for some constants $A_m,A_u$. The continuity from \ref{lem:LD}\ref{item:varphiavgcont} implies that $A_m=A_u$. For $0 < \epsilon_1 \ll \epsilon_2$, we now consider the domain $\Omega_{\epsilon_1, \epsilon_2}:=D_{\underline{\uC}}(\epsilon_2)\setminus D_{L}(\epsilon_1) $. By integrating $\Lcal\Phi=0$ on $\Omega_{\epsilon_1, \epsilon_2}$ and integrating by parts, we obtain
	  		\begin{equation*}
	  			\oint_{\partial\Omega_{\epsilon_1, \epsilon_2}}\left(\nabla\Phi-\frac{1}{2}\Phi X\right)\cdot\vec{\eta}+\frac{3}{2}\int_{\Omega_{\epsilon_1, \epsilon_2}}\Phi=0,
	  		\end{equation*}
	  		where $\vec{\eta}$ is the outward conormal. By taking the limit as $\epsilon_1\to 0$ first and then as $\epsilon_2\to 0$, we obtain by using the logarithmic behaviour of $\Phi$ near $L$ from \ref{def:LD}\ref{item:LDlog}, that
	  		\begin{equation*}
	  			2\pi \ru\left(A_u\frac{\phi_u'}{\phi_u}(\ru)-A_m\frac{\phi_m'}{\phi_m}(\ru)\right) =2\pi m e^{\ru^2/8}.
	  		\end{equation*}
	  		Therefore,
	  		\begin{equation*}
	  			A_m=A_u=\frac{m\phi_m(\ru)\phi_u(\ru)e^{\ru^2/8}}{\ru W[\phi_m,\phi_u](\ru)}.
	  		\end{equation*}
	  		The result then follows by \eqref{eq:Wronskian}.
	  	\end{proof}
	  	
	  	\begin{definition}\label{def:phiju}
	  		For $a,b,c\in\mathbb{R}$, we define the rotationally invariant solutions $\underline{\phi}[a,b]=\underline{\phi}[a,b;\ru]$ and (with singularity) $\underline{j}[c]=\underline{j}[c;\ru]$ by requesting the following initial data (recall \ref{not:multi}\ref{item:df})
	  		\begin{equation*}
	  			\underline{\phi}(\ru)=e^{\ru^2/8}a,\quad \partial_{\omega} \underline{\phi}(\ru)=e^{\ru^2/8}b;\quad \underline{j}(\ru)=0,\quad \partial_{\omega,+} \underline{j}(\ru)=-\partial_{\omega,-} \underline{j}(\ru)=e^{\ru^2/8}c,
	  		\end{equation*}
	  		and the ODEs $\Lcal\underline{\phi}=0$ on $\R^2\setminus\{O\}$ and $\Lcal\underline{j}=0$ on $\R^2\setminus(\{O\}\cup \underline{\uC})$.
	  	\end{definition}
	  	
	  	\begin{notation}\label{not:hhat}
	  		We use $\hat{h}$ to denote the function (recall \eqref{eq:Wronskian})
	  		\begin{equation*}
	  			\hat{h}:=\frac{\phi_m\phi_u'+\phi_u\phi_m'-r\phi_m\phi_u/2}{2r W[\phi_m,\phi_u]}=\frac{\sqrt{\pi}}{2}e^{-r^2/4}\phi_m\phi_u\left(\frac{\phi_m'}{\phi_m}+\frac{\phi_u'}{\phi_u}-\frac{r}{2}\right).
	  		\end{equation*}	
	  		Notice by \ref{lem:phimu}\ref{item:hhatmonotone}-\ref{item:rbalanced}, $\hat{h}(\rbalanced)=0$ and $\hat{h}$ is strictly decreasing in a neighbourhood of $\rbalanced$. We use $\hat{h}^{-1}$ to denote its inverse function in this neighbourhood.
	  	\end{notation}

	  	\begin{corollary}
	  		On $\R^2\setminus(\{0\}\cup \underline{\uC})$ we have
	  		\begin{equation}\label{eq:Phiavg}
	  			\phi=\underline{\phi}[\phi_1[\ru],m\hat{h}(\ru);\ru]+\underline{j}[m/(2\ru);\ru].
	  		\end{equation}
	  	\end{corollary}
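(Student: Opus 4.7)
The plan is to reduce the identity to uniqueness of solutions of the ODE \eqref{eq:jacobirotinvar} on each of the two components $(0,\ru)$ and $(\ru,\infty)$ of $\R^2\setminus(\{O\}\cup\underline{\uC})$. Both sides of \eqref{eq:Phiavg} are rotationally invariant and satisfy $\Lcal=0$ there (the LHS by \ref{lem:LD}\ref{item:varphiavgcont} combined with \ref{lem:Phiavg}, the RHS by \ref{def:phiju}), so it suffices to check that their values and one-sided $\partial_\omega$-derivatives agree at $r=\ru$.

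First, using the explicit formula \eqref{eq:phi} for $\phi=\Phi_{\avg}$, one checks that $\phi(\ru)=\phi_1e^{\ru^2/8}$ (from either branch, by continuity) and
\begin{equation*}
\partial_{\omega,+}\phi(\ru)=\phi_1e^{\ru^2/8}\!\left(\frac{\phi_u'}{\phi_u}(\ru)-\frac{\ru}{4}\right),\qquad
\partial_{\omega,-}\phi(\ru)=\phi_1e^{\ru^2/8}\!\left(\frac{\phi_m'}{\phi_m}(\ru)-\frac{\ru}{4}\right),
\end{equation*}
recalling that $\omega=-r^2/8$ gives $\partial_{\omega,r}=\partial_r-r/4$. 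On the other hand, by \ref{def:phiju} the RHS has value $e^{\ru^2/8}\phi_1[\ru]$ at $\ru$ and one-sided $\partial_\omega$-derivatives equal to $e^{\ru^2/8}\bigl(m\hat{h}(\ru)\pm m/(2\ru)\bigr)$.

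The crux is therefore the algebraic identity
\begin{equation*}
\phi_1\!\left(\frac{\phi_{u,m}'}{\phi_{u,m}}(\ru)-\frac{\ru}{4}\right)=m\hat h(\ru)\pm\frac{m}{2\ru}.
\end{equation*}
I would split this into its average and half-jump parts over the $\pm$ sign. The average part,
\begin{equation*}
\tfrac{\phi_1}{2}\!\left(\tfrac{\phi_u'}{\phi_u}+\tfrac{\phi_m'}{\phi_m}-\tfrac{\ru}{2}\right)=m\hat h(\ru),
\end{equation*}
follows directly from \ref{not:hhat} combined with $\phi_1[\ru]=\sqrt{\pi}m\phi_m(\ru)\phi_u(\ru)e^{-\ru^2/4}$ from \eqref{eq:phiunum}. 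The half-jump part,
\begin{equation*}
\tfrac{\phi_1}{2}\!\left(\tfrac{\phi_u'}{\phi_u}-\tfrac{\phi_m'}{\phi_m}\right)=\tfrac{\phi_1}{2}\cdot\tfrac{W[\phi_m,\phi_u]}{\phi_m\phi_u}(\ru)=\tfrac{m}{2\ru},
\end{equation*}
follows from \eqref{eq:Wronskian} and again \eqref{eq:phiunum}.

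With values and one-sided $\partial_\omega$-derivatives at $\ru$ matching, standard ODE uniqueness applied on $(0,\ru)$ and on $(\ru,\infty)$ yields equality on all of $\R^2\setminus(\{O\}\cup\underline{\uC})$. No substantial obstacle is expected; the only care needed is in bookkeeping the signs in $\partial_\omega$ and in invoking \eqref{eq:phiunum} and \eqref{eq:Wronskian} at the right moment.
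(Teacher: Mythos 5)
Your proposal is correct and follows essentially the same route as the paper: the paper's proof consists precisely of the two algebraic identities $2m\hat{h}(\ru)=\phi_1\bigl(\tfrac{\phi_m'}{\phi_m}+\tfrac{\phi_u'}{\phi_u}-\tfrac{\ru}{2}\bigr)(\ru)$ and $m/\ru=\phi_1\bigl(\tfrac{\phi_u'}{\phi_u}-\tfrac{\phi_m'}{\phi_m}\bigr)(\ru)$ (your average and half-jump parts, verified via \eqref{eq:phiunum}, \ref{not:hhat} and \eqref{eq:Wronskian}), followed by matching the initial data of \ref{def:phiju} against \eqref{eq:phi}. You have merely made explicit the ODE-uniqueness step that the paper leaves implicit.
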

	  	\begin{proof}
	  		By \eqref{eq:phiunum} and \ref{not:hhat}, $2m\hat{h}(\ru)=\phi_1\left(\frac{\phi_m'}{\phi_m}(\ru)+\frac{\phi_u'}{\phi_u}(\ru)-\frac{\ru}{2}\right)$, $m/\ru=\phi_1\left(-\frac{\phi_m'}{\phi_m}(\ru)+\frac{\phi_u'}{\phi_u}(\ru)\right)$.
	  		It then follows by \eqref{eq:phi} and \ref{def:phiju}.
	  	\end{proof}
	  	
	  	  	\begin{notation}\label{not:phihat}
	  		We use $\hat{\phi}=\hat\phi[\ru]$ to denote the function (recall \eqref{eq:phiunum} and \eqref{eq:Phiavg})
	  		\begin{align*}
	  			\hat{\phi}:=&\frac{\phi-\cutoff{2/m}{3/m}{\dist_{\underline{\uC}}}(\underline{j}[m/(2\ru)],0)}{\phi_1}\\
	  			=&
	  				\underline{\phi}\left[1,\frac{e^{\ru^2/4}\hat{h}(\ru)}{\sqrt{\pi}\phi_m(\ru)\phi_u(\ru)}\right]+\cutoff{2/m}{3/m}{\dist_{\underline{\uC}}}\left(0,\frac{e^{\ru^2/4}\underline{j}[1/(2\ru)]}{\sqrt{\pi}\phi_m(\ru)\phi_u(\ru)}\right).
	  		\end{align*}
	  	\end{notation}
	  	
	  	\begin{definition}\label{def:hatrs}
	  	We define a scaled shifted coordinate $\hat{r}=\hat{r}[m]:=m(r-\ru)$. We then define the function $\hat{s}=\hat{s}[m]:=m\log\frac{r}{\ru}=m\log\left(1+\frac{\hat{r}}{m\ru}\right)$, equivalently (recall \eqref{eq:ThetaCyl}),
	  	\begin{equation*}
	  		\Theta^{\Cylinder}(\hat{s}/m+\su,\theta)=(r,\theta),
	  	\end{equation*}
	  	where $\su$ is defined such that $\Theta^{\Cylinder}(\su,\theta)=(\ru,\theta)$.
	  \end{definition}
	  \begin{definition}
	  	For any constant $C\geq 1$ independent of $m$, we define the reflection operator (with respect to $\ru$) $\Rcal=\Rcal_{\ru}:C^0(D_{\underline{\uC}}(C/m))\to C^0(D_{\underline{\uC}}(C/m))$ by $\Rcal u(r,\theta):=u(2\ru-r,\theta)$. We then define the antisymmetric operator (with respect to $\ru$) $\Acal=\Acal_{\ru}:C^0(D_{\underline{\uC}}(C/m))\to C^0(D_{\underline{\uC}}(C/m))$ by $\Acal u:=u-\Rcal u$.
	  	\end{definition}

	  		\begin{lemma}\label{lem:phiju}
	  			The following hold (recall \eqref{eq:tildeg} and \ref{def:hatrs}) for $x\in(0,3]$.
	  			\begin{enumerate}[(i)]
	  				\item\label{item:phiu} $\norm{\Tcal_{\exp(\omega)}\underline{\phi}[1,0]-1:C^k(D_{\underline{\uC}}(x/m)\setminus\underline{\uC},\tilde{g})}\lesssim_k x^2/m^2$.
	  				\item\label{item:phiub} $\norm{\Tcal_{\exp(\omega)}\underline{\phi}[0,m]-\hat{s}:C^k(D_{\underline{\uC}}(x/m)\setminus\underline{\uC},\tilde{g})}\lesssim_k x^2/m^2$.
	  				\item \label{item:ju} $\norm{\Tcal_{\exp(\omega)}\underline{j}[m]-\abs{\hat{s}}:C^k(D_{\underline{\uC}}(x/m)\setminus\underline{\uC},\tilde{g})}\lesssim_k x^2/m^2$.
	  				\item\label{item:phiuA}   $\norm{\Acal\Tcal_{\exp(\omega)}\underline{\phi}[1,0]:C^k(D_{\underline{\uC}}(3/m)\setminus\underline{\uC},\tilde{g})}\lesssim_k 1/m^3$.
	  				\item \label{item:juA} $\norm{\Acal\Tcal_{\exp(\omega)}\underline{j}[m]:C^k(D_{\underline{\uC}}(3/m)\setminus\underline{\uC},\tilde{g})}\lesssim_k 1/m^3$.  
	  			\end{enumerate}
	  		\end{lemma}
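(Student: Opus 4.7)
The strategy is to do ODE analysis in the rescaled coordinate $\hat r = m(r-\ru)$. Setting $v:=\Tcal_{e^{\omega}}u = e^{\omega}u$ converts the equation $\Lcal u=0$ into the Hamiltonian equation $\Hcal v=0$, which for rotationally invariant $v$ reduces (after rescaling) to
\begin{equation*}
v_{\hat r \hat r} = -b(\hat r)\, v_{\hat r} + c(\hat r)\, v,
\end{equation*}
where $b(\hat r)=1/(mr)$ and $c(\hat r)=(r^{2}/16-1)/m^{2}$ with $r=\ru+\hat r/m$. In the region $|\hat r|\leq 3$ one has $|b|=O(1/m)$ and $|c|=O(1/m^{2})$, so the ODE is a small perturbation of $v_{\hat r\hat r}=0$ and its solutions are close to affine (or piecewise affine) functions of $\hat r$ (equivalently, $\hat s$) in the scaled metric $\tilde g$.

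For (i)--(iii), I first read off the Cauchy data $v(0)$ and $v_{\hat r}(0^{\pm})$ from \ref{def:phiju} using the relation $\partial_{\omega}u=\partial u+u\,\partial\omega$, and verify they agree with the Cauchy data of the claimed comparison function ($1$, $\hat s$, or $|\hat s|$). Setting $w:=v-(\text{comparison})$, one checks that $w$ satisfies an inhomogeneous version of the perturbed ODE with $w(0)=w_{\hat r}(0^{\pm})=0$, so double integration yields $|w|\lesssim \hat r^{3}/m^{2}\lesssim x^{2}/m^{2}$ on $|\hat r|\leq x\leq 3$. The higher $C^{k}(\Omega,\tilde g)$-norms follow by differentiating the ODE iteratively and using that derivatives in $\tilde g$ correspond to $\partial_{\hat r}$.

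For (iv)--(v), I decompose the ODE coefficients and the solution into even and odd parts with respect to $\hat r\mapsto -\hat r$: write $b=b_{e}+b_{o}$, $c=c_{e}+c_{o}$, and $v=v_{e}+v_{o}$. The key observation is that $b_{o}(\hat r)=O(\hat r/m^{2})$ and $c_{o}(\hat r)=O(\hat r/m^{3})$, so the ODE is nearly symmetric in $\hat r$. The odd part $v_{o}=\tfrac12\Acal v$ satisfies
\begin{equation*}
v_{o}'' = -b_{e}v_{e}' - b_{o}v_{o}' + c_{e}v_{o} + c_{o}v_{e},
\end{equation*}
with $v_{o}(0)=v_{o}'(0)=0$---in (iv) because $\underline\phi[1,0]$ is smooth at $\ru$ and satisfies the evenness condition $v_{\hat r}(0)=0$, and in (v) because the one-sided derivative conditions $v_{\hat r}(0^{\pm})=\pm 1/\ru$ of $\underline j[m]$ give cancellations in the matching of even/odd data. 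Using $v_{e}\approx 1$ (in (iv)) or $v_{e}\approx |\hat s|$ (in (v)) from parts (i), (iii), the dominant forcing is $c_{o}v_{e}=O(\hat r/m^{3})$, and double integration yields $|v_{o}|\lesssim \hat r^{3}/m^{3}\lesssim 1/m^{3}$ on $|\hat r|\leq 3$. Higher $C^{k}(\tilde g)$-bounds follow by bootstrapping.

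The main technical obstacle is case (v): since $v=\Tcal_{e^{\omega}}\underline j[m]$ has a jump in its first derivative at $\hat r=0$, the even/odd decomposition must be adapted to a two-sided analysis, and one must verify that the leading antisymmetric contributions from the two sides of $\underline \uC$ cancel sufficiently at each order in the Taylor expansion at $\hat r=0$---otherwise a surviving $O(\hat r^{2}/m)$ term would obstruct the $1/m^{3}$ bound. The cancellation is enforced by the specific sign structure of the jump conditions $\partial_{\omega,+}\underline j=-\partial_{\omega,-}\underline j$ in \ref{def:phiju}, together with the $O(\hat r/m^{2})$ and $O(\hat r/m^{3})$ sizes of $b_{o}$ and $c_{o}$.
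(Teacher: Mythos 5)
Your overall strategy coincides with the paper's: for (i)--(iii) the paper likewise passes to the rescaled radial ODE in $\hat r$, matches Cauchy data with the affine/piecewise-affine comparison functions, and concludes by variation of constants; for (iv)--(v) the paper writes the ODE satisfied by $\Acal u$ with a source built from $\Rcal u$, which is exactly your even/odd decomposition in different clothing. Parts (i)--(iv) of your argument go through.

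The gap is in (v). In your equation $v_o''=-b_ev_e'-b_ov_o'+c_ev_o+c_ov_e$ you declare $c_ov_e=O(\hat r/m^3)$ to be the dominant forcing, but for $v=\Tcal_{\exp(\omega)}\underline j[m]$ the term $-b_ev_e'$ is far larger: $b_e\sim 1/(m\ru)$ and $v_e'\sim\lambda\,\sgn(\hat r)$ with $\lambda\sim1$, because the even part of $v$ is $\approx\lambda|\hat r|$ rather than a constant, so $-b_ev_e'=O(1/m)$. No sign structure of the jump conditions cancels this term: it comes from the first-order coefficient $1/(mr)$ acting on the $O(1)$ slope of $\underline j$, which after reflection contributes with the same sign on both sides. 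Concretely, the two-sided Taylor expansion $v''(0^{\pm})=-b(0)v'(0^{\pm})=\mp\lambda/(m\ru)$ gives, for $\hat r>0$, $\Acal v(\hat r)=\tfrac12\bigl(v''(0^+)-v''(0^-)\bigr)\hat r^2+O(\hat r^3/m^2)=-\lambda\hat r^2/(m\ru)+\cdots$, i.e. $\Acal\Tcal_{\exp(\omega)}\underline j[m]=O(1/m)$ and genuinely not $O(1/m^3)$; with $\Rcal$ as defined (reflection in $r$) the $1/m^3$ bound in (v) is therefore unattainable by any argument, so the step where you invoke the ``cancellation enforced by the jump conditions'' would fail. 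The $1/m^3$ rate is recovered only if the reflection is taken in the conformal cylindrical coordinate $s=\log r$ (there the radial operator is $\partial_s^2$ with no first-order term, $\Acal|\hat s|\equiv0$, and your $c_ov_e$ really is the leading source); in the $\hat r$-reflection the correct bound is $1/m$, which is all that the downstream uses \ref{lem:GE}\ref{item:EprimeA} and \ref{lem:Phiosc}\ref{item:PhioscA} actually require. Part (iv) survives precisely because there $v_e'=O(\hat r/m^2)$, so $-b_ev_e'$ is $O(\hat r/m^3)$ as needed.
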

	  		\begin{proof}
	  			The proof is similar to \cite[lemma 5.14]{kapouleas:equator} and \cite[Lemma 2.23]{kapouleas:ii:mcgrath} by noticing that the functions inside the norms in \ref{item:phiu}-\ref{item:ju} have zero initial conditions and satisfy the ODE (recall \eqref{eq:jacobiHR2})
	  			\begin{equation*}
	  				\frac{\dd^2 u}{\dd \hat{r}^2}+\frac{1}{m\ru+\hat{r}}\frac{\dd u}{\dd \hat{r}}+\frac{1}{m^2}\left(1-\frac{(m\ru+\hat{r})^2}{16m^2}\right)u=\frac{E}{m^2},
	  			\end{equation*}
	  			where all the $C^k$ norms of $E$ is uniformly bounded in the domain. \ref{item:phiu}-\ref{item:ju} then follow by standard variation of constants. Similarly, the functions inside the norm in \ref{item:phiuA}-\ref{item:juA} has zero initial condition and satisfy the ODE
	  			\begin{equation*}
	  				\frac{\dd^2 \Acal u}{\dd \hat{r}^2}+\frac{1}{m\ru+\hat{r}}\frac{\dd \Acal u}{\dd \hat{r}}+\frac{1}{m^2}\left(1-\frac{(m\ru+\hat{r})^2}{16m^2}\right)\Acal u=-\frac{2m\ru}{m^2\ru^2-\hat{r}^2}\frac{\dd \Rcal u}{\dd \hat{r}}-\frac{\ru\hat{r}}{4m}\Rcal u.
	  			\end{equation*}
	  			The estimates in \ref{item:phiu} and \ref{item:ju} now control the right hand side with an extra factor $m^{-1}$. \ref{item:phiuA}-\ref{item:juA} then follow by standard variation of constants again.
	  		\end{proof}
	  	 \subsection*{Estimates on $\Phi^{0}$ and $\Phi^{1}$}
	  		Now we decompose $\Phi$. Our goal is to understand $\Phi''$ in \eqref{eq:Phiprime} around each singularity of $\Phi$. As stated above, we compare it with the average $\hat{\Phi}$ and estimate the error $\Phi'$. Notice that we remove the jump of $\phi=\Phi_{\avg}$ in the definition of $\hat{\Phi}$ \eqref{eq:Phihat} so that $\Phi'$ is smooth.

	  	\begin{definition}\label{def:decompose}
	  		We define $\hat{G}\in C^{\infty}_{\sym}(\R^2\setminus L)$ by
	  		\begin{equation}\label{eq:Ghat}
	  			\hat{G}:=\cutoff{2\delta}{3\delta}{\dist_{p}}(G_p-\underline{\phi}[\log\delta,0],0)
	  		\end{equation} 
	  		on $D_L(3\delta)$ (recall \ref{lem:green}) and $0$ otherwise. We define the rotationally invariant function $\hat{\Phi}\in C^{\infty}(\R^2)$ by (recall \ref{not:phihat})
	  		\begin{equation}\label{eq:Phihat}
	  			\hat{\Phi}:=\phi-\cutoff{2/m}{3/m}{\dist_{\underline{\uC}}}(\underline{j}[m/(2\ru)],0)=\phi_1\hat{\phi}.
	  		\end{equation}
	  		We then define the functions $\Phi'',\Phi'\in C^{\infty}_{\sym}(\R^2)$ and $E'\in C^{\infty}_{\sym}(\R^2)$  by
	  		\begin{equation}\label{eq:Phiprime}
	  			\Phi'':=\Phi-\hat{G},\quad \Phi':=\Phi-\hat{G}-\hat{\Phi},\quad E':=\Lcal\Phi'.
	  		\end{equation}
	  	\end{definition}
	
	  	\begin{lemma}\label{lem:GE}
	  		The following hold (recall \ref{eq:tildeg}).
	  		\begin{enumerate}[(i)]
	  		\item\label{item:Ghat} $\hat{G}$ is supported on $D_L (3\delta)$, and for $\delta'\in (0,\delta]$, $\norm{\hat{G}:C^k(\R^2\setminus D_L (\delta'),\rr,g)}\lesssim_k 1+\log(\delta/\delta')$, where $\rr:=\min\{\dist_L,\delta\}$.
	  		\item\label{item:GhatA} $\norm{\Acal\Tcal_{\exp(\omega)}\hat{G}:C^k(\R^2\setminus D_L (\delta),\tilde{g})}\lesssim_k 1/m$.
	  		\item\label{item:Eprime} $E'$ is supported on $D_{\uC} (3/m)\setminus D_L (2\delta)$, and $\norm{m^{-2}E':C^k(\R^2 ,\tilde{g})}\lesssim_k 1$.
	  		\item\label{item:EprimeA} $\norm{m^{-2}\Acal\Tcal_{\exp(\omega)}E':C^k(\R^2 ,\tilde{g})}\lesssim_k 1/m$.
	    	\end{enumerate}
	  	\end{lemma}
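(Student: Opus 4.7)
The plan is to treat the four assertions in turn: parts (i) and (ii) concern only the local structure of $\hat G$ near the singular set $L$, while (iii) and (iv) reduce to analysing $E'=\Lcal\Phi'$ once the principal singular behaviour of $\Phi$ has been subtracted. The overall strategy is to read the supports off from the cut-off definitions in \eqref{eq:Ghat} and \eqref{eq:Phihat}, bound the sizes using \ref{lem:green}\ref{item:greenest} together with \ref{lem:phiju}\ref{item:phiu}--\ref{item:ju}, and then extract the antisymmetry gains from \ref{lem:phiju}\ref{item:phiuA}--\ref{item:juA} together with the explicit asymmetry of $\dist_p$ under $r\mapsto 2\ru-r$.

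For (i), the support claim is immediate from \eqref{eq:Ghat}. On $D_p(2\delta)$ one has $\hat G=G_p-\underline\phi[\log\delta,0]$; combining \ref{lem:green}\ref{item:greenest} with $\Tcal_{\exp(\omega)}\underline\phi[\log\delta,0]=\log\delta+O(\log\delta/m^2)$ (obtained from \ref{lem:phiju}\ref{item:phiu} by linearity in the initial data) gives $\Tcal_{\exp(\omega)}\hat G=\log(\dist_p/\delta)+O(1)$. Since $|\nabla^k G_p|\sim\dist_p^{-k}$, the weight $\rr$ absorbs the derivatives and yields the stated bound $1+\log(\delta/\delta')$ on $\R^2\setminus D_L(\delta')$, while the cut-off transition region contributes only $O(1)$. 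For (ii), the key ingredient is the identity
\begin{equation*}
\dist_p^2(r,\theta)-\dist_p^2(2\ru-r,\theta)=4\ru(r-\ru)\bigl(1-\cos(\theta-\theta_p)\bigr),
\end{equation*}
valid for $p=\Theta(\ru,\theta_p,0)$. On $D_p(3\delta)$ with $\delta=1/(100m)$ this is $O(m^{-3})$ while $\dist_p^2\sim m^{-2}$, so $\Acal\log\dist_p=O(1/m)$ in the $\tilde g$-norm; combined with \ref{lem:phiju}\ref{item:phiuA} applied to $\underline\phi[\log\delta,0]$ and the corresponding $O(1/m)$ antisymmetry of $\psicut[2\delta,3\delta]\circ\dist_p$, the estimate follows.

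For (iii), I would first verify the identity $E'=-\Lcal\hat G-\Lcal\hat\Phi$: the distributional Dirac masses of $\Lcal\Phi$ at $L$ are cancelled by those of $\Lcal\hat G$ (since $\hat G-G_p$ is smooth on a neighbourhood of $p$ and $\underline\phi[\log\delta,0]$ is annihilated by $\Lcal$), and the jumps of $\phi$ across $\underline\uC$ are cancelled by those of $\underline j[m/(2\ru)]$ by the derivation preceding \eqref{eq:Phiavg}. Consequently $E'$ is smooth and supported on the cut-off annuli around each $p\in L$ and around $\underline\uC$; since $3\delta\ll 1/m$, \ref{def:delta}\emph{(i)} gives disjointness from $D_L(2\delta)$, and since $L\subset\underline\uC\subset\uC$ the support lies in $D_{\uC}(3/m)\setminus D_L(2\delta)$. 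On each such annulus, writing the inner argument of the cut-off as $f$ and the cut-off factor as $\chi$ (so that $\Lcal f=0$), one has
\begin{equation*}
\Lcal(f\chi)=f\bigl(\Delta\chi-\tfrac12 X\cdot\nabla\chi\bigr)+2\nabla f\cdot\nabla\chi,
\end{equation*}
and the bounds $|\nabla^k f|\lesssim m^k$ (from $G_p\sim\log\dist_p$ and the $|\hat s|$-type behaviour of $\underline j$) together with $|\nabla^k\chi|\lesssim m^k$ give $|\nabla^k E'|_g\lesssim m^{k+2}$, which rescales to $\|m^{-2}E':C^k(\R^2,\tilde g)\|\lesssim 1$. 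Part (iv) then follows by applying $\Acal$ termwise to this expression and extracting an extra factor of $1/m$ from each of $\Acal f$ and $\Acal\chi$, via \ref{lem:phiju}\ref{item:phiuA}--\ref{item:juA} and the distance computation above.

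The main obstacle I anticipate is the careful bookkeeping in (iv): because $r\mapsto 2\ru-r$ is not a Euclidean isometry, $\Acal$ does not commute with $\Lcal$ or with the conformal factor $e^\omega$, so the gain from antisymmetrisation cannot be read off from any symmetry principle. Instead one must Taylor-expand $\Tcal_{\exp(\omega)}(f\chi)$ at $r=\ru$ and verify that the contributions from the drift $X\cdot\nabla$, from the radial variation of $e^\omega$, and from the non-radial part of $\chi$ each separately produce the correct $O(1/m)$ correction, with no residual term of size $O(1)$ surviving the antisymmetrisation.
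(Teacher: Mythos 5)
Your proposal is correct and follows essentially the same route as the paper: support read off from the cut-off definitions, size bounds from \ref{lem:green}\ref{item:greenest} and \ref{lem:phiju}\ref{item:phiu}--\ref{item:ju}, and the antisymmetry gain from \ref{lem:phiju}\ref{item:phiuA}--\ref{item:juA} together with the near-invariance of $\dist_p$ under $r\mapsto 2\ru-r$ (your explicit identity for $\Acal\dist_p^2$ is just a more concrete form of the paper's estimate $\norm{\Acal\dist_p^{g_{\shr}}}\lesssim 1/m$; note only that the Green's function estimate is phrased in terms of $\dist_p^{g_{\shr}}$ rather than $\dist_p^{g}$, so the $O(1/m)$ antisymmetry of $\omega$ on $\supp\hat G$ must be folded in, which your final paragraph already anticipates).
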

	  	\begin{proof}
	  		\ref{item:Ghat} follows by \ref{lem:green}\ref{item:greenest}, \eqref{eq:delta} and the definition \eqref{eq:Ghat}. By \eqref{eq:Phiprime} and \eqref{eq:Phihat} (recall \eqref{eq:tildeL}),
	  		\begin{equation*}
	  			m^{-2}E'=-\tilde{\Lcal}\hat{G}+\tilde{\Lcal}\cutoff{2/m}{3/m}{\dist_{\underline{\uC}}}(\underline{j}[m/(2\ru)],0).
	  		\end{equation*}
	  		The first term vanishes on $D_L(\delta)$, and on $\R^2\setminus  D_L(\delta)$ it is controlled by \ref{item:Ghat}. The second term vanishes on $\R^2\setminus D_{\underline{\uC}}^{\R^2}(3/m)$, and on $D_{\underline{\uC}}^{\R^2}(3/m)$, it is controlled by \ref{lem:phiju}\ref{item:ju}. \ref{item:Eprime} then follows. \ref{item:GhatA} follows as \cite[Lemma 4.9(ii)]{kapouleas:ii:mcgrath} or \cite[Lemma 8.23(ii)]{LDg} by the estimate $\norm{\Acal \dist_{p}^{g_{\shr}}:C^k(\supp\hat{G},\tilde{g})}\lesssim_k 1/m$, \ref{lem:green}\ref{item:greenest} and \ref{lem:phiju}\ref{item:phiuA}. \ref{item:EprimeA} follows as \cite[Lemma 5.21(iv)]{kapouleas:equator}, \cite[Lemma 4.14(ii)]{kapouleas:ii:mcgrath} or \cite[Lemma 8.23(iv)]{LDg} by \ref{item:GhatA} and \ref{lem:phiju}\ref{item:juA}.
	  	\end{proof}
	  	\begin{lemma}\label{lem:Phiosc}
	  		The following hold for some constant $C$ independent of $m$.
	  		\begin{enumerate}[(i)]
	  			\item\label{item:Phiosc} $\norm{\Tcal_{\exp(\omega)}\Phi_{\osc}':C^k(\R^2,e^{-C\hat{s} }),\tilde{g}}\lesssim_k 1$.
	  			\item\label{item:PhioscA} $\norm{\Acal\Tcal_{\exp(\omega)}\Phi_{\osc}':C^k(D_{\underline{\uC}}(3/m),\tilde{g})}\lesssim_k 1/m$.
	  			\item\label{item:PhioscCS} $\norm{\Phi_{\osc}':C^k(\R^2\setminus D_O(2\rbalanced),m^2r^{-C(m^2-1) },g)}\lesssim_k 1$.
	  		\end{enumerate}
	  	\end{lemma}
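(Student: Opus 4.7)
The plan is to decompose $\Phi_{\osc}'$ and $E'$ into Fourier modes in $\theta$. Since $\Phi'$ is $\mathbf{D}_m$-symmetric and both $\hat{\Phi}$ and $(\Phi')_{\avg}$ are rotationally invariant, only the modes indexed by $k = nm$ with $n \in \Z \setminus \{0\}$ contribute to $\Phi_{\osc}'$. Writing $\Phi_{\osc}' = \sum_{n \ne 0} \phi_{nm}(r) e^{i n m \theta}$ and similarly for $E'$, the equation $\Lcal \Phi_{\osc}' = E_{\osc}'$ splits into a family of ODEs of the form \eqref{eq:jacobik} with $k = nm$, driven by the Fourier coefficients of $E'$ controlled by Lemma \ref{lem:GE}.

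For (i) I would work on the scaled geometry $\tilde g$. In the scaled coordinate $\hat s = m \log(r / \ru)$, the $k = nm$ mode ODE becomes an $O(m^{-2})$ perturbation of $u''(\hat s) - n^2 u(\hat s) = (\text{source})$, whose homogeneous solutions behave like $e^{\pm n \hat s}$ with $O(m^{-2})$ corrections on bounded $\hat s$-intervals. Since Lemma \ref{lem:GE}(iii) tells us each $E_{nm}$ is supported in $\{|\hat s| \lesssim 1\}$ with $\|m^{-2} E_{nm}\|_{C^k(\tilde g)} \lesssim 1$, variation of parameters mode by mode produces a solution $\phi_{nm}$ that decays exponentially in $|\hat s|$ at rate at least $1$ (the worst case being $n = \pm 1$, with size in $n$ controlled well enough for summability). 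Summing over $n$ gives the pointwise bound and, after a standard bootstrap from the ODE, the $C^k$ bound in (i). For (ii), Lemma \ref{lem:GE}(iv) supplies an extra factor $1/m$ in the antisymmetrized source $\Acal E'$, and running the same mode-by-mode argument for $\Acal \Phi_{\osc}'$ on $D_{\underline{\uC}}(3/m)$ yields the $1/m$ improvement.

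For (iii) in the exterior region $r \geq 2\rbalanced$ I would use the global representation in terms of the hypergeometric solutions. Since $E_{nm}$ is compactly supported near $\underline{\uC}$, each coefficient $\phi_{nm}(r)$ is a constant multiple of $\phi_{u, nm}(r)$ on $\{r \geq 2\rbalanced\}$, and by Lemma \ref{lem:phimu}(i) we have $\phi_{u, nm}(r) \sim (r/2)^{1 - 2(nm)^2}$ as $r \to \infty$. Hence the slowest decaying mode ($|n| = 1$) decays like $r^{1 - 2 m^2}$, while higher modes decay faster; the prefactor $m^2$ in the weight absorbs the size of the source from Lemma \ref{lem:GE}(iii). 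Summing the Fourier series then yields (iii) for any constant $C \leq 2$.

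The main obstacle is verifying that the exponential decay rate in the mode-by-mode Green's function is uniform in $m$ and in the Fourier index $n \geq 1$: one must check that the $O(m^{-2})$ perturbations in the rescaled $k$-th ODE stay genuinely lower order on bounded $\hat s$-intervals and that the $C^k$ constants do not blow up in $n$. Analogous computations appear in \cite[Lemma 5.21]{kapouleas:equator} and \cite[Lemma 4.14]{kapouleas:ii:mcgrath}; the present situation is actually easier because only modes $|k| \geq m$ appear and the source already lives in an $O(1/m)$ tube around $\underline{\uC}$, so no additional cutoff beyond those already built into $E'$ is needed.
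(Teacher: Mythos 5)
Your proposal follows essentially the same route as the paper's proof: pass to the equation $\Lcal\Phi'_{\osc}=E'_{\osc}$ with $E'$ supported in $D_{\underline{\uC}}(3/m)$ and controlled by Lemma \ref{lem:GE}, decompose into Fourier modes $k=nm$ so that the rescaled mode ODEs give exponential decay in $\hat{s}$ for (i), and read off (iii) from the fact that each exterior mode is a multiple of $\phi_{u,nm}\sim r^{1-2n^2m^2}$. The one point you gloss over is in (ii): $\Acal$ does not commute with $\Lcal$ (the operator is not invariant under $r\mapsto 2\ru-r$), so $u=\Acal\Tcal_{\exp(\omega)}\Phi'_{\osc}$ satisfies the equation with source $\frac{1}{m^2}\Acal\Tcal_{\exp(\omega)}E'_{\osc}$ \emph{plus} commutator terms of the form $-\frac{2m\ru}{m^2\ru^2-\hat{r}^2}\,\partial_{\hat{r}}\Rcal u-\frac{\ru\hat{r}}{4m}\Rcal u$; these are also $O(1/m)$ on $D_{\underline{\uC}}(3/m)$ thanks to part (i), so your conclusion still holds, but this step must be accounted for rather than attributing the entire $1/m$ gain to Lemma \ref{lem:GE}(iv).
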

	  	\begin{proof}
	  		From the definitions \ref{def:decompose} (recall \eqref{eq:jacobiHR2})
	  		\begin{equation*}
	  			\Hcal \Tcal_{\exp(\omega)}\Phi'=\Tcal_{\exp(\omega)} E'
	  		\end{equation*}
	  		By \ref{def:LD}\ref{item:LDCS}, $\Phi'_{\osc}\in\mathcal{CS}^{2,\beta}(\R^2)$, which gives the behaviour of $\Phi'_{\osc}$ at infinity. \ref{item:Phiosc} now follows as \cite[Lemma 5.23(iii)]{kapouleas:equator} or \cite[Lemma 8.28(ii)]{LDg} along with \ref{lem:GE}\ref{item:Eprime} by noticing that $E'$ is supported in $D_{\underline{\uC}}(3/m)$. Also notice that when $r>4$, the potential term in \eqref{eq:jacobiHR2} has a preferred sign for applying the comparison of ODE. As \cite[Lemma 5.23(iv)]{kapouleas:equator}, \cite[Lemma 4.15(iii)]{kapouleas:ii:mcgrath} or \cite[Lemma 8.28(iii)]{LDg} $, u=\Tcal_{\exp(\omega)}\Phi'_{\osc}$ satisfies
	  		\begin{equation*}
	  			\left(\Delta_{\tilde{g}}+\frac{1}{m^2}\left(1-\frac{r^2}{16}\right) \right)\Acal u=	\frac{1}{m^2} \Acal \Tcal_{\exp(\omega)} E'_{\osc}-\frac{2m\ru}{m^2\ru^2-\hat{r}^2}\frac{\dd \Rcal u}{\dd \hat{r}}-\frac{\ru\hat{r}}{4m}\Rcal u,
	  		\end{equation*}
	  		with zero initial condition. \ref{item:PhioscA} then follows along with \ref{item:Phiosc} and \ref{lem:GE}\ref{item:EprimeA}.
	  		
	  		\ref{item:PhioscCS} follows similarly as \ref{item:Phiosc}. From the support of $E'$ \ref{lem:GE}\ref{item:Eprime}, when $r>\ru+3/m$ the $k$-th Fourier coefficients of $\Phi'_{\osc}$ satisfies the ODE \eqref{eq:jacobik}.
	  		As $\Phi'_{\osc}\in\mathcal{CS}^{2,\beta}(\R^2)$ and by \ref{lem:phimu}\ref{item:phimusing}, $\Phi'_{\osc,k}$ is a multiple of $U\left(-k^2+\frac{1}{2},1,\frac{r^2}{4}\right)$ and has asymptotics $r^{1-2k^2}$ with coefficients controlled by \ref{lem:GE}\ref{item:Eprime}. From the symmetry, $k$ must be a multiple of $m$, \ref{item:PhioscCS} then follows by the standard Fourier analysis.
	  	\end{proof}
	  	
	  	\begin{lemma}\label{lem:Phiprime}
	  		The following hold.
	  	\begin{enumerate}[(i)]
	  		\item\label{item:Phiprime} $\norm{\Tcal_{\exp(\omega)}\Phi':C^k(\R^2 ,\tilde{g})}\lesssim_k 1$.
	  		\item\label{item:PhiprimeA} $\norm{\Acal\Tcal_{\exp(\omega)}\Phi':C^k(D_{\underline{\uC}}(3/m),\tilde{g})}\lesssim_k 1/m$.
	  		\item\label{item:PhiprimeCS} $\norm{\Phi':C^k(\R^2\setminus D_O(2\rbalanced),m^2r^{-C(m^2-1) },g)}\lesssim_k 1$.
	  	\end{enumerate}
	  	\end{lemma}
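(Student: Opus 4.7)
The plan is to decompose $\Phi' = \Phi'_{\avg} + \Phi'_{\osc}$ and treat the two pieces separately: the oscillatory part follows from Lemma \ref{lem:Phiosc}, while the rotationally-invariant average admits an explicit formula amenable to the ODE theory of Section \ref{S:RLD}. The key preliminary observation is that $\hat{\Phi}$ is rotationally invariant by construction, while $\Phi_{\avg} = \phi$, so averaging \eqref{eq:Phiprime} and using \eqref{eq:Phihat} yields
\begin{equation*}
\Phi'_{\avg} = \cutoff{2/m}{3/m}{\dist_{\underline{\uC}}}(\underline{j}[m/(2\ru)], 0) - \hat{G}_{\avg}.
\end{equation*}
Thus the content for the average piece reduces to showing that $\hat{G}_{\avg}$, the rotational average of $m$ cutoff logarithmic singularities equidistributed along $\underline{\uC}$, coincides with the cutoff of $\underline{j}[m/(2\ru)]$ up to a remainder of size $1$ in the scaled $C^k$ norm, with a $1/m$ improvement under $\Acal$.

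For part (i), I combine Lemma \ref{lem:Phiosc}(i) for $\Phi'_{\osc}$ with a direct analysis of $\Phi'_{\avg}$. Expanding each $G_p$ via Lemma \ref{lem:green}(ii) near $\underline{\uC}$ reduces the study of $\hat{G}_{\avg}$ to the rotational average of $\sum_{p \in L} \log \dist^{g_{\shr}}_p$; the equidistribution of the $m$ points kills all non-trivial angular Fourier modes of this sum, leaving a rotationally invariant function that, via Taylor expansion of the distance from a point of $\underline{\uC}$ to a nearby radius-$r$ circle, matches $\Tcal_{\exp(-\omega)}|\hat{s}|$ at leading order. The normalization $m/(2\ru)$ in \ref{def:Phi} and the $1/(2\pi r)$ from angular averaging are chosen precisely so that the leading singular parts cancel against the cutoff of $\underline{j}[m/(2\ru)]$ by virtue of Lemma \ref{lem:phiju}(iii); smooth residuals are controlled by Lemma \ref{lem:GE}(i) after rescaling from $g$ to $\tilde{g}$.

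For part (ii), both pieces admit a $1/m$ improvement under $\Acal$: Lemma \ref{lem:Phiosc}(ii) handles $\Phi'_{\osc}$; Lemma \ref{lem:phiju}(v) handles the cutoff of $\underline{j}[m/(2\ru)]$; and Lemma \ref{lem:GE}(ii) handles $\hat{G}$, hence $\hat{G}_{\avg}$. For part (iii), outside $D_O(2\rbalanced)$ both the support of $\hat{G}$ (inside $D_L(3\delta)$) and the support of the cutoff of $\underline{j}[m/(2\ru)]$ (inside $D_{\underline{\uC}}(3/m)$) are disjoint from the region in question, since $\ru < 2\rbalanced$ and $m$ is taken large. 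Thus $\Phi'_{\avg}$ vanishes there, and the stated bound reduces to Lemma \ref{lem:Phiosc}(iii), whose Fourier-analytic proof exploits $\mathbf{D}_m$-symmetry to restrict surviving modes to indices divisible by $m$, together with the $r^{1-2k^2}$ decay of the solutions of \eqref{eq:jacobik} that are regular at infinity.

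The main obstacle is the cancellation underlying part (i): both $\hat{G}_{\avg}$ and the cutoff of $\underline{j}[m/(2\ru)]$ are logarithmically divergent on $\underline{\uC}$ in the unscaled metric, so exhibiting their difference as merely of size $1$ in the $\tilde{g}$-metric requires a careful expansion of $\sum_{p \in L} \log \dist^{g_{\shr}}_p$ in powers of $(r - \ru)/\ru$ and $\theta$, exploiting exactly-spaced $m$-gon identities for the angular sums, and then matching the leading radial term against the normalization of $\underline{j}$ from \ref{def:phiju}.
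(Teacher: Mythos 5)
Your overall architecture coincides with the paper's: split $\Phi'=\Phi'_{\avg}+\Phi'_{\osc}$, dispatch the oscillatory part of all three items to \ref{lem:Phiosc}, derive the identity $\Phi'_{\avg}=-\hat{G}_{\avg}+\cutoff{2/m}{3/m}{\dist_{\underline{\uC}}}(\underline{j}[m/(2\ru)],0)$, observe that it vanishes outside $D_{\underline{\uC}}(3/m)$ so that (iii) reduces to \ref{lem:Phiosc}\ref{item:PhioscCS}, and cite \ref{lem:phiju}\ref{item:juA} and \ref{lem:GE}\ref{item:GhatA} for (ii) --- all exactly as in the paper. The one genuine divergence is your treatment of the $O(1)$ bound on $\Phi'_{\avg}$ in (i). The paper does not expand $\hat{G}_{\avg}$ at all: it uses that $\Phi'_{\avg}$ solves the second-order ODE $\tilde{\Lcal}\Phi'_{\avg}=m^{-2}E'_{\avg}$ with zero Cauchy data at $\partial D_{\underline{\uC}}(3/m)$ and a source already bounded in \ref{lem:GE}\ref{item:Eprime}; variation of parameters over a domain of unit $\tilde{g}$-size then gives the $C^k$ bound. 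This is shorter and, crucially, never requires averaging the logarithmic singularity, since $E'$ is supported off $D_L(2\delta)$. Your route --- computing the angular average of $\sum_{p\in L}\log\dist_p$ via the equally-spaced-points identity and matching it against $\underline{j}[m/(2\ru)]$ --- is the classical alternative and can be made to work, but it carries extra bookkeeping you only gesture at: the cutoffs in \eqref{eq:Ghat} confine $\hat{G}$ to $D_L(3\delta)$ with $3\delta\ll 3/m$, so on most of $D_{\underline{\uC}}(3/m)$ there is no cancellation to exhibit ($\hat{G}_{\avg}=0$ there while $\underline{j}[m/(2\ru)]$ is of size $O(1)$ in $\tilde g$); what is actually needed is that each term is \emph{separately} $O(1)$ in $C^k(\tilde{g})$, and for $\hat{G}_{\avg}$ near $L$ this still requires controlling all one-sided derivatives of the averaged, cut-off logarithm. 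Also, a small conceptual slip: angular averaging kills the nonconstant Fourier modes by definition; what the equidistribution (the $m$-gon product identity) buys you is the closed form $\log\max(r,\ru)$ for the averaged sum, not the rotational invariance itself.
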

	  	\begin{proof}
	  		By \ref{lem:Phiosc}\ref{item:Phiosc} and \ref{lem:Phiosc}\ref{item:PhioscA}, we only need to show the bound for $\Phi'_{avg}$. By \eqref{eq:Phiprime} and \eqref{eq:Phihat},
	  		\begin{equation*}
	  			\Phi'_{avg}=-\hat{G}_{\avg}+\cutoff{2/m}{3/m}{\dist_{\underline{\uC}}}(\underline{j}[m/(2\ru)],0),
	  		\end{equation*}
	  		which vanishes on $\R^2\setminus D_{\underline{\uC}}(3/m)$ completely. \ref{item:PhiprimeCS} now follows by \ref{lem:Phiosc}\ref{item:PhioscCS}. While on $D_{\underline{\uC}}(3/m)$, $\tilde{\Lcal}\Phi'_{avg}=m^{-2}E'_{\avg}$ provides the ODE
	  		\begin{equation*}
	  			\frac{\dd^2\Phi'_{avg}}{\dd\tilde{r}^2}-\left(\frac{1}{m\tilde{r}}-\frac{\tilde{r}}{2m^2}\right)\frac{\dd\Phi'_{avg}}{\dd\tilde{r}}+\frac{\Phi'_{avg}}{2m^2}=\frac{E'_{\avg}}{m^2}.
	  		\end{equation*}
	  		The estimate of $\Phi'_{\avg}$ then follows by the variation of parameters along with \ref{lem:GE}\ref{item:Eprime}. The estimate of $\Acal\Tcal_{\exp(\omega)}\Phi'_{\avg}$ follows by \ref{lem:phiju}\ref{item:juA} and \ref{lem:GE}\ref{item:GhatA}.
	  	\end{proof}
	  	  	\begin{lemma}\label{lem:Phinorm}
	  	  		The following hold for $\Phi$.
	  	  		\begin{enumerate}[(i)]
	  	  			\item\label{item:Phinorm} For $\delta'\in (0,\delta]$, $\rr:=\min\{\dist_L,\delta\}$, $\norm{\Phi:C^{k,\beta}(\R^2\setminus \sqcup_{q\in L}D_q^{\R^2}(\delta'),\rr,g)}\lesssim_k m+\log(\delta/\delta')$.
	  	  		\item\label{item:PhiCS} $\norm{\Phi:\mathcal{CS}^{2,\alpha}(\R^2\setminus D_O(2\rbalanced))}\lesssim m$ and
	  	  		\begin{equation}\label{eq:Phicone}
	  	  			\lim_{r\to\infty}\Phi/r=\sqrt{\pi}m\phi_{m}(\ru)e^{-\ru^2/8}/2.
	  	  		\end{equation}
	  	  		\end{enumerate}
	  	  			\end{lemma}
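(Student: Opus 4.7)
The plan is to exploit the decomposition $\Phi = \hat G + \hat\Phi + \Phi'$ from \ref{def:decompose} and bound each summand separately in the prescribed norms, using the lemmas already established.

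For (i), Lemma \ref{lem:GE}\ref{item:Ghat} immediately contributes $\lesssim_k 1 + \log(\delta/\delta')$ from the $\hat G$ term. For the rotationally invariant piece $\hat\Phi = \phi_1 \hat\phi$, I would first note that $\phi_1 \lesssim m$ by \eqref{eq:phiunum}, and then check that $\hat\phi$ is smooth on all of $\R^2$ (the jump of $\phi = \Phi_{\avg}$ across $\underline\uC$ being cancelled by the cutoff-added $\underline j$-term in \ref{not:phihat}). Using \ref{lem:phiju}\ref{item:phiu}--\ref{item:phiub} near $\underline\uC$, together with standard variation-of-constants control for \eqref{eq:jacobirotinvar} away from $\underline\uC$, gives $\norm{\hat\phi : C^{k,\beta}(\R^2, \tilde g)} \lesssim_k 1$; converting this $\tilde g = m^2 g$ bound to the weighted norm via $\rr \leq \delta \lesssim 1/m$ yields $\rr^j \partial^j \hat\phi \lesssim (\rr m)^j \lesssim 1$, so $\hat\Phi$ contributes $\lesssim m$. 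Finally, $\Phi'$ is controlled on any fixed compact portion of $\R^2$ by \ref{lem:Phiprime}\ref{item:Phiprime} (translated to the weighted norm by the same $\rr m \lesssim 1$ mechanism and the fact that $e^{\pm\omega}$ is bounded on $D_O(C)$ for any fixed $C$), and on the complement by the strong decay of \ref{lem:Phiprime}\ref{item:PhiprimeCS}. Summing the three contributions proves (i).

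For (ii), I would work on $\Omega := \R^2 \setminus D_O(2\rbalanced)$. Our choice of $r^0, r^1 \in ((r_u+\rbalanced)/2, (r_m+\rbalanced)/2)$ and the numerical fact $r_m < 3\rbalanced$ place $L$ strictly inside $D_O(2\rbalanced)$, so $\hat G \equiv 0$ on $\Omega$ by \ref{lem:GE}\ref{item:Ghat}, and for large $m$ the cutoff defining $\hat\Phi$ also vanishes on $\Omega$, giving $\hat\Phi = \phi$ there. By \ref{lem:Phiavg}, $\phi = \phi_1 e^{\ru^2/8}\phi_u/\phi_u(\ru)$ on $\Omega$. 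A refinement of the Tricomi asymptotics in \ref{lem:phimu}\ref{item:phimusing} (the full expansion from \cite[Chapter 13]{NIST}) gives $\phi_u(r) = r/2 + O(r^{-1})$ with analogous decay on derivatives, so $\phi - c_1 r$ lies in $C^{2,\beta}_{\an,-1}(\Omega)$ for the constant $c_1 := \phi_1\, e^{\ru^2/8}/(2\phi_u(\ru))$, with total $\mathcal{CS}^{2,\beta}$-norm $\lesssim m$. The piece $\Phi'$ contributes negligibly to the $\mathcal{CS}^{2,\beta}$-norm on $\Omega$ by the rapid decay in \ref{lem:Phiprime}\ref{item:PhiprimeCS}. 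Combining these gives $\norm{\Phi : \mathcal{CS}^{2,\beta}(\Omega)} \lesssim m$, while the explicit limit
\[
\lim_{r\to\infty}\Phi/r \;=\; \lim_{r\to\infty}\phi/r \;=\; \frac{\phi_1\, e^{\ru^2/8}}{2\,\phi_u(\ru)} \;=\; \frac{\sqrt{\pi}\, m\, \phi_m(\ru)\, e^{-\ru^2/8}}{2}
\]
follows by substituting \eqref{eq:phiunum}, establishing \eqref{eq:Phicone}.

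The main obstacle is the careful weight bookkeeping in (i): the bounds available for $\hat\Phi$ and $\Phi'$ are phrased in the rescaled metric $\tilde g = m^2 g$ (so that each derivative effectively costs a factor $m$ in $g$), whereas the target norm uses the weight $\rr = \min\{\dist_L, \delta\}$ which is $\lesssim 1/m$ globally. It is precisely the balance $\rr \cdot m \lesssim 1$ that absorbs the derivative-scaling factors without accruing extra powers of $m$, so that only the single factor of $m$ coming from the coefficient $\phi_1$ in $\hat\Phi$ survives in the final estimate.
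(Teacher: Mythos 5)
Your proposal is correct and follows essentially the same route as the paper: both control the three terms of the decomposition in \ref{def:decompose} via \ref{lem:GE}\ref{item:Ghat}, \ref{lem:phiju}, \eqref{eq:Phiavg}--\eqref{eq:phiunum} and \ref{lem:Phiprime} for part (i), and both reduce part (ii) to $\Phi=\phi+\Phi'_{\osc}$ outside $D_O(2\rbalanced)$ and compute the limit from \eqref{eq:phiunum} and the Tricomi asymptotics $\phi_u\sim r/2$. Your extra bookkeeping of the $\tilde g$-to-weighted-norm conversion via $\rr\cdot m\lesssim 1$ is exactly the mechanism the paper leaves implicit.
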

	  	  	\begin{proof}
	  	  		 \ref{item:Phinorm} follows by using \ref{lem:phiju}\ref{item:phiu}-\ref{item:ju}, \eqref{eq:Phiavg}, \eqref{eq:phiunum}, \ref{lem:GE}\ref{item:Ghat} and \ref{lem:Phiprime} to control the terms in the decomposition \ref{def:decompose}.
	  	  		 
	  	  		 By \ref{def:decompose}, on $\R^2\setminus D_O(2\rbalanced))$, $\Phi=\phi+\Phi'_{\osc}$.  \eqref{eq:Phicone} follows by \eqref{eq:Phiavg}, \eqref{eq:phiunum}, \ref{lem:Phiprime}\ref{item:PhiprimeCS} and \ref{lem:phimu}\ref{item:phimusing}. \ref{item:PhiCS} then follows by the definition \ref{def:conespaces}.
	  	  	\end{proof}
	  	\subsection*{Dislocation estimates on $\Phi^0$ and $\Phi^1$}
	  	 We have studied $\Phi^i$ around its singularities. However, the LD solutions of which the mismatches will be estimates are combinations of $\Phi^0$ and $\Phi^1$ (see \eqref{eq:varphizeta}, \eqref{eq:Phipm}, \eqref{eq:Lpm} and \eqref{eq:L}). In another word, we have to understand the $\Phi^0$ around the singularities of $\Phi^1$ and \emph{vice versa} (see the proof of \ref{lem:miss}, especially the estimates for $\mu'_{j,\pm}$). In this case, the estimates in \ref{lem:Phiprime} turn out to be insufficient. In this subsection, we treat this problem by the study in \cite[Section 9]{LDg}, where a similar situation happens in the doubling construction as the lack of symmetry. However, the case we encounter in this paper corresponds to the case $m_i=2m$ in \cite[Definition 9.1 and Lemma 9.16]{LDg}, which was not fully studied there. The general treatment of this problem was done in JZ's thesis  in \cite[Section 5.2.5]{zouthesis}, and we are in a much simpler situation here.
	  	 
	\begin{lemma}\label{lem:Gm}
		Consider $\R^2$ with coordinates $(\hat{s},\tilde{\theta})$ (recall \eqref{eq:tildetheta} and \ref{def:hatrs}), on any compact set of $\R_+\times\R \setminus\{(0,2\pi k):k\in\Z\}$ (recall \eqref{eq:Theta}, \eqref{eq:ThetaCyl} and  \eqref{eq:Phiavg}),
		\begin{equation*}
			\left(\Tcal_{\exp(\omega)}\left(\Phi-\underline{\phi}[\phi_1,m\hat{h}(\ru)]\right)\right)\circ\Theta\circ\Theta^{\Cylinder}(\hat{s}/m+\su,\tilde{\theta}/m) \to G_{\infty}(\hat{s},\tilde{\theta}),
		\end{equation*}
		smoothly when $m\to\infty$, where
		\begin{equation*}
			G_{\infty}(\hat{s},\tilde{\theta}):=\frac{1}{2}\log\left(4\sin^2\frac{\tilde{\theta}}{2}+4\sinh^2\frac{\hat{s}}{2}\right)=\frac{1}{2}\log\left(\sin^2\frac{\tilde{\theta}}{2}+\sinh^2\frac{\hat{s}}{2}\right)+\log 2,
		\end{equation*}
		is the Green's function on the cylinder.
	\end{lemma}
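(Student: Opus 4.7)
The plan is to rescale, use elliptic PDE compactness to extract a subsequential limit, and identify the limit uniquely as $G_\infty$ via its singularity structure, periodicity, and linear growth at infinity. Set
\begin{equation*}
u^{(m)}(\hat s,\tilde\theta) := e^\omega\bigl(\Phi-\underline{\phi}[\phi_1,m\hat h(\underline{r})]\bigr)\circ\Theta\circ\Theta^{\Cylinder}(\hat s/m+\underline{s},\tilde\theta/m).
\end{equation*}
The decomposition $\Phi=\hat G+\hat\Phi+\Phi'$ of \ref{def:decompose} combined with the identity $\hat\Phi = \underline{\phi}[\phi_1,m\hat h(\underline{r})] + \cutoff{2/m}{3/m}{\dist_{\underline{\uC}}}(0,\underline{j}[m/(2\underline{r})])$, immediate from \eqref{eq:Phihat} and \eqref{eq:Phiavg}, yields
\begin{equation*}
u^{(m)} = e^\omega\hat G+e^\omega\cutoff{2/m}{3/m}{\dist_{\underline{\uC}}}(0,\underline{j}[m/(2\underline{r})])+e^\omega\Phi'
\end{equation*}
on the relevant pullback domain.

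For a compact $K\subset\R_+\times\R\setminus\{(0,2\pi k):k\in\Z\}$, the pullback of $K$ lies within $O(1/m)$ of $\underline{\uC}$ in the Euclidean metric, stays bounded in $\tilde g$, and, for large $m$, avoids $L$; hence $\hat G$ (supported in $\dist^g_L\leq 3/(100m)$) vanishes on the pullback. By \ref{lem:Phiprime}\ref{item:Phiprime}, $e^\omega\Phi'$ is uniformly $C^k(\tilde g)$-bounded, and by \ref{lem:phiju}\ref{item:ju}, $e^\omega\underline{j}[m/(2\underline{r})]$ converges smoothly to $|\hat s|/2$ on bounded rescaled sets. Since $\Lcal\Phi=0$ off $L$ and $\Lcal\underline{\phi}[\phi_1,m\hat h(\underline{r})]=0$ off the origin, $u^{(m)}$ satisfies $\Hcal u^{(m)}=0$ off $L\cup\{0\}$ (via \eqref{eq:jacobiHF}); rewriting in rescaled coordinates, the operator $\underline{r}^2 m^{-2}e^{2\hat s/m}\Hcal$ converges smoothly to $-\Delta_{\mathrm{cyl}}=-(\partial_{\hat s}^2+\partial_{\tilde\theta}^2)$ on compact subsets of $\R_+\times\R\setminus\{(0,2\pi k)\}$. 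Arzela--Ascoli together with elliptic regularity produces, up to a subsequence, a smooth limit $u^\infty$ with $\Delta_{\mathrm{cyl}}u^\infty=0$ off the lattice $S:=\{(0,2\pi k):k\in\Z\}$.

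To identify $u^\infty=G_\infty$, observe that near each $(0,2\pi k)\in S$, \ref{lem:green} combined with $\log\dist^{g_{\shr}}_p=\omega(p)+\log\dist^g_p+O((\dist^g_p)^2)$ and the rescaling identity $\log\dist^g_p=\log\dist^{\mathrm{cyl}}-\log m+\log\underline{r}+O(1/m)$ makes the $\log m$ divergence cancel exactly against $-e^\omega\underline{\phi}[\log\delta,0]=\log(100m)+O(\dist^2)$ inside $\hat G$, producing a unit log singularity $u^\infty\sim\log\dist^{\mathrm{cyl}}$ at each lattice point. The $\mathbf D_m$-symmetry of $\Phi$ makes $u^\infty$ $2\pi$-periodic in $\tilde\theta$. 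Moreover the explicit linear growth $|\hat s|/2$ of $e^\omega\cutoff{2/m}{3/m}{\dist_{\underline{\uC}}}(0,\underline{j}[m/(2\underline{r})])$ for large $|\hat s|$, together with the boundedness of $e^\omega\Phi'$, shows $u^\infty-|\hat s|/2$ stays bounded on the punctured cylinder. Thus $w:=u^\infty-G_\infty$ is globally harmonic, $2\pi$-periodic in $\tilde\theta$, and bounded; Liouville on the cylinder forces $w$ to be constant, and the constant vanishes by matching the zero Fourier mode (e.g.\ by odd symmetry in $\hat s$). This gives $u^\infty=G_\infty$; uniqueness of the subsequential limit upgrades the convergence to the full sequence.

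The main obstacle is the asymptotic matching at $|\hat s|\to\infty$: the subtraction $\underline{\phi}[\phi_1,m\hat h(\underline{r})]$ is engineered precisely so that the residual $\underline{j}[m/(2\underline{r})]$ delivers the correct slope $\pm 1/2$, and verifying that $e^\omega\Phi'$ introduces no additional linear growth depends on the sharp $C^k(\tilde g)$-uniform boundedness of \ref{lem:Phiprime}\ref{item:Phiprime}. A mode-by-mode alternative also works: the $\mathbf D_m$-symmetry identifies non-trivial $\theta$-modes of $\Phi-\phi$ with integer modes on the cylinder, and each mode's radial ODE \eqref{eq:jacobik} rescales to $u''-k^2 u=0$ with decaying solution $e^{-k|\hat s|}$, reproducing the Fourier expansion $G_\infty-|\hat s|/2=-\sum_{k\geq 1}e^{-k|\hat s|}\cos(k\tilde\theta)/k$.
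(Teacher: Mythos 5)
Your overall strategy (blow up at scale $1/m$, extract a limit by elliptic compactness, identify it as $G_\infty$ through its singularities, periodicity, linear growth and a Liouville argument on the cylinder) is sound, and it is in fact a written-out version of what the paper simply imports by citing \cite[Lemma 9.26]{LDg}; the zero-mode normalization $G_{\infty,avg}(0)=0$ that you need at the end is exactly the point the paper's proof singles out. However, there is one concrete false step. You claim that the pullback of a compact $K\subset\R_+\times\R\setminus\{(0,2\pi k)\}$ avoids the support of $\hat G$. The support of $\hat G$ is $D_L(3\delta)$ with $\delta=1/(100m)$, and under the rescaling $(\hat s,\tilde\theta)=(m\log(r/\ru),m\theta)$ a Euclidean disc of radius $3/(100m)$ about $p\in L$ becomes a region of \emph{fixed} size $\approx 3/(100\,\ru)$ about the lattice point $(0,2\pi k)$ — it does not shrink. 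So any compact $K$ coming within rescaled distance $3/(100\,\ru)$ of a lattice point meets $\supp\hat G$ for every $m$, and the claim fails there. This cannot be a harmless omission: $\hat G$ is precisely the piece carrying $\tau\log\dist_p$, i.e.\ the logarithmic singularity of $G_\infty$; if it really vanished on all such $K$, your subsequential limit would be bounded near the lattice, the singularities would be removable, and Liouville would force $u^\infty=|\hat s|/2+\mathrm{const}$ rather than $G_\infty$. Your own singularity computation two sentences later (which correctly cancels the $\log m$'s between $e^{\omega}G_p$ and $-e^{\omega}\underline{\phi}[\log\delta,0]$) tacitly contradicts the vanishing claim. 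The fix is routine — on the pullback of $K$ one has $\dist_L\gtrsim_K 1/m$, so \ref{lem:GE}\ref{item:Ghat} gives uniform $C^k(\tilde g)$ bounds for $e^{\omega}\hat G$ there, and \ref{lem:green}\ref{item:greenest} together with $\dist^g_p\sim \frac{\ru}{m}\dist^{\mathrm{cyl}}$ identifies its limit as $\log\dist^{\mathrm{cyl}}+O(1)$ on the fixed annuli — but as written the compactness and limit identification are not established on those $K$.

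Two smaller points. First, the constant in the Liouville step is not fixed by ``odd symmetry in $\hat s$'' (neither $G_\infty$ nor the rotational average $e^{\omega}\underline{j}[m/(2\ru)]\approx|\hat s|/2$ is odd); what you actually need is that the $\tilde\theta$-average of $u^{(m)}$ equals $e^{\omega}\underline{j}[m/(2\ru)]$, which vanishes at $\hat s=0$ by \ref{def:phiju}, matching $G_{\infty,avg}(0)=0$. Second, in your mode-by-mode alternative, the rescaled limit $u''-k^2u=0$ requires the angular term $-K^2/r^2$ with $K=km$ in the radial ODE; taking \eqref{eq:jacobik} literally (with $-k^2$ in place of $-k^2/r^2$) would instead produce $u''-k^2\ru^2u=0$, so be careful which form of the mode equation you invoke.
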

	\begin{proof}
		The same as \cite[Lemma 9.26]{LDg}. The function $-\Tcal_{\exp(\omega)}\left(\underline{\phi}[\phi_1,m\hat{h}(\ru)]\right)$ is the function $\phi_m$ there. Moreover, here $G_{\infty,\avg}(0,0)=\partial_+G_{\infty,\avg}(0,0)-\partial_-G_{\infty,\avg}(0,0)=0$ by \cite[remark 9.27]{LDg}.
	\end{proof}
	  	
	  	\begin{lemma}\label{lem:MPhi}
	  		For $i=0,1$, consider the four functions $\Phi^i=\Phi^i[L^i]$, $\tilde{\Phi}^i=\Phi^i[\tilde{L}^i]$ as in \ref{def:Phi}, where the four point sets $L^i:=L^i[\ru;m]$, $\tilde{L}^i:=L^i[\tilde{\ru};m]$ are as in \ref{def:L}. When $\Delta {\ru}:=m(\tilde{\ru}-\ru)$ is small enough in terms of absolute value, the following holds as $m\to \infty$, where $q\in L^{1}$, $\tilde{q}\in \tilde{L}^{1}$ when $i=0$ and $q\in L^{0}$, $\tilde{q}\in \tilde{L}^{0}$ when $i=1$.
	  		\begin{align*}
	  			\frac{1}{m}\frac{\partial_{\omega}\tilde{\Phi}^i}{\partial r}\left(\tilde{q}\right)=&\frac{1}{m}\frac{\partial_{\omega}\Phi^i}{\partial r}\left(q\right)-\frac{\Delta \ru}{\ru^2} \left(\frac{1}{4}+o(1)\right)+O(\Delta \ru^2).
	  		\end{align*}
	  		
	  	\end{lemma}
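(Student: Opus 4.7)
The plan is to exploit the asymptotic description in \ref{lem:Gm} for both $\Phi^i$ and $\tilde\Phi^i$ in the scaled cylindrical coordinates centred at their respective singular circles, and then to expand in the small shift $\tilde\ru-\ru=\Delta\ru/m$. By the $\mathbf{D}_m$-symmetry we may reduce to the case $i=0$ with $q=p_1\in L^1$ and $\tilde q$ the analogous point of $\tilde L^1$. In the coordinates $(\hat s,\tilde\theta)$ of \ref{def:hatrs} and \eqref{eq:tildetheta} centred at $p_0\in L^0$ (resp.\ $\tilde p_0\in\tilde L^0$), the evaluation point $q$ (resp.\ $\tilde q$) corresponds to $(0,\pi)$ on the scaled cylinder, i.e.\ the antipode of the collapsed singularity $\{(0,2\pi k):k\in\Z\}$.

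First I would compute the leading order. Using $\partial_r=(m/r)\partial_{\hat s}$ and $\partial_\omega u/\partial r=e^{-\omega}\partial_r(e^\omega u)$,
\[
\tfrac{1}{m}\partial_\omega\Phi^0/\partial r(q)=\tfrac{e^{\ru^2/8}}{\ru}\,\partial_{\hat s}\bigl(\Tcal_{e^\omega}\Phi^0\bigr)\big|_{(0,\pi)}.
\]
By \ref{lem:Gm}, $\Tcal_{e^\omega}\Phi^0=\Tcal_{e^\omega}\underline\phi[\phi_1,m\hat h(\ru);\ru]+G_\infty+o(1)$ in $C^\infty_{\mathrm{loc}}$. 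A direct computation from the explicit formula gives $\partial_{\hat s}G_\infty(\hat s,\pi)=\tanh(\hat s/2)$, which vanishes at $\hat s=0$, while the $\underline\phi$-contribution yields $\ru\hat h(\ru)$ by the initial data in \ref{def:phiju}. Hence $\tfrac{1}{m}\partial_\omega\Phi^0/\partial r(q)=e^{\ru^2/8}\hat h(\ru)+o(1)$, and the same with $\ru$ replaced by $\tilde\ru$ at $\tilde q$.

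Next I split
\[
\tfrac{1}{m}\partial_\omega\tilde\Phi^0/\partial r(\tilde q)-\tfrac{1}{m}\partial_\omega\Phi^0/\partial r(q)=\underbrace{\bigl[\tfrac{1}{m}\partial_\omega(\tilde\Phi^0-\Phi^0)/\partial r(\tilde q)\bigr]}_{(A)}+\underbrace{\bigl[\tfrac{1}{m}\partial_\omega\Phi^0/\partial r(\tilde q)-\tfrac{1}{m}\partial_\omega\Phi^0/\partial r(q)\bigr]}_{(B)}.
\]
For $(B)$, $\Phi^0$ evaluated at $\tilde q$ sits at scaled coordinates $(\Delta\ru/\ru+O(\Delta\ru^2/m),\pi)$ in the centre-$\ru$ system; applying \ref{lem:Gm} again and Taylor expanding $\tanh(\hat s/2)$ gives the contribution $e^{\ru^2/8}\tanh(\Delta\ru/(2\ru))/\tilde\ru$ from $G_\infty$, while the $\underline\phi$-piece varies by only $O(\Delta\ru/m)$. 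For $(A)$, applying \ref{lem:Gm} to $\tilde\Phi^0$ in the centre-$\tilde\ru$ system and subtracting the centre-$\ru$ expansion of $\Phi^0$ reduces, to leading order in $m$, to comparing $G_\infty(\hat s_{\tilde\ru},\tilde\theta)$ and $G_\infty(\hat s_{\tilde\ru}+\Delta\ru/\ru,\tilde\theta)$ at the common point $\tilde q$, together with the smooth $\ru$-dependence of $\phi_1$ and $\hat h$ (the latter contributing via $\phi_1'(\ru)=2m\hat h(\ru)$, obtained from \ref{not:hhat}). A careful bookkeeping of these opposing contributions, including the $e^\omega$ conformal factor and the chain rule $\partial_r=(m/r)\partial_{\hat s}$, isolates the claimed linear coefficient $-1/(4\ru^2)$; the $O(\Delta\ru^2)$ remainder absorbs the quadratic $\sinh^2$-part of $G_\infty$ and the quadratic smooth corrections in $\underline\phi$.

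The main obstacle is that the two leading terms $e^{\ru^2/8}\hat h(\ru)$ and $e^{\tilde\ru^2/8}\hat h(\tilde\ru)$ nearly cancel, their difference being only $O(\Delta\ru/m)\to 0$; the nontrivial $O(\Delta\ru)$ coefficient $-1/(4\ru^2)$ must therefore be extracted from subleading contributions in \ref{lem:Gm}, where the explicit form of $G_\infty$ at the antipode $(0,\pi)$ meets the $\ru$-dependence of the background $\underline\phi$. This delicate matched expansion on the scaled cylinder is the technical heart of the argument, and follows the general treatment in \cite[Section 5.2.5]{zouthesis} specialised to the simpler situation in which the evaluation point lies antipodally to the collapsed singular set.
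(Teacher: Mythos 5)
Your reduction and your leading-order computation are sound and follow the paper's route: in the scaled cylinder coordinates the evaluation point sits at the antipode $(0,\pi)$, the Green's part contributes nothing at first order since $\partial_{\hat s}G_\infty(0,\pi)=0$, and the background $\underline{\phi}[\phi_1,m\hat h(\ru)]$ yields $e^{\ru^2/8}\hat h(\ru)$. (One slip: $G_\infty(\hat s,\pi)=\log 2+\log\cosh(\hat s/2)$, so $\partial_{\hat s}G_\infty(\hat s,\pi)=\tfrac12\tanh(\hat s/2)$, not $\tanh(\hat s/2)$; the missing $\tfrac12$ is exactly what makes $\partial^2_{\hat s}G_\infty(0,\pi)=\tfrac14$.)

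The gap is in the extraction of the linear coefficient, which you assert ("a careful bookkeeping\dots isolates $-1/(4\ru^2)$") but do not compute, and which your own decomposition cannot deliver. Carrying out your terms: $(B)$ displaces the evaluation point to $(+\Delta\ru/\ru,\pi)$ in the $\ru$-centred chart and contributes $+\tfrac{e^{\ru^2/8}}{\ru}\cdot\tfrac12\tanh(\Delta\ru/(2\ru))\approx+\tfrac{\Delta\ru}{4\ru^2}e^{\ru^2/8}$; in $(A)$, $\tilde\Phi^0$ at $\tilde q$ sits again at the exact antipode of its own singular set (contribution $0$ from $G_\infty$), while the subtracted $\Phi^0(\tilde q)$ carries the same $+\tfrac{\Delta\ru}{4\ru^2}e^{\ru^2/8}$ just computed, so $(A)\approx-\tfrac{\Delta\ru}{4\ru^2}e^{\ru^2/8}+O(\Delta\ru/m)$. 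Hence $(A)+(B)$ cancels at order $\Delta\ru$: when the singular circle and the evaluation point move \emph{together}, both configurations are antipodal in their respective scaled charts and no first-order term survives. The coefficient $-\tfrac{1}{4\ru^2}$ arises only from a \emph{relative} displacement between the evaluation point and the singular circle, which is how the lemma is actually invoked in the estimate of $\mu'_{j,\pm}$ in \ref{lem:miss}: there the evaluation point stays at radius $\ru$ while the singularities of $\tilde\Phi^i$ sit at radius $\tilde\ru$, so in the $\tilde\ru$-centred chart the point is at $\hat s=-\Delta\ru/\ru+O(\Delta\ru^2/m)$, and $\partial_{\hat s}G_\infty(\hat s,\pi)=\tfrac{\hat s}{4}+O(\hat s^3)$ combined with the two conformal factors $e^{-\su}=\ru^{-1}$ gives $-\tfrac{\Delta\ru}{4\ru^2}$ — this is precisely what the paper's citation of $\partial^2_{\hat s}G_\infty(0,\pi)=\tfrac14$ and of $e^{-2\su}=\ru^{-2}$ encodes. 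You need to identify this single net displacement explicitly; as your argument stands, the bookkeeping closes to $O(\Delta\ru/m)+o(1)$ and the claimed term never appears.
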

	  	\begin{proof}
	  		The term $-\Tcal_{\exp(\omega)}\left(\underline{\phi}[\phi_1,m\hat{h}(\ru)]\right)$ in \ref{lem:Gm} can be controlled by \ref{not:hhat} and \ref{def:phiju}. The conclusion the follows by \ref{lem:Gm} and an expansion from the calculations 
	  		\begin{align*}
	  			\frac{\partial G_{\infty}}{\partial \hat{s}}\left(0,\pi\right)=0,\quad
	  			\frac{\partial^2 G_{\infty}}{\partial \hat{s}^2}\left(0,\pi \right)=\frac{1}{4}. 	
	  		\end{align*}
	  		Notice that the conformal change \eqref{eq:ThetaCyl} provides extra coefficient $e^{-2\su}=\ru^{-2}$.
	  	\end{proof}
	  	
	  		\begin{lemma}\label{lem:Phihat}
	  	For the two functions $\Phi=\Phi^i$ and $\tilde{\Phi}=\tilde{\Phi}^i$ and $\Delta {\ru}:=m(\tilde{\ru}-\ru)$ small enough as in \ref{lem:MPhi}, the following hold on $\R^2$.
	  		\begin{align*}
	  		\abs{\hat{\tilde{\Phi}}-\hat{\Phi}}\lesssim \Delta {\ru}\cdot (r+1).
	  		\end{align*}
	  		where $\hat{\tilde{\Phi}}$ and $\hat{\Phi}$ is the corresponding (rotationally invariant) term in the decomposition \ref{def:decompose}.
	  	\end{lemma}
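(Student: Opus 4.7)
The plan is to view $\hat\Phi$ as an explicit rotationally invariant function of the single parameter $\ru$, given by the formulas in \ref{def:decompose} combined with \ref{lem:Phiavg} and \ref{not:phihat}, and to control $\hat{\tilde\Phi}-\hat\Phi$ via the mean value theorem in $\ru$. Since $\tilde\ru-\ru=\Delta\ru/m$, this reduces the claim to the uniform pointwise estimate
\[
|\partial_{\ru}\hat\Phi(r;\ru)|\lesssim m(r+1)
\]
for $\ru$ in the compact admissible range $((r_u+\rbalanced)/2,(r_m+\rbalanced)/2)$.

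I will write $\hat\Phi=\phi_1[\ru]\,\hat\phi[\ru]$ and handle the two factors separately. The scalar $\phi_1[\ru]=\sqrt\pi\,m\,\phi_m(\ru)\phi_u(\ru)e^{-\ru^2/4}$ from \eqref{eq:phiunum} is smooth in $\ru$, so $|\phi_1|+|\partial_{\ru}\phi_1|\lesssim m$. For $\hat\phi[\ru]$, the formula in \ref{not:phihat} expresses it as the sum of $\underline\phi[1,c_1(\ru)]$ (solving $\Lcal u=0$ on $\R^2\setminus\{O\}$) and a cutoff supported in a strip of width $3/m$ around $\underline\uC$, with bracketed function $e^{\ru^2/4}\underline{j}[1/(2\ru)]/(\sqrt\pi\phi_m(\ru)\phi_u(\ru))$. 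Using $\phi_u\sim r/2$ at infinity from \ref{lem:phimu}\ref{item:phimusing} and the boundedness of $\phi_m,\phi_u$ on compact sets, the regular piece $\underline\phi[1,c_1(\ru)]$ is $\lesssim r+1$ globally. Its $\ru$-derivative itself solves $\Lcal u=0$ (the ODE has no $\ru$-dependence) with initial data at $r=\ru$ obtained by differentiating in $\ru$ the prescribed conditions in \ref{def:phiju}; these initial data are $O(1)$, so $|\partial_{\ru}\underline\phi[1,c_1(\ru)]|\lesssim r+1$ as well.

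For the cutoff piece, the bracketed function vanishes at $r=\ru$ (by $\underline{j}(\ru)=0$) and has uniformly bounded Lipschitz constant in $r$, so on the support of the cutoff (a strip of width $3/m$) it is $O(1/m)$. The $\ru$-derivative either hits the bracketed function, producing $O(1)$ by differentiating its initial data at $r=\ru$ in \ref{def:phiju}, or hits the cutoff itself, producing a factor $O(m)$ from $\psicut'$ times $|\partial_{\ru}\dist_{\underline\uC}|=O(1)$ applied to a quantity of size $O(1/m)$, again $O(1)$. Combining these with $|\phi_1|,|\partial_{\ru}\phi_1|\lesssim m$ yields $|\partial_{\ru}(\phi_1\hat\phi)|\lesssim m(r+1)$, and multiplying by $\tilde\ru-\ru=\Delta\ru/m$ gives the claim.

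The main bookkeeping obstacle is tracking the three distinct mechanisms of $\ru$-dependence in $\hat\phi$ — through the initial data of $\underline\phi$ and $\underline{j}$ prescribed at the moving point $r=\ru$, through the smooth scalar coefficients, and through the location of the cutoff support — but each contribution is controlled by a straightforward chain rule once the conventions are fixed, so no individual step should be delicate.
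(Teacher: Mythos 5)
Your overall strategy (mean value theorem in $\ru$, reducing the claim to $|\partial_{\ru}\hat\Phi|\lesssim m(r+1)$) is workable in spirit, but the way you split $\hat\phi$ and estimate the two pieces \emph{separately} contains a genuine gap. First, you have the cutoff convention backwards: in \ref{not:phihat} the term $\cutoff{2/m}{3/m}{\dist_{\underline{\uC}}}\bigl(0,c\,\underline{j}\bigr)$ equals $c\,\underline{j}$ \emph{outside} the $3/m$-neighbourhood of $\underline{\uC}$ and vanishes inside the $2/m$-neighbourhood; it is not supported in the strip. Second, and more seriously, the ``regular piece'' $\underline{\phi}[1,c_1(\ru)]$ is \emph{not} $\lesssim r+1$ globally, and neither is its $\ru$-derivative. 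A solution of $\Lcal u=0$ with generic $O(1)$ Cauchy data at $r=\ru$ has a nonzero $\phi_m$-component and hence grows like $r^{-3}e^{r^2/4}$ as $r\to\infty$ by \ref{lem:phimu}\ref{item:phimusing} (and its $\phi_u$-component is logarithmically singular at the origin). One checks that $\underline{\phi}[\phi_1,m\hat{h}(\ru);\ru]$ would be a pure multiple of $\phi_u$ only if $\phi_m'/\phi_m=\phi_u'/\phi_u$ at $\ru$, which the nonvanishing Wronskian forbids. The exponentially growing parts of $\underline{\phi}$ and $\underline{j}$ cancel only in their sum, which outside the strip equals $\phi$ (a pure multiple of $\phi_u$ for $r>\ru$ and of $\phi_m$ for $r<\ru$). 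So the step ``solution of the ODE with $O(1)$ initial data $\Rightarrow O(r+1)$'' is false, and your separate bounds on the two pieces and their $\ru$-derivatives do not hold.

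The repair is essentially the paper's argument: treat the two regions differently rather than the two summands. On $D_{\underline{\uC}}(4/m)$, where $\hat\Phi$ reduces to $\underline{\phi}[\phi_1[\ru],m\hat{h}(\ru);\ru]$ up to a small multiple of $\underline{j}$, the domain is compact and bounded away from $0$ and $\infty$, so continuous dependence of the ODE solution on its Cauchy data and base point gives the $O(\Delta\ru)$ bound there. Outside that neighbourhood, $\hat\Phi=\phi$ is given explicitly by \eqref{eq:phi} as $C(\ru)\,\phi_u(r)$ (resp.\ $C(\ru)\,\phi_m(r)$) with a smooth coefficient satisfying $|C|+|\partial_{\ru}C|\lesssim m$, and differentiating only the coefficient while using $\phi_u\sim r/2$ at infinity and the boundedness of $\phi_m$ on $[0,\ru]$ yields $|\hat{\tilde{\Phi}}-\hat{\Phi}|\lesssim m\cdot\frac{\Delta\ru}{m}\cdot(r+1)$, as required.
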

	  	\begin{proof}
	  		By \ref{def:decompose} and \eqref{eq:Phiavg}, when $r\in (\ru-4/m,\ru+4/m)$, it is equivalent to controlling the differences of the two functions
	  		\begin{equation*}
	  			\underline{\phi}[\phi_1[\ru],m\hat{h}(\ru);\ru],\quad \underline{j}[m/(2\ru);\ru],
	  		\end{equation*}
	  		when $\ru$ varies. By \ref{def:phiju} and \eqref{eq:phiunum} this follows by the continuous dependence of the solution of ODE to the initial conditions. For the other domain, this follows by \eqref{eq:phi} and the asymptotic of $\phi_m$ in \ref{lem:phimu}\ref{item:phimusing}.
	  	\end{proof}

	  	\section{MLD solutions}\label{S:MLD}
	  	\subsection*{Mismatch and obstruction spaces $\skernelv$, $\skernel$}
	  	
	  	\begin{definition}[Spaces of affine functions]\label{def:affine}
	  		Given $p\in\R^2$, let $\Vcal[p]\subset C^{\infty}(T_p\R^2)$ be the space of \emph{affine functions on} $T_p\R^2$. Given a function $v$ which is defined on a neighborhood of $p$ in $\R^2$ and is differentiable at $p$, we define $\Ecalu_pv:=e^{\omega(p)}(v(p)+\dd_{\omega}|_pv)\in\Vcal[p]$ (recall \ref{not:multi}\ref{item:df}). $\forall \kappau\in\Vcal[p]$, let $\kappau=\kappa^{\perp}+\kappa$ be the unique decomposition with $\kappa^{\perp}\in\R$ and $\kappa\in T^*_p\R^2$ and let $\abs{\kappau}:=\abs{\kappa^{\perp}}+\abs{\kappa}$. we also define $\Vcal[L]:=\oplus_{p\in L}\Vcal[p]$ for any finite set $L\in\R^2$.
	  	\end{definition}
	  	
	  	\begin{lemma}\label{lem:varphi}
	  		For the $\mathbf{D}_m$-symmetric LD solution $\varphi=\varphi[\tauu;\Lu;m]$ as in \ref{def:LD}, $\forall p\in L_{\pm}$ there exists a function $\hat{\varphi}_p\in C^{\infty}(D_p(2\delta))$ such that the following hold.
	  		\renewcommand{\theenumi}{\roman{enumi}}
	  		\begin{enumerate}
	  			\item \label{item:varphihat} $\varphi=\varphihat_p\pm\tau_{\pm} G_p$ on $D_p(2\delta)\setminus\{p\}$ (recall \ref{lem:green}\ref{item:greenp}).
	  			\item\label{item:Ep} $\Ecalu_p\varphihat_p:T_p\R^2\to \Vcal[p]$ is independent of the choice of $\delta$ and depends only on $\varphi$.
	  		\end{enumerate}	
	  	\end{lemma}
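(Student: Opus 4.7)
The plan is to set $\hat{\varphi}_p := \varphi \mp \tau_{\pm} G_p$ on $D_p(2\delta)\setminus\{p\}$ (choosing any Green's function $G_p$ provided by \ref{lem:green} on a small enough neighbourhood, then extending by subtracting a smooth function if needed to be defined on the whole punctured disc; alternatively one may shrink $2\delta$ and argue locally). Part \ref{item:varphihat} then holds tautologically on the punctured disc, and the whole content is to show $\hat{\varphi}_p$ extends smoothly across $p$.

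For the extension, first observe that $\Lcal\hat{\varphi}_p = 0$ on $D_p(2\delta)\setminus\{p\}$ since $\Lcal\varphi=0$ there by \ref{def:LD}\ref{item:LDCS} and $\Lcal G_p = 0$ by \ref{lem:green}\ref{item:greenp}. Next, by \ref{def:LD}\ref{item:LDlog} the function $\Tcal_{\exp(\omega)}\varphi \mp \tau_{\pm}\log\dist^{g_{\shr}}_p$ is bounded near $p$, while by \ref{lem:green}\ref{item:greenest} $\Tcal_{\exp(\omega)}G_p - \log\dist^{g_{\shr}}_p$ is bounded near $p$. Subtracting, $\Tcal_{\exp(\omega)}\hat{\varphi}_p$ is bounded near $p$, hence so is $\hat{\varphi}_p$ (the weight $e^{\omega}$ is a smooth positive function). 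A standard removable singularity argument for the uniformly elliptic operator $\Lcal$ on $\R^2$ (for instance, approximate by solutions of the Dirichlet problem on $D_p(2\delta)\setminus D_p(\epsilon)$ and let $\epsilon\to 0$, using that $\log\dist_p$ is the fundamental solution and bounded data kill the singular contribution) then yields $\hat{\varphi}_p \in C^0(D_p(2\delta))$ with $\Lcal\hat{\varphi}_p = 0$ weakly, and elliptic regularity upgrades this to $\hat{\varphi}_p\in C^{\infty}(D_p(2\delta))$.

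For \ref{item:Ep}, suppose $G_p$ and $G'_p$ are two Green's functions satisfying \ref{lem:green}\ref{item:greenp}-\ref{item:greenest} (on domains $D_p(\deltau)$, $D_p(\deltau')$ possibly smaller than $2\delta$, with the corresponding $\hat{\varphi}_p$ and $\hat{\varphi}'_p$ constructed accordingly). By \ref{lem:green}\ref{item:greenuni}, $G_p - G'_p$ extends smoothly to $p$ with value $0$ and zero differential at $p$. Therefore $\hat{\varphi}_p - \hat{\varphi}'_p = \mp\tau_{\pm}(G_p - G'_p)$ vanishes at $p$ together with its differential, so in the formula
\begin{equation*}
	\Ecalu_p(\hat{\varphi}_p - \hat{\varphi}'_p) = e^{\omega(p)}\bigl((\hat{\varphi}_p - \hat{\varphi}'_p)(p) + \dd(\hat{\varphi}_p - \hat{\varphi}'_p)|_p + (\hat{\varphi}_p - \hat{\varphi}'_p)(p)\,\dd\omega|_p\bigr) = 0,
\end{equation*}
which gives $\Ecalu_p\hat{\varphi}_p = \Ecalu_p\hat{\varphi}'_p$. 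Since the ambiguity in choosing $\delta$ (within the range where \ref{lem:green} applies) is exactly the ambiguity of picking a representative $G_p$, this proves the asserted independence.

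The only point requiring any care is the removable singularity step, since the solutions of interest are only bounded after conjugation by $\Tcal_{\exp(\omega)}$; but $\omega$ is smooth and bounded on $D_p(2\delta)$, so this equivalence is trivial and no essential obstacle arises. Everything else is bookkeeping assembled from \ref{def:LD} and \ref{lem:green}.
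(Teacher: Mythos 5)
Your proposal is correct and follows essentially the same route as the paper: define $\hat{\varphi}_p:=\varphi\mp\tau_{\pm}G_p$, use the matching logarithmic behaviours from \ref{def:LD}\ref{item:LDlog} and \ref{lem:green}\ref{item:greenest} together with removability of a bounded singularity to get smoothness, and invoke \ref{lem:green}\ref{item:greenuni} for the well-definedness of $\Ecalu_p\varphihat_p$. The paper states this in one line; your write-up just supplies the standard details.
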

	  	\begin{proof}
	  		\ref{item:varphihat} follows from \ref{def:LD}\ref{item:LDlog} and \ref{lem:green}\ref{item:greenest} 
	  		and \ref{item:varphihat} then serves as the definition of $\varphihat_p$. \ref{item:Ep} follows then from \ref{item:varphihat} and \ref{lem:green}\ref{item:greenuni}. 
	  	\end{proof}
	  	
	  	\begin{definition}[Mismatch of LD solutions]\label{def:mismatch}
	  		For the $\mathbf{D}_m$-symmetry LD solution $\varphi=\varphi[\tauu;\Lu;m]$ as in \ref{lem:LD}, and a pair of two real numbers $\hu:=(h_+,h_-)$ (we use the convention $h_{\pm}=0$ when $L_{\pm}$ is empty), we define the \emph{mismatch of $\varphi$}, $\Mcal^{\hu}_{\Lu}\varphi\in\Vcal[\Lu]$, by $\Mcal^{\hu}_{\Lu}\varphi:=\oplus_{p\in L_+\sqcup L_-}\Mcal^{h_{\pm}}_p\varphi$, where $\Mcal^{h_{\pm}}_p\varphi\in\Vcal[p]$ for $p\in L_{\pm}$ is defined by (recall \eqref{eq:w})
	  		\begin{align}\label{eq:mismatch}
	  			\Mcal^{h_{\pm}}_p\varphi&:=\Ecalu_p\varphihat_p\pm\tau_{\pm}\log\frac{\tau_{\pm}}{2}-h_{\pm}.
	  		\end{align} 
	  	\end{definition}
	  	
	  	\begin{remark}
	  			From a Taylor expansion of $\varphihat_p$ combined with \ref{lem:green}\ref{item:greenest}, $\Mcal_p^{h_{\pm}}$ can be equivalently defined by requesting that for small $v\in T_p\R^2$
	  			\begin{equation*}
	  				\left(\Tcal_{\exp(\omega)}\varphi\right)\circ\exp_p^{g_{\shr}}(v)=\pm\tau_{\pm}\log\frac{2\abs{v}_{g_{\shr}|_p}}{\tau_{\pm}}+h_{\pm}+(\Mcal^{h_{\pm}}_p\varphi)(v)+O(\abs{v}^2\log\abs{v}),
	  			\end{equation*}
	  			or by \ref{lem:logconformal},
	  		\begin{equation*}
	  			\left(\Tcal_{\exp(\omega)}\varphi\right)\circ\exp_p^{g}(v)=\pm\tau_{\pm}\log\frac{2\abs{v}}{\tau_{\pm}}+h_{\pm}\pm\tau_{\pm}\frac{r^2(p)}{8}\mp\tau_{\pm}\frac{r(p)}{8}\dr(v)+(\Mcal^{h_{\pm}}_p\varphi)(v)+O(\abs{v}^2\log\abs{v}).
	  		\end{equation*}
	  		Notice that $\Tcal_{\exp(\omega)}\varphi$ is an LD solution with respect to the metric $g_{\shr}$ in the sense of \cite[Definition 3.10]{LDg}.
	  	\end{remark}
	  	
	  	\begin{definition}\label{def:VW}
	  		Given a $\mathbf{D}_m$-symmetric finite set $L\subset\uC[c]$ (recall \eqref{eq:circle}), We define the functions $V=V[L]$, $W=W[L]$, $V'=V'[L]$, $W'=W'[L]\in C^{\infty}_{\sym}(\R^2)$ by (recall \ref{def:phiju})
	  		\begin{align}\label{eq:VW}
	  			V[L]:=\cutoff{\delta}{2\delta}{\dist_{L}}(\underline{\phi}[1,0;c],0),\quad W[L]:=\Lcal V\\
	  			V'[L]:=\cutoff{\delta}{2\delta}{\dist_{L}}(\underline{\phi}[0,1;c],0),\quad W'[L]:=\Lcal V,\nonumber
	  		\end{align}
	  		on $D_{L}(2\delta)$ and $0$ otherwise.
	  		
	  		For a collection of $\mathbf{D}_m$-symmetric finite sets $L_i$ such that $L_i\subset\uC[c_i]$ and $L:=\sqcup_i L_i$, we then define spaces $\skernelv[L],  \skernel[L]\subset C^{\infty}_{\sym}(\R^2)$ by
	  		\begin{align*}
	  			\skernelv[L]:=\oplus_i\mathrm{span}\{V[L_i],V'[L_i]\},&\quad \skernel[L]:=\oplus_i\mathrm{span}\{W[L_i],W'[L_i]\}.
	  		\end{align*}
	  	\end{definition}
	  	\begin{lemma}[Obstruction spaces]\label{lem:obstr}
	  		The following hold for $\skernelv=\skernelv[L]$.
	  		\begin{enumerate}[(i)]
	  			\item\label{item:vsupp} The functions in $\skernelv$ are supported on $D_{L}(4\delta)$.
	  			\item\label{item:wsupp} The functions in $\skernel$ are supported on $\sqcup_{p\in L}D_p(4\delta)\setminus D_p(\delta/4)$.
	  			\item\label{item:V} For $V,V'\in \skernelv$, $\norm{V:C^k(\R^2 ,\tilde{g})}\lesssim_k 1$, $\norm{V':C^k(\R^2 ,\tilde{g})}\lesssim_k 1$.
	  		\end{enumerate}	
	  	\end{lemma}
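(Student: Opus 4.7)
The three parts of the lemma reduce to the structural properties of the cutoff $\psicut[2\delta,\delta]\circ\dist_L$ combined with what is known about $\underline{\phi}[1,0;c]$ and $\underline{\phi}[0,1;c]$ from \ref{lem:phiju}.

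For part \ref{item:vsupp} I will simply read the support constraint off the definition \eqref{eq:VW}: the cutoff $\psicut[2\delta,\delta]\circ\dist_L$ vanishes outside $D_L(2\delta)$, so both $V[L]$ and $V'[L]$ are supported in $D_L(2\delta)\subset D_L(4\delta)$, and hence so is every element of $\skernelv$.

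For part \ref{item:wsupp} I will exploit the fact that on $D_L(\delta)$ the cutoff equals $1$, reducing $V[L_i]$ to $\underline{\phi}[1,0;c_i]$ and $V'[L_i]$ to $\underline{\phi}[0,1;c_i]$. By \ref{def:phiju} these rotationally invariant solutions satisfy $\Lcal\underline{\phi}=0$ on $\R^2\setminus\{O\}$, and for $m$ large the origin is not in $D_L(\delta)$ (cf.\ \ref{def:delta}), so $W[L_i]=\Lcal V[L_i]$ and $W'[L_i]=\Lcal V'[L_i]$ both vanish on $D_L(\delta)$. Combined with the vanishing outside $D_L(2\delta)$ from part \ref{item:vsupp}, the supports of $W[L_i]$ and $W'[L_i]$ lie in $D_L(2\delta)\setminus D_L(\delta)$. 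Using the disjointness assumption in \ref{def:delta}, this region coincides with $\sqcup_{p\in L_i}\bigl(D_p(2\delta)\setminus D_p(\delta)\bigr)\subset \sqcup_{p\in L}\bigl(D_p(4\delta)\setminus D_p(\delta/4)\bigr)$.

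For part \ref{item:V} I will factor each generator of $\skernelv$ as (radial function)\,$\times$\,(cutoff) and invoke the multiplicative property \eqref{E:norm:mult}. The cutoff $\psicut[2\delta,\delta]\circ\dist_L$ has $k$-th derivatives of order $(1/\delta)^k=O(m^k)$ in $g$; since $\tilde g=m^2 g$ rescales $k$-th derivatives by $m^{-k}$, its $C^k(\tilde g)$-norm is $O(1)$. For the radial factor of $V[L]$, \ref{lem:phiju}\ref{item:phiu} (applied with $x$ a fixed small absolute constant, so that $x/m\ge 2\delta$) gives $\|\Tcal_{\exp(\omega)}\underline{\phi}[1,0;c]-1:C^k(D_L(2\delta),\tilde g)\|\lesssim_k m^{-2}$; since $e^{\omega}$ is smooth and bounded away from $0$ and $\infty$ on $D_L(2\delta)$, $\underline{\phi}[1,0;c]$ itself is $O(1)$ in $C^k(\tilde g)$. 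For the radial factor of $V'[L]$, I will use linearity to write $\underline{\phi}[0,1;c]=m^{-1}\underline{\phi}[0,m;c]$, apply \ref{lem:phiju}\ref{item:phiub}, and note that $\hat{s}=m\log(r/\ru)$ is $O(1)$ in $C^k(\tilde g)$ on the support (since $|r-\ru|\le 2\delta=O(1/m)$), yielding in fact the stronger bound $\|V':C^k(\R^2,\tilde g)\|\lesssim_k m^{-1}$, which trivially implies the claimed $\lesssim_k 1$. The main (and only) point requiring any care is the consistent conversion between derivatives in $g$ and in $\tilde g$; once this scaling is handled the rest is direct bookkeeping.
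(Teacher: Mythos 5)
Your proposal is correct and fills in, accurately, exactly the routine verification that the paper compresses into ``follows by the definitions \ref{def:phiju} and \ref{def:VW}'': the support statements come from the cutoff structure together with $\Lcal\underline{\phi}=0$ away from the origin, and the $C^k(\tilde g)$ bounds come from the $\delta^{-k}=O(m^k)$ derivative growth of the cutoff cancelling against the $m^{-k}$ rescaling of $\tilde g$, combined with \ref{lem:phiju}\ref{item:phiu}--\ref{item:phiub}. There is no substantive difference in approach.
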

	  	
	  	\begin{proof}
	  		\ref{item:vsupp}, \ref{item:wsupp} and \ref{item:V} follow by the definitions \ref{def:phiju} and  \ref{def:VW}. 
	  	\end{proof}
	  	\begin{lemma}\label{item:LDLpm}
	  		For the $\mathbf{D}_m$-symmetry LD solution $\varphi=\varphi[\tauu;\Lu;m]$ as in \ref{lem:LD}, suppose that either $L_+=L^0$ and $L_-=L^1$, or $L_-=L^0$ and $L_+=L^1$ for some $L^0=L^0[r_0]$, $L^1=L^1[r_1]$ as in \ref{def:L}, when $L_{\pm}$ are both non-empty; $L_{+}$ is the same as either $L^0$ or $L^1$ if $L_{-}$ is empty; and $L_{-}$ is the same as either $L^0$ or $L^1$ if $L_{+}$ is empty, then the following hold with $L=L_+\sqcup L_-$.
	  		\begin{enumerate}[(i)]
	  		\item\label{item:varphisym} $\oplus_{p\in L_+\sqcup L_-}\Ecalu_p C^{\infty}_{\sym}(\R^2)$ is a four-dimensional subspace of $\Vcal[L]=\Vcal[L_+]\oplus \Vcal[L_-]$ when $\L_{\pm}\neq \emptyset$; and a two-dimensional subspace of $\Vcal[L]$ when one of $L_{\pm}$ is empty. We denote this space by $\Vcal_{\sym}[L]$.
	  		\item\label{item:EL} For $\skernelv[L]=\skernelv[L_+]\oplus\skernelv[L_-]$, $\Ecal_{\Lu}:\skernelv[L]\to\Vcal_{\sym}[L]$ defined by $\Ecal_{\Lu}(v):=\sum_{p\in L_{\pm}}\Ecalu_p v$ is a linear isomorphism, and $\norm{\Ecal_{\Lu}^{-1}}\lesssim m^{2+\beta}$, where $\norm{\Ecal_{\Lu}^{-1}}$ is the operator norm of $\Ecal_{\Lu}^{-1}:\Vcal_{\sym}[L]\to\skernelv[L]$ with respect to the $C^{2,\beta}(\R^2,g)$ norm on the target and the maximum norm on the domain subject to the metric $g$ on $\R^2$.
	  	\end{enumerate}
	  	\end{lemma}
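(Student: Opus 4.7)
The plan is to prove both parts by computing $\Ecalu_p$ on the natural basis $\{V[L_i], V'[L_i]\}$ of $\skernelv[L_i]$ and showing that it maps onto a basis of an explicitly identified 2-dimensional subspace of $\Vcal[p]$.

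For part \ref{item:varphisym}, I would first fix $p \in L^i$ and observe that the stabilizer of $p$ in $\mathbf{D}_m$ contains the reflection $\sigma_p$ (namely $\sigma_v[0]$ for $p_0$ and $\sigma_v[\pi/m]$ for $p_1$) which fixes the radial direction at $p$ and negates the tangential one. For any $v \in C^{\infty}_{\sym}$, the identity $v = v \circ \sigma_p$ together with the rotation-invariance of $\omega$ forces $\dd_\omega v|_p$ to be $\sigma_p^*$-invariant, so $\Ecalu_p v$ lies in $\mathrm{span}\{1, \dr\} \subset \Vcal[p]$, a 2-dimensional subspace. Equivariance under the remaining action of $\mathbf{D}_m$ ties $\Ecalu_{p'} v$ for $p'$ in the orbit of $p$ to $\Ecalu_p v$, so each orbit contributes at most 2 dimensions to $\Vcal_{\sym}[L]$. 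Equality of dimensions will follow from the explicit construction in part \ref{item:EL}.

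For part \ref{item:EL}, I would use the initial data in \ref{def:phiju} to compute the images directly. Near each $p \in L_i$ the cutoff in \eqref{eq:VW} equals 1, so $V[L_i] = \underline{\phi}[1, 0; c_i]$ and $V'[L_i] = \underline{\phi}[0, 1; c_i]$ in a neighborhood of $p$ (with $c_i = r^i$). Using $\Ecalu_p u = e^{\omega(p)}(u(p) + \dd_\omega u|_p)$, the factor $e^{c_i^2/8}$ appearing in the initial conditions of $\underline{\phi}$ cancels exactly the weight $e^{\omega(p)} = e^{-c_i^2/8}$, yielding $\Ecalu_p V[L_i] = 1$ and $\Ecalu_p V'[L_i] = \dr$. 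Since $V[L_i]$ and $V'[L_i]$ are rotationally invariant (hence $\mathbf{D}_m$-symmetric), equivariance on the orbit extends this to a linear isomorphism $\skernelv[L_i] \to \Vcal_{\sym}[L_i]$. Taking the direct sum over the nonempty $L_\pm$ shows that $\Ecal_{\Lu}$ is a linear isomorphism and simultaneously confirms the dimension count (4 when both $L_\pm$ are nonempty, else 2).

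For the norm bound, \ref{lem:obstr}\ref{item:V} gives $\norm{V[L_i], V'[L_i]: C^k(\R^2, \tilde g)} \lesssim_k 1$. Converting from the rescaled metric $\tilde g = m^2 g$ back to $g$ multiplies $j$-th derivatives by $m^j$ and the $\beta$-H\"older seminorm of the $k$-th derivatives by $m^{k+\beta}$, so $\norm{V, V': C^{2,\beta}(\R^2, g)} \lesssim m^{2+\beta}$. Since the matrix of $\Ecal_{\Lu}$ in the explicit bases $\{V[L_i], V'[L_i]\}$ and $\{1, \dr\}$ has uniformly bounded entries, $\norm{\Ecal_{\Lu}^{-1}} \lesssim m^{2+\beta}$ follows immediately. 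The only real difficulty is the careful bookkeeping of the various weight factors ($e^{c_i^2/8}$ from \ref{def:phiju} and $e^{\omega(p)}$ from the definition of $\Ecalu_p$) so that the basis images come out with the correct unit coefficients; no genuine analytic obstacle is anticipated.
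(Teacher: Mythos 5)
Your proposal is correct and follows essentially the same route as the paper's (very terse) proof: part (i) from the reflection in the stabilizer of each $p\in L^i$ forcing $\Ecalu_p v\in\mathrm{span}\{1,\dr\}$, and part (ii) from the explicit computation $\Ecalu_p V[L_i]=1$, $\Ecalu_p V'[L_i]=\dr$ (the $e^{c_i^2/8}$ in \ref{def:phiju} cancelling $e^{\omega(p)}$) together with the $C^k(\R^2,\tilde g)$ bounds of \ref{lem:obstr}\ref{item:V} rescaled to $g$. One small wording slip: $V[L_i]$ and $V'[L_i]$ are not rotationally invariant (they are cutoffs supported near the finite orbit $L_i$), only $\mathbf{D}_m$-symmetric; what your argument actually uses is that they agree with the rotationally invariant $\underline{\phi}[1,0;c_i]$, $\underline{\phi}[0,1;c_i]$ on a neighbourhood of each $p\in L_i$, which is exactly what \eqref{eq:VW} provides.
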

	  	\begin{proof}
	  		\ref{item:varphisym} follows from the symmetry imposed on $\varphi$. \ref{item:EL} follows from \ref{def:affine} and \ref{lem:obstr}\ref{item:V}.
	  	\end{proof}
	  	\subsection*{The family of LD solutions}
	  	We now start the construction of the LD solution. 
	  	
	  	Recall \ref{lem:LD}, in general, there are four parameters for each LD solution: the radii ($r_{\ell}$'s) of the circles where $L^0$ and $L^1$ lie, which will be the positions of catenoidal bridges; and the strengths ($\tau_{\ell}$'s) of the singularities respectively, which will be the waists of catenoidal bridges. 
	  	
	  	However, these parameters are not independent. We will connect the two adjacent levels of the graphs of the LD solutions by $m$ catenoidal bridges (see \ref{def:initial}). Therefore, the singularities of the adjacent levels have to relate each other: the $L^+$ of level $j+1$ and the $L^-$ of level $j$ not only have to be both either $L^0$ or $L^1$ (recall \ref{item:LDLpm}), but the circles where they lie have to be very close to each other. Actually, in this section, we will set them to be the same (see \eqref{eq:Lpm} and \eqref{eq:L}), and postpone this dislocation in the next section from the tilt $\kappa$ of catenoid (see \ref{def:catebridge}).
	  	
	  	Overall, the symmetry in \ref{N:G} then leads to the following space of symmetric parameters. The full space of parameters will be defined in \ref{def:Pcal}.
	  	\begin{definition}[The space of symmetric parameters]\label{def:zeta}
	  		We define \emph{the space of symmetric parameters} (recall \ref{not:Jm})
	  		\begin{equation*}
	  			\Pcal_s:=\begin{cases}
	  				&\R^{2J+1},\text{ when }J \text{ is a half integer},\\
	  				&\R^{2J},\text{ when }J \text{ is an integer}.
	  			\end{cases}
	  		\end{equation*}
	  		 The continuous parameters of the LD solutions are
	  		 \begin{equation*}
	  		 	\zetaboldu=(\zeta,\zetabold,\zeta',\zetabold'):=
	  		 	\begin{cases}
	  		 		&(\zeta,(\zeta_i)_{i=1/2}^{J-1},\zeta',(\zeta'_i)_{i=1/2}^{J-1})\in\BPcals,\text{ when }J \text{ is a half integer},\\
	  		 		&(\zeta,(\zeta_i)_{i=1}^{J-1},\zeta',(\zeta'_i)_{i=1}^{J-1})\in\BPcals,\text{ when }J \text{ is an integer},
	  		 	\end{cases}
	  		 \end{equation*} 
	  		  where 
	  		\begin{equation}\label{eq:zeta}
	  			\BPcals:=\{	\zetaboldu=(\zeta,\zetabold,\zeta',\zetabold')\in\Pcal_s:\abs{\zeta}+ \abs{\zetabold}+\abs{\zeta'}+ \abs{\zetabold'}\leq \cu\},
	  		\end{equation}
	  			where $\cu$ is a constant which can be taken as large as needed in terms of absolute constants, independent of $m$ fixed and will be fixed later (cf. the proof of \ref{thm}).
	  	\end{definition}
	  	
We now give the LD solutions. As explained above, they are determined by the radii of the circles $r_{\ell}$'s and the strengths $\tau_{\ell}$'s. The $r_{\ell}$'s all have to be very close to the radius $\rbalanced$ in \ref{lem:phimu}\ref{item:rbalanced}, which is the root of $\hat{h}$ (recall \ref{not:hhat}), so that the averages of LD solutions are all almost "balanced" at the jumps (the "horizontal balancing" in \cite{kapouleas:equator}). The dislocation analysis from \ref{lem:MPhi} leads to the expression \eqref{eq:r} (see the estimates of $\mu'_{j,\pm}$ in \ref{lem:miss}).

The motivation of the formul{\ae} \eqref{eq:tau} and \eqref{eq:h} for $\tau_{\ell}$'s and $h_{\ell}$'s is more complicated. The mismatch \eqref{eq:mismatch} relates both $\tau_{\ell}$'s and parameters $h_{\ell}$'s, which will correspond to the waists and heights of the catenoidal bridges. However, without putting extra dislocations (the $\kappa^{\perp}$ in \ref{def:catebridge}), both $\tau_{\ell}$'s and $h_{\ell}$'s will be determined from the system of equations by vanishing the mismatches in \eqref{eq:mismatch} (the "vertical balancing" in \cite{kapouleas:equator}). Indeed, if one writes such equations and linearises them in a certain way, the system will be reduced to a linear eigenvalue problem in \ref{lem:linearalg}. A similar system has been studied by Wiygul in \cite[(3.11)-(3.17)]{wiygul:stacking}. We omit this part here and check both balancings in \ref{lem:miss}. One can also compare \eqref{eq:tau} with \cite[(2.14),(3.12),(3.14),(3.16),Lemma 3.18]{wiygul:stacking} and \cite[(2.14)-(2.15)]{CSW}, and compare \eqref{eq:h} with \cite[(2.16)-(2.17)]{wiygul:stacking} and \cite[(2.17),(2.19)]{CSW}.

As stated before \ref{not:Jm}, $J$ is an analogy of \emph{total angular momentum quantum number}. We then use $j=-J,-J+1,\dots,J$ to label the $2J+1$ levels or LD solutions, as an analogy  of the \emph{total angular momentum projection quantum numbers}. As for a spin-$\frac{1}{2}$ system, we use $\ell=-J+1/2,-J+3/2,\dots,J-1/2$ to label the intervals between the levels, as an analogy of the possible \emph{magnetic quantum numbers}. 

\begin{definition}
	We define the \emph{signature function} $\sgn:\frac{1}{2}\Z\to \{\pm 1\}$ by
	\begin{equation*}
		\sgn(\ell):=
		\begin{cases}
			&\ell\mod 2,\text{ when }\ell\in\Z,\\
			&(\ell+1/2)\mod 2,\text{ when }\ell\notin\Z.
		\end{cases}
	\end{equation*}
\end{definition} 
\begin{remark}
	The signature function determines the positions of catenoidal bridges. When $\sgn(j+1/2)=i$ for $i=0,1$, we will put a catenoidal bridge connect the level $j$ and the level $j+1$ at points belong to some $L^i[r_{j+1/2}]$; similarly when $\sgn(j-1/2)=i$ for $i=0,1$, we will put a catenoidal bridge connecting level $j$ and level $j-1$ at points belong to some $L^i[r_{j-1/2}]$. This relation gives \eqref{eq:taupm}, \eqref{eq:Lpm} and \eqref{eq:hpm}.
	
	While the choice of the signature of an integer is natural, the choice of the signature of a half integer is arbitrary. If it were defined as $(\ell-1/2)\mod 2$, then the shrinker obtained would be the shrinker $\sigma_h(\breve{M})$ (recall \eqref{eq:sym} and \ref{def:grp}), where $\breve{M}$ is the shrinker constructed in \ref{thm}.
\end{remark}	  		
 	  		
\begin{definition}[LD solutions {$\varphi_j\bbracket{\zetaboldu}$}]\label{def:varphi}
For $\zetaboldu\in \BPcals$ as in \ref{def:zeta}, $\ell=-J+1/2,-J+3/2,\cdots,J-1/2$, we define the parameters $\tau_{\ell}$, $r_{\ell}$, $h_{\ell}$, $\phi_J$ and the finite point sets $L_{\ell}$ (recall \ref{def:L}, \eqref{eq:phiunum}, \ref{not:hhat} and \ref{def:VW}) by 
\begin{equation}\label{eq:r}
	r_{\ell}=r_{\ell}\bbracket{\zetaboldu}=r_{\ell}\bbracket{\zetaboldu;m}:=
		\hat{h}^{-1}(\zeta'/m)-4r_{J-1/2}^2\sum_{i=\abs{\ell}+1/2}^{J-1}\zeta'_{i}/m^2,
\end{equation}
\begin{equation}\label{eq:L}
	L_{\ell}=L_{\ell}\bbracket{\zetaboldu}=L_{\ell}\bbracket{\zetaboldu;m}:=L^{\sgn(\ell)}[r_{\ell}],
\end{equation}
\begin{equation}\label{eq:phiJ}
	\phi_J=	\phi_J\bbracket{\zetaboldu}=\phi_J\bbracket{\zetaboldu;m}:=\phi_1[r_{J-1/2}],
\end{equation}
\begin{align}\label{eq:tau}
	\tau_{\ell}=\tau_{\ell}\bbracket{\zetaboldu}=\tau_{\ell}\bbracket{\zetaboldu;m}:=\frac{\cos\frac{\pi \ell}{2J+1}}{m}\exp\left((\cos\frac{\pi }{2J+1}-1)\phi_J+\zeta+\frac{\sum_{0<i\leq\abs{\ell}-1/2}\zeta_i}{\phi_J}\right),
\end{align}	
\begin{equation}\label{eq:h}
	h_{\ell}=h_{\ell}\bbracket{\zetaboldu}=h_{\ell}\bbracket{\zetaboldu;m}:=
	\begin{cases}
			&\pm \tau_{J-3/2}\phi_J/2
		, \ell=\pm(J-1/2),\\
		&(\tau_{\ell-1}-\tau_{\ell+1})\phi_J/2
		, \ell=-J+3/2,\dots,J-3/2,
	\end{cases}
\end{equation}	 	  	
For $j=-J,-J+1,\cdots,J$,
we then define the LD solution $\varphi_j$ and the functions $\underline{v}_j\in\skernelv[L_j]$ by
using \ref{lem:LD}
\begin{equation}\label{eq:varphizeta}
	\varphi_j=\varphi_j\bbracket{\zetaboldu}=\varphi_j\bbracket{\zetaboldu;m}:=\varphi[\tauu_j;\Lu_j;m]=\tau_{j,+}\Phi_{j,+}-\tau_{j,-}\Phi_{j,-},\text{ where, }
\end{equation}
\begin{equation}\label{eq:Phipm}
	\Phi_{j,\pm}:=\varphi[1,0;L_{j,\pm},\emptyset;m],
\end{equation}
\begin{equation}\label{eq:taupm}
	\tauu_{j}=\tauu_{j}\bbracket{\zetaboldu}=\tauu_{j}\bbracket{\zetaboldu;m}=(\tau_{j,+}\bbracket{\zetaboldu;m},\tau_{j,-}\bbracket{\zetaboldu;m}):=
	\begin{cases}
		&(\tau_{J-1/2},0), j=J,\\
		&(\tau_{j-1/2},\tau_{j+1/2}), -J+1\leq j \leq J-1,\\
		&(0,\tau_{-J+1/2}), j=-J,
	\end{cases}
\end{equation}

\begin{equation}\label{eq:Lpm}
	\Lu_{j}=\Lu_{j}\bbracket{\zetaboldu}=	\Lu_{j}\bbracket{\zetaboldu;m}=(L_{j,+}\bbracket{\zetaboldu;m},L_{j,-}\bbracket{\zetaboldu;m}):=
	\begin{cases}
		&(L_{J-1/2},\emptyset), j=J,\\
		&(L_{j-1/2},L_{j+1/2}), -J+1\leq j \leq J-1,\\
		&(\emptyset,L_{-J+1/2}), j=-J,
	\end{cases}
\end{equation}
\begin{equation}\label{eq:vunder}
	 \underline{v}_j:= -\Ecal_{\Lu_j}^{-1}\Mcal^{\hu_j}_{\Lu_j}\varphi_j\in\skernelv[L_{j}],\text{ where }L_j:=L_{j,+}\sqcup L_{j,-}=L_{j+1/2}\sqcup L_{j-1/2}.
\end{equation}
\end{definition}

\begin{definition}\label{def:deltaprime}
	For $\tau_{j,\pm}$ as in \eqref{eq:varphizeta}, we define $\delta'_{j,\pm}$ by
	\begin{equation}\label{eq:deltaprime}
		\delta'_{j,\pm}:=\tau_{j,\pm}^{\alpha},
	\end{equation}
	where we now fix some small $\alpha\in(0,1)$, which we will assume is as small in absolute	terms as needed. We use the notation $\tau_{\min}:=\min_{\ell} \tau_{\ell}$, $\tau_{\max}:=\max_{\ell} \tau_{\ell}$ and $\deltamin':=\min_{j,\pm}\delta'_{j,\pm}=\tau_{\min}^{\alpha}$.
\end{definition}

	  	\begin{remark}\label{rk:taur}
	  		By \eqref{eq:phiunum}, \eqref{eq:zeta}, \eqref{eq:tau} and \eqref{eq:r}, for $\ell,\ell'\in\{-J+1/2,\dots,J-1/2\}$,
	  		\begin{equation*}
	  			\abs{\tau_{\ell}/\tau_{\ell'}-1}\lesssim \cu/m,\quad \abs{r_{\ell}-r_{\ell'}}\lesssim \cu/m^2.
	  		\end{equation*}
	  	\end{remark}
	  	\begin{lemma}[Matching equation and matching estimate for {$\varphi_j\bbracket{\zetaboldu}$}]\label{lem:miss}
	  		Given $\varphi_j=\varphi_j\bbracket{\zetaboldu}$ as in \ref{def:varphi} with $\zetaboldu\in\BPcals$, for (recall \ref{def:mismatch}) $\mu_{\pm,j}$,  $\mu_{\pm,j}'$ defined by (recall \eqref{eq:tau})
	  		\begin{equation}\label{eq:mu}
	  		\Vcal_{\sym}[\Lu_{j}]	\owns\Mcal^{\hu_j}_{\Lu_j}(\varphi_j)=
	  		\begin{cases}
	  			&\tau_{J,+}\left(\mu_{J,+}+\mu'_{J,+}\dr\right), j=J,\\
	  			&\Big(\tau_{j,+}\left(\mu_{j,+}+\mu'_{j,+}\dr\right),\\
	  			&\tau_{j,-}\left(\mu_{j,-}+\mu'_{j,-}\dr\right)\Big), 0 \leq j \leq J-1 ,
	  		\end{cases}
	  		\end{equation}
	  		where (recall \eqref{eq:h})
	  		\begin{equation}\label{eq:hpm}
	  			\hu_{j}=
	  			\begin{cases}
	  				&(h_{J-1/2},0), j=J,\\
	  				&(h_{j-1/2},h_{j+1/2}), -J+1\leq j \leq J-1,\\
	  				&(0,h_{-J+1/2}), j=-J.\\
	  			\end{cases}
	  		\end{equation}
	  		We then have up to a constant only depending on $N$, with $\zeta_{0}:=0, \zeta_{-\frac{1}{2}}:=-\zeta_{\frac{1}{2}}$, $\zeta_{J}:=0$,
	  		\begin{equation}\label{eq:muzeta}
	  	   	\mu_{j,\pm}=O(1)\pm\zeta+
	  	   	\begin{cases}
	  	   		&\mp\frac{\cos\frac{\pi (j+1\mp1/2)}{2J+1}\zeta_{j+1/2\mp 1/2}-\cos\frac{\pi(j-1\mp 1/2)}{2J+1}\zeta_{j-1/2\mp1/2}}{2\cos\frac{\pi(j\mp 1/2)}{2J+1}},
	  	   		1/2\leq j\leq J  
	  	   		,\\
	  	   		&\mp  \frac{\cos\frac{3\pi/2}{2J+1}}{2\cos\frac{\pi/2}{2J+1}}\zeta_{1}, j=0, J\in\N,
	  	   	\end{cases}
	  	   \end{equation}

	  		\begin{equation}\label{eq:muzetaprime}
	  			\mu'_{j,\pm}=O(1)\pm
	  			\begin{cases}
	  				 &\zeta', j=J\text{ for }+,\\
	  				 &\pm\zeta'_{j}, 1/2\leq j\leq J-1,\\
	  				 &0, J\in \N, j=0.
	  			\end{cases}
	  		\end{equation}
	  	\end{lemma}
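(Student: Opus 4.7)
My plan is to compute $\Mcal^{\hu_j}_{\Lu_j}(\varphi_j)$ at each singular point $p\in L_{j,\pm}=L_{j\mp 1/2}$ by expanding $\varphi_j=\tau_{j,+}\Phi_{j,+}-\tau_{j,-}\Phi_{j,-}$ around $p$ and then reading off the constant part $\mu_{j,\pm}$ and the $\dr$-coefficient $\mu'_{j,\pm}$ of $\Mcal^{h_{j,\pm}}_p\varphi_j/\tau_{j,\pm}\in\Vcal[p]$. Fixing $p\in L_{j,+}$, only $\Phi_{j,+}$ is singular at $p$, so by \ref{lem:varphi} the regular part is $\varphihat_p=\tau_{j,+}\widehat{\Phi_{j,+}}_{,p}-\tau_{j,-}\Phi_{j,-}|_{D_p(2\delta)}$. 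Applying the decomposition $\Phi=\hat{G}+\hat{\Phi}+\Phi'$ from \ref{def:decompose} to $\Phi_{j,+}$, the first piece has, at $p$, the explicit Taylor datum from \ref{lem:green}\ref{item:greenest} together with the subtraction of $\underline{\phi}[\log\delta,0]$ coming from \eqref{eq:Ghat}; the second piece is rotationally invariant and its value and radial derivative at $p$ are given in closed form by \eqref{eq:Phihat}, \eqref{eq:Phiavg}, \ref{def:phiju} and \ref{not:hhat}; the third piece contributes $O(1)$ by \ref{lem:Phiprime}\ref{item:Phiprime}. The smooth piece $\Phi_{j,-}$, whose singular set lies on the opposite circle, is handled instead by the dislocation estimate \ref{lem:MPhi} applied with $\Delta r=r_{j+1/2}-r_{j-1/2}$, together with \ref{lem:Phihat} for the radii-dependence of $\hat{\Phi}$.

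For the constant part $\mu_{j,\pm}$, the mismatch formula \eqref{eq:mismatch} combines $\Ecalu_p\varphihat_p$ with $\pm\tau_{j,\pm}\log(\tau_{j,\pm}/2)$ and $-h_{j,\pm}$. The large contribution from $\hat{\Phi}$ at $p$ is proportional to $\phi_1[r_{j\mp 1/2}]=\phi_J+O(\cu/m)$ by \ref{rk:taur}, \eqref{eq:phiunum} and \eqref{eq:phiJ}. Dividing by $\tau_{j,\pm}$ and using \eqref{eq:tau}, the terms $\log\tau_{j,\pm}+(\cos\frac{\pi}{2J+1}-1)\phi_J$ cancel against each other and against $h_{j,\pm}/\tau_{j,\pm}$ as given by \eqref{eq:h}, because \eqref{eq:tau} is exactly the Dirichlet eigenvector $\tau_\ell\propto\cos\frac{\pi\ell}{2J+1}$ of the discrete Laplacian with the boundary convention $\tau_{\pm(J+1/2)}=0$ (this is the tridiagonal system encoded by \eqref{eq:h}). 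What remains after this cancellation is $\pm\zeta$ from the global exponent in \eqref{eq:tau} and, at each interior level $\ell=j\mp 1/2$, the two-sided difference of $\zeta_i$ weighted by $\cos\frac{\pi(j\pm 1/2\mp 1/2)}{2J+1}/\cos\frac{\pi(j\mp 1/2)}{2J+1}$, arising from expanding $\tau_{j\pm 1/2\mp 1/2}/\tau_{j\mp 1/2}$. This reproduces exactly \eqref{eq:muzeta}, the special case $j=0$ with $J\in\N$ using the convention $\zeta_{-1/2}=-\zeta_{1/2}$.

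For the $\dr$-coefficient $\mu'_{j,\pm}$, the main term comes from the derivative of $\hat{\Phi}$ at $p$, which by \eqref{eq:Phiavg} and \ref{not:hhat} is proportional to $\hat{h}(r_{j\mp 1/2})/\phi_m(r_{j\mp 1/2})\phi_u(r_{j\mp 1/2})$; plugging in \eqref{eq:r} this equals $\zeta'/m$ when $j=\pm J$ (one neck neighbor) and contributes $\zeta'+O(1)$ after dividing by the appropriate $m$-factor. For interior $j$, the additional contribution from the \emph{other} $\Phi_{j,\mp}$ at $p$ is, by \ref{lem:MPhi}, $-(r_{j\mp 1/2}-r_{j\pm 1/2})/(4r_{J-1/2}^2)+O(m^{-2})$, and the difference $r_{j-1/2}-r_{j+1/2}$ computed from \eqref{eq:r} is exactly $4r_{J-1/2}^2\zeta'_{j}/m^2$ at leading order, yielding $\pm\zeta'_j+O(1)$; all $\zeta'_i$ with $i\ne j$ cancel telescopically by the summation structure of \eqref{eq:r}. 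This gives \eqref{eq:muzetaprime}.

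The main obstacle is this bookkeeping of cancellations: the formul\ae{} \eqref{eq:r}, \eqref{eq:tau}, \eqref{eq:h} are engineered so that three different balancing systems (horizontal, vertical, and the cross-level coupling) simultaneously zero out to within the listed explicit linear dependence on $\zetaboldu$, with everything else absorbed into $O(1)$. Concretely, one must carefully distinguish: the \emph{diagonal} self-interaction of each $\Phi_{j,\pm}$ at its own singularity (treated by the decomposition \ref{def:decompose} and controlled by \ref{lem:Phiprime}); the \emph{off-diagonal} interaction of $\Phi_{j,\mp}$ with points of $L_{j,\pm}$ on the opposite circle (treated by the dislocation estimates \ref{lem:MPhi}, \ref{lem:Phihat} using the cylindrical Green's function limit \ref{lem:Gm}); and the \emph{global} asymptotic piece encoded by $\phi_J$ via \eqref{eq:Phicone}. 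Once all three are tabulated and the choices \eqref{eq:tau}--\eqref{eq:h} are substituted, the tridiagonal eigenvalue structure reduces the matching system to exactly the explicit formul\ae{} \eqref{eq:muzeta}--\eqref{eq:muzetaprime}.
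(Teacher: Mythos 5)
Your proposal follows essentially the same route as the paper's proof: decompose each $\Phi_{j,\pm}$ as in \ref{def:decompose}, control the diagonal terms at each singularity via \ref{lem:green}, \ref{lem:phiju} and \ref{lem:Phiprime}, handle the cross-circle term via the dislocation estimates \ref{lem:MPhi} and \ref{lem:Phihat}, and then substitute \eqref{eq:r}, \eqref{eq:tau}, \eqref{eq:h} and invoke \ref{cor:linearalg} to extract the stated linear dependence on $\zetaboldu$. The only blemishes are minor bookkeeping slips (the cross-level contribution to $\mu'_{j,\pm}$ should carry the factor $m^2(r(q)-r(p))/(4r(p)^2)$ rather than $(r(q)-r(p))/(4r_{J-1/2}^2)$, and for interior $j$ one should note explicitly that the two $m\hat{h}(r(p))$ terms cancel so that no $\zeta'$ survives in $\mu'_{j,\pm}$); neither affects the structure of the argument.
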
	
	  	\begin{proof}
	  		For any $p\in L_{j,\pm}$, by \ref{lem:varphi}\ref{item:varphihat}, \eqref{eq:Phiavg}, \eqref{eq:varphizeta} and \ref{def:decompose}, inside $D_p(2\delta)$ with $q\in L_{j,\mp}$
	  		\begin{align}\label{eq:hatphi}
	  			&\frac{\hat{\varphi}_{j,p}}{\tau_{j,\pm}}=\pm\underline{\phi}[\phi_1[r(p)]-\log\delta,m\hat{h}(r(p));r(p)]+\Phi'_{j,\pm}\\
	  			\mp&\frac{\tau_{j,\mp}}{\tau_{j,\pm}}
	  		 \left(\underline{\phi}[\phi_1[r(q)],m\hat{h}(r(q));r(q)]+\Phi'_{j,\mp})\right)\nonumber,
	  		\end{align}
	  		where $\Phi'_{j,\pm}$ denotes the corresponding decomposition in \ref{def:decompose} for $\Phi_{j,\pm}$ in \eqref{eq:Phipm}. By \ref{def:mismatch}, \ref{def:phiju}, \ref{lem:phiju}\ref{item:phiu}-\ref{item:phiub}, \ref{lem:Phiprime}\ref{item:Phiprime}, \eqref{eq:phiunum} and \ref{def:delta}, \ref{rk:taur}
	  		\begin{align*}
	  			\mu_{j,\pm}=\pm&\left(\phi_J-\log\delta+O\left(\frac{\cu}{m}\right)+O(1)\right)\pm\log\frac{\tau_{j,\pm}}{2}-\frac{h_{j,\pm}}{\tau_{j,\pm}}\\ \mp&\frac{\tau_{j,\mp}}{\tau_{j,\pm}}\left(\phi_J+O\left(\frac{\cu^2}{m^2}\right)+O\left(\frac{\cu}{m}\right)+O(1)\right)\\
	  		=\pm&\left(\phi_J+\log m\right)\mp\frac{\tau_{j,\mp}}{\tau_{j,\pm}}\phi_J\pm\log{\tau_{j,\pm}}-\frac{h_{j,\pm}}{\tau_{j,\pm}} +O(1).
	  	\end{align*}
	  	Now plug in \eqref{eq:taupm}, \eqref{eq:h} and \eqref{eq:hpm} for $\tau_{j,\pm}$ and $h_{j,\pm}$, when $j\neq J$ and $j\neq J-1$ for $-$
	  	\begin{align*}
	  		\mu_{j,\pm}
	  		=&\pm\left(\phi_J+\log m+\log{\tau_{j\mp\frac{1}{2}}}\right)\mp\frac{\tau_{j\pm\frac{1}{2}}}{\tau_{j\mp\frac{1}{2}}}\phi_J-\frac{\tau_{j-1\mp\frac{1}{2}}-\tau_{j+1\mp\frac{1}{2}}}{2\tau_{j\mp\frac{1}{2}}}\phi_J+O(1)\\
	  		=&\pm\left(\phi_J+\log m+\log{\tau_{j\mp\frac{1}{2}}}\right)\mp\frac{\tau_{j-1\mp\frac{1}{2}}+\tau_{j+1\mp\frac{1}{2}}}{2\tau_{j\mp\frac{1}{2}}}\phi_J+O(1),
	  	\end{align*}
	  	and when $j=J$ and $j= J-1$ for $-$
	  		\begin{align*}
	  		\mu_{J,+}
	  		=&\phi_J+\log m+\log{\tau_{J-\frac{1}{2}}}-\frac{\tau_{J-\frac{3}{2}}}{2\tau_{J-\frac{1}{2}}}\phi_J+O(1),\\
	  		\mu_{J-1,-}
	  		=&-\phi_J-\log m-\log{\tau_{J-\frac{3}{2}}}+\frac{\tau_{J-\frac{3}{2}}}{\tau_{J-\frac{1}{2}}}\phi_J-\frac{\tau_{J-\frac{3}{2}}}{2\tau_{J-\frac{1}{2}}}\phi_J+O(1)\\
	  		=&-\phi_J-\log m-\log{\tau_{J-\frac{3}{2}}}+\frac{\tau_{J-\frac{3}{2}}}{2\tau_{J-\frac{1}{2}}}\phi_J+O(1),
	  	\end{align*}
	  		Plug in \eqref{eq:tau} for $\tau_{\ell}$, \eqref{eq:muzeta} now follows by applying \ref{cor:linearalg}.
	  
	  	 Similarly, by \ref{def:mismatch}, \ref{def:phiju}, \ref{lem:Phiprime}\ref{item:PhiprimeA} and \ref{lem:MPhi}, \ref{rk:taur}, when $j\neq J$
	  	\begin{align*}
	  			\mu'_{j,\pm}=& \pm\left(m\hat{h}(r(p))+O\left(1\right)\right)
	  			 \mp\left(1+O\left(\frac{\cu}{m}\right)\right)\left(m\hat{h}(r(p))-\frac{m^2(r(q)-r(p))}{r(p)^2}\left(\frac{1}{4}+o(1)\right)+O\left(\frac{\cu^2}{m}\right)\right),
	  		\end{align*}
	  		and
	  		\begin{align*}
	  			\mu'_{J,+}=& m\hat{h}(r(p))+O\left(1\right).
	  		\end{align*}
	  Now plug in \eqref{eq:Lpm} for $r(p)$ and $r(q)$
	  		\begin{align*}
	  		\mu'_{j,\pm}=& \pm m^2(r_{j\pm 1/2}-r_{j\mp 1/2})/(4r_{j\mp 1/2}^2)+O(1),\text{ when }j\neq J,\\
	  		\mu'_{J,+}=&  m\hat{h}(r_{J-1/2})+O\left(1\right).
	  	\end{align*}	
	  		Plug in \eqref{eq:r} for $r_{\ell}$, \eqref{eq:muzetaprime} now follows.
	  	\end{proof}

	  	\begin{lemma}[Estimates for {$\varphi_j\bbracket{\zetaboldu}$}]\label{lem:varphiest}
	  		Given $\varphi_j\bbracket{\zetaboldu}$ and $\tau_{\ell}\bbracket{\zetaboldu}$ as in \ref{def:varphi}, the following hold when $m$ is large enough (recall \ref{def:deltaprime}).  
	  		\begin{enumerate}[(i)]
	  			\item\label{item:hatvarphi} $\forall p\in L_{j,\pm}$, $\delta^{-2}\norm{\hat{\varphi}_{j,p}+\underline{v}_j:C^{3,\beta}(\partial D_p(\delta),g)}\lesssim \tau_{\min}^{1-\alpha/9}$.
	  			\item\label{item:solboundn} $\norm{\varphi_j+\underline{v}_j:C^{k,\beta}(\R^2\setminus \sqcup_{\pm} D_{L_{j,\pm}}(\delta_{j,\pm}'),\rr,g)}\lesssim_k m\tau_{\min}$, where $\rr:=\min\{\dist_{L_j},\delta\}$.
	  				\item\label{item:solCS} $\norm{\varphi_j+\underline{v}_j:\mathcal{CS}^{2,\alpha}(\R^2\setminus D_O(2\rbalanced))}\lesssim m\tau_{\min}$ (recall \ref{def:conespaces}).
	  			\item\label{item:solembed} On $\R^2\setminus \sqcup_{\pm}D_{L_{j,\pm}}(\delta'_{j,\pm})$ we have
	  				\begin{align*}
	  					&\tau_{\max}+ h_{J-1/2}\hat{\phi}\leq \pm(\varphi_{\pm J}+\underline{v}_{\pm J}),\\
	  					&\tau_{\max}+h_{j-1/2}\hat{\phi}\leq \varphi_j+\underline{v}_j\leq -\tau_{\max}+h_{j+1/2}\hat{\phi}, J+1\leq j\leq J-1.\\
	  				\end{align*}
	  		\end{enumerate}
	  	\end{lemma}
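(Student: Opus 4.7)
The plan is to dispose of (ii) and (iii) first, which reduce to collecting the Section~4 estimates for the pieces of the decomposition $\varphi_j+\underline{v}_j=\tau_{j,+}\Phi_{j,+}-\tau_{j,-}\Phi_{j,-}+\underline{v}_j$. Lemma~\ref{lem:Phinorm}\ref{item:Phinorm} bounds the first piece by $\tau_{\min}(m+\log(\delta/\delta'_{j,\pm}))$; since $\delta'_{j,\pm}=\tau_{j,\pm}^{\alpha}$ and $|\log\tau_{\min}|=O(m)$ (from the exponential factor in \eqref{eq:tau} with $\phi_J\sim m$), this reduces to $O(m\tau_{\min})$. For $\underline{v}_j$ the key observation is that $\omega(p)+r(p)^2/8=0$ combined with the initial conditions of Definition~\ref{def:phiju} gives $\Ecalu_p\underline{\phi}[1,0;r(p)]=1$ and $\Ecalu_p\underline{\phi}[0,1;r(p)]=\dr$, so $\Ecal_{\Lu_j}$ is essentially the identity at the coefficient level. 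Combined with the $O(\tau_{\min})$ mismatch bound of Lemma~\ref{lem:miss}, this makes the coefficients of $\underline{v}_j$ in the basis $\{V_i,V_i'\}$ of Definition~\ref{def:VW} all of size $O(\tau_{\min})$. Lemma~\ref{lem:obstr}\ref{item:V} together with $\rr\leq\delta\sim 1/m$ then upgrades this to a weighted bound of order $\tau_{\min}$, absorbed into the $m\tau_{\min}$ estimate. Part~(iii) is handled by the same argument using Lemma~\ref{lem:Phinorm}\ref{item:PhiCS} and the observation that $\underline{v}_j\equiv 0$ on $\R^2\setminus D_O(2\rbalanced)$ since it is supported in $D_L(4\delta)$.

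For part~(iv), the dominant piece of $\varphi_j+\underline{v}_j$ is the rotationally invariant average $\tau_{j,+}\hat{\Phi}_{j,+}-\tau_{j,-}\hat{\Phi}_{j,-}$ from Definition~\ref{def:decompose}, which using $\hat{\Phi}\approx \phi_1\hat{\phi}$ and the prescription \eqref{eq:h} collapses to $h_{j,\pm}\hat{\phi}$. The remaining oscillating terms $\tau_{j,\pm}\Phi'_{j,\pm}$ and the contribution of $\underline{v}_j$ are uniformly $O(\tau_{\min})$ by Lemma~\ref{lem:Phiprime} and the bound of the previous paragraph, and these fit inside the $\pm\tau_{\max}$ margins since $\tau_{\min}\sim\tau_{\max}$ by Remark~\ref{rk:taur}; the signs follow from the positivity of $\hat{\phi}$ in the relevant range and the signs of the $h_\ell$'s prescribed by \eqref{eq:h}.

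Part~(i) is the technical heart of the lemma. The defining identity $\Ecalu_p\underline{v}_j=-\Mcal^{h_{\pm}}_p\varphi_j$ combined with \eqref{eq:mismatch} forces the affine data of $\hat{\varphi}_{j,p}+\underline{v}_j$ at $p$ to equal the catenoid matching constant $\mp\tau_{j,\pm}\log(\tau_{j,\pm}/2)+h_{j,\pm}$, with vanishing linear part. The prescriptions \eqref{eq:tau} and \eqref{eq:h} are precisely tuned so that this constant cancels its leading contribution and ends up of size $O(\tau_{\min}^{1-\alpha/9})$ once the $\zetaboldu$-dependent corrections of Lemma~\ref{lem:miss} are absorbed (using $\zetaboldu\in\BPcals$). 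For the higher-order part, one inserts \eqref{eq:hatphi} for $\hat{\varphi}_{j,p}$ together with the uncut form $\underline{v}_j=c\underline{\phi}[1,0;r(p)]+c'\underline{\phi}[0,1;r(p)]$ valid on $D_p(\delta)\subset D_L(\delta)$, and Taylor expands each summand at $p$. Lemmas~\ref{lem:phiju} and~\ref{lem:Phiprime} furnish the $C^k$ bounds in the scaled metric $\tilde{g}$, from which the standard quadratic Taylor remainder on $\partial D_p(\delta)$ picks up a factor of $\delta^2$, producing the desired $\delta^2\tau_{\min}^{1-\alpha/9}$ bound.

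The main obstacle is verifying the cancellation at the affine level: one must show that $\mp\tau_{j,\pm}\log(\tau_{j,\pm}/2)+h_{j,\pm}$ is genuinely sub-principal rather than of the naive size $O(\tau_{\min}\log m)$. This is where the explicit algebraic form of \eqref{eq:tau} and \eqref{eq:h}---chosen as the solution of the vertical balancing linear system in the spirit of \cite{wiygul:stacking,CSW}---is essential, and where Lemma~\ref{lem:MPhi} enters to compare the $L^0$- and $L^1$-based terms in \eqref{eq:hatphi} when the two singularity circles sit at slightly different radii. The final accounting mirrors the mismatch computation of Lemma~\ref{lem:miss} but at one higher order of smallness.
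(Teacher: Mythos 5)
Your treatment of (i)--(iii) coincides in substance with the paper's: decompose via \ref{def:decompose}, bound $\varphi_j$ by \ref{lem:Phinorm}, and bound $\underline{v}_j$ by combining \eqref{eq:vunder} with the $O(1)$ size of $\mu_{j,\pm},\mu'_{j,\pm}$ from \ref{lem:miss} and the bounds of \ref{lem:obstr}\ref{item:V}. One remark on (i): the ``delicate cancellation'' you identify as the main obstacle is not actually needed. Since $\tau_{\min}$ is exponentially small in $m$ (by \eqref{eq:tau} with $\phi_J\sim m$), the factor $\tau_{\min}^{-\alpha/9}$ beats every power of $m$, so a crude bound of order $m\tau_{\min}$ for $\mp\tau_{j,\pm}\log(\tau_{j,\pm}/2)+h_{j,\pm}$ and for the remaining terms of \eqref{eq:hatphi} already gives $m\tau_{\min}\ll\delta^2\tau_{\min}^{1-\alpha/9}$; no fine tuning of \eqref{eq:tau} and \eqref{eq:h} enters at this stage.

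The genuine gap is in (iv). You assert that the rotationally invariant leading term $\tau_{j,+}\hat{\Phi}_{j,+}-\tau_{j,-}\hat{\Phi}_{j,-}$ ``collapses to $h_{j,\pm}\hat{\phi}$'' and that the remaining errors, being $O(\tau_{\min})$, ``fit inside the $\pm\tau_{\max}$ margins since $\tau_{\min}\sim\tau_{\max}$''. This cannot work as stated: if the collapse were exact there would be no margin left over at all, and an error that is merely $O(\tau_{\min})$ with an unspecified constant cannot be absorbed by a margin of exactly $\tau_{\max}$ when $\tau_{\min}/\tau_{\max}=1+O(\cu/m)$ (\ref{rk:taur}). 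Worse, the actual error after replacing $\hat{\Phi}_{j,\pm}$ by $\hat{\Phi}_{J,+}$ via \ref{lem:Phihat} and discarding $\hat{G}$, $\Phi'$ and $\underline{v}_j$ is of size $\tau_{\min}\,O(\alpha m+\cu)$, which is much \emph{larger} than $\tau_{\max}$. The inequality holds precisely because the collapse is \emph{not} exact: by \eqref{eq:h} one has $(\tau_{j-1/2}-\tau_{j+1/2})\phi_J-h_{j+1/2}=\tfrac12\phi_J\bigl(\tau_{j-1/2}-2\tau_{j+1/2}+\tau_{j+3/2}\bigr)$, and \ref{cor:linearalg} applied to the explicit form \eqref{eq:tau} evaluates this second difference as $\bigl(\cos\tfrac{\pi}{2J+1}-1\bigr)\tau_{j+1/2}\phi_J$ up to admissible errors --- a quantity of \emph{definite sign} and of size $\sim m\tau_{\min}$. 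Multiplied by $\hat{\phi}$ (note $m\lesssim\hat{\Phi}_{J,+}=\phi_1\hat{\phi}$, so $\hat{\phi}\gtrsim 1$), this produces a one-signed term of order $m\tau_{\min}$ that simultaneously supplies the $\mp\tau_{\max}$ margin and dominates the $\tau_{\min}\,O(\alpha m+\cu)$ error once $\alpha$ is chosen small and $m$ large. Your argument never invokes \ref{cor:linearalg} or this second-difference identity, and without it the chain of inequalities in (iv) does not close.
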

	  	\begin{proof}
	  	From \eqref{eq:hatphi}, \eqref{eq:tau}, \ref{lem:phiju}\ref{item:phiu}-\ref{item:phiub}, \eqref{eq:phiunum} and \ref{lem:Phiprime}\ref{item:Phiprime}, the estimate in \ref{item:hatvarphi} is satisfied by $\hat{\varphi}_{j,p}$. Moreover, from \ref{lem:Phinorm}\ref{item:Phinorm}, \eqref{eq:tau} and \eqref{eq:varphizeta}, the estimate in \ref{item:solboundn} is satisfied by $\varphi_{j}$.
	  	
	  	On the other hand, by definition \eqref{eq:vunder}, on $D_{L_{\pm}}(4\delta)$,
	  	\begin{equation}\label{eq:vunderj}
	  		\underline{v}_j=-\tau_{j,\pm}\mu_{j,\pm} V[L_{j,\pm}]-\tau_{j,\pm}\mu'_{j,\pm}V'[L_{j,\pm}],
	  	\end{equation}
	  	and $\underline{v}_j=0$ on the other region by \ref{lem:obstr}\ref{item:vsupp}.
    	From \eqref{eq:tau}, \eqref{eq:mu}, \eqref{eq:zeta} and \ref{lem:obstr}\ref{item:V}, 
    	\begin{equation}\label{eq:vundernorm}
    		\norm{\underline{v}_j:C^k(\R^2,\tilde{g})}\lesssim_k \cu\tau_{\min}.
    	\end{equation}
    	The estimates in \ref{item:hatvarphi}-\ref{item:solboundn} are now also satisfied by $\underline{v}_j$.  \ref{item:solCS} follows by \eqref{eq:tau} and \ref{lem:Phinorm}\ref{item:PhiCS} as $\underline{v}_j=0$ on the domain. 
	  

	  Finally by using \eqref{eq:tau}, \ref{lem:GE}\ref{item:Ghat}, \ref{lem:Phiprime}\ref{item:Phiprime}, \ref{lem:Phiprime}\ref{item:PhiprimeCS} and \eqref{eq:vundernorm} to control the terms in \ref{def:decompose}, on $\R^2\setminus \sqcup_{\pm}D_{L_{j,\pm}}(\delta'_{j,\pm})$, we have
	  \begin{align*}
	  	\varphi_j+\underline{v}_j&=\tau_{j,+}(\hat{\Phi}_{j,+}+\hat{G}_{j,+}+\Phi'_{j,+})-\tau_{j,-}(\hat{\Phi}_{j,-}+\hat{G}_{j,-}+\Phi'_{j,-})+\underline{v}_j\\
	  	&=\tau_{j,+}\hat{\Phi}_{j,+}-\tau_{j,-}\hat{\Phi}_{j,-}+\tau_{\min}O(\alpha m+\cu),
	  \end{align*}
	  	where subscript ${j,\pm}$ denotes the corresponding decomposition in \ref{def:decompose} for $\Phi_{j,\pm}$ in \eqref{eq:Phipm}. By \ref{lem:Phihat} and \ref{rk:taur}, we can replace $\hat{\Phi}_{j,\pm}$ by $\hat{\Phi}_{J,+}$ with error controlled, and then by \eqref{eq:taupm} and \eqref{eq:h} when $j\neq \pm J$
	  	\begin{align*}
	  		\varphi_j+\underline{v}_j
	  		&=(\tau_{j-1/2}-\tau_{j+1/2})\hat{\Phi}_{J,+}+\tau_{\min}O(\alpha m+\cu)\\
	  		&=\left(\frac{h_{j+1/2}}{\phi_J}+\frac{\tau_{j-1/2}-2\tau_{j+1/2}+\tau_{j+3/2}}{2}\right)\hat{\Phi}_{J,+}+\tau_{\min}O(\alpha m+\cu)\\
	  		&=\left(\frac{h_{j-1/2}}{\phi_J}-\frac{\tau_{j-3/2}-2\tau_{j-1/2}+\tau_{j+1/2}}{2}\right)\hat{\Phi}_{J,+}+\tau_{\min}O(\alpha m+\cu)
	  	\end{align*}
	  	with $j\neq J-1$ for the second equation and $j\neq -J+1$ for the third equation. Similarly,
	  	\begin{align*}
	  		\varphi_J+\underline{v}_J
	  		&=\left(\frac{h_{J-1/2}}{\phi_J}-\frac{\tau_{J-3/2}-2\tau_{J-1/2}}{2}\right)\hat{\Phi}_{J,+}+\tau_{\min}O(\alpha m+\cu),\\
	  		\varphi_{J-1}+\underline{v}_{J-1}
	  		&=\left(\frac{h_{J-1/2}}{\phi_J}+\frac{\tau_{J-3/2}-2\tau_{J-1/2}}{2}\right)\hat{\Phi}_{J,+}+\tau_{\min}O(\alpha m+\cu).
	  	\end{align*}
	  	Plugging in \eqref{eq:tau} by using \eqref{eq:Phihat}, \eqref{eq:phiJ} and \ref{cor:linearalg} again,
	  	\begin{align*}
	  		\varphi_j+\underline{v}_j
	  		&=h_{j+1/2}\hat{\phi}+\left(\cos\frac{\pi}{2J+1}-1\right)\tau_{j+1/2}\hat{\Phi}_{J,+}+\tau_{\min}O(\alpha m+\cu)\\
	  		&=h_{j-1/2}\hat{\phi}-\left(\cos\frac{\pi}{2J+1}-1\right)\tau_{j-1/2}\hat{\Phi}_{J,+}+\tau_{\min}O(\alpha m+\cu),
	  	\end{align*}
	  	with $j\neq J$ for the first equation and $j\neq -J$ for the second equation. By \ref{def:decompose}, \ref{def:phiju} and \eqref{eq:phiunum}, it is easy to see $m\lesssim\hat{\Phi}$. \ref{item:solembed} then follows by taking $m$ big enough and $\alpha$ small enough and using \ref{rk:taur}. 
	\end{proof}

	  \section{The initial surfaces}
\label{S:init}
	  
	   \subsection*{Catenoidal bridges and their mean curvature}
	  \label{S:cat} 
	  As stated in the introduction, the catenoidal bridges in our construction will have four parameters: the position $r$, the waist $\tau$, the height $h$ and the tilt $\kappa$. The first two parameters have been fixed by \eqref{eq:r} and \eqref{eq:tau} in the last section. On the other hand, we can still input a parameter $\kappa^{\perp}$ for the height to \eqref{eq:h} and a parameter $\kappa$ for the tilt. This is similar to \cite[Section 2]{LDg}. The difference here is that we have bigger unbalancing parameters $\kappau=(\kappa,\kappa^{\perp})$.
	   \begin{definition}[Tilting rotations $\RRR_{\kappa}$, {\cite[Definition 1.11]{LDg}}]\label{def:tilt}
	  	Let $\kappa:E^2 \to (E^2 )^{\perp}$ be a linear map, where $E^2$ is a two-dimensional subspace of a three-dimensional Euclidean vector space $E^3$ and $(E^2 )^{\perp}$ denotes the orthogonal complement of $E^2$ in $E^3$. By choosing a unit normal vector to $E^2$ , we can identify $\kappa$ with an element of $(E^2 )^*$ . We define $\RRR_{\kappa}$ to be the rotation of $E^3$ characterized by $\RRR_{\kappa} (P ) = \Graph_P \kappa$ for $P \subset E^2$ a half-plane with $\partial P = \ker \kappa$ when $\kappa\neq 0$, or the identity $\Id_{E^3}$ when $\kappa=0$.
	  	
	  	Given also a function $u : \Omega \to \R$ on $\Omega \subset E^2$ such that $\RRR_{\kappa} (\Graph^{E^3}_{\Omega} u)$ is graphical over $E^2$ , we define $\mathrm{Tilt}_{\kappa}(u) : \Omega' \to \R$, with $\Omega'\subset E^2$ a shift of $\Omega$, by requesting $\RRR_{\kappa} (\Graph^{E^3}_{\Omega} u)=\Graph^{E^3}_{\Omega'} \mathrm{Tilt}_{\kappa}(u)$.
	  \end{definition}
	  
	  \begin{definition}\label{def:cyl}
	  	Given $p\in\R^2$, we choose for $(T_p\R^3,g_{\shr}|_p)$ Cartesian coordinates $(\tilde{x},\tilde{y},\tilde{z}):T_p\R^3\to\R^3$ satisfying $(\tilde{x},\tilde{y},\tilde{z})\circ\nu_{\R^2,g_{\shr}}(p)=(0,0,1)$. Let $\Cylinder:=\R\times\Sph^1$ be the standard cylinder, $\chi$ the standard product metric on it and $(s,\vartheta)$ be the standard coordinates on $\Cylinder$. We also define $\Cylinder[\tau,\varrho]$ by
	  	\begin{equation*}
	  		\Cylinder[\tau,\varrho]:=\{(s,\vartheta)\in\Cylinder:\tau\cosh s<\varrho\}.
	  	\end{equation*}
	  	
	  	Given $\tau\in\R_{+},h\in\R$, we define a catenoid $\K[p,\tau,h]\subset T_p\R^3$ of size $\tau$ and height $h$, and its parametrization $X_{\K}=X_{\K}[p,\tau,h]:\Cylinder\to\K[p,\tau,h]$ by taking
	  	\begin{align}\label{eq:rhozz}
	  		&\rho(s):=\tau\cosh s,\quad \zz(s):=\tau s+h\\
	  		&(\tilde{x},\tilde{y},\tilde{z})\circ X_{\K}(s,\vartheta)=(\rho(s) \cos\vartheta,\rho(s)\sin\vartheta,\zz(s)).\nonumber
	  	\end{align}
	  	Alternatively, 
	  	\begin{equation*}
	  		(\tilde{x},\tilde{y},\tilde{z})^{-1}\{(\rr\cos\vartheta,\rr\cos\vartheta,\varphi_{cat}(\rr):(\rr,\vartheta)\in [\tau,\infty)\times\R\}\subset T_p\R^3
	  	\end{equation*}
	  	is the part above the waist of $\K[p,\tau]$, where the function $\varphi_{cat}=\varphi_{cat}[\tau,h]:[\tau,\infty)\to\R$ is defined by
	  	\begin{equation*}
	  		\varphi_{cat}[\tau,h]:=\tau\arcosh\frac{\rr}{\tau}+h. 
	  	\end{equation*}
	  \end{definition}

	  \begin{definition}[Tilted catenoidal bridges in $T_p\R^3$ and catenoidal bridges in $\R^3$]\label{def:catebridge}
	  	Given $p\in \R^2$,  $x \in [0,4]$ (where
	  	$x$ may be omitted when $x = 0$) $\tau>0$ and $\kappau=\kappa^{\perp}+\kappa\in\Vcal[p]$, we define an elevated and tilted by $\kappau$ \emph{model catenoid} in $T_p\R^3$ of size $\tau$ and a corresponding \emph{catenoidal bridge} in $\R^3$ as the following in the notation of \ref{def:cyl}. where $b$ is a large constant to be chosen later independently of the $\tau$ and $\kappau$ parameters (see the proof of \ref{prop:lineareq}).
	  	\begin{align*}
	  		&\K[p,\tau,h,\kappau]:=X_{\K}[p,\tau,h,\kappau](\Cylinder)\subset T_p\R^3,\\
	  		\text{where (recall \ref{def:tilt}) }&X_{\K}[p,\tau,h,\kappau]:=\RRR_{\tau\kappa}\circ X_{\K}[p,\tau,h]+\tau\kappa^{\perp}\nu_{\Sigma}(p):\Cylinder\to T_p\R^3,\\
	  		&\Kcech_x[p,\tau,h,\kappau]:=X_{\Kcech}[p,\tau,h,\kappau](\Cylinder[\tau,6\tau^{\alpha}/(1+x)])\subset \R^3,\\
	  		&\Ku_x[p,\tau,h,\kappau]:=X_{\Kcech}[p,\tau,h,\kappau](\Cylinder[\tau,b(1+x)\tau])\subset \R^3,\\
	  		\text{where }&X_{\Kcech}[p,\tau,h,\kappau]:=\exp_p^{\R^2,\R^3,g_{\shr}}\circ X_{\K}[p,\tau,h,\kappau]:\Cylinder\to \R^3.
	  	\end{align*}	 
	  	Finally, using the above maps, we take the coordinates $(s,\vartheta)$ on the cylinder as in \ref{def:cyl} to be coordinates on $\K[p,\tau,h,\kappau]$ and $\Kcech[p,\tau,h,\kappau]$.
	  \end{definition}
	  
	   \begin{definition}[Graphs of tilted catenoidal bridges]
	  	Given $\kappau\in\Vcal[p]$, we define $\varphi^{\pm}_{cat}[p,\tau,h,\kappau]:\Omega'\to\R$ smoothly depending on the parameters by using \ref{lem:graph} and requiring that (recall \ref{not:manifold})
	  	\begin{equation*}
	  		\Graph_{\Omega'}^{\R^3,g}(\varphi^{\pm}_{cat}[p,\tau,h,\kappau] )\subset\Graph_{\Omega}^{\R^3,g_{\shr}}\left(\pm\mathrm{Tilt}_{\pm\tau\kappa}(\varphi_{cat}[\tau,\pm h]\circ\dist_O)+\tau\kappa^{\perp}\right)\subset \Kcech[p,\tau,h,\kappau]
	  	\end{equation*}
	  	 and that $\varphi^{\pm}_{cat}[p,\tau,0,0]$ is always positive, where (only here and the lemma following) we use $\Omega:=D_p^{\R^2,g}(5\tau^{\alpha})\setminus D_p^{\R^2,g}(2\tau)$ and $\Omega':=D_p^{\R^2,g}(4\tau^{\alpha})\setminus D_p^{\R^2,g}(3\tau)$. Equivalently,
	  	\begin{equation*}
	  		(x_1,x_2,\varphi^{\pm}_{cat}[p,\tau,h,\kappau])\in \Kcech[p,\tau,h,\kappau] \text{ for }p=(x_1,x_2,0)\in\Omega'.
	  	\end{equation*}
	  \end{definition}
	  \begin{lemma}[Tilted catenoid asymptotics]\label{lem:varphicat}
	  	For $\rr:=\dist^{g_{\shr}}_{p}$, assuming $-\log\tau\sim m$, $h\sim m\tau $, $\kappau\sim 1$, the following hold.
	  	\begin{equation*}
	  		\norm{\Tcal_{\exp(\omega)}\varphi^{\pm}_{cat}[p,\tau,h,\kappau]\mp \tau\log\frac{2\rr}{\tau}-h-\tau\kappau\circ(\exp^{\R^2,g_{\shr}}_p)^{-1}:C^k(D_p(4\tau^{\alpha})\setminus D_p(3\tau),\rr,g,\rr^{-2})}\lesssim_k m^3\tau^3.
	  	\end{equation*}
	  \end{lemma}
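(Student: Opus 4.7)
The plan is to work in $g_{\shr}$-Fermi coordinates centred at $p$, where by \ref{def:catebridge} the catenoidal bridge $\Kcech[p,\tau,h,\kappau]$ is the graph of
$F := \pm\mathrm{Tilt}_{\pm\tau\kappa}(\varphi_{cat}[\tau,\pm h]\circ\dist_O) + \tau\kappa^{\perp}$
over a region of $T_p\R^2$. From this starting point I would peel off three sources of error in turn: the asymptotic expansion of $\arcosh$, the effect of the tilt rotation, and the passage from $g_{\shr}$-Fermi graphs to Euclidean $g$-graphs. In each step the weighted norm will be controlled by \eqref{E:norm:derivative} and \eqref{E:norm:mult}.

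First, from $\arcosh(y)=\log(2y)+\log\tfrac{1+\sqrt{1-y^{-2}}}{2}$ one obtains by Taylor expansion $\tau\arcosh(\rr/\tau)=\tau\log(2\rr/\tau)-\tau^3/(4\rr^2)+O(\tau^5/\rr^4)$, with analogous decay on all derivatives in the $\rr$-variable. Rewriting this in the weighted $C^{k,\beta}$-norm with weight $\rr^{-2}$ immediately gives a contribution of size $O(\tau^3)$, well inside the advertised $m^3\tau^3$ bound. The additive constant $h$ and the vertical shift $\tau\kappa^{\perp}$ are carried through unchanged.

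Next, handle the tilt. Writing $\RRR_{\tau\kappa}=\mathrm{Id}+\tau\kappa A+O((\tau\kappa)^2)$ for the skew-symmetric $A$ determined in \ref{def:tilt}, and unpacking the graph identity $\RRR_{\tau\kappa}(\Graph u)=\Graph\mathrm{Tilt}_{\tau\kappa}(u)$ via the implicit function theorem, one obtains $\mathrm{Tilt}_{\tau\kappa}(u)(y)=u(y)+\tau\kappa(y)+R$ with $|R|\lesssim(\tau\kappa)^2(|u|+|y||\nabla u|)$ and corresponding bounds on derivatives. For $u=\varphi_{cat}[\tau,h]\circ\dist_O$, the estimates $|u|\lesssim\tau m$ (coming from $h\sim m\tau$ and $\tau\log(\rr/\tau)\lesssim\tau m$ on the annulus) and $|\nabla u|\lesssim\tau/\rr$ make the tilt residual of order $\tau^3 m^2$ in the weighted $C^{k,\beta}$-norm, again comfortably inside the budget. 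Finally, to convert from $g_{\shr}$-Fermi to Euclidean graph, note that for $q\in\R^2$ the unique catenoid point $g$-vertically above $q$ equals $\exp_p^{\R^2,\R^3,g_{\shr}}(v+F(v)\nu_p)$ for a unique $v\in T_p\R^2$, and the relation between the $g_{\shr}$-length $F(v)$ and the Euclidean height $\varphi^{\pm}_{cat}(q)$ is obtained by tracking the $g_{\shr}$-geodesic starting normal to $\R^2$. Since $g_{\shr}=e^{2\omega}g$, a first-order Jacobi field computation gives $\varphi^{\pm}_{cat}(q)=e^{-\omega(q)}F(v)+O(F^2|\nabla\omega|+F^3)$, with $C^k$-versions following by differentiation of the implicit relations. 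Multiplying by $e^{\omega(q)}$ and using $|F|\lesssim\tau m$ yields the desired $e^{\omega}\varphi^{\pm}_{cat}-F=O(\tau^3 m^3\rr^{-2})$ in the weighted norm.

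The main obstacle is routine but intricate bookkeeping: three separate asymptotic expansions must be carried out with explicit $C^{k,\beta}$ control in the weighted space $(\Omega,\rr,g,\rr^{-2})$, using \eqref{E:norm:derivative} and \eqref{E:norm:mult} repeatedly to combine them. The factor $m^3$ in the final estimate traces precisely to the logarithmic growth $|F|\sim\tau m$ being hit cubically by the conformal conversion step $F\mapsto e^{\omega}\varphi^{\pm}_{cat}$, while the pure catenoid and tilt contributions are strictly lower order.
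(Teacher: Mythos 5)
Your decomposition (catenoid expansion, tilt, conformal graph conversion) is the same route the paper takes: the first two steps reproduce the model estimate on $T_p\R^2$ that the paper imports from \cite[Lemma 2.12, Remark 2.15]{LDg}, and your third step is exactly the role of \ref{lem:graph}\ref{item:fprime}. You also correctly locate the origin of the factor $m^3$ in the cubic dependence on $\norm{f}\sim m\tau$ in that last step.

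There is, however, a genuine gap in the conversion step as you have written it. You claim $\varphi^{\pm}_{cat}(q)=e^{-\omega(q)}F(v)+O(F^2\abs{\nabla\omega}+F^3)$ and then pass directly to the conclusion. But near the bridge circles one has $\abs{\nabla\omega}=\abs{q}/(2n)\sim\rbalanced\sim 1$, so a term $F^2\abs{\nabla\omega}\sim m^2\tau^2$ is \emph{not} absorbed by the target: the weighted bound requires the error to be at most $m^3\tau^3\,\rr^{-2}$, and at the outer edge $\rr\sim\tau^{\alpha}$ this forces the error to be $\lesssim m^3\tau^{3-2\alpha}\ll m^2\tau^2$. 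So if the quadratic term were really present, the lemma would fail. The whole point of Appendix B is that it is absent: since $\R^2$ is totally geodesic in $g_{\shr}$ and $\partial_{\zz}\omega$ vanishes on $\R^2$ (Assumption \ref{ass:omegaz}, which holds for the $\omega$ of \eqref{eq:weight}), the normal $g_{\shr}$-geodesic satisfies $e^{\omega}Z_s=s+O(s^3)$ with no $s^2$ term (\ref{lem:gamma}), while the $O(s^2)$ horizontal drift of the geodesic only enters the height comparison multiplied by $\nabla F$, which is small enough after weighting by $\rr^2$. This is what yields the purely cubic error $\norm{f}^3$ of \ref{lem:graph}\ref{item:fprime}. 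Your argument needs to make this cancellation explicit rather than writing down a quadratic term and silently discarding it; as stated, the estimate does not close.
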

	  \begin{proof}
	  	As in \cite[Lemma 2.12, remark 2.15]{LDg}, $u^{\pm}=\pm\mathrm{Tilt}_{\pm\tau\kappa}(\varphi_{cat}[\tau,\pm h]\circ\dist_O)+\tau\kappa^{\perp}$ satisfies the estimate
	  	\begin{equation*}
	  		\norm{u^{\pm}\mp \tau\log\frac{2\dist_O^{g_{\shr}|_p}}{\tau}- h-\tau\kappau:C^k(D^{T_p\R^2}_p(4\tau^{\alpha})\setminus D^{T_p\R^2}_p(2\tau),\dist_O^{g_{\shr}|_p},g_{\shr}|_p,(\dist_O^{g_{\shr}|_p})^{-2})}\lesssim_k \tau^3.
	  	\end{equation*}
	  	The estimate then follows by \ref{lem:graph}\ref{item:fprime}.
	  \end{proof}
	  \begin{lemma}
	  	For $\Kcech=\Kcech[p,\tau,h,\kappau]$, $\norm{\rho^2H:C^k(\Kcech,\chi,\rho^2(\abs{\zz}+\tau)) }\lesssim_k 1$.
	  \end{lemma}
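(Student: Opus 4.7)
The plan is to compute $H$ directly from the explicit catenoid parametrization \eqref{eq:rhozz}, exploiting that the model catenoid $\K[p,\tau,h,\kappau]\subset T_p\R^3$ is Euclidean-minimal (being a standard catenoid translated by $\tau\kappa^{\perp}\vec{e}_{\tilde{z}}$ and rotated by $\RRR_{\tau\kappa}$, both Euclidean isometries of $T_p\R^3$), and that the Fermi exponential $F=\exp_p^{\R^2,\R^3,g_{\shr}}$ introduces only a controlled perturbation.

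Concretely, I would first pass through the relation $H=e^{\omega}H^{\shr}-\tfrac{1}{2}X\cdot\nu$ (recall \ref{not:Fcal}), reducing the estimate to separate bounds on $H^{\shr}$ and $X\cdot\nu$. Since $F$ is an isometry of $g_{\shr}$ at the origin (between $(T_p\R^3,g_{\shr}|_p)$ and $(\R^3,g_{\shr})$ at $p$), and $\R^2$ is totally geodesic in $(\R^3,g_{\shr})$ because $\nabla\omega\perp\R^2$ on $\R^2$, the pullback $F^{*}g_{\shr}$ on $T_p\R^3$ agrees with the flat metric $g_{\shr}|_{p}$ at the origin and deviates from it by $O(|v|^{2})$ with bounded higher derivatives on any neighborhood containing $\Kcech$. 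Hence the $g_{\shr}$-mean curvature of $\Kcech$, equal to the mean curvature of $\K$ in $F^{*}g_{\shr}$, is a controlled perturbation of the vanishing mean curvature of $\K$ in the flat metric.

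For $X\cdot\nu$ one computes directly: writing $v=X_{\K}(s,\vartheta)$, $X=F(v)=p+v+O(|v|^{2})$, and taking $\tilde{\nu}=(\cos\vartheta,\sin\vartheta,-\sinh s)/\cosh s$ as the tangent-space normal on the untilted catenoid, one finds $v\cdot\tilde{\nu}=\tau-\zz\tanh s=O(\tau+|\zz|)$, while $p\cdot\tilde{\nu}=O(|p|/\cosh s)=O(\tau/\rho)$ is absorbed by the $\rho^{2}$-factor in the weight after multiplication. The tilt parameters $\kappau$ contribute smooth perturbations of size $O(\tau|\kappau|)$, in the spirit of \ref{lem:varphicat} and the analogous calculations in \cite[\S2]{LDg}. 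Derivatives in the cylinder metric $\chi$ are handled by differentiating through \eqref{eq:rhozz} and the Taylor expansion of $F$; since $\partial_{s}\rho=\tau\sinh s$ and $\partial_{s}\zz=\tau$ respect the weight scaling and $F$ has bounded derivatives of all orders, each additional $\partial_{s},\partial_{\vartheta}$ produces contributions of the expected order.

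The main obstacle is to track carefully how the various contributions from $e^{\omega}H^{\shr}$ and $X\cdot\nu/2$ combine, using the precise structure of Fermi normal coordinates for $g_{\shr}$ to exhibit the required cancellations; in particular, one must verify that the $O(\tau/\rho)$ terms from $p\cdot\tilde{\nu}$ and the quadratic corrections from the Fermi distortion together fit within the weight $\rho^{2}(|\zz|+\tau)$ uniformly from the waist region ($\rho\sim\tau$, $|\zz|\sim|h|$) to the asymptotic region ($\rho\sim\tau^{\alpha}$), with all cylinder-coordinate derivatives controlled.
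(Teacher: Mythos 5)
There is a genuine gap, and it begins with your reading of $H$. You take $H$ to be the Euclidean mean curvature and split $H=e^{\omega}H^{\shr}-\tfrac12 X\cdot\nu$; but the term $\tfrac12X\cdot\nu$ is not controlled by the weight, and with that reading the statement is in fact false. As you compute, $p\cdot\tilde\nu=O(|p|/\cosh s)$; at the waist $\cosh s=1$ and $|p|=r(p)\approx\rbalanced\approx 1.5$, so at the points of the waist circle where $\tilde\nu$ is parallel to $p$ one has $|X\cdot\nu|\gtrsim 1$, hence $\rho^{2}|X\cdot\nu|\sim\tau^{2}$, while the weight there is $\rho^{2}(|\zz|+\tau)\lesssim\tau^{2}(|h|+\tau)\lesssim m\tau^{3}\ll\tau^{2}$. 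More generally, $\rho^{2}\cdot O(\tau/\rho)=O(\rho\tau)$, and $\rho\tau/\bigl(\rho^{2}(|\zz|+\tau)\bigr)\gtrsim 1/(m\rho)$ is unbounded as $\rho\to\tau$, so this term is \emph{not} absorbed by the $\rho^{2}$-factor. The only reading under which the lemma holds --- and the one the paper uses both in its proof (which works entirely in $g_{\shr}$ via \cite[Lemmas C.9 and 2.28]{LDg}) and in its application in \ref{lem:H} --- is that $H$ is the Gaussian/weighted mean curvature $H^{2\omega}=e^{\omega}H^{\shr}$, for which no separate $X\cdot\nu$ term appears: the model catenoid is exactly minimal for the flat metric $g_{\shr}|_{p}$ on $T_p\R^3$, and one only estimates the error introduced by the Fermi exponential map.

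Second, even for the $H^{\shr}$ part your quantitative input is too weak. Knowing only that $(\exp_p^{\R^2,\R^3,g_{\shr}})^{*}g_{\shr}-g_{\shr}|_{p}=O(|v|^{2})$ yields, through the first variation of the mean curvature in the metric, an error of order $|\partial(\text{deviation})|+|A|\cdot|\text{deviation}|=O(|v|)+O(\tau\rho^{-2}|v|^{2})$, i.e.\ $O(\rho)$ in the outer region $\rho\sim\tau^{\alpha}$ --- far larger than the required $O(|\zz|+\tau)=O(m\tau)$. The entire content of the lemma is the improvement from $O(\rho)$ to $O(|\zz|+\tau)$, which comes from the structure of the Fermi metric $\dd\zz^{2}+g_{\Sigma,\zz}$ together with the total geodesy of $\R^2$ in $(\R^3,g_{\shr})$, so that the leaves $\Sigma_{\zz}$ have mean curvature $O(|\zz|)$ and the horizontal deviation of the metric at $\zz=0$ is purely tangential. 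You correctly flag this cancellation as ``the main obstacle'' but do not carry it out; the paper discharges exactly this step by citing \cite[Lemma C.9 and Lemma 2.28]{LDg}. As written, the proposal therefore proves neither the intended ($g_{\shr}$) statement nor the Euclidean one you adopt.
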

	  \begin{proof}
	  	Notice that $\hat{g}:=X_{\Kcech}^*g_{\shr}$ on $T_p\R^3$ is conformal with $g_{\shr}|_p$, the quantity $\beta$ in \cite[Lemma C.9]{LDg} vanishes by \cite{LDg}[remark C.6]. The result then follows by \cite[Lemma C.9]{LDg} and \cite[Lemma 2.28]{LDg} along with the fact that $\R^2$ is totally geodesic in $(\R^3,g_{\shr})$.
	  \end{proof}
	  \subsection*{The initial surfaces and their regions}
	 \begin{definition}[The space of parameters]\label{def:Pcal}
	 		We define the space of (total) parameters (recall \ref{not:Jm}, \ref{item:LDLpm}\ref{item:varphisym} and \ref{def:zeta})
	 	\begin{equation*}
	 		\Pcal:=\Pcal_s\oplus\oplus_{\ell>0}^{J-1/2}\Vcal_{\sym}[L_\ell]\cong \R^{4J}.
	 	\end{equation*}
	 	We also define the space $\BPcal$ by (recall \eqref{eq:zeta})
	 	\begin{equation*}
	 		\BPcal:=\{(\zetaboldu,\kappaboldu=\{\kappau_{\ell}\}^{J-1/2}_{\ell>0})\in\Pcal:\abs{\zetaboldu}\leq \cu, \abs{\kappaboldu}\leq \cu\}.
	 	\end{equation*}
	 \end{definition}
	    \begin{notation}\label{not:kappa}
	  	From \ref{def:tilt}, when $\ell>0$, we can represent $\kappau_{\ell}$ by $(\kappa_{\ell},\kappa^{\perp}_{\ell})\in\R^2$. We then use the notation $\kappabold=\{\kappau_{\ell}\}_{\ell>0}$, $\kappabold^{\perp}=\{\kappau_{\ell}^{\perp}\}_{\ell>0}$.  We can also extend the definition of $\kappau_{\ell}$ to all $\ell=-J+1/2,-J+3/2,\dots,J-1/2$ by $\kappau_{\ell}:=-\kappau_{-\ell}$ when $\ell<0$ and $\kappau_0:=(0,0)$ when $J$ is a half integer. We also use the notation $	\kappau_j:=(\tau_{j,+}\kappau_{j,+},\tau_{j,-}\kappau_{j,-})$ and (cf., \ref{eq:taupm} and \ref{eq:hpm}) 
	  	\begin{equation*}
	  		(\kappau_{j,+},\kappau_{j,-}):=
	  		\begin{cases}
	  			&(\kappau_{J-1/2},0), j=J,\\
	  			&(\kappau_{j-1/2},\kappau_{j+1/2}), -J+1\leq j \leq J-1,\\
	  			&(0,\kappau_{-J+1/2}), j=-J.
	  		\end{cases}
	  	\end{equation*}
	  	Similarly, we can represent $\kappau_{j,\pm}$ by $(\kappa_{j,\pm},\kappa^{\perp}_{j,\pm})$. 
	  \end{notation}
	  \begin{definition}[Initial surfaces]\label{def:initial}
	  	Given $(\zetaboldu,\kappaboldu)\in \BPcal$, $\varphi=\varphi_j\bbracket{\zetaboldu}$ and $\underline{v}_j$ as in \ref{def:varphi},
	  	we define the smooth initial surface (recall \ref{not:manifold}\ref{item:graph} and \ref{def:catebridge})
	  	\begin{equation*}
	  		M=M[\zetaboldu,\kappaboldu]:= \sqcup_j\Graph_{\Omega_j}^{\R^3,\R^2,g}(\varphi^{gl}_j)\cup\sqcup_{\ell} \sqcup_{p\in L_{\ell}}\Kcech[p,\tau_{\ell},h_{\ell},\kappau_{\ell}],
	  	\end{equation*}
	  	where $\Omega_j:=\R^2\setminus \sqcup_{\pm} D_{L_{j,\pm}}(9\tau_{j,\pm})$ and the functions $\varphi^{gl}_j:=\varphi^{gl}_j\bbracket{\zetaboldu,\kappaboldu}:\Omega_j\to\R$ are defined by $\varphi_{j}+\underline{v}_{j}+\Ecal^{-1}_{\Lu_j}\kappau_j$ (recall \ref{item:LDLpm}\ref{item:EL}) on $\R^2\setminus  \sqcup_{\pm} D_{L_{j,\pm}}(3\delta'_{j,\pm})$ and on $\sqcup_{p\in L_{j,\pm}} D_p(3\delta'_{j,\pm})\setminus D_p(9\tau_{j,\pm})$
	  	\begin{equation}\label{eq:phigl}
	  		\varphi^{gl}_j:=
	  			\Psi[2\delta'_{j,\pm},3\delta'_{j,\pm};\dist_p](\varphi^{\pm}_{cat}[p,\tau_{j,\pm},h_{j,\pm},\kappau_{j,\pm}],\varphi_{j}+\underline{v}_{j}+\Ecal^{-1}_{\Lu_j}\kappau_j).
	  	\end{equation}
	  \end{definition}
	
	  \begin{lemma}[The gluing region]\label{lem:glue}
	  	For $M=M[\zetaboldu,\kappaboldu]$ as in \ref{def:initial}, $g=g_{Euc}$ and $\forall p\in L_{j,\pm}$, the following hold.
	  	\begin{enumerate}[(i)]
	  		\item\label{item:gluecat} $\norm{\varphi^{gl}_j-\varphi^{\pm}_{cat}[p,\tau_{j,\pm},h_{j,\pm},\kappa_{j,\pm}]:C^{2,\beta}(D_p(4\delta'_{j,\pm})\setminus D_p(\delta'_{j,\pm}),\delta'_{j,\pm},g)}\lesssim \tau_{j,\pm}^{1+15\alpha/8}$.
	  		\item\label{item:gluevarphi} $\norm{\varphi^{gl}_j:C^{3,\beta}(D_p(4\delta'_{j,\pm})\setminus D_p(\delta'_{j,\pm}),\delta'_{j,\pm},g)}\lesssim m\tau_{j,\pm}$.
	  		\item\label{item:glueH} $\norm{(\delta'_{j,\pm})^2 (H^{2\omega})':C^{2,\beta}(D_p(3\delta'_{j,\pm})\setminus D_p(2\delta'_{j,\pm}),\delta'_{j,\pm},g)}\lesssim \tau_{j,\pm}^{1+15\alpha/8}$, where $(H^{2\omega})'=H^{2\omega}_{\Omega_j}\circ (\Pi_{\R^2})^{-1}$ (recall \ref{not:Fcal}).
	  	\end{enumerate}
	  \end{lemma}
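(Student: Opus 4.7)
The plan is to carry out a local analysis on the gluing annulus $D_p(4\delta'_{j,\pm})\setminus D_p(\delta'_{j,\pm})$ around each $p\in L_{j,\pm}$. The setting is favourable: since $\tau_{j,\pm}$ is exponentially small in $m$ (by \eqref{eq:tau} and \eqref{eq:phiunum}) while $\delta=1/(100m)$ is only polynomially small, we have $\delta'_{j,\pm}=\tau_{j,\pm}^{\alpha}\ll \delta$, so this annulus lies well inside $D_p(2\delta)$ where the decomposition $\varphi_j=\hat{\varphi}_{j,p}\pm\tau_{j,\pm}G_p$ of \ref{lem:varphi}\ref{item:varphihat} is available, and well inside $D_L(\delta)$ where $\underline{v}_j$ and $\Ecal^{-1}_{\Lu_j}\kappau_j$ coincide with explicit combinations of $\underline{\phi}[1,0]$ and $\underline{\phi}[0,1]$ from \ref{def:phiju}.

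The key step is to show that on this annulus the two functions being glued in \eqref{eq:phigl}, namely $\varphi^{\pm}_{cat}[p,\tau_{j,\pm},h_{j,\pm},\kappau_{j,\pm}]$ and $\varphi_j+\underline{v}_j+\Ecal^{-1}_{\Lu_j}\kappau_j$, share the same leading asymptotic expansion at $p$: the log-part $\pm\tau_{j,\pm}\log(2\abs{v}_{g_{\shr}}/\tau_{j,\pm})$, the constant $h_{j,\pm}$, and the tilt $\tau_{j,\pm}\kappau_{j,\pm}(v)$. For the catenoid side this is exactly \ref{lem:varphicat}. For the other side, the decomposition $\varphi_j=\hat{\varphi}_{j,p}\pm\tau_{j,\pm}G_p$ combined with the defining identity $\Ecal_{\Lu_j}\underline{v}_j=-\Mcal^{\hu_j}_{\Lu_j}\varphi_j$, which by \eqref{eq:mismatch} cancels the $\Ecalu_p\hat{\varphi}_{j,p}\pm\tau_{j,\pm}\log(\tau_{j,\pm}/2)-h_{j,\pm}$ part of the $1$-jet, together with $\Ecalu_p\Ecal^{-1}_{\Lu_j}\kappau_j=\tau_{j,\pm}\kappau_{j,\pm}$ and the Green's function expansion $\Tcal_{\exp(\omega)}G_p=\log\dist^{g_{\shr}}_p+O(\dist_p^2\log\dist_p)$ from \ref{lem:green}\ref{item:greenest}, produces exactly the same leading expansion. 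The two functions therefore differ only by quadratic remainders.

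Part (i) then follows from three pointwise bounds on the remainder at scale $\delta'_{j,\pm}$: the catenoid remainder is at most $m^3\tau_{j,\pm}^{3-2\alpha}\ll\tau_{j,\pm}^{1+15\alpha/8}$ by \ref{lem:varphicat}; the Taylor remainder of $\hat{\varphi}_{j,p}+\underline{v}_j$ past its (exactly known) $1$-jet is controlled by \ref{lem:varphiest}\ref{item:hatvarphi} plus rescaling from the reference scale $\delta$ down to $\delta'_{j,\pm}$; and the residual from $\Ecal^{-1}_{\Lu_j}\kappau_j$ past its leading affine part is controlled by \ref{lem:phiju}\ref{item:phiu}-\ref{item:phiub}. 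Multiplying by the cutoff $\psicut[2\delta'_{j,\pm},3\delta'_{j,\pm}]\circ\dist_p$, whose $C^{2,\beta}$ seminorm at scale $\delta'_{j,\pm}$ is uniformly bounded, yields (i). Part (ii) follows from (i) by the triangle inequality together with the direct bound $\norm{\varphi^{\pm}_{cat}:C^{3,\beta}(D_p(4\delta'_{j,\pm})\setminus D_p(\delta'_{j,\pm}),\delta'_{j,\pm},g)}\lesssim m\tau_{j,\pm}$ obtained by differentiating the $\pm\tau\log(\cdot)$ leading term in \ref{lem:varphicat}.

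For (iii), on $D_p(3\delta'_{j,\pm})\setminus D_p(2\delta'_{j,\pm})$ I write $\varphi^{gl}_j=\varphi^{\pm}_{cat}+w$ with $w$ controlled by (i), and expand $(H^{2\omega})'[\varphi^{gl}_j]=(H^{2\omega})'[\varphi^{\pm}_{cat}]+\Lcal_{\R^2}w+Q(w,\nabla w,\nabla^2 w)$ with $Q$ at least quadratic. The catenoid term is controlled through the mean-curvature bound $\norm{\rho^2H:C^k(\Kcech,\chi,\rho^2(\abs{\zz}+\tau_{j,\pm}))}\lesssim_k 1$ stated just before \ref{def:Pcal}, which on our annulus gives $\abs{H^{2\omega}}\lesssim m\tau_{j,\pm}\ll \tau_{j,\pm}^{1-\alpha/8}$, together with the fact that $\R^2$ is totally geodesic in $(\R^3,g_{\shr})$. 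The linear term $\Lcal_{\R^2}w$ is handled via (i) combined with the scale-$\delta'_{j,\pm}$ rescaling of $\Lcal_{\R^2}$, and the quadratic $Q$ via (i)--(ii). The main obstacle is tracking the precise exponent $1+15\alpha/8$: the Schauder estimate \ref{lem:varphiest}\ref{item:hatvarphi} is at the reference scale $\delta\gg\delta'_{j,\pm}$ and is sharp only up to the factor $\tau_{\min}^{1-\alpha/9}$, so its rescaling to $\delta'_{j,\pm}$ must exploit that the $1$-jet of $\hat{\varphi}_{j,p}+\underline{v}_j$ at $p$ matches exactly and only a quadratic Taylor remainder needs to be interpolated; these three independent sources of error must all fit comfortably under the common bound $\tau_{j,\pm}^{1+15\alpha/8}$, constraining $\alpha$ to be sufficiently small in absolute terms.
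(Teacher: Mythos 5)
Your proposal is correct and follows essentially the same route as the paper: both glued pieces are compared on the transition annulus to a common leading model ($\pm\tau_{j,\pm}G_p$ plus the affine corrections built from $\underline{\phi}[1,0]$ and $\underline{\phi}[0,1]$), the catenoid-side error is bounded via \ref{lem:varphicat}, and the MLD-side error via the vanishing of $\Ecalu_p$ of the remainder together with the $(\delta'_{j,\pm}/\delta)^2$ decay coming from \ref{lem:varphiest}\ref{item:hatvarphi} --- exactly the paper's $\underline{\varphi}_{\pm}$/$\bar{\varphi}_{\pm}$ decomposition. The only (harmless) deviation is in (iii), where you linearise the weighted mean curvature around the catenoid graph and invoke the bridge mean-curvature bound, whereas the paper linearises around the plane and uses that the common model is $\Lcal$-harmonic, so that $\Lcal\varphi^{gl}_j$ reduces to $\Lcal$ of the cutoff interpolation already controlled in (i); both yield the stated exponent.
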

	  \begin{proof}
	  	In this proof we use the notation omitting the subscript $j$.
	  	
	  	We have  for each $p \in L_{\pm}$ on $D_p(4\delta'_{\pm})\setminus D_p(\delta'_{\pm})$ and $\underline{\phi}[1,0]=\underline{\phi}[1,0;r(p)]$ (recall \ref{def:phiju})
	  	\begin{align}\label{eq:varphiplusminus}
	  		&\varphi^{gl}=\pm\tau_{\pm} G_p+\left(\mp\tau_{\pm}\log\frac{\tau_{\pm}}{2}+h_{\pm}\right) \underline{\phi}[1,0]+\Ecal^{-1}_{\Lu}\kappau+\cutoff{2\delta'_{\pm}}{3\delta'_{\pm}}{\dist_p}(\underline{\varphi}_{\pm},\bar{\varphi}_{\pm}),\nonumber\\
	  	\text{where }&\underline{\varphi}_{\pm}:=\varphi^{\pm}_{cat}[p,\tau_{\pm},h_{\pm},\kappau_{\pm}]\mp\tau_{\pm} G_p+\left(\pm\tau_{\pm}\log\frac{\tau_{\pm}}{2}-h_{\pm}\right) \underline{\phi}[1,0]-\Ecal^{-1}_{\Lu}\kappau\\
	  	&\bar{\varphi}_{\pm}:=\hat{\varphi}_p+\left(\pm\tau_{\pm}\log\frac{\tau_{\pm}}{2}-h_{\pm}\right)\underline{\phi}[1,0]+\underline{v}_{\pm}.\nonumber
	  	\end{align} 
	  	Thus by \ref{lem:green}\ref{item:greenest}, \eqref{eq:tau} and \eqref{eq:h}, on $D_p(4\delta'_{\pm})\setminus D_p(\delta'_{\pm})$,
	  	\begin{align}\label{eq:phiglestn}
	  		\norm{\varphi^{gl}}\lesssim m\tau_{\pm}+\norm{\underline{\varphi}_{\pm}}+\norm{\bar{\varphi}_{\pm}},
	  	\end{align}
	  	 where in this proof when we do not specify the norm, we mean the $C^{3,\beta}(D_p(4\delta'_{\pm})\setminus D_p(\delta'_{\pm}),\delta'_{\pm},g)$ unless specified otherwise.
	  	
	  	Note that on $D_p(4\delta'_{\pm})\setminus D_p(\delta'_{\pm})$, 
	  	 \begin{align*}
	  	 	\varphi^{gl}-\varphi^{\pm}_{cat}[p,\tau_{\pm},h_{\pm},\kappau_{\pm}]&=\cutoff{2\delta'_{\pm}}{3\delta'_{\pm}}{\dist_p}(0,\bar{\varphi}_{\pm}-\underline{\varphi}_{\pm}),\\
	  	 	\Lcal\varphi^{gl}&=\Lcal\cutoff{2\delta'_{\pm}}{3\delta'_{\pm}}{\dist_p}(\underline{\varphi}_{\pm},\bar{\varphi}_{\pm}).
	  	 \end{align*}
	  	 Using these, we have 
	  	 \begin{align}
	  	 	\norm{\varphi^{gl}-\varphi^{\pm}_{cat}[p,\tau_{\pm},h_{\pm},\kappau_{\pm}]}&\lesssim \norm{\underline{\varphi}_{\pm}}+\norm{\bar{\varphi}_{\pm}},\label{eq:phiglcat}\\
	  	 	\norm{(\delta'_{\pm})^2\Lcal\varphi^{gl}:C^{0,\beta}(D_p(4\delta'_{\pm})\setminus D_p(\delta'_{\pm}),\delta'_{\pm},g)}&\lesssim \norm{\underline{\varphi}_{\pm}}+\norm{\bar{\varphi}_{\pm}}.\label{eq:Lphigl}
	  	 \end{align}

	  	  By  \eqref{eq:varphiplusminus}, 
	  	 \begin{align*}
	  	 	&\norm{\underline{\varphi}_{\pm}}\lesssim	\norm{\Tcal_{\exp(\omega)}\underline{\varphi}_{\pm}} \lesssim \norm{\tau_{\pm}\Tcal_{\exp(\omega)} G_p-\tau_{\pm}\log\dist_p^{g_{\shr}}}\\
	  	 	+&\norm{\Tcal_{\exp(\omega)}\varphi^{\pm}_{cat}[p,\tau_{\pm},h_{\pm},\kappau_{\pm}]\mp\tau_{\pm}\log\frac{2\dist^{g_{\shr}}_p}{\tau_{\pm}}-h_{\pm}-\kappau_{\pm}\circ(\exp^{\R^2,g_{\shr}}_p)^{-1}}\nonumber\\
	  	 	+&\norm{\left(\pm\tau_{\pm}\log\frac{\tau_{\pm}}{2}-h_{\pm}-\kappa_{\pm}^{\perp}\right)(1-\Tcal_{\exp(\omega)}\underline{\phi}[1,0])}+\norm{\kappa_{\pm}\circ(\exp^{\R^2,g_{\shr}}_p)^{-1}-\kappa_{\pm}\Tcal_{\exp(\omega)}\underline{\phi}[0,1]}\nonumber.
	  	 \end{align*}
	  	 Thus by using \ref{lem:green}\ref{item:greenest}, \ref{lem:varphicat}, \ref{lem:phiju}\ref{item:phiu}-\ref{item:phiub},
	  	 \ref{def:VW} to control each terms and using \eqref{not:kappa}, \eqref{eq:tau}, \eqref{eq:h}, \eqref{eq:deltaprime} to control the parameters
	  	 \begin{equation}\label{eq:phiminusestn}
	  	 	\norm{\underline{\varphi}_{\pm}}\lesssim \tau_{\pm}\delta'^2_{\pm}\abs{\log\delta'_{\pm}}+m^3\tau_{\pm}^3(\delta'_{\pm})^{-2}+ \tau_{\pm}(\delta'^2_{\pm})\abs{\log\tau_{\pm}}+\tau_{\pm}(\delta'_{\pm})^2\lesssim m\tau_{\pm}^{1+2\alpha}+m^3\tau_{\pm}^{2-2\alpha}.
	  	 \end{equation}
	  	 
	  	 Because $\Lcal\bar{\varphi}_{\pm} = 0$ on $D_p(4\delta'_{\pm})\setminus D_p(\delta'_{\pm})$ and $\Ecalu_{p}\bar{\varphi}_{\pm}=0$ (recall \ref{def:mismatch} and \eqref{eq:varphiplusminus}), it follows from standard linear theory and \ref{lem:varphiest}\ref{item:hatvarphi} that
	  	 \begin{equation}\label{eq:phiplusestn}
	  	 	\norm{\bar{\varphi}_{\pm}}\lesssim\left(\frac{\delta'_{\pm}}{\delta}\right)^2\norm{\bar{\varphi}_{\pm}:C^{3,\beta}(\partial D_p^{\R^2}(\delta),\delta,g)}\lesssim  (\delta'_{\pm})^{2}\tau_{\pm}^{1-\alpha/9}\lesssim\tau_{\pm}^{1+17\alpha/9}. 
	  	 \end{equation}
	  	  \ref{item:gluecat} now follows by \eqref{eq:phiglcat}, \eqref{eq:phiminusestn} and \eqref{eq:phiplusestn}, and \ref{item:gluevarphi} now follows by \eqref{eq:phiglestn}, \eqref{eq:phiminusestn} and \eqref{eq:phiplusestn}.

	  	  By expanding $(H^{2\omega})'$ in linear and higher order terms (by using $H^{2\omega}_{\Omega_j}=e^{\omega}H_{\Omega_j}^{g_{\shr}}$ as in \ref{not:Fcal} or as in e.g., \cite[(10.2)]{kapouleas:kleene:moller} and \cite[Lemma 2.4]{Wang:JAMS}) we have
	  	 \begin{equation*}
	  	 	(\delta'_{\pm})^2 (H^{2\omega})'=	(\delta'_{\pm})^2\Lcal\varphi^{gl}+\delta'_{\pm}\tilde{Q}_{(\delta'_{\pm})^{-1}\varphi^{gl}}.
	  	 \end{equation*}
	  	 Thus 
	  	 \begin{equation} 
\label{eq:thus} 
	  	 	\norm{\delta'_{\pm}\tilde{Q}_{(\delta'_{\pm})^{-1}\varphi^{gl}}:C^{0,\beta}(D_p(4\delta'_{\pm})\setminus D_p(\delta'_{\pm}),\delta'_{\pm},g)}
	  	 	\lesssim(\delta'_{\pm})^{-2}\norm{\varphi^{gl}}^2.
	  	 \end{equation}
	  	  \ref{item:glueH} follows along with \eqref{eq:Lphigl}.
	  \end{proof}
	  
	  \begin{lemma}\label{lem:phigl}
	  	$M$ defined in \ref{def:initial} is a $\Grp[m,J]$-invariant embedded surface (recall \ref{def:grp}) and moreover the following hold.
	  	\begin{enumerate}[(i)]
	  		\item\label{item:phiglembed} On $\R^2\setminus \sqcup_{\pm}D_{L_{j,\pm}}(\delta'_{j,\pm})$ we have 
	  		\begin{align*}
	  			&\frac{8}{9}\tau_{\max}+ h_{J-1/2}\hat{\phi}\leq \pm\varphi^{gl}_{\pm J},\\
	  			&\frac{8}{9}\tau_{\max}+h_{j-1/2}\hat{\phi}\leq \varphi^{gl}_j \leq -\frac{8}{9}\tau_{\max}+h_{j+1/2}\hat{\phi},\quad J+1\leq j\leq J-1.\\
	  		\end{align*}
	  		\item\label{item:phiglbound} $\norm{\varphi^{gl}_j:C^{3,\beta}(\R^2\setminus \sqcup_{\pm}D_{L_{j,\pm}}(\delta'_{j,\pm}),g)}\lesssim \frac{9}{8}\tau_{\min}^{8/9}$.
	  		\item\label{item:phiglCS} $\norm{\varphi^{gl}_j:\mathcal{CS}^{2,\alpha}(\R^2\setminus D_O(2\rbalanced))}\lesssim m\tau_{\min}$ (recall \ref{def:conespaces}).
	  			\item\label{item:phiglcone}
	  			\begin{equation*}
	  				\lim_{r\to\infty}\varphi^{gl}_j/r=
	  				\begin{cases}
	  					\pm\frac{\sqrt{\pi}m}{2}{\tau}_{J- 1/2}\phi_m[{r}_{J- 1/2}]e^{{r}_{J- 1/2}^2/8},\text{ when }j=\pm J,\\
	  					\frac{\sqrt{\pi}m}{2}\left({\tau}_{j- 1/2}\phi_m[{r}_{j- 1/2}]e^{{r}_{j- 1/2}^2/8}-{\tau}_{j+ 1/2}\phi_m[{r}_{j+ 1/2}]e^{{r}_{j+ 1/2}^2/8}\right),\text{ when }j\neq\pm J,0,\\
	  					0\text{ when }j=0 \text{ and }J\in\N.
	  				\end{cases}
	  			\end{equation*}
	  	\end{enumerate}
	  \end{lemma}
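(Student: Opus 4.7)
My plan is to first establish items (i)--(iv) and then derive the $\Grp[m,J]$-invariance and embeddedness of $M$, using (i) for the latter. For (i), I split into the bulk region $\R^2 \setminus \sqcup_\pm D_{L_{j,\pm}}(3\delta'_{j,\pm})$, on which by construction $\varphi^{gl}_j = \varphi_j + \underline{v}_j + \Ecal^{-1}_{\Lu_j}\kappau_j$, and the transition annulus $D_p(3\delta'_{j,\pm}) \setminus D_p(9\tau_{j,\pm})$ around each $p\in L_{j,\pm}$. On the bulk I apply the sandwich estimate \ref{lem:varphiest}\ref{item:solembed} and absorb the extra term $\Ecal^{-1}_{\Lu_j}\kappau_j$, which by \ref{item:LDLpm}\ref{item:EL}, \ref{lem:obstr}\ref{item:V}, and the factor $\tau_{j,\pm}$ carried by $\kappau_j$ via \ref{not:kappa}, is $O(\cu\tau_{\min})$ pointwise; choosing $m$ large in terms of $\cu$ absorbs it into the gap between $\tau_{\max}$ and $\frac{8}{9}\tau_{\max}$. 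On each transition annulus, \ref{lem:glue}\ref{item:gluecat} replaces $\varphi^{gl}_j$ by $\varphi^{\pm}_{cat}$ up to error of order $\tau_{j,\pm}^{1+15\alpha/8}$, and \ref{lem:varphicat} writes $\varphi^{\pm}_{cat}$ as $\pm\tau_{j,\pm}\log(2\rr/\tau_{j,\pm}) + h_{j,\pm}$ plus lower-order terms; on $\rr\in [9\tau_{j,\pm}, 3\delta'_{j,\pm}]$ this logarithmic profile sits strictly inside the claimed interval once I use \eqref{eq:h} and the near-constancy of $\hat\phi$ on this scale (from \ref{not:phihat}) to check that $h_{j\pm 1/2}\hat\phi - h_{j,\pm}$ is negligible.

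Items (ii), (iii), (iv) are shorter. (ii) follows by piecing together \ref{lem:varphiest}\ref{item:solboundn} on the bulk, \ref{lem:glue}\ref{item:gluevarphi} on the transition annulus, and the catenoid estimates \ref{lem:varphicat} near the waist, all of which are bounded by $\tau_{\min}^{8/9}$ after the appropriate rescaling. For (iii), the assumption $r_\ell \in ((r_u+\rbalanced)/2, (r_m+\rbalanced)/2)$ together with $\delta = 1/(100m)$ ensures that every $D_{L_\ell}(4\delta)$ and every catenoidal bridge lies well inside $D_O(2\rbalanced)$, so on the complement both $\underline{v}_j$ and $\Ecal^{-1}_{\Lu_j}\kappau_j$ vanish by \ref{lem:obstr}\ref{item:vsupp} and hence $\varphi^{gl}_j = \varphi_j$; the estimate then reduces to \ref{lem:varphiest}\ref{item:solCS}. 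For (iv) I plug \eqref{eq:Phicone} into \eqref{eq:varphizeta}: each $\Phi_{j,\pm}/r$ has a definite limit depending only on $r_{j,\pm}$, and \eqref{eq:taupm} produces the two generic cases. In the exceptional case $j = 0$, $J \in \N$, the formulas \eqref{eq:tau} and \eqref{eq:r} are invariant under $\ell \leftrightarrow -\ell$, giving $\tau_{0,+} = \tau_{0,-}$ and $r_{0,+} = r_{0,-}$, so the two contributions in \eqref{eq:varphizeta} have identical magnitude and opposite signs and cancel.

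Finally, for $\Grp[m,J]$-invariance I verify that each ingredient is symmetric: $\varphi_j$ and $\underline{v}_j$ are $\mathbf{D}_m$-symmetric by \ref{lem:LD} and \ref{def:VW}, the sets $L_\ell$ are $\mathbf{D}_m$-orbits by \ref{def:L}, and identical catenoidal data $(\tau_\ell, h_\ell, \kappau_\ell)$ sit over each orbit. The remaining generator, $\UUU[0]$ when $J \notin \N$ or $\UUU[\pi/(2m)]$ when $J \in \N$, couples $\sigma_h$ with a horizontal element and implements $j \leftrightarrow -j$; the evenness of $\tau_\ell, r_\ell$ in $\ell$ and the oddness of $h_\ell, \kappau_\ell$ (cf.\ \ref{not:kappa}) make this consistent, and in the integer-$J$ case the $\sigma_v[\pi/(2m)]$ component is exactly what swaps $L^0 \leftrightarrow L^1$ to match the sign flip of $\sgn(\ell)$ for integer $\ell$. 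Embeddedness then follows from (i), which delivers strict separation $\varphi^{gl}_{j-1} < \varphi^{gl}_j$ with uniform slack on $\R^2 \setminus \sqcup_\pm D_{L_{j,\pm}}(\delta'_{j,\pm})$, while distinct catenoidal bridges are disjoint by \ref{def:delta} and stay close to their catenoid models by \ref{lem:varphicat}. The main obstacle I anticipate is the bookkeeping in (i) on the transition annulus, where I must verify that the $\tau_{\max}/9$ slack survives once the logarithmic growth of $\varphi^{\pm}_{cat}$, the slow variation of $h_{j\pm 1/2}\hat\phi$ on the scale $\tau_{j,\pm}^\alpha$, and all other accumulated error terms are accounted for.
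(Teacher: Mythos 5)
Your proposal follows the paper's proof essentially verbatim: the same bulk/transition-annulus split for (i) and (ii) via \ref{lem:varphiest}, \ref{lem:glue} and \ref{lem:varphicat}, the same reduction of (iii) to \ref{lem:varphiest}\ref{item:solCS}, and the same derivation of (iv) from \eqref{eq:varphizeta}, \eqref{eq:Phicone} and \eqref{eq:taupm}, with symmetry and embeddedness handled as in the paper. The one point to watch is that the tilt term $\Ecal^{-1}_{\Lu_j}\kappau_j$ is of size $O(\cu\,\tau_{\max})$, which the literal gap $\tau_{\max}/9$ does not absorb once $\cu$ is large; you must instead invoke the $O(m\tau_{\min})$ slack visible inside the proof of \ref{lem:varphiest}\ref{item:solembed}, which is what your ``choose $m$ large in terms of $\cu$'' implicitly does.
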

	  \begin{proof}
	  	The symmetry follows by the construction. \ref{item:phiglembed} on $\R^2\setminus \sqcup_{\pm}D_{L_{j,\pm}}(3\delta'_{j,\pm})$ follows from \ref{lem:varphiest}\ref{item:solembed} and \ref{def:initial}, and on each $D_p(4\delta'_{j,\pm})\setminus D_p(\delta'_{j,\pm})$ with $p\in L_{j,\pm}$ follows from \ref{lem:glue}\ref{item:gluecat} and \ref{lem:varphicat}. The embeddedness of $M$ then follows by \ref{item:phiglembed} and by comparing the rest of $M$ with catenoids. \ref{item:phiglbound} on $\R^2\setminus\sqcup_{\pm} D_{L_{j,\pm}}(3\delta'_{j,\pm})$ follows from \ref{lem:varphiest}\ref{item:solboundn} and \ref{def:initial}, and on each $D_p(4\delta'_{j,\pm})\setminus D_p(\delta'_{j,\pm})$ with $p\in L_{j,\pm}$ follows from  \ref{lem:glue}\ref{item:gluevarphi}.  \ref{item:phiglCS} follows by \ref{lem:varphiest}\ref{item:solCS} and \ref{def:initial}, \ref{item:phiglcone} follows by \eqref{eq:varphizeta}, \eqref{eq:Phicone} and \eqref{eq:taupm}.
	  \end{proof}
	  \begin{definition}\label{def:region}
	  	We define the regions on $M$ by (recall \ref{def:catebridge})
	  	\begin{align}\label{eq:KcheckM}
	  		&\K_M:=\sqcup_{\ell}\sqcup_{p\in L_{\ell}}\K[p,\tau_{\ell},h_{\ell},\kappau_{\ell}],\\
	  		&\Kcech_x[M]:=\sqcup_{\ell}\sqcup_{p\in L_{\ell}}\Kcech_x[p,\tau_{\ell},h_{\ell},\kappau_{\ell}],\nonumber\\\nonumber
	  	&\Ku_x[M]:=\sqcup_{p\in L}\Ku_x[p,\tau_{\ell},h_{\ell},\kappau_{\ell}].\nonumber
	  	\end{align}
	  	We define the map $\Pi_{\K}:\Kcech[M]\to\K_M$ by taking $\Pi_{\K}:=(\exp_p^{\R^2,\R^3,g_{\shr}})^{-1}$ on each $\Kcech[p,\tau_{\ell},h_{\ell},\kappau_{\ell}]$. We also define $\underline{\K}[p,\ell]:=\Pi_{\K}(\Ku[p,\tau_{\ell},h_{\ell},\kappau_{\ell}])$ for each $p\in L_{\ell}$. Finally, we define the region $\tilde{S}'_{x,j}\subset \R^2$ and $\tilde{S}_{x,j}\subset M$ by
	  	\begin{align}\label{eq:tildeSprime}
	  		&\tilde{S}'_{x,j}:=\R^2\setminus \sqcup_{\pm} D_{L_{j,\pm}}((1+x)b\tau_{j,\pm}),\\
	  		&\tilde{S}_{x,j}:=\Pi_{\R^2}^{-1}(\tilde{S'}_{x,j})\cap \Graph_{\Omega_j}^{\R^3,\R^2,g}(\varphi^{gl}_j).\nonumber
	  	\end{align}
	  	We also omit $x$ if $x=0$.
	  \end{definition}
	 
	  \begin{notation}\label{not:JM}
	  	If for $j=-J,-J+1,\dots,J$, $f_j$ is a function supported on $\tilde{S}'_j$, we use the notation $\fbold=\{f_j\}$. 	We then define $\JM(\fbold)$ to be the function on $M$ supported on $\sqcup_j\tilde{S}_{j}$ defined by $f_j\circ\Pi_{\R^2}$ on $\tilde{S}_{j}$ and $0$ on the other region. 
	  	
	  	If there are two collections of functions $\fbold=\{f_j\}$, $\gbold=\{g_j\}$, we also use $\fbold\gbold$ to denote the collection of functions $\{f_jg_j\}$. Similarly, we use the notation $\Lcal\fbold$ to denote the collection of functions $\{\Lcal f_j\}$.
	  \end{notation}

	   \begin{definition}\label{def:VM}
	  	We define the vector space $\Vcal[M]\subset\oplus_{j}\Vcal_{\sym}[L_j]$ by (recall \ref{item:LDLpm}\ref{item:varphisym}) the following. An element $\{\nu_{J,+}+\nu'_{J,+}\dr,\{(\nu_{j,+}+\nu'_{j,+}\dr,\nu_{j,-}+\nu'_{j,-}\dr)\}_{j={-J+1}}^{J-1},\nu_{J,-}+\nu'_{J,-}\dr\}\in \Vcal[M]$ if and only if
	  	\begin{align*}
	  		\begin{cases}
	  			&\nu_{j,+}=-\nu_{-j,-},\nu'_{j,+}=-\nu'_{-j,-},\text{ when }j\in\N,\\
	  			&\nu_{j,+}=\nu_{-j,-},\nu'_{j,+}=\nu'_{-j,-},\text{ when }j\notin\N.\\
	  		\end{cases}
	  	\end{align*}
	  	By \ref{item:LDLpm}\ref{item:varphisym}, we have $\Vcal[M]\cong \R^{4J}$. We will then use $(\nubold,\nubold')=(\{\nu_{j,\pm}\},\{\nu'_{j,\pm}\})\in\R^{4J}$ to denote the element in $\Vcal[M]$. We also use $(\taubold\nubold,\taubold\nubold')$ to denote $(\{\tau_{j,\pm}\nu_{j,\pm}\},\{\tau_{j,\pm}\nu'_{j,\pm}\})\in \Vcal[M]$.
	  	
	  	We also define the vector spaces $\skernelv[M]\subset\oplus_{j}\skernelv[L_j]$, $\skernel[M]\subset\oplus_{j}\skernel[L_j]$ (recall \ref{def:VW}) by the following.  An element $\{f_{J,+},\{(f_{j,+},f_{j,-})\}_{j={-J+1}}^{J-1},f_{J,-}\}\in \skernelv[M]$ if and only if
	  	\begin{align*}
	  		\begin{cases}
	  			&f_{j,+}=-f_{-j,-},\text{ when }j\in\N,\\
	  			&f_{j,+}=f_{-j,-},\text{ when }j\notin\N.\\
	  		\end{cases}
	  	\end{align*}
	  	Similarly, we define $\skernel[M]$, or equivalently, $\skernel[M]=\Lcal \skernelv[M]$. By \ref{N:G}, we have $\JM\skernelv[M]\subset C^{\infty}_{\sym}(M)$ and $\JM\skernel[M]\subset C^{\infty}_{\sym}(M)$.
	  
	  We then define the isomorphism $\Ecal_M: \skernelv[M]\to \Vcal[M]$ by $\Ecal_M\fbold=\{\Ecal_{\Lu_j}f_j\}$ (recall \ref{def:VW}). We remark that
	  \begin{equation*}
	  	\Lcal\circ\Ecal_M^{-1}(\nubold,\nubold')=\{\sum_{\pm}\tau_{j,\pm}(\nu_{j,\pm}W_{j,\pm}+\nu'_{j,\pm}W'_{j,\pm})\},
	  \end{equation*}
	  where $W_{j,\pm}=W[L_{j,\pm}]$, $W'_{j,\pm}=W'[L_{j,\pm}]$.
	  \end{definition}
	  
	  \begin{definition}
	  	We define the functions $w_j\in\skernel[L_j]$ by (recall \ref{def:VW})
	  	\begin{equation}\label{eq:w}
	  		w_j:=
	  		\begin{cases}
	  			&\Lcal\left(\underline{v}_j+\Ecal_{\Lu_j}^{-1}\kappau_j\right),\text{ when }j\in\N,\\
	  			&\frac{j}{\abs{j}}\Lcal\left(\underline{v}_j+\Ecal_{\Lu_j}^{-1}\kappau_j\right),\text{ when }j\notin\N.\\
	  		\end{cases}
	  	\end{equation}
	  	Notice by \ref{lem:obstr}\ref{item:wsupp}, we can use the notation $\wbold=\{w_j\}$. Moreover, by \ref{N:G} and \eqref{eq:vunder}, we have $\wbold\in\skernel[M]$ and $\JM(\wbold)\in C^{\infty}_{\sym}(M)$.
	  \end{definition}
	  
	  \begin{lemma}\label{lem:zetamu}
	  	There exists a linear map (recall \ref{def:Pcal}) $Z=Z_{\zetaboldu,\kappaboldu}:\Vcal[M]\to \Pcal$ such that for $(\zetaboldu,\kappaboldu)\in\BPcal$
	  	\begin{equation*}
	  		\abs{(\zetaboldu,\kappaboldu)-Z(\mubold,\mubold')}\lesssim 1,
	  	\end{equation*}
	  	where $(\mubold,\mubold')\in\Vcal[M]$ are defined such that  $	\Lcal\circ\Ecal_M^{-1}(\taubold\mubold,\taubold\mubold')=-\wbold$ (recall \eqref{eq:w}). 
	  \end{lemma}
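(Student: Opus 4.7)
The plan is to identify the map $(\zetaboldu, \kappaboldu) \mapsto (\mubold, \mubold')$ as a linear map perturbed by a bounded term, and take $Z$ to be the inverse of that linear part. The stated estimate then follows by applying $Z$ to the perturbation together with a uniform bound $\|Z\| \lesssim 1$.

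First, I unpack the definition of $(\mubold, \mubold')$. Applying $\Ecal_M$ to the defining equation $\Lcal \Ecal_M^{-1}(\taubold\mubold, \taubold\mubold') = -\wbold$ and using \eqref{eq:w} together with \eqref{eq:vunder}, I obtain (up to the sign conventions set by \ref{def:VM} and \eqref{eq:w})
\[
(\taubold\mubold, \taubold\mubold') = \Mcal^{\hu}_{\Lu}\varphi - \kappaboldu.
\]
From \ref{def:affine} and \ref{not:kappa}, the contribution of $\kappaboldu$ decomposes: $\kappa^\perp_{j,\pm}$ enters only the constant part $\mu_{j,\pm}$, while $\kappa_{j,\pm}$ enters only the directional part $\mu'_{j,\pm}$. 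Combining this with the explicit expressions \eqref{eq:muzeta}, \eqref{eq:muzetaprime} for $\Mcal^{\hu}_{\Lu}\varphi$ yields affine formulae expressing $(\mubold, \mubold')$ linearly in $(\zetaboldu, \kappaboldu)$ modulo an $O(1)$ error absorbed by the constant terms of Lemma \ref{lem:miss}.

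Second, the linear system decouples into two subsystems of dimension $2J$: one relates $\mubold$ to $(\zeta, \zetabold, \kappabold^\perp)$, and the other relates $\mubold'$ to $(\zeta', \zetabold', \kappabold)$. Using \ref{not:kappa} (including $\kappa^\perp_{-\ell} = -\kappa^\perp_\ell$ and $\kappa^\perp_0 = 0$ in the half-integer case) together with the symmetries imposed by \ref{def:VM}, the free-parameter count matches the number of independent mismatches in each subsystem. Within the $\mu$-subsystem the $\kappa^\perp$ components enter essentially diagonally after the reindexing, while $\zeta$ acts as a common shift and $\zetabold$ produces a tridiagonal-type contribution with coefficients built from the cosines $\cos\frac{\pi\ell}{2J+1}$; since $J$ is fixed and these cosines are nonzero for $|\ell|\leq J-1/2$, the corresponding matrix is invertible with a bounded inverse. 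The $\mu'$-subsystem is handled identically by the analogous structure in \eqref{eq:muzetaprime}.

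Finally, defining $Z$ as the inverse of this linear part, the bound $|(\zetaboldu, \kappaboldu) - Z(\mubold, \mubold')| \lesssim 1$ follows from the uniform control of $\|Z\|$ combined with the $O(1)$ perturbation. The principal technical obstacle is establishing invertibility of the tridiagonal $\zetabold$-block with a norm bound independent of $m$: the cosine coefficients are explicit, but careful bookkeeping of the various $\pm$ cases in \eqref{eq:muzeta}--\eqref{eq:muzetaprime} and of the symmetry-induced identifications in \ref{def:VM} is required. The requisite linear-algebraic input is essentially the content of \ref{cor:linearalg}, already invoked in the derivation of \ref{lem:miss}.
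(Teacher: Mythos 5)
Your overall strategy coincides with the paper's: write $(\mubold,\mubold')$ as a fixed, $m$-independent linear map $A$ of $(\zetaboldu,\kappaboldu)$ plus an $O(1)$ error absorbed from the $O(1)$ terms of \eqref{eq:muzeta}--\eqref{eq:muzetaprime}, observe that the system decouples into a $\mubold$-block in $(\zeta,\zetabold,\kappabold^{\perp})$ and a $\mubold'$-block in $(\zeta',\zetabold',\kappabold)$, and set $Z:=A^{-1}$. Your step identifying $(\taubold\mubold,\taubold\mubold')=\Mcal^{\hu}_{\Lu}\varphi-\kappaboldu$ is correct, and the $\mubold'$-block is indeed trivially invertible.

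The gap is in the invertibility of the $\mubold$-block, which is the real content of the lemma. You assert that the $\zetabold$-contribution is ``tridiagonal-type'' and that invertibility holds ``since the cosines are nonzero.'' That inference is invalid: a (tri/bi)diagonal matrix with nonzero entries can perfectly well be singular --- the tridiagonal Toeplitz matrix of Lemma \ref{lem:linearalg} itself has the eigenvalue $2\cos\frac{k\pi}{N+1}=0$ whenever $N$ is odd and $k=(N+1)/2$. Moreover, \ref{cor:linearalg} is the trigonometric identity consumed in the proof of \ref{lem:miss} to produce \eqref{eq:muzeta}; it is not the linear-algebraic input for inverting $A$. What actually makes the block invertible is a specific structure that must be exhibited. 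The paper does this by passing to sums and differences: $\nu_{j,+}+\nu_{j-1,-}=-2\kappa^{\perp}_{j-1/2}$ recovers $\kappabold^{\perp}$ exactly (not merely ``essentially diagonally''), and $\nu_{j,+}-\nu_{j-1,-}=2\zeta-\bigl(\cos\tfrac{\pi(j+1/2)}{2J+1}\,\zeta_{j}-\cos\tfrac{\pi(j-3/2)}{2J+1}\,\zeta_{j-1}\bigr)/\cos\tfrac{\pi(j-1/2)}{2J+1}$ yields, with the conventions $\zeta_{0}=0$, $\zeta_{-1/2}=-\zeta_{1/2}$, $\zeta_{J}=0$, a square system in $(\zeta,\zetabold)$ whose matrix has nonzero diagonal, one sub-diagonal, and one full column, and which is inverted by Gaussian elimination. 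You need to supply this (or an equivalent) structural argument; as written, the decisive step is asserted rather than proved.
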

	  \begin{proof}
	  	First notice that by \ref{lem:miss}, \eqref{eq:w} and \eqref{not:kappa}
	  	\begin{equation}\label{eq:mubold}
	  		(\mubold,\mubold')=(\{\mu_{j,\pm}-\kappa_{j\mp 1/2}^{\perp}\},\{\mu'_{j,\pm}-\kappa_{j\mp 1/2}\}),
	  	\end{equation}
	  	where $\mu_{j,\pm}$, $\mu'_{j,\pm}$ are as the same in \ref{lem:miss}.
	  	
	  	Now consider the map $A:\Pcal\to\Vcal[M]$, $A:(\zetaboldu,\kappaboldu)\mapsto (\nubold,\nubold')$ defined by (again we define $\zeta_0=0$, $\zeta_{-1/2}=-\zeta_{1/2}$, $\zeta_{J}=0$)
	  	\begin{equation*}
	  		\nu_{j,\pm}=\pm\zeta+
	  		\begin{cases}
	  			&\mp\frac{\cos\frac{\pi (j+1\mp1/2)}{2J+1}\zeta_{j+1/2\mp 1/2}-\cos\frac{\pi(j-1\mp 1/2)}{2J+1}\zeta_{j-1/2\mp1/2}}{2\cos\frac{\pi(j\mp 1/2)}{2J+1}}-\kappa_{j\mp 1/2}^{\perp},
	  			1/2\leq j\leq J, ,\\
	  			&\mp  \frac{\cos\frac{3\pi/2}{2J+1}}{2\cos\frac{\pi/2}{2J+1}}\zeta_{1}\mp\kappa_{1/2}^{\perp}, j=0,\text{ when } J\in\N,
	  		\end{cases}
	  	\end{equation*}
	  	and
	  	\begin{equation*}
	  		\nu'_{j,\pm}=
	  		\begin{cases}
	  			&\zeta'-\kappa_{J-1/2}, j=J\text{ for }+,\\
	  			&\pm\zeta'_{j}-\kappa_{j\mp1/2}, 1/2\leq j\leq J-1,\\
	  			&\mp \kappa_{1/2}, j=0,\text{ when } J\in\N.
	  		\end{cases}
	  	\end{equation*}
	  	From \eqref{eq:muzeta}, \eqref{eq:muzetaprime} and \eqref{eq:mubold}, $\abs{A(\zetaboldu,\kappaboldu)-(\mubold,\mubold')}\lesssim 1$. The problem is then the same as showing the invertibility of the linear map $A$. 
	  	
	  	For the component $(\zeta',\zetabold',\kappabold)\mapsto \nubold'$, the invertibility is obvious. For the component $(\zeta,\zetabold,\kappabold^{\perp})\mapsto \nubold$, notice that for $1 \leq j \leq J$,  $\nu_{j,+}+\nu_{j-1,-}=-2\kappa_{j-1/2}^{\perp}$ and for $1/2 \leq j \leq J$
	  	\begin{equation*}
	  		\nu_{j,+}-\nu_{j-1,-}=2\zeta-\frac{\cos\frac{\pi (j+1/2)}{2J+1}\zeta_{j}-\cos\frac{\pi(j-3/2)}{2J+1}\zeta_{j-1}}{\cos\frac{\pi(j- 1/2)}{2J+1}}.
	  	\end{equation*}
	  	The right hand side can be written as a matrix multiplying with the vector $(\zeta,\zetabold)$ with non-zero elements in the diagonal, one sub-diagonal, and one column. The invertibility simply follows by Gaussian elimination.
	  \end{proof}
	  \section{The linearised equation on the initial surfaces}
\label{S:lin} 

	  \subsection*{Global norms and the mean curvature on the initial surfaces}
	  
	  \begin{definition}\label{def:norm}
	  	For $k \in \{0,2\}$, $\hat{\beta} \in (0, 1)$, $\hat{\gamma} \in \mathbb{R}$, we define the following norms. For $L$ a finite set in $\R^2$ and $\Omega$ a domain in $\R^3\cap \{\zz=h\}$ (recall \ref{def:conespaces}), we define
	  	\begin{equation*}
	  		\norm{u}_{k,\hat{\beta},\hat{\gamma};\Omega,L}:=\norm{u:C^{k,\hat{\beta}}(\Omega\cap\Pi_{\R^2}^{-1}(D_O(2\rbalanced)),\rr,g,\rr^{\hat{\gamma}})}+\norm{u\circ\Pi_{\R^2}^{-1}:\mathcal{CS}^{2,\hat{\beta}}(\Pi_{\R^2}(\Omega)\setminus D_O(2\rbalanced))},
	  	\end{equation*}
	  	where $\rr:=\dist_L\circ \Pi_{\R^2}$ and $g$ is the standard metric on $\R^3$; for $\Omega$ a domain in $M$ we define
	  		\begin{align*}
	  		\norm{u}_{k,\hat{\beta},\hat{\gamma};\Omega}&:=\norm{u:C^{k,\hat{\beta}}(\Omega\cap \Pi_{\R^2}^{-1}(D_O(2\rbalanced)),\rr,g,\rr^{\hat{\gamma}})}\\
	  		&+\sum_j\norm{u|_{\tilde{S}_j}\circ(\Pi_{\R^2}|_{\tilde{S}_j})^{-1}:\mathcal{CS}^{k,\hat{\beta}}(\Pi_{\R^2}(\tilde{S}_j\cap\Omega)\setminus D_O(2\rbalanced))},
	  	\end{align*}
	  	where $\rr:=\dist_{L_j}\circ\Pi_{\R^2}$ on $\Graph_{\Omega_j}^{\R^3,\R^2,g}(\varphi^{gl}_j)$ and $\rr:=\dist_{L_{\ell}}\circ\Pi_{\R^2}$ on $\Kcech[p,\tau_{\ell},h_{\ell},\kappau_{\ell}]$ (recall \ref{def:initial}), and $g$ is the metric on $M$ induced by the standard metric on $\R^3$; for $\Omega$ a domain in $\K_M$  (recall \ref{def:region}) we define
	  	\begin{equation*}
	  		\norm{u}_{k,\hat{\beta},\hat{\gamma};\Omega}:=\norm{u:C^{k,\hat{\beta}}(\Omega,\rr,g,\rr^{\hat{\gamma}})},
	  	\end{equation*}
	  	where $\rr := \rho(s)$ (recall \eqref{eq:rhozz}) and $g$ is the metric induced by each Euclidean metric $g_{\R^3}|_p$ on $T_p\R^3$ $\forall p \in L_{\ell}$ on $ \K[p,\tau_{\ell},h_{\ell},\kappau_{\ell}]$.
	  \end{definition}
	  
	    \begin{convention}\label{conv:gamma}
	  	From now on we assume that $b$ (recall \ref{def:catebridge}) is as large as needed in absolute terms and $\cu$ (recall \ref{def:Pcal}), and $m$ is as big and thus $\tau_{\ell}$ is as small as needed in absolute terms and $b$. We also fix some $\beta\in(0,1/100)$ and $\gamma\in (1,2)$. We will suppress the dependence of various constants on $\beta$. 
	  \end{convention} 
	  
	  \begin{notation}\label{not:Bq}
	  	Throughout this subsection we use the notation $\R^2_{j,\pm}$ to denote the affine plane with height $h_{j,\pm}$ (recall \eqref{eq:hpm}):
	  	\begin{equation*}
	  		\R^2_{j,\pm}:=\R^3\cap\{\zz=h_{j,\pm}\}.
	  	\end{equation*}
	  	Let $q\in \tilde{S'}_j\cap D_p(\delta)$ for $p\in L_{j,\pm}$, we define the metric $\tilde{g}_q :=\rr(q)^{-2} g$ on $\R^3$, where $\rr(q):=\dist_{L_{j,\pm}}(q)$, $g=g_{Euc}$. In this metric $\tilde{S}_{j}$ is locally the graph of $\varphi_{:q}$ over $\R^2_{j,\pm}$, where $\varphi_{:q}:=\rr(q)^{-1} (\varphi^{gl}_j-h_{j,\pm})$. We also define the disk $\widecheck{B}_q:=D^{\R^2,\tilde{g}_q}_q(1/10)$. Finally, we remark that the norm defined in \ref{def:norm} for $\widecheck{B}_q$ is the same on $\R^2_{j,\pm}\cap \Pi^{-1}_{\R^2}(\widecheck{B}_q)$: for $u\in C^{k,\hat{\beta}}(\R^2_{j,\pm})$
	  	\begin{equation}\label{eq:normheq}
	  		\norm{u\circ \Pi_{\R^2}^{-1}}_{k,\hat{\beta},\hat{\gamma};\widecheck{B}_q,L_{j,\pm}}=\norm{u}_{k,\hat{\beta},\hat{\gamma};\R^2_{j,\pm}\cap\Pi^{-1}_{\R^2}(\widecheck{B}_q),L_{j,\pm}}.
	  	\end{equation}\label{eq:Lheq}
	  	And for the operator $\Lcal_{\R^2_{j,\pm}}$ (recall \eqref{eq:jacobiF}): 
	  	\begin{equation}
	  		\Lcal_{\R^2_{j,\pm}}u=\Lcal_{\R^2}(u\circ\Pi_{\R^2}).
	  	\end{equation}
	  \end{notation}

	  \begin{lemma}\label{lem:equivnorm}
	  	For $k=0,2$ and $\hat{\gamma}\in\R$ the following hold.
	  	\begin{enumerate}[(i)]
	  		\item\label{item:equivnormK} If $\widecheck{\Omega}$ is a domain in $\Pi_{\K}(\Kcech[M])$ (recall \eqref{eq:KcheckM}), $\Omega:=\Pi_{\K}^{-1}(\widecheck{\Omega})\subset \Kcech[M]\subset M$ and $f\in C^{k,\beta}(\widecheck{\Omega})$, then
	  		\begin{equation*}
	  			\norm{f\circ\Pi_{\K}}_{k,\beta,\hat{\gamma};\Omega}\sim_k 	\norm{f}_{k,\beta,\hat{\gamma};\widecheck{\Omega}}.
	  		\end{equation*}
	  		\item\label{item:equivnormS} If $\Omega'$ is a domain in $\tilde{S'}_j$ (recall \ref{eq:tildeSprime}), $\Omega:=(\Pi_{\R^2}|_{\tilde{S}_j})^{-1}(\Omega')$ and $f\in C^{k,\beta}(\Omega')$, then
	  		\begin{equation*}
	  			\norm{f\circ\Pi_{\R^2}}_{k,\beta,\hat{\gamma};\Omega}\sim_k 	\norm{f}_{k,\beta,\hat{\gamma};\Omega',\sqcup_{\pm}L_{j,\pm}}.
	  		\end{equation*}
	  	\end{enumerate}
	  \end{lemma}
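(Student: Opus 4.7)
The plan is to reduce both statements to the assertion that the maps $\Pi_{\K}$ and $\Pi_{\R^2}\big|_{\tilde{S}_j}$ are diffeomorphisms of bounded distortion with respect to the naturally-scaled metrics and weights appearing in \ref{def:norm}. Once this is established, the estimates on each ball of radius $1/100$ in the scaled metric transfer from one side to the other up to uniform multiplicative constants (depending only on $k$), which is exactly what the $\sim_k$ in the statement allows.

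For \ref{item:equivnormK}, on each piece $\Kcech[p,\tau_{\ell},h_{\ell},\kappau_{\ell}]$ the map $\Pi_{\K}$ is by definition $(\exp_p^{\R^2,\R^3,g_{\shr}})^{-1}$. Since $\R^2$ is totally geodesic in $(\R^3,g_{\shr})$ and since the Fermi exponential map has all its derivatives uniformly bounded on the relevant domain (in normal coordinates centred at $p$), $\Pi_{\K}$ is a smooth diffeomorphism whose Jacobian and all its derivatives are uniformly bounded in the metric $g_{\shr}|_p$. The conformal factor $e^{2\omega}$ relating $g_{\shr}$ and $g$ is smooth, strictly positive and has all derivatives bounded on $D_p^{\R^3}(3\delta)$ (recall \ref{def:delta}); moreover it is nearly constant on the scale of a single catenoidal bridge. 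Hence the pull-back of $g$ under $\Pi_{\K}^{-1}$ is comparable to $g_{\R^3}|_p$. The weight $\rr=\rho(s)$ on $\K$ and $\rr=\dist_{L_{\ell}}\!\circ\Pi_{\R^2}$ on $\Kcech$ are then equivalent because $\Pi_{\R^2}\circ\exp_p^{\R^2,\R^3,g_{\shr}}$ differs from the orthogonal projection to $T_p\R^2$ by quadratic terms of the normal coordinate. Translating Definition \ref{D:newweightedHolder} through this bounded-distortion diffeomorphism gives the claimed equivalence.

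For \ref{item:equivnormS}, $\tilde{S}_j$ is by construction the $g$-graph of $\varphi^{gl}_j$ over $\Omega_j$ (away from the catenoidal part, where one already uses \ref{item:equivnormK} and the estimates in \ref{lem:glue} to patch), so $\Pi_{\R^2}\big|_{\tilde{S}_j}$ is a diffeomorphism and the induced metric $g_{\tilde{S}_j}$ pulls back to $g_{\R^2}+\dd\varphi^{gl}_j\otimes \dd\varphi^{gl}_j$. By \ref{lem:phigl}\ref{item:phiglbound} we have $\|\nabla\varphi^{gl}_j\|_{C^{2,\beta}(\R^2,g)}\lesssim \tau_{\min}^{8/9}\ll 1$ in the scaled sense of \ref{def:norm}, so the two metrics are bi-Lipschitz with constant $1+o(1)$ and the same is true for the weight $\rr=\dist_{L_{j,\pm}}\circ\Pi_{\R^2}$ on both sides. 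For the conical part (outside $D_O(2\rbalanced)$), \ref{lem:phigl}\ref{item:phiglCS} guarantees that $\varphi^{gl}_j$ lies in $\mathcal{CS}^{2,\alpha}$, and the definition of the cone Hölder space in \ref{def:conespaces} already uses coordinates on $\R^2$; hence $\Pi_{\R^2}$ identifies the $\mathcal{CS}^{2,\beta}$ norm on $\tilde{S}_j$ with that on its image up to constants.

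The only delicate point is the matching of the two pieces of the norm in \ref{def:norm} (the weighted Hölder part on the bounded region and the $\mathcal{CS}^{2,\beta}$ part on the conical end) across the interface $\partial D_O(2\rbalanced)$. This is handled uniformly because on any fixed compact annulus around $\partial D_O(2\rbalanced)$ both norms are equivalent to the standard $C^{k,\beta}$ norm, and the graph map as well as $\Pi_{\K}$ are uniformly smooth there. Tracking the weight exponents $\hat{\gamma}$ is routine since both $\rr$-functions transform the same way under the respective diffeomorphisms, so the $\rr^{\hat{\gamma}}$ factors cancel in the estimates. Combining the two directions of the comparison yields $\sim_k$ in each case.
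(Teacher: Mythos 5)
Your reduction of both parts to the assertion that $\Pi_{\K}$ and $\Pi_{\R^2}|_{\tilde{S}_j}$ are bounded-distortion diffeomorphisms for the rescaled metrics and weights of \ref{def:norm} is the same strategy as the paper's, and your part \ref{item:equivnormK} matches the paper's argument (reduce to \cite[Lemma 4.3(i)]{LDg} stated for $g_{\shr}$, then use that $g_{\shr}$ and $g_{Euc}$ are uniformly equivalent on the region). For part \ref{item:equivnormS}, however, your justification has a concrete gap exactly where the paper's proof does its extra work, namely on the bridge annuli $b\tau_{j,\pm}\le\dist_{L_{j,\pm}}\le\delta'_{j,\pm}$, which belong to $\tilde{S}'_j$ but not to the region covered by \ref{lem:phigl}\ref{item:phiglbound}. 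That estimate is stated only on $\R^2\setminus\sqcup_{\pm}D_{L_{j,\pm}}(\delta'_{j,\pm})$, so it cannot supply the smallness of $\rr\,|\nabla^2\varphi^{gl}_j|+|\nabla\varphi^{gl}_j|$ where the weight $\rr$ degenerates; there one must instead use the tilted catenoid asymptotics \ref{lem:varphicat} together with \ref{lem:glue}\ref{item:gluecat}.

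More importantly, the subtlety the paper's proof is explicitly devoted to is left unaddressed: on $\widecheck{B}_q$ with $q$ near a bridge, $\varphi^{gl}_j$ has $C^0$ size of order $m\tau_{j,\pm}$ because of the height $h_{j,\pm}$, which after rescaling by $\rr(q)^{-1}\sim (b\tau_{j,\pm})^{-1}$ is of order $m/b\gg 1$. Hence $\tilde{S}_j$ is \emph{not} a small graph over $\R^2$ at the relevant scale, and a generic ``small-graph implies equivalent norms'' lemma does not apply directly. The paper's remedy is to view $\tilde{S}_j$ locally as the graph of $\varphi_{:q}=\rr(q)^{-1}(\varphi^{gl}_j-h_{j,\pm})$ over the translated plane $\R^2_{j,\pm}$ of \ref{not:Bq}, where the rescaled graph function is genuinely small, and then to invoke the translation invariance \eqref{eq:normheq} to return to $\R^2$. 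Your observation that only $\dd\varphi^{gl}_j$ enters the pulled-back induced metric is a legitimate way around this, but then the comparison of the geodesic balls $B_x$ and of the H\"older seminorms in \ref{D:newweightedHolder} must be carried out directly for the graph map on the bridge annuli rather than ``patched'' through \ref{lem:glue}; as written, the proposal omits this step at precisely the delicate spot.
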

	\begin{proof}
		 \ref{item:equivnormK} is the same as \cite[Lemma 4.3(i)]{LDg} if we use $g_{\shr}$ instead of $g_{Euc}$ in the definition \ref{def:norm}. However, the two metric is equivalent in the region.
		
		\ref{item:equivnormS} is similar to \cite[Lemma 4.3(ii)]{LDg} in $D_O(2\rbalanced)$. However, as the $C^0$-norm of $\phi_j^{gl}$ is of order $m\tau_{j,\pm}$ because of the height $h_{j,\pm}$ in $\widecheck{B}_q$ (see \ref{lem:varphicat} and \ref{lem:glue}\ref{item:gluecat}), we first compare the norms:
			\begin{equation*}
			\norm{f\circ\Pi_{\R^2}}_{k,\beta,\hat{\gamma};\tilde{S}_j\cap\Pi^{-1}_{\R^2}(\widecheck{B}_q)}\sim_k 	\norm{f}_{k,\beta,\hat{\gamma};\R^2_{j,\pm}\cap\Pi^{-1}_{\R^2}(\widecheck{B}_q),\sqcup_{\pm}L_{j,\pm}}.
		\end{equation*}
		Then we make use of \eqref{eq:normheq} to obtain \ref{item:equivnormS}. In the other region it simply follows by the definition \ref{def:norm}.
	\end{proof}

	  \begin{lemma}\label{lem:equivop}
	  	For $\hat{\gamma}\in\R$, the following hold.
	  	\begin{enumerate}[(i)]
	  		\item \label{item:equivopK}If $u\in C^{2,\beta}(\Pi_{\K}(\Kcech[M]))$, then (recall \eqref{eq:jacobiH})
	  		\begin{equation*}
	  			\norm{\Tcal_{\exp(-\omega)}\Lcal_M(\Tcal_{\exp(-\omega)}u\circ\Pi_{\K})-(\Lcal^{Euc}_{\K}u)\circ\Pi_{\K}}_{0,\beta,\hat{\gamma}-2;\Kcech[M]}\lesssim (\delta_{\min}')^2	\norm{u}_{2,\beta,\hat{\gamma};\Pi_{\K}(\Kcech[M])}.
	  		\end{equation*}
	  		\item\label{item:equivopS} If $u\in C^{2,\beta}(\tilde{S}'_j)$ and $\epsilon_1\in [0,1/2]$, then 
	  		\begin{equation*}
	  			\norm{\Lcal_M(u\circ\Pi_{\R^2})-(\Lcal u)\circ\Pi_{\R^2}}_{0,\beta,\hat{\gamma}-2;\tilde{S}_j}\lesssim b^{\epsilon_1-1}\log b \tau^{\epsilon_1}_{\max}	\norm{u}_{2,\beta,\hat{\gamma+\epsilon_1};\tilde{S'}_j}.
	  		\end{equation*}
	  	\end{enumerate}
	  \end{lemma}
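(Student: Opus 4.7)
The plan is to treat the two parts separately, exploiting the fact that on each region the initial surface $M$ is a small perturbation of a canonical model (a Euclidean catenoid on $\Kcech[M]$, and flat $\R^2$ on $\tilde S_j$), and to compare the shrinker Jacobi operator $\Lcal_M$ with the model operator via a direct expansion.

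For part (i), I would begin by unwinding the definition of $\Kcech[M]$: on each component $\Kcech[p,\tau_\ell,h_\ell,\kappau_\ell]$ we have $\Pi_{\K}=(\exp_p^{\R^2,\R^3,g_\shr})^{-1}$, and the model catenoid $\K[p,\tau_\ell,h_\ell,\kappau_\ell]\subset(T_p\R^3,g|_p)$ is minimal in the Euclidean metric. The conjugation identity \eqref{eq:jacobiHF} converts $\Lcal_M$ into $-\Tcal_{\exp(-\omega)}\Hcal_M\Tcal_{\exp(\omega)}$, where $\Hcal_M$ is the shrinker Hamiltonian in $(\R^3,g_\shr)$. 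Since $\R^2$ is totally geodesic in $(\R^3,g_\shr)$, the pullback $(\exp_p^{\R^2,\R^3,g_\shr})^* g_\shr$ differs from $g_\shr|_p$ by a symmetric tensor whose $C^{k,\beta}$ size is $O(\rho^2)$ on the relevant neighbourhood (this is exactly the content used in \ref{lem:varphicat} and the catenoidal curvature estimates above). Expanding the metric, the induced metric on $\Pi_{\K}^{-1}(\K)$, its Laplace–Beltrami operator, its $|A|^2$, and the drift term $\tfrac12 X\cdot\nabla$ each differ from the corresponding objects on the model catenoid by terms of order $\rho^2$ in $C^{0,\beta}$ with the appropriate weights. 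Multiplying by the weight $\rr^{\hat\gamma-2}$ and observing that $\rho\lesssim \delta'_{\min}$ throughout $\Kcech[M]$ yields the factor $(\delta'_{\min})^2$.

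For part (ii), the starting point is the explicit formula for the shrinker Jacobi operator on a graph: if $\Sigma_u=\Graph_{\R^2}(u)$ over $\R^2$, then
\begin{equation*}
\Lcal_{\Sigma_u}(f\circ\Pi_{\R^2})=\bigl(\Lcal_{\R^2} f\bigr)\circ\Pi_{\R^2}+\Qcal[u](f),
\end{equation*}
where $\Qcal[u]$ is a second-order linear operator in $f$ whose coefficients are polynomial in $\nabla u$, $\mathrm{Hess}\,u$ and $u$ itself (with a factor of $u$ arising from the $(X_\Sigma\cdot\nu_\Sigma)$-induced terms in \eqref{eq:jacobiF} and the shift in $X\cdot\nabla$). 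The schematic form is
\begin{equation*}
\Qcal[u](f)\sim |\nabla u|^2\,\Delta f+\mathrm{Hess}\,u\ast\nabla f+(\text{lower order in }f)\cdot P(u,\nabla u),
\end{equation*}
so in weighted norms we need to control $|\nabla u|$ and the second derivatives of $u$ with the appropriate weights. Applying this to $u=\varphi^{gl}_j$, and using \ref{lem:phigl}\ref{item:phiglbound}, \ref{lem:glue}\ref{item:gluevarphi}, together with the catenoidal asymptotics of \ref{lem:varphicat} inside the annular region $\{\rr\ge b\tau_{j,\pm}\}$, one has $|\nabla\varphi^{gl}_j|\lesssim \tau_{j,\pm}\rr^{-1}$ and a similar bound with one extra power of $\rr^{-1}$ for the Hessian. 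Combining this with the weight shift from $\rr^{\hat\gamma+\epsilon_1}$ on the target of $u$ to $\rr^{\hat\gamma-2}$ on the error gives a worst factor $\tau_{j,\pm}(b\tau_{j,\pm})^{\epsilon_1-1}$ on the catenoidal part, i.e.\ $b^{\epsilon_1-1}\tau_{\max}^{\epsilon_1}$. The logarithm $\log b$ enters from the contribution of the $u$-multiplied term in the operator, which picks up the catenoidal profile $\tau\log(\rr/\tau)$ and is largest at $\rr\sim b\tau$. Outside a fixed neighbourhood of the bridges the estimate is much easier, since $\varphi^{gl}_j$ and its derivatives are bounded by $m\tau_{\min}$, which is smaller than the target bound by our choice of $b$.

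The main technical obstacle is the bookkeeping in part (ii): isolating the finitely many structurally different error terms in $\Qcal[\varphi^{gl}_j]$, matching each one against the correct piece of the weighted norm (weight $\rr^{\hat\gamma-2}$ on the error versus $\rr^{\hat\gamma+\epsilon_1}$ on $u$), and verifying that the dominant contribution on the annulus $\{b\tau_{j,\pm}\le\rr\le 1\}$ really produces $b^{\epsilon_1-1}\log b\,\tau_{\max}^{\epsilon_1}$ rather than a larger power. The $\mathcal{CS}^{2,\beta}$ piece of the norm (far from all bridges) causes no difficulty because $\varphi^{gl}_j$ decays at least as the LD solution by \ref{lem:phigl}\ref{item:phiglCS}, so there the error is absorbed into an $O(m\tau_{\min})$ term that is negligible once $b$ is fixed and $m$ is taken large.
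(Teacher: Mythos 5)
Part~(i) of your proposal is essentially the paper's argument: the paper simply invokes the corresponding catenoidal comparison from the Linearised Doubling framework (the $O(\rho^2)$ deviation of the Fermi-pulled-back metric from $g_{\shr}|_p$) together with the conjugation \eqref{eq:jacobiHF} and the uniform bound on $e^{\omega}$; your expansion reproduces this.

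For part~(ii) there is a genuine gap in how you set up the comparison near the bridges. You treat $\tilde{S}_j$ as a graph over $\R^2$ (at height zero) with graph function $\varphi^{gl}_j$, and you estimate the error terms that carry a factor of the graph function itself using the catenoidal profile $\tau\log(\rr/\tau)$. But $\varphi^{gl}_j$ contains the height $h_{j,\pm}$, which by \eqref{eq:h} is of order $\tau_{j,\pm}\phi_J\sim m\tau_{j,\pm}$ and dominates $\tau_{j,\pm}\log(\rr/\tau_{j,\pm})\sim\tau_{j,\pm}\log b$ on the annulus $\rr\sim b\tau_{j,\pm}$ (recall that $b$ is fixed before $m$ is taken large, by \ref{conv:gamma}). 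In the rescaled ball $\widecheck{B}_q$ in which the weighted norms of \ref{def:norm} are computed, the rescaled graph function $\rr(q)^{-1}\varphi^{gl}_j$ therefore has $C^0$ size of order $m/b$, which is not small; a generic perturbation estimate of the form $\norm{\Lcal_M(f\circ\Pi)-(\Lcal f)\circ\Pi}\lesssim\norm{\text{graph function}}_{C^{2}}\,\norm{f}_{C^{2}}$ then yields a bound worse than the claimed one by a factor of order $m/\log b$. The paper avoids this by first comparing $\Lcal_M$ with $\Lcal_{\R^2_{j,\pm}}$, the operator of the affine plane at height $h_{j,\pm}$ (over which the graph function is $\rr(q)\varphi_{:q}=\varphi^{gl}_j-h_{j,\pm}$, whose rescaled $C^0$ size is $\log b/b$ --- this is the true source of the $\log b$ in the statement), and then using the exact identity \eqref{eq:Lheq}, $\Lcal_{\R^2_{j,\pm}}u=\Lcal_{\R^2}(u\circ\Pi_{\R^2})$, to pass back to $\R^2$. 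Your direct route can be repaired, but only by exploiting the specific structure of \eqref{eq:jacobiF}: the only term that sees the absolute height is the drift $-\tfrac12 X_\Sigma\cdot\nabla_\Sigma$, and there the height enters multiplied by $\nabla\varphi^{gl}_j\cdot\nabla f$, producing a contribution of order $m\tau_{j,\pm}^2$ times the weighted norm of $f$, which is indeed negligible. Your schematic term ``$(\text{lower order in }f)\cdot P(u,\nabla u)$'' is too coarse to record this cancellation, and your identification of the $\log b$ with the $u$-multiplied term silently drops the dominant height contribution, so as written the accounting does not close.
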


	  \begin{proof}
	  	By \cite[Lemma 4.7(i)]{LDg},
	  	\begin{equation*}
	  		\norm{\Lcal^{\shr}_M(u\circ\Pi_{\K})-(\Lcal^{Euc}_{\K}u)\circ\Pi_{\K}}_{0,\beta,\hat{\gamma}-2;\Kcech[M]}\lesssim (\delta'_{\min})^2	\norm{u}_{2,\beta,\hat{\gamma};\Pi_{\K}(\Kcech[M])}.
	  	\end{equation*}
	  		\ref{item:equivopK} then follows by \eqref{eq:jacobiHF} and the uniform bound of $e^{\omega}$.
	  		\ref{item:equivopS} is similar to \cite[Lemma 4.7(ii)]{LDg} in $D_O(2\rbalanced)$. However, again as the $C^0$-norm of $\varphi_j^{gl}$ is of order $m\tau_{j,\pm}$ because of the height $h_{j,\pm}$ in $\widecheck{B}_q$, we first compare the operators for $u\in C^{k,\beta}(\R^2_{j,\pm}\cap\Pi^{-1}_{\R^2}(\widecheck{B}_q))$:
	  		\begin{equation*}
	  		\norm{\Lcal_M(u\circ\Pi_{\R^2_{j,\pm}})-(\Lcal_{\R^2_{j,\pm}} u)\circ\Pi_{\R^2_{j,\pm}}}_{0,\beta,\hat{\gamma}-2;\tilde{S}_j\cap\Pi^{-1}_{\R^2}(\widecheck{B}_q)}\lesssim b^{\epsilon_1-1}\log b \tau^{\epsilon_1}_{j,\pm}	\norm{u}_{2,\beta,\hat{\gamma}+\epsilon_1;\R^2_{j,\pm}\cap\Pi^{-1}_{\R^2}(\widecheck{B}_q)}.
	  		\end{equation*}
	  		Then we make use of \eqref{eq:Lheq} to obtain \ref{item:equivnormS}. Finally, outside $D_O(2\rbalanced)$, \ref{item:equivopS} follows by \ref{lem:phigl}\ref{item:phiglCS}.
    \end{proof}
\begin{lemma}[Mean curvature on the initial surfaces]\label{lem:H}
	We have for $H^{2\omega}=H^{2\omega}_M$ (recall \ref{not:Fcal}) the global estimate (recall \ref{not:JM})
	\\ 
	\begin{equation}\label{eq:H}
		\norm{H^{2\omega}-\JM(\wbold)}_{0,\beta,\gamma-2;M}\lesssim \tau_{\max}^{1+\alpha/3}.
	\end{equation} 
\end{lemma}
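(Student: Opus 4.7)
The plan is to partition $M$ into three overlapping regions and verify the estimate on each: the catenoidal bridge region $\K_M$ (recall \ref{def:region}), the gluing annuli $\sqcup_p D_p(3\delta'_{j,\pm}) \setminus D_p(2\delta'_{j,\pm})$ inside the graphs, and the pure graph regions outside the gluing. Crucially, the support of each $w_j$ lies in $\sqcup_{p\in L_j} D_p(4\delta) \setminus D_p(\delta/4)$ by \ref{lem:obstr}\ref{item:wsupp}, which sits strictly outside the catenoidal bridges and the gluing annuli for $m$ large (since $\delta = 1/(100m) \gg \delta'_{\min} = \tau_{\min}^{\alpha}$). So on these first two regions, $\JM(\wbold)$ vanishes, and one only needs to bound $H^{2\omega}$ itself.

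On $\K_M$, the catenoidal bridges are $g_{\shr}$-exponential images of Euclidean catenoids in tangent space, which are minimal in $T_p\R^3$; since $\R^2$ is totally geodesic in $(\R^3,g_{\shr})$, the residual mean curvature $H^{g_{\shr}}$ is small. The estimate immediately preceding \ref{def:Pcal} gives $\|\rho^2 H\|_{C^k(\Kcech, \chi, \rho^2(|\zz|+\tau))} \lesssim_k 1$, which combined with $H^{2\omega} = e^\omega H^{g_{\shr}}$ and the equivalence of norms \ref{lem:equivnorm}\ref{item:equivnormK} yields a bound of order $\tau_{\max}$, well within tolerance. On the gluing annulus, \ref{lem:glue}\ref{item:glueH} directly provides $(\delta'_{j,\pm})^2 (H^{2\omega})' \lesssim \tau_{j,\pm}^{1+15\alpha/8}$, which converts to the required weighted estimate via \ref{lem:equivnorm}\ref{item:equivnormS}.

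On the pure graph region, expand as in the proof of \ref{lem:glue}\ref{item:glueH}:
\begin{equation*}
H^{2\omega} = \Lcal \varphi^{gl}_j + \delta' \tilde{Q}_{(\delta')^{-1}\varphi^{gl}_j},
\end{equation*}
where the quadratic remainder $\tilde{Q}$ is controlled by $(\delta')^{-2}\|\varphi^{gl}_j\|^2$. Since here $\varphi^{gl}_j = \varphi_j + \underline{v}_j + \Ecal^{-1}_{\Lu_j}\kappau_j$ and $\Lcal \varphi_j = 0$ by \ref{def:LD}, the definition \eqref{eq:w} of $w_j$ (accounting for orientation signs built into \ref{def:VM} and \ref{not:JM}) gives $\Lcal \varphi^{gl}_j \circ \Pi_{\R^2} = \JM(\wbold)$, so $H^{2\omega} - \JM(\wbold)$ equals the quadratic remainder. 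Using \ref{lem:phigl}\ref{item:phiglbound} inside $D_O(2\rbalanced)$ and \ref{lem:phigl}\ref{item:phiglCS} outside to bound $\varphi^{gl}_j$, and converting $\Lcal$ to $\Lcal_M$ by \ref{lem:equivop}\ref{item:equivopS}, the quadratic remainder is of order $\tau_{\min}^{16/9}$ locally and $(m\tau_{\min})^2$ asymptotically, both absorbed into $\tau_{\max}^{1+\alpha/3}$ by \ref{rk:taur} and the smallness of $\alpha$ from \ref{def:deltaprime}.

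The main subtlety lies in tracking the orientations: $\tilde{S}_j$ inherits an orientation from $M$ that alternates with $j$, and the definition of $w_j$ in \eqref{eq:w} together with the sign conventions in \ref{def:VM} must be traced carefully so that $\Lcal \varphi^{gl}_j$ matches $w_j$ at every level $j$ (including negative half-integer indices, where the factor $j/|j|$ appears). Once this sign bookkeeping is in place, summing the three contributions and converting between norms via \ref{lem:equivnorm} and \ref{lem:equivop} yields \eqref{eq:H}.
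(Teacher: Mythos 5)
Your proposal is correct and follows essentially the same route as the paper's proof: split $M$ into the catenoidal bridges (handled by the $\rho^2H$ estimate and the conformal relation $H^{2\omega}=e^{\omega}H^{g_{\shr}}$, as in the cited doubling construction), the gluing annuli (handled by \ref{lem:glue}\ref{item:glueH}), and the graph regions, where the identity $\rr^2(H^{2\omega})'=\rr^2 w_j+\rr\tilde{Q}$ coming from $\Lcal\varphi_j=0$ reduces the claim to a quadratic estimate, with the cone norms of \ref{lem:phigl}\ref{item:phiglCS} covering the region outside $D_O(2\rbalanced)$. Your explicit observation that $\JM(\wbold)$ vanishes on the bridges and gluing annuli because $\supp w_j\subset D_{L_j}(4\delta)\setminus D_{L_j}(\delta/4)$ lies well outside $D_{L_j}(3\delta'_{j,\pm})$ is a helpful point the paper leaves implicit.
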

\begin{proof}
	We can again use \ref{not:Fcal} as $H^{2\omega}=e^{\omega}H_{M}^{g_{\shr}}$ and
	on $\Kcech[M]$ (recall \ref{def:region}), the estimate is the same as in \cite[Lemma 4.6]{LDg} in $g_{\shr}$. We then again use the equivalence of the two metrics. On the gluing region, the estimate follows by \ref{lem:glue}\ref{item:glueH}. On the exterior of the gluing region and in $\Pi_{\R^2}^{-1}(D_O(2\rbalanced))$ the estimate is again the same as in \cite[Lemma 4.6]{LDg}, where instead of the formula for $H'$, we use the formula on each $\tilde{S}'_j$ (recall \ref{not:Bq})
	\begin{equation*}
		\rr^2 (H^{2\omega})'=\rr^2 w_j+\rr\tilde{Q}_{\varphi_{:q}},
	\end{equation*}
	where $(H^{2\omega})'$ is the pull back of $H^{2\omega}$ on $\R^2$. Finally, outside of $\Pi_{\R^2}^{-1}(D_O(2\rbalanced))$, the estimate follows as \cite[Proposition 10.1]{kapouleas:kleene:moller} or by \cite[Lemma 2.4]{Wang:JAMS} along with \ref{lem:phigl}\ref{item:phiglCS}.
\end{proof}
     Using now the same rescaling we prove a global estimate for the nonlinear terms of the weighted mean curvature of the graph over the initial surfaces as follows.
	  	\begin{lemma}\label{lem:nonlinear}
	  		Let $M$ as in \ref{def:initial} and $\textup{ф}\in C^{2,\beta}(M)$ satisfies $\norm{\textup{ф}}_{2,\beta,\gamma;M}\leq \tau_{\max}^{1+\alpha/4}$, then $M_{\textup{ф}}:=\Graph_M^{\R^3,g}(\textup{ф})$ is well defined, is embedded, and if $H^{2\omega}_{\textup{ф}}$ is the weighted mean curvature of $M_{\textup{ф}}$ pulled back to $M$ by $\Graph_M^{\R^3,g}(\textup{ф})$ and $H^{2\omega}$ is the weighted mean curvature of $M$, then
	  		\begin{equation*}
	  			\norm{H^{2\omega}_{\textup{ф}}-H^{2\omega}-\mathcal{L}_M\textup{ф}}_{0,\beta,\gamma-2;M}\lesssim  \norm{\textup{ф}}_{2,\beta,\gamma;M}^2
	  		\end{equation*}
	  	\end{lemma}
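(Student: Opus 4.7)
The plan is to reduce to a standard Taylor expansion of the graph mean curvature operator, localized on each piece of the decomposition $M = \Kcech[M]\sqcup (\text{gluing annuli})\sqcup \tilde{S}$ from \ref{def:region}, with local rescalings that convert the weighted norms in \ref{def:norm} into ordinary $C^{k,\beta}$ norms for surfaces of bounded geometry. The well-definedness and embeddedness of $M_{\textup{ф}}$ follow first from the hypothesis, since $\|\textup{ф}\|_{0;M}\lesssim \tau_{\max}^{1+\alpha/4}\rr^{\gamma}$ with $\rr \geq \tau_{\min}$ on the catenoidal bridges and $\gamma>1$; rescaling by $\rr^{-2}g$ shows $\textup{ф}$ is small compared to the injectivity radius of $M$, so the normal exponential map $\Graph^{\R^3,g}_M(\textup{ф})$ is an embedding.

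For the main estimate I would use the conformal identity $H^{2\omega}_\Sigma = e^{\omega} H^{g_{\shr}}_\Sigma$ from \ref{not:Fcal} to reduce to an expansion in $(\R^3,g_{\shr})$, where the relevant linearisation is $\mathcal{L}^{\shr}_M$. The standard normal graph formula gives
\begin{equation*}
H^{g_{\shr}}_{\textup{ф}} - H^{g_{\shr}}_M - \mathcal{L}^{\shr}_M \textup{ф} \;=\; Q_M(\textup{ф},\nabla\textup{ф},\nabla^2\textup{ф}),
\end{equation*}
where $Q_M$ is at least quadratic in its arguments with coefficients polynomial in $|A_M|$ and $|\nabla A_M|$; conjugating back by $\Tcal_{\exp(\omega)}$ (cf.\ \eqref{eq:jacobiHF}) produces the same structure for $H^{2\omega}$ with $\mathcal{L}_M$ as the linearisation and a nonlinear remainder $\tilde{Q}$. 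This is the calculation used already in the proof of \ref{lem:H} for the error $\rr\tilde{Q}_{\varphi_{:q}}$; here I would apply it to $\textup{ф}$ in place of $\varphi^{gl}-h_{j,\pm}$.

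Locally I would work in the rescaled metric $\tilde{g}_q = \rr(q)^{-2}g$ centred at a point $q$, as in \ref{not:Bq} (and analogously on $\Kcech[M]$ after pulling back by $\Pi_{\K}$ via \ref{lem:equivnorm} and \ref{lem:equivop}). In this metric the local piece of $M$ has bounded geometry and the rescaled function $\tilde{\textup{ф}} := \rr^{-\gamma}\textup{ф}\circ\text{(rescaling)}$ satisfies $\|\tilde{\textup{ф}}\|_{C^{2,\beta}(\tilde{g}_q)} \lesssim \|\textup{ф}\|_{2,\beta,\gamma;M}$. The Taylor bound $|Q(\tilde{\textup{ф}})|_{C^{0,\beta}(\tilde{g}_q)} \lesssim \|\tilde{\textup{ф}}\|_{C^{2,\beta}(\tilde{g}_q)}^{2}$ holds with constant depending only on the bounded geometry. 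Multiplying by the conformal factor $\rr^{-2}$ to return to the unrescaled metric and inserting the weight $\rr^{\gamma-2}$ from the target norm gives the desired bound, provided the extra factor $\rr^{2\gamma-(\gamma-2)-2} = \rr^{\gamma}$ is controlled; but $\rr^{\gamma}\|\textup{ф}\|_{0;M}\lesssim \|\textup{ф}\|_{2,\beta,\gamma;M}$ by \ref{def:norm}, which supplies this factor against one copy of $\|\textup{ф}\|$.

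On the cone region $M\setminus \Pi_{\R^2}^{-1}(D_O(2\rbalanced))$ the weight is governed by $\mathcal{CS}^{2,\beta}$ rather than by $\rr^{\gamma}$. There I would proceed as in the asymptotic part of the proof of \ref{lem:H}, using the expansion of $H^{2\omega}$ for a graph over $\R^2$ from \cite[Proposition 10.1]{kapouleas:kleene:moller} or \cite[Lemma 2.4]{Wang:JAMS} and invoking the uniform $\mathcal{CS}^{2,\beta}$ control of $\varphi^{gl}_j$ from \ref{lem:phigl}\ref{item:phiglCS}; the quadratic structure is identical. The main obstacle is simply the bookkeeping across the three rescaling regimes (catenoidal, graphical/Fermi, conical), in particular verifying that the nonlinear coefficients in $\tilde{Q}$ remain uniformly bounded once $M$ is rescaled to bounded geometry — this is what the choices $\gamma\in(1,2)$ and $\beta\in(0,1/100)$ in \ref{conv:gamma} are engineered to accommodate, so no new analytic input is required beyond the region-by-region rescaling already used for \ref{lem:H}.
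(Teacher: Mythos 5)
Your proposal is correct and follows essentially the same route as the paper: the paper's proof also splits $M$ into the catenoidal region (handled via $H^{2\omega}=e^{\omega}H^{g_{\shr}}$ and the conjugation \eqref{eq:jacobiHF}, citing \cite[Lemma 5.1]{LDg}), the graphical region in $\Pi_{\R^2}^{-1}(D_O(2\rbalanced))$ (via the rescaled expansion $\rr^2 H^{2\omega}_{\textup{ф}}=\rr^2H^{2\omega}+\rr^2\mathcal{L}_M\textup{ф}+\rr\tilde{Q}_{\textup{ф}/\rr}$), and the conical exterior (via \cite[Proposition 10.1]{kapouleas:kleene:moller} or \cite[Lemma 2.4]{Wang:JAMS}). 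Your rescaling bookkeeping closes for the same reason the paper's does ($\gamma>1$ and $\rr$ bounded on the inner region), so no new input is needed.
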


	  \begin{proof}
	  	On $\Kcech[M]$ as in \cite[Lemma 5.1]{LDg}
	  	\begin{equation*}
	  		\norm{H^{\shr}_{\textup{ф}}-H^{\shr}_M-\mathcal{L}_M^{\shr}\Tcal_{\exp(\omega)}\textup{ф}}_{0,\beta,\gamma-2;M}\lesssim  \norm{\Tcal_{\exp(\omega)}\textup{ф}}_{2,\beta,\gamma;M}^2,
	  	\end{equation*}
	  	where $H^{\shr}_{\textup{ф}}=H^{\shr}_{M_{\textup{ф}}}$ (recall \ref{not:Fcal}). By the uniform control of $\omega$ and \eqref{eq:jacobiHF}, we obtain the estimate in this region. On $\Pi_{\R^2}^{-1}(D_O(2\rbalanced))$ the estimate is again the same as in \cite[Lemma 5.1]{LDg}, where instead of the formula for $H_{\textup{ф}}$, we use the formula on each $\tilde{S}'_j$ (recall \ref{def:norm})
	  	\begin{equation*}
	  		\rr^2 H^{2\omega}_{\textup{ф}}=\rr^2H^{2\omega}+\rr^2 \mathcal{L}_M\textup{ф}+\rr\tilde{Q}_{\textup{ф}/\rr}.
	  	\end{equation*}
	  	Finally, outside of $\Pi_{\R^2}^{-1}(D_O(2\rbalanced))$, the estimate follows as \cite[Proposition 10.1]{kapouleas:kleene:moller} or by \cite[Lemma 2.4]{Wang:JAMS}.
	  \end{proof}	
	  
	   \subsection*{The definition of {$\Rcal_M^{appr}$} and the main proposition}
	   \begin{definition}\label{def:cutoff}
	   	We define the collection of functions $\psibold'=\{\psi'_{j}\}\in C^{\infty}(\R^2)$ for $j=-J,-J+1,\dots,J$ and $\widecheck{\psi}\in C^{\infty}(M)$ by requesting the following.
	   	\renewcommand{\theenumi}{\roman{enumi}}
	   	\begin{enumerate}
	   		\item $\widecheck{\psi}$ is supported on $\Kcech[M]\subset M$ and $\psi'_j$ on $\tilde{S}'_j\subset\R^2$ (recall \ref{def:region}).
	   		\item $\psi'_j=1$ on $\R^2\setminus D_{L_j}(2b\tau_{j,\pm})\subset\R^2$ and for each $p\in L_{j,\pm}$ we have
	   		\begin{align*}
	   			\psi'_j&=\Psibold[b\tau_{j,\pm},2b\tau_{j,\pm};\dist_p](0,1)\text{ on }D_{p}^{\R^2}(2b\tau_{j,\pm}),\\
	   			\widecheck{\psi}&=\Psibold[2\delta'_{j,\pm},\delta'_{j,\pm};\dist_p\circ\Pi_{\R^2}](0,1)\text{ on } \Kcech[p,j\mp 1/2].
	   		\end{align*}
	   	\end{enumerate}
	   \end{definition}
	   
	   As in \cite[Definition 4.11]{kapouleas:equator} and \cite[Definition 4.17]{LDg}, we will construct a linear map (recall \ref{def:VW} and \eqref{eq:Lpm}) $\mathcal{R}_{M,appr}:C^{0,\beta}_{\sym}(M)\to C^{2,\beta}_{\sym}(M)\oplus \skernel[M]\oplus C^{0,\beta}_{\sym}(M)$, so that $(u_1,\wbold_{E,1,j},E_1):=\mathcal{R}_{M,appr}(E)$ is an approximate solution to the equation \ref{prop:lineareq}\ref{item:LDeqmodulow}. The approximate solution will be constructed by combining semi-local approximate solutions.
	   
	   Given $E\in C^{0,\beta}_{\sym}(M)$, we define $E'_j\in C^{0,\beta}_{\sym}(\R^2)$ by requiring that they are supported on $\tilde{S}'_j$ and that
	   \begin{equation}\label{eq:Eprime}
	   	E'_j\circ\Pi_{\R^2}=(\psi'_j\circ\Pi_{\R^2})E.
	   \end{equation}
	   By \ref{lem:linearRduo} and \ref{item:LDLpm}\ref{item:EL}, there are unique functions $u'_j\in \mathcal{CS}^{2,\beta}_{\sym}(\R^2)$ and $w_{E,1,j}\in \skernel[L_j]$ such that
	   \begin{equation}\label{eq:uprime}
	   	\Lcal u'_j=E'_j+w_{E,1,j},\quad \Ecalu_p u'_j=0, \forall p\in L_j.    
	   \end{equation}
	   Note that $\Lcal((1-\psi'_j)u'_j)=[\psi',\Lcal]u'_j+(1-\psi'_j)E'_j$ is supported on $\Ku[M]\subset\Kcech[M]\subset M$, we define $\tilde{E}\in C^{0,\beta}_{\sym}(\K_M)$, by requesting that it is supported on $\Pi_{\K}(\Ku[M])$ and that on $\Ku[M]$ we have
	   \begin{equation}\label{eq:Etilde}
	   	\tilde{E}\circ\Pi_{\K}=(1-\JM(\psibold'))E+\JM(\Lcal((1-\psibold')\ubold')),
	   \end{equation}
	   where $\ubold':=\{u'_j\}$ (recall \ref{not:JM}).
	   
	   For $k\in\{0,2\}$, we introduce decompositions $C^{k,\beta}(\K_M)=C^{k,\beta}_{low}(\K_M)\oplus C^{k,\beta}_{high}(\K_M)$ and also $H^{1}(\K_M)=H^{1}_{low}(\K_M)\oplus H^{1}_{high}(\K_M)$ into subspaces of functions which satisfy the condition that their restrictions to a parallel circle of a $\K[p,\tau_{\ell},h_{\ell},\kappau_{\ell}]$ belong or are ($L^2$-)orthogonal respectively to the span of the constants and the first harmonics on the circle. Notice that $\tilde{E}$ has compact support, we then have
	   \begin{equation}
	   	\tilde{E}=\tilde{E}_{low}+\tilde{E}_{high},
	   \end{equation}
	   with $\tilde{E}_{low}\in C^{0,\beta}_{low}(\K_M)\cap H^{1}_{low}(\K_M)$, $\tilde{E}_{high} \in C^{0,\beta}_{high}(\K_M)\cap H^{1}_{high}(\K_M)$ supported on $\Pi_{\K}(\Ku[M])\subset \K_M$.
	   
	   Let $\Lcal_{\K}^{Euc}$ denote the Jacobi linear operator on $\K[p,\tau_{\ell},h_{\ell},\kappau_{\ell}]$ (recall \eqref{eq:jacobiH}), we define $\tilde{u}=\tilde{u}_{low}+\tilde{u}_{high}$ by requesting $\tilde{u}_{low}\in C^{2,\beta}_{low}(\K_M)$, $\tilde{u}_{high} \in C^{2,\beta}_{high}(\K_M)$ to be the solutions of
	   \begin{equation}\label{eq:utilde}
	   	\mathcal{L}_{\K}\tilde{u}_{low}= \tilde{E}_{low},\quad \mathcal{L}_{\K}\tilde{u}_{high}= \tilde{E}_{high}
	   \end{equation}
	   determined uniquely as follows. By separating variables, the first equation amounts to uncoupled ODE equations which are solved uniquely by assuming vanishing initial data on the waist of the catenoids. For the second equation we can as usual change the metric conformally to $h= \frac{1}{2}\abs{A}^2_{\K}g_{\K}=\nu_{\K}^*g_{\Sph^2}$, and then we can solve uniquely because the inhomogeneous term is clearly orthogonal to the kernel. 
	   
	   We conclude now the definition of $\mathcal{R}_{M,appr}$:
	   \begin{definition}\label{def:Rappr}
	   	We define the operator 
	   	\begin{equation*}
	   		\mathcal{R}_{M,appr}:C_{\sym}^{0,\beta}(M)\to C_{\sym}^{2,\beta}(M)\oplus \skernel[M]\oplus C_{\sym}^{0,\beta}(M)
	   	\end{equation*}
	   	by $ \mathcal{R}_{M,appr}E=(u_1,\wbold_{E,1},E_1)$, where $\wbold_{E,1}=\{w_{E,1,j}\}$ is as above, $u_1:=\widecheck{\psi}\tilde{u}\circ\Pi_{\K}+\JM(\psibold'\ubold')$, $E_1=\Lcal_M u_1-E-\JM (\wbold_{E,1})$.
	   \end{definition}

	   \begin{proposition}\label{prop:lineareq}
	   	A linear map $\mathcal{R}_M:C^{0,\beta}_{\sym}(M)\to C^{2,\beta}_{\sym}(M) \oplus\skernel[M]$, $E\mapsto (u,\wbold_{E})$ can be defined by
	   	\begin{equation*}
	   		\mathcal{R}_M:=(u,\wbold_{E}):=\sum_{n=1}^{\infty}(u_n,\wbold_{E,n}),
	   	\end{equation*}
	   	where the sequence $\{(u_n,\wbold_{E,n},E_n)\}_{n\in \N}$ is defined inductively by
	   	\begin{equation*}
	   		(u_n,\wbold_{E,n},E_n):=-\mathcal{R}_{M,appr}E_{n-1}, \quad E_0:=-E.
	   	\end{equation*}
	   	Moreover, the following hold:
	   	\renewcommand{\theenumi}{\roman{enumi}}
	   	\begin{enumerate}
	   		\item\label{item:LDeqmodulow} $\Lcal_M u=E+\JM(\wbold_{E})$.
	   		\item We have the estimates
	   		\begin{align*}
	   			&\norm{u}_{2,\beta,\gamma;M}\lesssim m^{4+2\beta} \norm{E}_{0,\beta,\gamma-2;M},\\
	   			& \abs {\mubold_{E}}+\abs {\mubold'_{E}}\lesssim \tau_{\max}^{-1}m^{4+2\beta-\gamma}\norm{E}_{0,\beta,\gamma-2;M},
	   		\end{align*}
	   	 where $(\mubold_{E},\mubold'_{E})\in\Vcal[M]$ are defined such that (recall \ref{def:VM}) $	\Lcal\circ\Ecal_M^{-1}(\taubold\mubold_{E},\taubold\mubold'_{E})=-\wbold_E$.
	   	\end{enumerate}
	   \end{proposition}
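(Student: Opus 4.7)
The plan is to follow the semi-local-plus-iteration scheme which has become standard in the LD methodology (see \cite[Proposition 4.12]{kapouleas:equator} and \cite[Proposition 4.19]{LDg}), and to show that the semi-local solver $\mathcal{R}_{M,appr}$ defined in \ref{def:Rappr} behaves as a contraction on the error $E_n$, with a small contraction factor coming from the geometric separation between the scales $\tau_{j,\pm}\le\delta'_{j,\pm}\le\delta\le 1$. Once contraction is established, \ref{item:LDeqmodulow} and the estimates on $u$ and $(\mubold_E,\mubold'_E)$ follow by summing the geometric series.

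First I would estimate the output of one step $\mathcal{R}_{M,appr}E=(u_1,\wbold_{E,1},E_1)$. On the planar pieces, $u'_j$ and $w_{E,1,j}$ are produced by \ref{lem:linearRduo} together with \ref{item:LDLpm}\ref{item:EL} to kill the value and $\omega$-differential of $u'_j$ at the points of $L_j$; the Schauder estimate of \ref{lem:linearRduo} gives $\norm{u'_j:\mathcal{CS}^{2,\beta}(\R^2)}\lesssim \norm{E'_j:\mathcal{CS}^{0,\beta}(\R^2)}$, and \ref{item:LDLpm}\ref{item:EL} contributes the factor $m^{2+\beta}$ for the correction by $\Ecal_{\Lu_j}^{-1}$. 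On the catenoidal pieces, $\tilde u_{low}$ is solved modewise as an ODE with zero data at the waist and $\tilde u_{high}$ is solved by the conformal change to $\nu_{\K}^*g_{\Sph^2}$; both solvers are uniform in $\tau_{\ell}$ when expressed in the $\norm{\cdot}_{2,\beta,\gamma;\,\cdot}$ norms of \ref{def:norm}, with constants depending only on $\gamma\in(1,2)$. Pulling back by $\Pi_{\K}$ and $\Pi_{\R^2}$ and combining via \ref{lem:equivnorm} yields
\begin{equation*}
\norm{u_1}_{2,\beta,\gamma;M}+\abs{\wbold_{E,1}}\lesssim m^{2+\beta}\norm{E}_{0,\beta,\gamma-2;M}.
\end{equation*}

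The crucial estimate is on the new error $E_1=\Lcal_M u_1-E-\JM(\wbold_{E,1})$. Four sources of error must be controlled: (a) the cutoff commutators $[\widecheck\psi,\Lcal_M]\tilde u\circ\Pi_{\K}$ and $\JM([\psibold',\Lcal]\ubold')$, both supported in narrow transition regions and therefore estimable in a weighted $C^{0,\beta}$-norm by the derivative-gain of the cutoff together with the size of $u'_j$ and $\tilde u$; (b) the mismatch between $\Lcal_M$ and the model operators $\Lcal^{Euc}_{\K}$ and $\Lcal_{\R^2_{j,\pm}}$, controlled by \ref{lem:equivop}\ref{item:equivopK} with factor $(\delta'_{\min})^2$ on $\Kcech[M]$ and by \ref{lem:equivop}\ref{item:equivopS} with factor $b^{\epsilon_1-1}(\log b)\tau_{\max}^{\epsilon_1}$ on the graphical pieces, for some small $\epsilon_1>0$; (c) the fact that $\JM(\wbold_{E,1})\circ\Pi_{\R^2}$ differs from $\sum_j\psi'_j\, w_{E,1,j}\circ\Pi_{\R^2}$ only by terms in the transition region since $w_{E,1,j}$ is supported away from $\Ku[M]$; (d) the residual $\tilde E_{high}$ and $\tilde E_{low}$ on $\K_M$ are absorbed exactly by $\Lcal_\K \tilde u$. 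Choosing $b$ large enough in absolute terms (so $b^{\epsilon_1-1}\log b\ll 1$) and then $m$ large enough so that $\tau_{\max}^{\epsilon_1}$ and $(\delta'_{\min})^2$ are small relative to $m^{2+\beta}$ produces an estimate
\begin{equation*}
\norm{E_1}_{0,\beta,\gamma-2;M}\le \tfrac12\,\norm{E}_{0,\beta,\gamma-2;M}.
\end{equation*}

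The hard part is ensuring this contraction factor is genuinely less than $1$ in the presence of the $m^{2+\beta}$ blow-up from the planar Schauder estimate: one must be careful that the commutator terms $[\widecheck\psi,\Lcal_M]\tilde u\circ\Pi_{\K}$ only cost $\tau_{\min}^{\alpha\cdot\text{small}}$ after multiplication by the $m^{2+\beta}$ norm of $u_1$, which works because the commutator is supported where $\rr\sim\delta'_{\min}=\tau_{\min}^\alpha$ and the weight $\rr^{\gamma-2}$ then supplies the needed powers of $\tau_{\min}$. With the contraction in hand, the series $\sum_n(u_n,\wbold_{E,n})$ converges geometrically, $E_n\to 0$, and a telescoping argument gives \ref{item:LDeqmodulow}. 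The estimate on $u$ is the sum of the $u_1$ estimates, producing the final $m^{4+2\beta}$ factor (one $m^{2+\beta}$ from the planar solver and one $m^{2+\beta}$ absorbed in making the contraction strict). For the bound on $(\mubold_E,\mubold'_E)$, write $\wbold_E=\Lcal\circ\Ecal_M^{-1}(-\taubold\mubold_E,-\taubold\mubold'_E)$ and evaluate in the maximum norm on $\Vcal[M]$; \ref{item:LDLpm}\ref{item:EL} contributes a factor $m^{2+\beta}$, one factor of $\tau_{\max}^{-1}$ comes out of the normalization $\taubold$, and one power of $m^{-\gamma}$ comes from comparing the uniform $C^{2,\beta}$ estimate of the planar correction with the weighted $\rr^\gamma$ norm measured at $\rr\sim 1/m$ inside $\Pi_{\R^2}^{-1}(D_O(2\rbalanced))$, yielding the stated $\tau_{\max}^{-1}m^{4+2\beta-\gamma}\norm{E}_{0,\beta,\gamma-2;M}$.
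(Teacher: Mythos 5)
Your proposal is correct and follows essentially the same route as the paper: the paper's proof simply defers to \cite[Proposition 4.18]{LDg}, invoking Lemmas \ref{lem:equivnorm} and \ref{lem:equivop}, and what you have written is precisely the iteration-and-contraction argument that citation encapsulates (one-step semi-local estimate, smallness of the new error $E_1$ via the cutoff commutators and the operator-comparison lemmas after choosing $b$ and then $m$ large, geometric summation). The only soft spot is the bookkeeping of the $m^{4+2\beta}$ constant, which you attribute somewhat loosely to ``absorbing'' a second factor of $m^{2+\beta}$; but since the paper gives no details either, this does not constitute a gap in the approach.
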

	   \begin{proof}
	   The proof is the same as \cite[Proposition 4.18]{LDg} from \ref{lem:equivnorm} and \ref{lem:equivop}. The only difference is that instead of $\pm$ corresponding to two levels, we have the subscript $j$ for multiple ($2J+1$) levels.
	\end{proof}

	  \section{Main results}
\label{S:main} 

	  \begin{theorem}\label{thm}
If $m\in\N$ is large enough, then there is $(\breve{\zetaboldu},\breve{\kappaboldu})\in \BPcal$ as in \ref{def:Pcal}, $\breve{\tau}_{\ell}:=\tau_{\ell}\bbracket{\breve{\zetaboldu};m}$, $\breve{r}_{\ell}:=r_{\ell}\bbracket{\breve{\zetaboldu};m}$, $\breve{h}_{\ell}:=h_{\ell}\bbracket{\breve{\zetaboldu};m}$, $\breve{\varphi}_j:=\varphi_j\bbracket{\breve{\zetaboldu};m}$ as in \ref{def:varphi} satisfying \ref{lem:varphiest}, and moreover there is $\breve{\textup{ф}}\in C^{2,\beta}_{\sym}(M[\breve{\zetaboldu},\breve{\kappaboldu}])$, where $M[\breve{\zetaboldu},\breve{\kappaboldu}]$ is as in \ref{def:initial} such that in the notation of \ref{def:norm},
\begin{equation}\label{eq:phibrev}
	\norm{\breve{\textup{ф}}}_{2,\beta,\gamma;M}\leq \tau_{\max}^{1+\alpha/4},
\end{equation}
and furthermore in the notation of \ref{lem:nonlinear}, $\breve{M}=\breve{M}[m,J]:=M[\breve{\zetaboldu},\breve{\kappaboldu}]_{\breve{\textup{ф}}}$ is a $\Grp[m,J]$-invariant embedded self-shrinker in $\R^3$ (recall \ref{def:grp}) with genus $2J(m-1)$ and $2J+1$ ends. The hypersurfaces $\breve{M}[m,J]$ converge in the sense of varifolds as $m \to\infty$ to $(2J+1)\R^2$, and $\breve{M}[m,J]$ has asymptotic cone $\cup_j\breve{\Ccal}_j$, where $\breve{\Ccal}_j=\breve{\Ccal}_j[m,J]$ is the cone which is the graph of (recall \ref{lem:phimu})
\begin{equation*}
	\begin{cases}
	\pm\frac{\sqrt{\pi}m}{2}\breve{\tau}_{J-1/2}\phi_m(\rbalanced)e^{\rbalanced^2/8}(1+O(m^{-1}))r,\text{ when }j=\pm J,\\
	\frac{\sqrt{\pi}m}{2}(\breve{\tau}_{j-1/2}-\breve{\tau}_{j+1/2})\phi_m(\rbalanced)e^{\rbalanced^2/8}(1+O(m^{-1}))r,\text{ when }j\neq\pm J,0,\\
		O(\breve{\tau}_{1/2}^{1+\alpha/4})r\text{ when }j=0 \text{ and }J\in\N.
	\end{cases}
\end{equation*} 

	  \end{theorem}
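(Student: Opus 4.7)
The plan is the standard PDE gluing scheme. Given parameters $(\zetaboldu,\kappaboldu)\in\BPcal$, I will first solve the perturbation equation $H^{2\omega}_{\textup{ф}}=0$ modulo the finite-dimensional obstruction space $\skernel[M]$ via a contraction mapping argument to produce $\textup{ф}=\textup{ф}\bbracket{\zetaboldu,\kappaboldu}$; then I will adjust the parameters by Schauder's fixed-point theorem to kill the residual obstruction. The main surfaces, function spaces, and parameter spaces are $\Grp[m,J]$-symmetric throughout, so symmetry is preserved automatically.

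For the first step, fix $(\zetaboldu,\kappaboldu)\in\BPcal$ and define a map $\Fcal$ on the ball $B:=\{\textup{ф}\in C^{2,\beta}_{\sym}(M):\norm{\textup{ф}}_{2,\beta,\gamma;M}\leq\tau_{\max}^{1+\alpha/4}\}$ by $\Fcal(\textup{ф}):=u$ where $(u,\wbold_E):=\Rcal_M(-H^{2\omega}-Q(\textup{ф}))$ with $Q$ the quadratic remainder from Lemma \ref{lem:nonlinear}. A fixed point of $\Fcal$ satisfies $H^{2\omega}_{\textup{ф}}=\JM(\wbold_E)$. Combining Lemma \ref{lem:H} (so $\norm{H^{2\omega}-\JM(\wbold)}_{0,\beta,\gamma-2;M}\lesssim\tau_{\max}^{1+\alpha/3}$), Lemma \ref{lem:nonlinear}, and the linear estimate $\norm{u}_{2,\beta,\gamma;M}\lesssim m^{4+2\beta}\norm{E}_{0,\beta,\gamma-2;M}$ from Proposition \ref{prop:lineareq}, one checks that $\Fcal$ maps $B$ to $B$ and is a contraction there when $m$ is large enough. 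The decisive point is that by \eqref{eq:tau} the quantity $\tau_{\max}$ is exponentially small in $m$, so the polynomial loss $m^{4+2\beta}$ is absorbed by any positive power of $\tau_{\max}$. The contraction mapping theorem then yields a unique $\textup{ф}\bbracket{\zetaboldu,\kappaboldu}\in B$ depending continuously on the parameters.

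For the second step, the fixed point leaves a residual obstruction $\wbold^{\mathrm{tot}}:=\wbold+\wbold_E\in\skernel[M]$ corresponding to mismatches $(\mubold^{\mathrm{tot}},\mubold'^{\mathrm{tot}})\in\Vcal[M]$ via $\Lcal\circ\Ecal_M^{-1}(\taubold\mubold^{\mathrm{tot}},\taubold\mubold'^{\mathrm{tot}})=-\wbold^{\mathrm{tot}}$. Proposition \ref{prop:lineareq} gives $\abs{\mubold_E}+\abs{\mubold'_E}\lesssim m^{4+2\beta-\gamma}\tau_{\max}^{\alpha/3}\to 0$, while Lemma \ref{lem:zetamu} provides a linear isomorphism $Z:\Vcal[M]\to\Pcal$ with $\abs{(\zetaboldu,\kappaboldu)-Z(\mubold,\mubold')}\lesssim 1$ uniformly on $\BPcal$. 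I then define
\begin{equation*}
\Jcal(\zetaboldu,\kappaboldu):=(\zetaboldu,\kappaboldu)-Z(\mubold^{\mathrm{tot}},\mubold'^{\mathrm{tot}}).
\end{equation*}
The bound $\abs{\Jcal(\zetaboldu,\kappaboldu)}\leq\abs{(\zetaboldu,\kappaboldu)-Z(\mubold,\mubold')}+\abs{Z}\,\abs{(\mubold_E,\mubold'_E)}\leq C_0$ for some absolute $C_0$ shows that, after fixing $\cu\geq C_0$ in \ref{def:Pcal}, $\Jcal$ is a continuous self-map of the compact convex set $\BPcal$. Schauder's theorem produces $(\breve{\zetaboldu},\breve{\kappaboldu})\in\BPcal$ with $Z(\mubold^{\mathrm{tot}},\mubold'^{\mathrm{tot}})=0$; since $Z$ is an isomorphism, $\wbold^{\mathrm{tot}}=0$ and hence $H^{2\omega}_{\breve{\textup{ф}}}=0$, so $\breve{M}$ is a self-shrinker.

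The geometric conclusions are read off the construction: $\Grp[m,J]$-invariance is preserved since all spaces are the symmetric ones; embeddedness follows from Lemma \ref{lem:phigl}\ref{item:phiglembed} combined with $\norm{\breve{\textup{ф}}}_{2,\beta,\gamma;M}\leq\tau_{\max}^{1+\alpha/4}$; the genus $2J(m-1)$ and $2J+1$ ends are visible from the combinatorial structure of $M$ consisting of $2J+1$ planar sheets connected by $2Jm$ catenoidal bridges ($m$ bridges between each adjacent pair); varifold convergence to $(2J+1)\R^2$ as $m\to\infty$ follows from $\tau_{\max}\to 0$, which shrinks the bridges to zero measure while each graphical piece converges to $\R^2$; and the asymptotic cone description follows from Lemma \ref{lem:phigl}\ref{item:phiglCS}\ref{item:phiglcone} combined with the $\mathcal{CS}^{2,\beta}$-smallness of $\breve{\textup{ф}}$. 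The main obstacle is tuning the parameters $\alpha,\beta$ and the lower bound on $m$ so that the $m^{4+2\beta}$ loss in the linear theory and the amplification $\tau_{\max}^{-1}$ in the mismatch estimate of Proposition \ref{prop:lineareq} are beaten by the quadratic gain from $Q$ and the exponential smallness of $\tau_{\max}$ in $m$ coming from \eqref{eq:tau}; this is feasible precisely because the MLD construction in Section \ref{S:MLD} has already absorbed all polynomial-size errors into the parameters.
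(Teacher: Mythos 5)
Your proposal is essentially correct in its conclusions but routes the fixed-point argument differently from the paper. You split the problem into (a) a contraction-mapping step in the infinite-dimensional variable for each fixed $(\zetaboldu,\kappaboldu)$, solving the equation modulo $\skernel[M]$, followed by (b) a finite-dimensional fixed-point argument on $\BPcal$ alone to kill the residual obstruction. The paper instead runs a \emph{single} Schauder iteration on the product $B\subset C^{2,\beta}(M[0])\times\Pcal$, using the compactness of the embedding $C^{2,\beta}\hookrightarrow C^{2,\beta'}$ and a family of diffeomorphisms $\Fcal_{\zetaboldu,\kappaboldu}:M[0]\to M[\zetaboldu,\kappaboldu]$ to pull everything back to the fixed reference surface $M[0]$. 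The one-shot Schauder approach buys two things that your route has to supply separately. First, your contraction step needs a Lipschitz estimate of the form $\norm{Q(\textup{ф}_1)-Q(\textup{ф}_2)}_{0,\beta,\gamma-2}\lesssim(\norm{\textup{ф}_1}+\norm{\textup{ф}_2})\norm{\textup{ф}_1-\textup{ф}_2}$ for the quadratic remainder; Lemma \ref{lem:nonlinear} as stated only gives the diagonal bound $\norm{Q(\textup{ф})}\lesssim\norm{\textup{ф}}^2$, which does not by itself imply contraction. Such a Lipschitz bound is standard and provable by the same rescaling arguments, but it is an additional estimate you assert rather than derive. Second, both the continuity of $\textup{ф}\bbracket{\zetaboldu,\kappaboldu}$ in the parameters and the continuity of $(\zetaboldu,\kappaboldu)\mapsto(\mubold^{\mathrm{tot}},\mubold'^{\mathrm{tot}})$ require identifying functions on the varying surfaces $M[\zetaboldu,\kappaboldu]$; the paper's $\Fcal_{\zetaboldu,\kappaboldu}$ together with the norm equivalence \eqref{eq:Fzeta} is exactly the device for this, and your proposal omits it.

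The remaining ingredients match the paper: the smallness chain $\tau_{\max}^{1+\alpha/3}\to m^{4+2\beta}\tau_{\max}^{1+\alpha/3}\leq\tau_{\max}^{1+\alpha/4}$ (using that $\tau_{\max}$ is exponentially small in $m$ by \eqref{eq:tau} and \eqref{eq:phiJ}), the use of Lemma \ref{lem:zetamu} to invert the parameter-to-mismatch map up to an $O(1)$ error absorbed by taking $\cu$ large, and the geometric conclusions (symmetry, embeddedness via Lemma \ref{lem:phigl}\ref{item:phiglembed}, genus and ends from the combinatorics, and the asymptotic cones from Lemma \ref{lem:phigl}\ref{item:phiglcone}) are all as in the paper.
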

	  \begin{proof}
	  		As in \cite[Lemma 5.5]{LDg}, there exists a family of diffeomorphisms $\mathcal{F}_{\zetaboldu,\kappaboldu}:M[0]\to M[\zetaboldu,\kappaboldu]$ 
continuously depending on $\zetaboldu,\kappaboldu$ such that for any $u\in C^{k,\beta}(M[\zetaboldu,\kappaboldu])$, we have
	  		\begin{equation}\label{eq:Fzeta}
	  			\norm{u\circ \mathcal{F}_{\zetaboldu,\kappaboldu}}_{k,\beta,\gamma;M[0]}\sim_k 	\norm{u}_{k,\beta,\gamma;M[\zetaboldu,\kappaboldu]}.
	  		\end{equation}
	  		
	  		 We first define $B\subset C^{2,\beta}(M[0])\times \Pcal$ by (recall \ref{def:Pcal})
	  	\begin{equation}\label{eq:B}
	  		B:=\{v\in C^{2,\beta}(M[0]):\norm{v}_{2,\beta,\gamma;M[0]}\leq \tau_{\max}\bbracket{0;m}^{1+\alpha}\}\times\BPcal.
	  	\end{equation}
	  	We next define a map $\Jcal:B\to C^{2,\beta}(M[0])\times \Pcal$ as the following.
	  	Let $(v,\zetaboldu,\kappaboldu)\in B$, we define $(u,\wbold_H):=-\mathcal{R}_{M[\zetaboldu,\kappaboldu]}(H^{2\omega}_M-\JM(\wbold))$ by \ref{prop:lineareq} (recall \eqref{eq:w}). We define $\textup{ф}:=v\circ \mathcal{F}_{\zetaboldu,\kappaboldu}^{-1}+u$. Then by \ref{lem:H}, \ref{prop:lineareq}, \eqref{eq:Fzeta} and the size of $v$ in \eqref{eq:B}
	  	\begin{equation}\label{eq2}
	  		\norm{\textup{ф}}_{2,\beta,\gamma;M[\zetaboldu,\kappaboldu]}\leq  \tau_{\max}^{1+\alpha/4},\quad \abs{\mubold_{H}}+\abs{\mubold'_{H}}\leq  \tau_{\max}^{\alpha/4},
	  	\end{equation}
	   where $(\mubold_{H},\mubold'_{H})\in\Vcal[M]$ are defined such that $	\Lcal\circ\Ecal_M^{-1}(\taubold\mubold_{H},\taubold\mubold'_{H})=-\wbold_H$.
	  	By \ref{prop:lineareq} again we define $(u_Q,\wbold_Q):=-\mathcal{R}_{M[\zetaboldu,\kappaboldu]}(H^{2\omega}_{\textup{ф}}-H^{2\omega}_M-\mathcal{L}_M\textup{ф})$, where $H^{2\omega}_{\textup{ф}}:=H^{2\omega}_{M_\textup{ф}}$ is the weighted mean curvature on $M_\textup{ф}$ in the notation of \ref{lem:nonlinear}. Then by \ref{lem:nonlinear} and \ref{prop:lineareq}
	  	\begin{equation}\label{eq4g}
	 \norm{u_Q}_{2,\beta,\gamma;M[\zetaboldu,\kappaboldu]}\leq  \tau_{\max}^{2-\alpha/4},\quad \abs{\mubold_{Q}}+\abs{\mubold'_{Q}}\leq  \tau_{\max}^{1-\alpha/4},
	  	\end{equation}
	  	 where $(\mubold_{Q},\mubold'_{Q})\in\Vcal[M]$ are defined such that $	\Lcal\circ\Ecal_M^{-1}(\taubold\mubold_{Q},\taubold\mubold'_{Q})=-\wbold_Q$.
	  	Combining the definitions we have
	  	\begin{equation}\label{eq5g}
	  		\Lcal_M(u_Q-v\circ \Fcal_{\zetaboldu,\kappaboldu}^{-1})+H_{\textup{ф}}=-\JM(\wbold_{sum}),
	  	\end{equation}
	  	where $\wbold_{sum}=\wbold+\wbold_H+\wbold_Q=-\Lcal\circ\Ecal_M^{-1}(\taubold\mubold_{sum},\taubold\mubold'_{sum})$ and $(\mubold_{sum},\mubold'_{sum})=(\mubold+\mubold_{H}+\mubold_{Q},\mubold'+\mubold'_{H}+\mubold'_{Q})$. 
	  	
	  	Finally, we define $\Jcal$ by (recall \ref{lem:zetamu})
	  	\begin{align*}
	  		\mathcal{J}(v,\zetaboldu,\kappaboldu):=(u_Q\circ \Fcal_{\zetaboldu,\kappaboldu},(\zetaboldu,\kappaboldu)-Z_{\zetaboldu,\kappaboldu}(\mubold_{sum},\mubold'_{sum})).
	  	\end{align*}
	  	
	  	$B$ is convex, and the embedding $B\hookrightarrow C^{2,\beta'}(M[0])\times \Pcal$ is compact for $\beta'\in(0,\beta)$. By \eqref{eq4g} and \eqref{eq:Fzeta}, $\mathcal{J}$ maps the first factor into $B$ itself. By \ref{def:Pcal}, \ref{lem:zetamu}, \eqref{eq2} and \eqref{eq4g} $\mathcal{J}$ maps the second factor into $B$ itself by choosing $\cu$ and then $m$ big enough. It is easy to check that $\mathcal{J}$ is a continuous map in the induced topology. By Schauder’s fixed point theorem \cite[Theorem 11.1]{gilbarg} then, there is a fixed point $(\breve{v},\breve{\zetaboldu},\breve{\kappaboldu})\in B$ of $\mathcal{J}$, which therefore satisfies $\breve{v}=\breve{u}_Q\circ \mathcal{F}_{\breve{\zetaboldu},\breve{\kappaboldu}}$ and $\breve{\wbold} + \breve{\wbold}_H + \breve{\wbold}_Q = 0$, where we use “$\breve{\cdot}$” to denote the various quantities for $\zetaboldu= \breve{\zetaboldu}$, $\kappaboldu = \breve{\kappaboldu}$ and $v = \breve{v}$. By \eqref{eq5g} then we conclude that $\breve{M}_{\breve{\textup{ф}}}$ is a self-shrinker. The smoothness follows from standard regularity theory, and the embeddedness follows from \ref{lem:phigl}\ref{item:phiglembed} and \eqref{eq2}. The topology then follows because we are connecting $2J+1$ planes adjacently with $m$ bridges, and the symmetry follows by construction. Finally the cone asymptotic follows from the bound on the norm of $\breve{\textup{ф}}$ in \eqref{eq2} and \ref{lem:phigl}\ref{item:phiglcone}, \eqref{eq:r}.
	  \end{proof}

	    \appendix
	    \section{A linear algebra lemma}
	     A similar lemma has also appeared in \cite[Lemma 3.18]{wiygul:stacking} and in \cite[Lemma A.1]{CSW}. Here we make a version more directly related to our application.
	    \begin{lemma}\label{lem:linearalg}
	    	For $N\in\N$, the \emph{tridiagonal Toeplitz} matrix $T_{N\times N}$ with
	    	\begin{equation*}
	    		T_{ij}:=
	    		\begin{cases}
	    			1\text{ if }\abs{i-j}=1,\\
	    			0\text{ otherwise },
	    		\end{cases}
	    	\end{equation*}
	    has eigenvalues
	    \begin{equation*}
	    	\lambda_k=2\cos{\frac{k\pi}{N+1}},
	    \end{equation*}
	    for $k=1,\dots,N$ and corresponding orthogonal eigenvectors
	    \begin{equation*}
	    	\vec{e}_k=\left(\sin{\frac{k\pi}{N+1}},\sin{\frac{2k\pi}{N+1}},\dots,\sin{\frac{Nk\pi}{N+1}}\right).
	    \end{equation*}
	    In particular, all the eigenvalues of $T$ are simple and there is only one positive eigenvalue $2\cos{\frac{\pi}{N+1}}$ corresponding to an eigenvector with all the entries positive.
	    	\end{lemma}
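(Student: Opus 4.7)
The plan is to verify the claimed eigenvalue/eigenvector pairs by direct substitution using a single trigonometric identity, and then read off simplicity, orthogonality, and the sign structure from that.

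First I would set $\alpha_k := k\pi/(N+1)$ and pad the proposed eigenvector with zeros, i.e.\ consider the extended sequence $(\tilde e_k)_j := \sin(j\alpha_k)$ for $j = 0,1,\dots,N+1$. The virtue of this padding is that $(\tilde e_k)_0 = 0$ and $(\tilde e_k)_{N+1} = \sin(k\pi) = 0$, so the product $T\vec{e}_k$ agrees entrywise with the three-term recurrence $j \mapsto (\tilde e_k)_{j-1} + (\tilde e_k)_{j+1}$ for $1 \le j \le N$, with no separate argument needed for the first and last rows of $T$. The sum-to-product identity
$$\sin((j-1)\alpha_k) + \sin((j+1)\alpha_k) \;=\; 2\cos\alpha_k\,\sin(j\alpha_k)$$
then gives $T\vec{e}_k = (2\cos\alpha_k)\vec{e}_k$, identifying the eigenvalue $\lambda_k = 2\cos\bigl(k\pi/(N+1)\bigr)$.

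Next, since $\cos$ is strictly monotone on $[0,\pi]$, the values $\lambda_1 > \lambda_2 > \cdots > \lambda_N$ are pairwise distinct; having produced $N$ nonzero eigenvectors for $N$ distinct eigenvalues, we have exhausted the spectrum of $T$ and every eigenvalue is automatically simple. Orthogonality of the $\vec{e}_k$ is then immediate from $T$ being symmetric (it can also be confirmed directly via the standard discrete orthogonality of sines on $\{1,\dots,N\}$).

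For the final assertion, $\vec{e}_1$ has entries $\sin\bigl(j\pi/(N+1)\bigr)$ with argument in $(0,\pi)$ for each $j=1,\dots,N$, hence strictly positive, and $\lambda_1 = 2\cos\bigl(\pi/(N+1)\bigr) > 0$. Any other eigenvector $\vec{e}_k$ with $k \ge 2$ is orthogonal to the strictly positive vector $\vec{e}_1$, which rules out $\vec{e}_k$ having all entries of one sign. I anticipate no serious obstacle here; the only mild subtlety is the effect of the boundary rows of $T$, and that is precisely what the zero-padding at indices $0$ and $N+1$ handles cleanly.
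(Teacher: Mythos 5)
Your verification is correct and complete; the only difference from the paper is that the paper does not prove this lemma at all but simply cites the standard reference on tridiagonal Toeplitz matrices (\cite[(4),(7)]{Toeplitz}), whereas you supply the elementary argument in full. Your zero-padding device at indices $0$ and $N+1$ is exactly the right way to absorb the boundary rows into the three-term recurrence, the sum-to-product identity then gives $T\vec{e}_k=2\cos(k\pi/(N+1))\,\vec{e}_k$, and the entries $\sin(k\pi/(N+1))\neq 0$ guarantee the eigenvectors are nonzero, so the $N$ distinct values exhaust the spectrum and simplicity and orthogonality follow from symmetry. One point worth making explicit: read literally, ``there is only one positive eigenvalue'' is false for $N\geq 4$ (e.g.\ $N=4$ has $\lambda_2=2\cos(2\pi/5)>0$); the intended assertion, which is what the construction actually uses (cf.\ \cite[Lemma 3.18]{wiygul:stacking}), is the Perron--Frobenius-type statement that exactly one eigenvalue admits an eigenvector with all entries positive, namely $\lambda_1=2\cos(\pi/(N+1))$, which is itself positive. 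Your closing argument --- simplicity plus orthogonality of $\vec{e}_k$, $k\geq 2$, to the strictly positive vector $\vec{e}_1$ forces $\vec{e}_k$ to change sign --- proves precisely this corrected statement, so no gap remains.
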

	    	\begin{proof}
	    		\cite[(4),(7)]{Toeplitz}. 
	    	\end{proof}
         \begin{cor}\label{cor:linearalg}
         	For $J\in\frac{1}{2}\N$, the following hold for $j=-J+1,-J+2,\cdots,J$.
         	\begin{align*}  
         		\cos\frac{\pi}{2J+1}&=\frac{\cos\frac{\pi(j-3/2)}{2J+1}+\cos\frac{\pi(j+1/2)}{2J+1}}{2\cos\frac{\pi(j-1/2)}{2J+1}}
         	\end{align*}
         \end{cor}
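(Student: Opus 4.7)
The statement is a purely trigonometric identity, and the plan is to prove it directly via the sum-to-product formula $\cos A + \cos B = 2\cos\tfrac{A+B}{2}\cos\tfrac{A-B}{2}$.

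First I would set $A = \tfrac{\pi(j-3/2)}{2J+1}$ and $B = \tfrac{\pi(j+1/2)}{2J+1}$, so that $\tfrac{A+B}{2} = \tfrac{\pi(j-1/2)}{2J+1}$ and $\tfrac{A-B}{2} = -\tfrac{\pi}{2J+1}$. Applying the sum-to-product identity then yields
\begin{equation*}
\cos\tfrac{\pi(j-3/2)}{2J+1} + \cos\tfrac{\pi(j+1/2)}{2J+1} = 2\cos\tfrac{\pi(j-1/2)}{2J+1}\cos\tfrac{\pi}{2J+1},
\end{equation*}
and dividing both sides by $2\cos\tfrac{\pi(j-1/2)}{2J+1}$ gives the claimed equality.

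The only thing to check is that the denominator does not vanish. Since $j$ ranges over $-J+1,\dots,J$ (with $J \in \tfrac{1}{2}\N$), the quantity $j-\tfrac{1}{2}$ lies in $\{-J+\tfrac{1}{2},\dots,J-\tfrac{1}{2}\}$, so $\tfrac{\pi(j-1/2)}{2J+1} \in (-\tfrac{\pi}{2},\tfrac{\pi}{2})$ and the cosine is strictly positive. The appeal to Lemma \ref{lem:linearalg} is not really needed for this corollary (the lemma is the more substantial result, giving the spectrum of the tridiagonal Toeplitz matrix via the same kind of trigonometric manipulation), but the corollary can be viewed as the scalar recurrence $\lambda_1 e_j = e_{j-1} + e_{j+1}$ for the top eigenpair of $T$, after relabelling indices. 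I do not expect any obstacle here: the argument is a one-line application of a standard identity, and the only care required is the non-vanishing of the denominator, which is immediate from the index range.
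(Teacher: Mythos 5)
Your proof is correct, and it takes a genuinely different route from the paper's. The paper deduces the identity from Lemma \ref{lem:linearalg}: it reads off the componentwise eigenvalue equation $e_{i-1}+e_{i+1}=\lambda_1 e_i$ for the top eigenpair of the tridiagonal Toeplitz matrix with $N=2J$, and then converts $\sin\frac{i\pi}{2J+1}$ into $\cos\frac{\pi(j-1/2)}{2J+1}$ via $\sin\theta=\cos(\pi/2-\theta)$ and the relabelling $i=J+1-j$ (checking separately that the boundary case $j=J$ is the standard double-angle identity, consistent with the convention $e_0=e_{N+1}=0$). You instead prove the identity in one line by sum-to-product, with $\frac{A+B}{2}=\frac{\pi(j-1/2)}{2J+1}$ and $\frac{A-B}{2}=-\frac{\pi}{2J+1}$, and you correctly verify that the denominator is nonzero since $|j-1/2|\le J-1/2<\frac{2J+1}{2}$ forces $\frac{\pi(j-1/2)}{2J+1}\in(-\frac{\pi}{2},\frac{\pi}{2})$ — a point the paper's proof leaves implicit. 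Your argument is more elementary and self-contained; the paper's phrasing has the advantage of exhibiting the identity as exactly the eigenvector recurrence that underlies the vertical balancing (which is why the lemma is stated at all), and your own observation that the corollary is the scalar recurrence $\lambda_1 e_j=e_{j-1}+e_{j+1}$ after relabelling shows the two viewpoints coincide.
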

         \begin{proof}
         	    This follows by applying \ref{lem:linearalg} and rewriting $\sin$ by $\cos$ with $2J=N$. Notice that when $j=J$ it is the usual trigonometric identity
         	    \begin{equation*}
         	    		\cos\frac{\pi}{2J+1}=\frac{\sin\frac{2\pi}{2J+1}}{2\sin\frac{\pi}{2J+1}}=\frac{\cos\frac{\pi(J-3/2)}{2J+1}}{2\cos\frac{\pi(J-1/2)}{2J+1}}.
         	    \end{equation*}
         \end{proof}
         
         \section{Geodesics in conformal metrics}
         In this appendix, we use the notation $(N,\Sigma,g)$ (recall \ref{not:manifold}) and make the extra assumption that $\Sigma\subset N$ is totally geodesic. We also use the notation $\hat{g}=e^{2\omega}g$ for the conformal metric $\hat{g}$. We will consider the geodesics in $N$ under these two metrics. 
         
         \begin{lemma}[{\cite[Lemma 3.13]{LDg}}]\label{lem:logconformal}
         	For each $p\in N$ and $q$ in some neighborhood of $p$ in $N$,
         	\begin{equation*}
         		\abs{\log\dist_p^{\hat{g}}(q)-\log\dist_p^{g}(q)-\omega(p)-\frac{1}{2}\dd_p\omega((\exp_p^g)^{-1}(q))}\lesssim (\dist_p^{g}(q))^2.
         	\end{equation*}
         \end{lemma}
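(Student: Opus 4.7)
The plan is to compare $\dist_p^{\hat g}(q)$ and $\dist_p^g(q)$ by upper-bounding each via a test path that is a geodesic for the other metric, and then Taylor-expanding $\omega$ along that test path. Write $d := \dist_p^g(q)$, $\hat d := \dist_p^{\hat g}(q)$, and $v := (\exp_p^g)^{-1}(q) \in T_pN$, so $|v|_g = d$. Note that the totally geodesic hypothesis on $\Sigma$ plays no role here; the statement is purely ambient.

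For the upper bound, consider the $g$-geodesic $\gamma(t) := \exp_p^g(tv)$, $t \in [0,1]$, from $p$ to $q$. Since $|\dot\gamma(t)|_g = d$ and $\gamma$ is a valid comparison curve in $\hat g$,
\begin{equation*}
\hat d \leq L_{\hat g}(\gamma) = d\int_0^1 e^{\omega(\gamma(t))}\,dt.
\end{equation*}
A Taylor expansion of $\omega$ along $\gamma$ gives $\omega(\gamma(t)) = \omega(p) + t\,\dd_p\omega(v) + O(t^2 d^2)$, where the implicit constant depends only on $\|\omega\|_{C^2}$ near $p$. Expanding the exponential and integrating term by term yields
\begin{equation*}
\int_0^1 e^{\omega(\gamma(t))}\,dt = e^{\omega(p)}\Big(1 + \tfrac12 \dd_p\omega(v) + O(d^2)\Big),
\end{equation*}
using that $\dd_p\omega(v) = O(d)$ so that higher-order terms absorb into $O(d^2)$. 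Taking logarithms then gives
\begin{equation*}
\log\hat d \leq \log d + \omega(p) + \tfrac12\dd_p\omega(v) + O(d^2).
\end{equation*}

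For the matching lower bound, apply the same argument with the roles of $g$ and $\hat g$ reversed, using $g = e^{-2\omega}\hat g$. Setting $\hat v := (\exp_p^{\hat g})^{-1}(q)$ and testing the $\hat g$-geodesic $\hat\gamma(t) := \exp_p^{\hat g}(t\hat v)$ against $g$ produces
\begin{equation*}
\log\hat d \geq \log d + \omega(p) + \tfrac12 \dd_p\omega(\hat v) + O(\hat d^2).
\end{equation*}
To merge the two inequalities, I need to compare $\dd_p\omega(v)$ with $\dd_p\omega(\hat v)$ and $\hat d$ with $d$. Consider $\Phi := (\exp_p^{\hat g})^{-1}\circ \exp_p^g$ defined near $0 \in T_pN$: it is a smooth local diffeomorphism fixing $0$, and since the differential of any exponential map at $0$ is the canonical identification of $T_0(T_pN)$ with $T_pN$, one has $D\Phi(0) = \Id$. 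Consequently $\hat v = \Phi(v) = v + O(|v|_g^2) = v + O(d^2)$, so $\dd_p\omega(\hat v) = \dd_p\omega(v) + O(d^2)$ by linearity and boundedness of $\dd_p\omega$. The upper bound already implies $\hat d \lesssim d$, hence $O(\hat d^2) = O(d^2)$. Substituting into the lower bound gives the two-sided estimate claimed.

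The only delicate point is verifying Step 3, namely that $D\Phi(0)$ equals the identity so that $\hat v - v = O(d^2)$; this is the reason one does not pick up an extra $O(d)$ error when passing between the two tangent vectors. Beyond that, the argument is a routine first-order expansion of a conformal rescaling. Since this result is stated as \cite[Lemma 3.13]{LDg} in the main body, a proof proposal in the appendix can either reproduce the steps above in compressed form or simply cite that reference and indicate where the totally-geodesic hypothesis (irrelevant here) is used elsewhere.
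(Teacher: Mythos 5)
Your argument is correct. Note that the paper does not prove this lemma internally at all — it is quoted directly from \cite[Lemma 3.13]{LDg} — and your two-sided comparison (testing the $g$-geodesic against $\hat g$-length and vice versa, expanding $\omega$ to first order along each, and observing that $D\bigl((\exp_p^{\hat g})^{-1}\circ\exp_p^{g}\bigr)(0)=\Id$ so that $\hat v=v+O(d^2)$ and no extra $O(d)$ error appears) is precisely the standard proof of the cited statement; you are also right that the totally geodesic hypothesis on $\Sigma$ plays no role in this particular estimate.
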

         
         We will now make the following assumption for $\omega$, and apparently the pair $(\R^3,\R^2)$ and the $\omega$ in \eqref{eq:weight} satisfy it.
         \begin{assumption}\label{ass:omegaz}
         	$\omega$ is smooth and moreover, 
         	\begin{equation}
         		\partial_{\zz}\omega|_{\zz=0}=0,
         	\end{equation}
         	where $\zz$ is the Fermi coordinates as in \ref{def:fermi}\ref{item:zz}.
         \end{assumption} 
         
         \begin{lemma}\label{lem:christoffel}
         	The following hold for the Christoffel symbols in a neighbourhood of $p\in\Sigma$, where $\Gamma$ denotes the Levi-Civita connection of the metric $g$, $\hat{\Gamma}$ denotes the Levi-Civita connection of the metric $\hat{g}$, $i,j,\dots$ denote the coordinates on $\Sigma_{\zz}$ (recall \ref{def:fermi}).
         	\begin{align*}
         		\hat{\Gamma}^{\zz}_{ij}=\Gamma^{\zz}_{ij}-\partial_{\zz}{\omega}g_{ij},\quad &\hat{\Gamma}^{\zz}_{i\zz}=\Gamma^{\zz}_{i\zz}+\partial_{i}{\omega},\quad \hat{\Gamma}^{\zz}_{\zz  \zz}=\partial_{\zz}{\omega}.
         	\end{align*}
         \end{lemma}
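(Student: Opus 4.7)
The plan is to derive these identities by direct specialization of the standard conformal transformation law for Christoffel symbols to Fermi coordinates adapted to $\Sigma$. Recall that for any conformally related metrics $\hat g = e^{2\omega} g$ on an $(n+1)$-dimensional manifold, one has the classical identity
\begin{equation*}
\hat{\Gamma}^{k}_{\alpha\beta} = \Gamma^{k}_{\alpha\beta} + \delta^{k}_{\alpha}\,\partial_{\beta}\omega + \delta^{k}_{\beta}\,\partial_{\alpha}\omega - g_{\alpha\beta}\, g^{kl}\,\partial_{l}\omega,
\end{equation*}
which I would prove (or quote) from the definition $\hat{\Gamma}^{k}_{\alpha\beta} = \tfrac{1}{2}\hat g^{kl}(\partial_{\alpha}\hat g_{\beta l} + \partial_{\beta}\hat g_{\alpha l} - \partial_{l}\hat g_{\alpha\beta})$ together with $\hat g_{\alpha\beta} = e^{2\omega}g_{\alpha\beta}$ and $\hat g^{kl} = e^{-2\omega}g^{kl}$.

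The second ingredient is the structure of the metric $g$ in Fermi coordinates $(x^{i},\zz)$ along $\Sigma$ (as in Definition \ref{def:fermi}): namely $g_{\zz\zz}\equiv 1$ and $g_{i\zz}\equiv 0$, so likewise $g^{\zz\zz}\equiv 1$ and $g^{i\zz}\equiv 0$, and moreover the coordinate curves $t\mapsto(x^{i},t)$ are unit-speed $g$-geodesics, which forces $\Gamma^{\zz}_{\zz\zz}\equiv 0$. Plugging $k=\zz$ into the conformal identity and using these facts collapses each term: for $(\alpha,\beta)=(i,j)$ the $\delta^{\zz}_{i},\delta^{\zz}_{j}$ contributions vanish and $g^{\zz l}\partial_{l}\omega = \partial_{\zz}\omega$, giving the first formula; for $(\alpha,\beta)=(i,\zz)$ only the term $\delta^{\zz}_{\zz}\partial_{i}\omega = \partial_{i}\omega$ survives (since $g_{i\zz}=0$), giving the second; and for $(\alpha,\beta)=(\zz,\zz)$ the two $\delta^{\zz}_{\zz}\partial_{\zz}\omega$ terms combine to $2\partial_{\zz}\omega$ while $g_{\zz\zz}g^{\zz\zz}\partial_{\zz}\omega = \partial_{\zz}\omega$ is subtracted, leaving $\partial_{\zz}\omega$ after using $\Gamma^{\zz}_{\zz\zz}=0$.

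There is essentially no obstacle here; the proof is a bookkeeping exercise once the Fermi-coordinate normalizations are in place. I would remark only that neither the total geodesicity of $\Sigma$ in $g$ (which would give $\Gamma^{\zz}_{ij}|_{\zz=0}=0$) nor Assumption \ref{ass:omegaz} (which would give $\partial_{\zz}\omega|_{\zz=0}=0$) is used in the statement as written; those hypotheses are carried along because they are needed in the broader appendix where this lemma is applied, and one could observe as a corollary that along $\Sigma$ itself the formulas simplify to $\hat{\Gamma}^{\zz}_{ij}|_{\zz=0}=0$, $\hat{\Gamma}^{\zz}_{i\zz}|_{\zz=0}=\partial_{i}\omega|_{\zz=0}$, and $\hat{\Gamma}^{\zz}_{\zz\zz}|_{\zz=0}=0$.
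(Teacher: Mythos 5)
Your proposal is correct and follows essentially the same route as the paper: the paper's proof is exactly the standard conformal transformation law for Christoffel symbols specialized using the Fermi-coordinate facts $g_{\zz\zz}=1$, $g_{i\zz}=0$, and $\nabla_{\partial\zz}\partial\zz=0$ (equivalently $\Gamma^{\zz}_{\zz\zz}=0$), which is precisely your bookkeeping. Your side remark that neither Assumption \ref{ass:omegaz} nor the total geodesicity of $\Sigma$ is needed for the lemma itself is also accurate.
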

         \begin{proof}
         	Direct calculation from the formula of the Christoffel symbols under conformal change 
         	and the facts that $g_{\zz\zz}=1$, $g_{\zz i}=0$, $\nabla_{\partial\zz}\partial\zz=0$ (see e.g., \cite[Lemma A.5]{LDg}).
         \end{proof}
         \begin{corollary}\label{cor:geodesic}
         	$\Sigma$ is totally geodesic in $\hat{g}$.
         \end{corollary}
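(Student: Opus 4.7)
The plan is to reduce the claim to the vanishing of the normal Christoffel symbols on $\Sigma$ in the new metric and then invoke Lemma \ref{lem:christoffel} together with Assumption \ref{ass:omegaz}. Concretely, I will show that the scalar second fundamental form of $\Sigma\subset(N,\hat g)$, computed in Fermi coordinates $(x^i,\zz)$ adapted to $\Sigma$ (so that $\Sigma=\{\zz=0\}$), is a constant multiple of $\hat\Gamma^{\zz}_{ij}|_{\zz=0}$, and then verify that this quantity vanishes.

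First I would set up the $\hat g$-unit normal. Since $g_{\zz\zz}=1$ and $g_{\zz i}=0$ in Fermi coordinates, the conformal rescaling gives $\hat g_{\zz\zz}=e^{2\omega}$ and $\hat g_{\zz i}=0$; hence $\partial_{\zz}$ is $\hat g$-orthogonal to $T\Sigma$ at points of $\Sigma$ and the $\hat g$-unit normal is $\hat\nu=e^{-\omega}\partial_{\zz}$. For tangential indices $i,j$ one computes
\[
\hat A_{ij}=\hat g(\hat\nabla_{\partial_i}\partial_j,\hat\nu)=\hat\Gamma^{k}_{ij}\hat g(\partial_k,\hat\nu)+\hat\Gamma^{\zz}_{ij}\hat g(\partial_{\zz},\hat\nu)=e^{\omega}\hat\Gamma^{\zz}_{ij},
\]
using $\hat g(\partial_k,\hat\nu)=0$ (again because horizontal and normal directions are $\hat g$-orthogonal). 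Thus $\Sigma$ is totally geodesic in $(N,\hat g)$ if and only if $\hat\Gamma^{\zz}_{ij}\big|_{\zz=0}=0$ for all $i,j$.

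Now I would apply Lemma \ref{lem:christoffel}, which yields the identity $\hat\Gamma^{\zz}_{ij}=\Gamma^{\zz}_{ij}-\partial_{\zz}\omega\cdot g_{ij}$. The first term vanishes on $\Sigma$ because $\Sigma$ is totally geodesic in $(N,g)$ (this is the same characterization with $\omega\equiv 0$, applied to the unmodified metric), and the second term vanishes on $\Sigma$ by Assumption \ref{ass:omegaz}. Consequently $\hat A_{ij}|_{\zz=0}=0$, proving the corollary. There is no real obstacle here: the statement is essentially a one-line consequence of Lemma \ref{lem:christoffel}; the only minor care needed is the short verification that $\partial_{\zz}$ remains $\hat g$-orthogonal to $\Sigma$ after the conformal change, which is automatic for any conformal rescaling.
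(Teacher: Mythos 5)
Your proof is correct and follows exactly the paper's own (one-line) argument: the paper likewise deduces the claim from the formula $\hat{\Gamma}^{\zz}_{ij}=\Gamma^{\zz}_{ij}-\partial_{\zz}\omega\, g_{ij}$ in Lemma \ref{lem:christoffel} together with Assumption \ref{ass:omegaz} and the standing assumption that $\Sigma$ is totally geodesic in $g$. You simply spell out the intermediate step identifying $\hat\Gamma^{\zz}_{ij}|_{\zz=0}$ with the second fundamental form, which the paper leaves implicit.
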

         \begin{proof}
         	This follows by the formula for $\hat{\Gamma}^{\zz}_{ij}$ in \ref{lem:christoffel} and \ref{ass:omegaz}.
         \end{proof}

         \begin{lemma}\label{lem:gamma}
         	Suppose $\Omega\subset\Sigma$ is compact. For $x \in\Omega$, we define the geodesic $\gamma_s=\gamma_s(x)$ in $(N,\hat{g})$ with Fermi coordinates $\gamma_s(x)=(X_s(x),Z_s(x))$ parametrised by arc length in $\hat{g}$ and with the initial conditions:
         	\begin{equation*}
         		\gamma(0)=x,\quad 	\dot{\gamma}(0)=(0,e^{-\omega(x)}).
         	\end{equation*}
         	Then the following hold when $\abs{s}$ is small enough.
         	\begin{align*}
         		\norm{\partial^i_s(X_s-x):C^k(\Omega)}\lesssim_k \min\{1,\abs{s}^{2-i}\}, \quad 
         		\norm{\partial^i_s(e^{\omega}Z_s-s):C^k(\Omega)}\lesssim_k \min\{1,\abs{s}^{3-i}\}.
         	\end{align*}
         \end{lemma}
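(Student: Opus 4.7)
The plan is to set up the geodesic ODE in Fermi coordinates, use standard smooth dependence of ODE solutions on initial data to get $C^k$ control in $x$, and then refine the bounds near $s=0$ by Taylor expansion, where the key cancellation $\ddot Z(0) = 0$ comes from combining Lemma \ref{lem:christoffel} with Assumption \ref{ass:omegaz}.

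First, in Fermi coordinates $(x^i,\zz)$ adapted to $\Sigma\subset N$, the geodesic equation for $\gamma_s=(X_s,Z_s)$ in $(N,\hat g)$ reads
\begin{equation*}
\ddot X^i + \hat\Gamma^i_{ab}(\gamma_s)\,\dot\gamma_s^a\dot\gamma_s^b = 0,\qquad \ddot Z + \hat\Gamma^{\zz}_{ab}(\gamma_s)\,\dot\gamma_s^a\dot\gamma_s^b = 0,
\end{equation*}
with initial data $(X_0(x),Z_0(x))=(x,0)$ and $(\dot X(0),\dot Z(0))=(0,e^{-\omega(x)})$. The coefficients $\hat\Gamma$ are smooth on a neighborhood of $\Omega\subset\Sigma=\{\zz=0\}$, and the initial data depend smoothly on $x\in\Omega$, so by the standard smooth-dependence theorem for ODEs the flow $(s,x)\mapsto\gamma_s(x)$ is smooth on $(-\varepsilon,\varepsilon)\times\Omega$ for some $\varepsilon=\varepsilon(\Omega)>0$, and every derivative $\partial_s^i\partial_x^\alpha\gamma_s$ is uniformly bounded there. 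This gives both estimates in the trivial regime $|s|\gtrsim 1$ (i.e. the $\min\{1,\cdot\}$ clause), and in particular trivializes the cases $i>2$ for the $X$ estimate and $i>3$ for the $Z$ estimate.

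To obtain the sharp order of vanishing at $s=0$, I Taylor-expand. Since $\dot X(0)=0$ one immediately gets $X_s-x=\tfrac12\ddot X(0)\,s^2+O(s^3)$ with all coefficients smooth in $x$, which by differentiation in $s$ yields $\|\partial_s^i(X_s-x):C^k(\Omega)\|\lesssim_k |s|^{2-i}$ for $i\le 2$; combined with the uniform bound from the previous paragraph, this is the first estimate. For $Z_s$, the expansion $e^{\omega(x)}Z_s-s = \tfrac12 e^{\omega(x)}\ddot Z(0)s^2 + O(s^3)$ reduces everything to the claim $\ddot Z(0)=0$. Evaluating the $Z$-equation at $s=0$ with $\dot X(0)=0$ gives
\begin{equation*}
\ddot Z(0) = -\hat\Gamma^{\zz}_{\zz\zz}(x,0)\,(e^{-\omega(x)})^2,
\end{equation*}
and by Lemma \ref{lem:christoffel}, $\hat\Gamma^{\zz}_{\zz\zz}=\partial_\zz\omega$, which vanishes on $\Sigma$ by Assumption \ref{ass:omegaz}. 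Hence $\ddot Z(0)=0$, so $e^{\omega(x)}Z_s-s=O(s^3)$ with smooth dependence on $x$, and differentiating in $s$ finishes the second estimate.

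There is no substantial obstacle: the $C^k(\Omega)$ regularity in $x$ is automatic from ODE smoothness and compactness of $\Omega$, the order-$s^2$ vanishing of $X_s-x$ is immediate from the tangential initial velocity being zero, and the only subtle point is the order-$s^3$ vanishing of $e^{\omega(x)}Z_s-s$, which is precisely the input that Assumption \ref{ass:omegaz} was introduced to secure through the Christoffel computation in Lemma \ref{lem:christoffel}.
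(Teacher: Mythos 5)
Your proposal is correct and follows essentially the same route as the paper: write the geodesic equations in Fermi coordinates using Lemma \ref{lem:christoffel}, observe that $\dot X(0)=0$ and that $\hat\Gamma^{\zz}_{\zz\zz}=\partial_{\zz}\omega$ vanishes on $\Sigma$ by Assumption \ref{ass:omegaz} (hence $\ddot Z(0)=0$), Taylor expand, and get the $C^k(\Omega)$ dependence from smooth dependence of the flow on initial data. Your write-up just makes explicit the cancellation $\ddot Z(0)=-\hat\Gamma^{\zz}_{\zz\zz}e^{-2\omega}=0$ that the paper leaves implicit.
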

        \begin{proof}
          The geodesic equation for $Z_s$ by \ref{lem:christoffel} is 
        	\begin{align*}
        		\ddot{Z}+(\Gamma_{ij}^{\zz}-\partial_{\zz}\omega g_{ij})\dot{X^i}\dot{X^j}+2(\Gamma_{i\zz}^{\zz}+\partial_{i}\omega)\dot{X^i}\dot{Z}+\partial_{\zz}\omega\dot{Z}^2=0.
        	\end{align*}
        	From the initial conditions and \ref{ass:omegaz}, the Taylor expansion of $X_s$ and $Z_s$ implies the $C^0$ estimates. The $C^k$ estimates follow by variations with respect to the initial conditions of the equation.
        \end{proof}
        \begin{corollary}\label{cor:domain}
       		Suppose $\Omega\subset\Sigma$, $f\in C^{\infty}(\Omega)$ with $\norm{f:C^{k}(\Omega,g)}$ small enough, the map $\Gamma_f:\Omega\to \Sigma $ defined by $\Gamma_f(x):=X_{f(x)}(x)$ satisfies the following.
       		\begin{equation*}
       			\norm{\Gamma_f-\Id_{\Omega}:C^k(\Omega,g)}\lesssim_k\norm{f:C^k(\Omega,g)}^2.
       		\end{equation*}
       \end{corollary}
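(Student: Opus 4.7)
The plan is to exploit the second-order vanishing of $X_s(x)-x$ in $s$ at $s=0$ established by \ref{lem:gamma}, together with a chain-rule/composition argument, to produce the quadratic estimate. Throughout I write $F(x,s):=X_s(x)-x$, viewed as a smooth map on $\Omega\times(-\varepsilon,\varepsilon)$ into (a coordinate chart of) $\Sigma$, so that $\Gamma_f(x)-x=F(x,f(x))$.

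First I would record the two vanishing conditions $F(x,0)=0$ and $\partial_s F(x,0)=0$: the former is immediate from $\gamma_0(x)=x$, and the latter follows from the initial condition $\dot\gamma(0)=(0,e^{-\omega(x)})$ in \ref{lem:gamma}, whose $\Sigma$-component is zero. Combined with the bound $\norm{\partial_s^2 F:C^k(\Omega)}\lesssim_k 1$ from \ref{lem:gamma} (taking $i=2$), an application of Taylor's theorem with integral remainder in the $s$-variable gives
\begin{equation*}
F(x,s)=s^2\, R(x,s),\qquad R(x,s):=\int_0^1(1-u)\,\partial_s^2 F(x,us)\,du,
\end{equation*}
where $R$ is smooth on $\Omega\times(-\varepsilon,\varepsilon)$ and satisfies $\norm{R:C^k(\Omega\times(-\varepsilon/2,\varepsilon/2))}\lesssim_k 1$ by the same $C^k$-bound on $\partial_s^2 F$.

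Next I would substitute $s=f(x)$ to obtain $\Gamma_f(x)-x=f(x)^2\,R(x,f(x))$. To bound this in $C^k(\Omega,g)$ I would apply two standard tools: the multiplicative property of $C^k$ norms (recall \eqref{E:norm:mult}) to split off the factor $f^2$, and a Faà~di~Bruno-type composition estimate to control $R(\cdot,f(\cdot))$. Concretely, for $\norm{f:C^k(\Omega,g)}$ small the composition satisfies
\begin{equation*}
\norm{R(\cdot,f(\cdot)):C^k(\Omega,g)}\lesssim_k 1+\norm{f:C^k(\Omega,g)}^k\lesssim_k 1,
\end{equation*}
since every derivative in the chain rule produces either a bounded derivative of $R$ alone or a factor involving derivatives of $f$, all of which are small. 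Combining these,
\begin{equation*}
\norm{\Gamma_f-\Id_\Omega:C^k(\Omega,g)}\lesssim_k\norm{f^2:C^k(\Omega,g)}\cdot\norm{R(\cdot,f(\cdot)):C^k(\Omega,g)}\lesssim_k\norm{f:C^k(\Omega,g)}^2,
\end{equation*}
which is the desired estimate.

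The only mildly delicate point is the composition estimate for $R(\cdot,f(\cdot))$ in the second step: one must verify that the product of derivatives of $f$ appearing in Faà~di~Bruno is absorbed into $\norm{f:C^k}^k$ and hence remains bounded for small $f$, while the $s$-derivatives of $R$ stay bounded uniformly for $|s|\le\norm{f}_{C^0}$. This is routine once the smallness of $\norm{f:C^k(\Omega,g)}$ is used to keep $f$ within the domain where $R$ and its derivatives are controlled. No other obstacle arises, and smooth dependence of $\Gamma_f$ on $f$ is automatic from the smoothness of $X_s$ in $(x,s)$.
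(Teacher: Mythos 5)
Your proposal is correct and takes essentially the same route as the paper: the paper's proof simply says the $C^0$ bound follows from the $C^0$ estimate in \ref{lem:gamma} and the $C^k$ bound follows by taking derivatives and using the $C^k$ estimates there. Your Taylor factorization $F(x,s)=s^2R(x,s)$ followed by the product estimate \eqref{E:norm:mult} and a Fa\`a di Bruno composition bound is just a more explicitly organized bookkeeping of that same chain-rule argument, resting on the same second-order vanishing of $X_s-x$ in $s$ recorded in \ref{lem:gamma}.
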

       \begin{proof}
       	The $C^0$ estimate follows directly from the $C^0$ estimates in \ref{lem:gamma}. The $C^k$ estimate follows by taking derivatives and the $C^k$ estimates in \ref{lem:gamma}.
       \end{proof}
        \begin{lemma}\label{lem:graph}
        	Suppose $\Omega\subset\Sigma$, $f\in C^{\infty}(\Omega)$ with $\norm{f:C^{k}(\Omega,g)}$ small enough. Consider $\Graph_{\Omega}^{N,\hat{g}}(f)$ the graph of $f$ in the metric $\hat{g}$ in the sense of \ref{not:manifold}\ref{item:graph}. Then there is a domain $\Omega'\subset\Omega$ and a smooth function $f'\in C^{\infty}(\Omega')$ such that the following hold.
        	\begin{enumerate}[(i)]
        		\item\label{item:graphidentical} $\Graph_{\Omega'}^{N,g}(f')\subset \Graph_{\Omega}^{N,\hat{g}}(f)$.
        		\item\label{item:domaincontain} There is a constant $C$ such that $\Omega\subset D^g_{\Omega'}(C\norm{f:C^{0}(\Omega,g)^2})$, (recall \ref{not:manifold}\ref{item:neighbour}).
        		\item\label{item:fprime} $\norm{ \Tcal_{\exp(\omega)}f'-f:C^k(\Omega',g)}\lesssim_k \norm{f:C^{k}(\Omega,g)}^3$.
        	\end{enumerate}
        \end{lemma}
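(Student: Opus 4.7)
The lemma reduces to interpreting the parametrization $x \mapsto \gamma_{f(x)}(x)$ of $\mathrm{Graph}^{N,\hat{g}}_\Omega(f)$ in terms of Fermi coordinates and inverting the horizontal component. My plan is to construct $(\Omega', f')$ as the image of this parametrization under the projection $\Pi_\Sigma$, then transfer the pointwise estimates of Lemma B.4 through the near-identity reparametrization provided by Corollary B.4.

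First, I invoke Corollary B.4 to conclude that $\Gamma_f := X_{f(\cdot)}(\cdot) : \Omega \to \Sigma$ satisfies $\|\Gamma_f - \mathrm{Id}_\Omega : C^k(\Omega,g)\| \lesssim_k \|f:C^k(\Omega,g)\|^2$. For $\|f:C^1\|$ small enough, the differential $d\Gamma_f$ is invertible, so $\Gamma_f$ is a diffeomorphism onto $\Omega' := \Gamma_f(\Omega) \subset \Sigma$, with inverse $\Psi := \Gamma_f^{-1} : \Omega' \to \Omega$ satisfying $\|\Psi - \mathrm{Id}_{\Omega'} : C^k(\Omega',g)\| \lesssim_k \|f:C^k(\Omega,g)\|^2$ by the usual inverse function estimates. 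I then set
\[
f'(x') := Z_{f(\Psi(x'))}(\Psi(x')).
\]
By construction, $\gamma_{f(x)}(x) = (X_{f(x)}(x), Z_{f(x)}(x))$ in Fermi coordinates, so for $x' = \Gamma_f(x) \in \Omega'$ the point $\gamma_{f(x)}(x)$ has Fermi coordinates $(x', f'(x'))$; by Corollary B.3 and the definition of Fermi coordinates this point equals $\exp^{N,g}_{x'}(f'(x')\nu_{x'})$, proving (i).

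Part (ii) is immediate: for any $x \in \Omega$, the point $\Gamma_f(x) \in \Omega'$ satisfies $\mathrm{dist}^{N,g}(x, \Gamma_f(x)) \le C\|f:C^0(\Omega,g)\|^2$ by Corollary B.4, giving $\Omega \subset D^g_{\Omega'}(C\|f:C^0\|^2)$.

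For (iii), I apply Lemma B.4 to get $\|e^{\omega(x)} Z_{f(x)}(x) - f(x) : C^k(\Omega,g)\| \lesssim_k \|f:C^k(\Omega,g)\|^3$, where I use that Lemma B.4 bounds the $C^k$-norm of $\partial^i_s(e^\omega Z_s - s)$ by $|s|^{3-i}$ and $f(x)$ plays the role of $s$, so composition and the multiplicative property \eqref{E:norm:mult} yield the claimed $\|f\|^3$ bound. Then I decompose
\[
e^{\omega(x')} f'(x') - f(x') = \bigl(e^{\omega \circ \Psi(x')} Z_{f(\Psi(x'))}(\Psi(x')) - f(\Psi(x'))\bigr) + \bigl(e^{\omega(x')} - e^{\omega(\Psi(x'))}\bigr) f'(x') + \bigl(f(\Psi(x')) - f(x')\bigr).
\]
The first bracket, evaluated at $x = \Psi(x')$, is $O(\|f\|^3)$ in $C^k$ by the previous estimate precomposed with the near-identity map $\Psi$. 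The second bracket is controlled by $\|\omega\|_{C^{k+1}} \|\Psi - \mathrm{Id}\|_{C^k} \cdot \|f'\|_{C^k} \lesssim \|f\|^2 \cdot \|f\| = \|f\|^3$, using that $\|f':C^k(\Omega',g)\| \lesssim \|f:C^k(\Omega,g)\|$. The third bracket is $\|f\|_{C^1} \|\Psi - \mathrm{Id}\|_{C^k} \lesssim \|f\|^3$ again by Corollary B.4.

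The only delicate point is the $C^k$ composition estimate with $\Psi$; I expect this to be routine given the multiplicative norm property \eqref{E:norm:mult} and standard Faà di Bruno bounds, since $\Psi$ is a $C^k$-small perturbation of the identity and we are composing with functions whose $C^k$ norms are already controlled. Thus no substantial obstacle is anticipated, and the lemma follows.
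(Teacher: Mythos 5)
Your proposal is correct and takes essentially the same route as the paper: you define $f'=Z_{f(\cdot)}(\cdot)\circ\Gamma_f^{-1}$ exactly as in the paper's proof, obtain (ii) from \ref{cor:domain}, and obtain (iii) from the estimates of \ref{lem:gamma} and \ref{cor:domain} via the same decomposition the paper leaves implicit. The only caveat --- shared with the paper's one-line argument --- is that your bound $\lVert f\circ\Psi-f\rVert_{C^k}\lesssim \lVert f\rVert_{C^1}\lVert \Psi-\mathrm{Id}\rVert_{C^k}$ is not literally valid for $k\ge 1$ (the difference costs one extra derivative or a H\"older modulus of $D^k f$), which is harmless here since $f$ is smooth and the applications control all orders.
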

        \begin{proof}
        	\ref{item:graphidentical} is equivalent to
        	\begin{equation*}
        		f'(x)=Z_{f(x)}\circ\Gamma_f^{-1}(x).   
        	\end{equation*}
        	And we use it as the definition for $f'$. \ref{item:domaincontain} then follows by \ref{cor:domain}. \ref{item:fprime} follows by the definition of $f'$ and the estimates in \ref{lem:gamma} and \ref{cor:domain}.
        \end{proof}
	     \bibliographystyle{plain}
	     \bibliography{shrstack}
\end{document}